\renewcommand\theequation{\thesection.\arabic{equation}}
\newcommand{\mult}{\textrm{mult\,}}
\newcommand{\BA}{{\mathbb {A}}}
\newcommand{\BC}{{\mathbb {C}}}
\newcommand{\BH}{{\mathbb {H}}}
\newcommand{\BN}{{\mathbb {N}}}
\newcommand{\BQ}{{\mathbb {Q}}}
\newcommand{\BR}{{\mathbb {R}}}
\newcommand{\BZ}{{\mathbb {Z}}}
\renewcommand{\CD}{{\mathcal {D}}}
\newcommand{\CH}{{\mathcal {H}}}
\newcommand{\CI}{{\mathcal {I}}}
\newcommand{\CJ}{{\mathcal {J}}}
\newcommand{\CL}{{\mathcal {L}}}
\newcommand{\CN}{{\mathcal {N}}}
\newcommand{\CO}{{\mathcal {O}}}
\newcommand{\CP}{{\mathcal {P}}}
\newcommand{\CS}{{\mathcal {S}}}
\newcommand{\CT}{{\mathcal {T}}}
\newcommand{\CW}{{\mathcal {W}}}
\newcommand{\CY}{{\mathcal {Y}}}
\newcommand{\CZ}{{\mathcal {Z}}}
\newcommand{\FB}{{\mathfrak {B}}}
\newcommand{\FD}{{\mathfrak {D}}}
\newcommand{\FN}{{\mathfrak {N}}}
\newcommand{\FP}{{\mathfrak {P}}}
\newcommand{\FR}{{\mathfrak {R}}}
\newcommand{\FS}{{\mathfrak {S}}}
\newcommand{\FT}{{\mathfrak {T}}}
\newcommand{\FV}{{\mathfrak {V}}}
\newcommand{\Fc}{{\mathfrak {c}}}
\newcommand{\Fd}{{\mathfrak {d}}}
\newcommand{\Fe}{{\mathfrak {e}}}
\newcommand{\Fg}{{\mathfrak {g}}}
\newcommand{\Fl}{{\mathfrak {l}}}
\newcommand{\Fm}{{\mathfrak {m}}}
\newcommand{\Fn}{{\mathfrak {n}}}
\newcommand{\Fp}{{\mathfrak {p}}}
\newcommand{\Fr}{{\mathfrak {r}}}
\newcommand{\Fs}{{\mathfrak {s}}}
\newcommand{\Fu}{{\mathfrak {u}}}
\newcommand{\Fw}{{\mathfrak {w}}}
\newcommand{\RI}{{\mathrm {I}}}
\newcommand{\RO}{{\mathrm {O}}}
\newcommand{\RU}{{\mathrm {U}}}
\newcommand{\ScO}{{\mathscr {O}}}
\newcommand{\Ad}{{\mathrm{Ad}}}
\newcommand{\Aut}{{\mathrm{Aut}}}
\newcommand{\ari}{{\mathrm{ari}}}
\newcommand{\an}{{\mathrm{an}}}
\newcommand{\bp}{{\mathrm{bp}}}
\newcommand{\disc}{{\mathrm{disc}}}
\newcommand{\End}{{\mathrm{End}}}
\newcommand{\Gal}{{\mathrm{Gal}}}
\newcommand{\GL}{{\mathrm{GL}}}
\newcommand{\gen}{{\mathrm{gen}}}
\newcommand{\gp}{{\mathrm{gp}}}
\newcommand{\Hom}{{\mathrm{Hom}}}
\newcommand{\Isom}{{\mathrm{Isom}}}
\newcommand{\Mp}{{\mathrm{Mp}}}
\newcommand{\mx}{\mathrm{max}}
\newcommand{\nsd}{{\mathrm{nsd}}}
\newcommand{\rank}{{\mathrm{rank}}}
\newcommand{\SL}{{\mathrm{SL}}}
\newcommand{\SO}{{\mathrm{SO}}}
\newcommand{\SU}{{\mathrm{SU}}}
\newcommand{\sgn}{{\mathrm{sgn}}}
\newcommand{\Sp}{{\mathrm{Sp}}}
\newcommand{\st}{{\mathrm{st}}}
\newcommand{\Span}{{\mathrm{Span}}}
\newcommand{\tr}{{\mathrm{tr}}}
\newcommand{\ud}{\,\mathrm{d}}
\newcommand{\Witt}{\mathrm{Witt}}
\newcommand{\wf}{\mathrm{WF}}
\newcommand{\wm}{\mathrm{wm}}
\newcommand{\ovl}{\overline}
\newcommand{\udl}{\underline}
\newcommand{\wt}{\widetilde}
\newcommand{\wh}{\widehat}
\newcommand{\apair}[1]{\left\langle {#1} \right\rangle}
\newcommand{\ol}{\overline}
\newcommand{\ul}{\underline}
\def\bks{{\backslash}}
\def\diag{{\rm diag}}
\def\eps{{\epsilon}}
\def\sig{{\sigma}}
\newtheorem{thm}{Theorem}[section]
\newtheorem{cor}[thm]{Corollary}
\newtheorem{lem}[thm]{Lemma}
\newtheorem{prop}[thm]{Proposition}
\newtheorem {conj}[thm]{Conjecture}
\newtheorem {ques/conj}[thm]{Question/Conjecture}
\newtheorem{defn}[thm]{Definition}
\newtheorem{rmk}[thm]{Remark}
\begin{document}
\renewcommand{\theequation}{\arabic{equation}}
\numberwithin{equation}{section}

\title[Wavefront Sets and Generic $L$-packets]{Arithmetic Wavefront Sets and Generic $L$-packets}

\author{Dihua Jiang}
\address{School of Mathematics, University of Minnesota, Minneapolis, MN 55455, USA}
\email{jiang034@umn.edu}

\author{Dongwen Liu}
\address{School of Mathematical Sciences, Zhejiang University, Hangzhou 310058, Zhejiang, P. R. China}
\email{maliu@zju.edu.cn}

\author{Lei Zhang}
\address{Department of Mathematics,
National University of Singapore,
Singapore 119076}
\email{matzhlei@nus.edu.sg}

\keywords{Classical Groups over Local Fields, Admissible Representations and Casselman-Wallach Representations, Rationality of Local Langlands Correspondence, Local Gan-Gross-Prasad Conjecture, Nilpotent Orbits, Wavefront Sets.}

\date{\today}
\subjclass[2010]{Primary 11F70; Secondary 20G25, 22E50}

\thanks{The work of the first author is supported in part by the NSF Grant DMS-2200890 and the Simons Grants: SFI-MPS-SFM-00005659 and 
SFI-MPS-TSM-00013449. The work of the second author is supported in part by National Natural Science Foundation of China No. 12171421. The work of the third author is supported by AcRF Tier 1 grants R-146-000-277-114 and R-146-000-313-144 of National University of Singapore.
}

\begin{abstract}
Let $G$ be a classical group defined over a local field $F$ of characteristic zero. Let $\pi$ be an irreducible admissible representation $\pi$ of $G(F)$, which is of Casselman-Wallach type if $F$ 
is archimedean. If $\pi$ has a generic local $L$-parameter, we define the {\it arithmetic wavefront set} $\wf_\ari(\pi)$ of $\pi$, which is a subset of $F$-rational nilpotent orbits in the 
Lie algebra $\Fg(F)$ under adjoint action of $G(F)$, by means of the arithmetic structures of the enhanced $L$-parameter $(\varphi,\chi)$ of $\pi$. 
Those arithmetic structures are discovered by 
using our method of consecutive descents of enhanced $L$-parameters, based on the rationality of the local Langlands correspondence and the local Gan-Gross-Prasad conjecture. We study the basic structure of $\wf_\ari(\pi)$ and prove that it is an invariant of $\pi$ (Theorem \ref{inv-pi}). 
Furthermore, those basic structures $\wf_\ari(\pi)^\mx$ are expected to yield 
the precise $F$-rational structure of $\wf_\ari(\pi)^\mx$, 
which has been realized, when $F$ is archimedean, in Theorems \ref{thm1} and \ref{thm2} (Theorems \ref{thm:main}, \ref{cor:wf-max-unique-archimedean}, and \ref{thm:maximal-rational-order}). 
Based on the local Langlands reciprocity, the Wavefront Set Conjecture (Conjecture \ref{conj:wfss} and Conjecture \ref{conj:main}) asserts that the wavefront sets on the $L$-parameter side should be closed related to those on the representation side, namely, 
\[
\wf_\wm(\pi)^\mx=\wf_\ari(\pi)^\mx=\wf_\tr(\pi)^\mx
\]
when $\pi$ has a generic local $L$-parameter, where the algebraic wavefront set $\wf_\wm(\pi)$ is defined by C. M\oe glin and J.-L. Waldspurger in \cite{MW87}, using the generalized Whittaker models and the analytic wavefront set $\wf_\tr(\pi)$ is defined by R. Howe in \cite{H81,Hd85} via the wavefront set of the distribution character $\Theta_\pi$ of $\pi$, and also by the asymptotic expansion of the distribution character $\Theta_\pi$ of $\pi$ near the identity (\cite{H74, HC78, BV80}). Conjecture \ref{conj:wfss} is verified for families of interesting cases and provides a means to determine $\wf_\wm(\pi)^\mx$ and $\wf_\tr(\pi)^\mx$ in terms of the computationally more accessible 
$\wf_\ari(\pi)^\mx$. 
\end{abstract}

\maketitle
\tableofcontents

\section{Introduction}



Let $F$ be a local field of characteristic zero, which is either $\BR$, $\BC$ or a finite extension of the field $\BQ_p$ of $p$-adic numbers for some prime $p$. Let $G$ be any reductive algebraic group defined over $F$. Denote by $\Pi_F(G)$ the set of equivalence classes of irreducible admissible representations of $G(F)$. Here if $F$ is archimedean, we take the representations of $G(F)$ to be of Casselman-Wallach type (\cite{Cas89, Wal92}).

The local Langlands conjecture (\cite{L70, B79, V93}) as a vast generalization of the local Artin reciprocity law over $F$ provides a classification of the set $\Pi_F(G)$ (the admissible dual of $G$) in terms of the arithmetic 
data associated with $F$. More precisely, let $\CL_F$ be the local Langlands group of $F$, whose relation with the local Weil group $\CW_F$ is given in \eqref{LLG}. Let $^LG$ be the local $L$-group of 
$G$. A local Langlands parameter or $L$-parameter is an admissible homomorphism 
\begin{align}\label{LLP}
    \varphi\ \colon\ \CL_F\longrightarrow {^LG}
\end{align}
which is compatible with the action of the local Weil group or Galois group on the both sides (\cite{B79}). The set of equivalence classes of the local $L$-parameters of $G$ is denoted by $\Phi_F(G)$. 
We refer to Section \ref{ssec-LVP} for more details. The local Langlands conjecture for $G$ over $F$ asserts that there exists a canonical correspondence (the local Langlands reciprocity map)
\begin{align}\label{LLR}
    \FR_{F,G}\ \colon\ \Pi_F(G)\longrightarrow \Phi_F(G)
\end{align}
such that for each $\varphi\in\Phi_F(G)$, the fiber $\FR_{F,G}^{-1}(\varphi)$ is a non-empty finite subset of $\Pi_F(G)$, which is denoted by $\Pi_\varphi(G)$ and is called the local $L$-packet associated with $\varphi$. 

When $F$ is archimedean, R. Langlands (\cite{L89}) establishes the local Langlands conjecture for all reductive algebraic groups over $F$ by classifying the set $\Pi_F(G)$ by means of 
the local $L$-parameters in $\Phi_F(G)$ and their associated arithmetic data. It remains a difficult problem in general when $F$ is non-archimedean. 

The local Langlands conjecture for the general linear groups over a non-archimedean lcoal field $F$ is a theorem proved by M. Harris and R. Taylor (\cite{HT01}), of G. Henniart (\cite{Hn00}) and also of P. Scholze (\cite{Sch13}). For symplectic groups and $F$-quasisplit special orthogonal groups, it can be deduced from the endoscopic classification of J. Arthur in \cite{Ar13}. The method of \cite{Ar13} 
has been extended to $F$-quasisplit unitary groups by C. P. Mok (\cite{Mok15}), and by T. Kaletha, A. Minguez, S. W. Shin, and P.-J. White for pure inner forms of unitary groups (\cite{KMSW14}). It is extended to metaplectic groups by using local theta correspondence by H. Atobe and W. T. Gan (\cite{AG17a}). 
The recent work of L. Fargues and P. Scholze (\cite{FS21}) makes an important progress on the local Langlands conjecture for general reductive algebraic groups over $p$-adic local fields. 
However, there are many issues that remain to be resolved. We refer to a recent paper by M. Harris (\cite{Hm22}) and T. Kaletha (\cite{K23}) for a detailed explanation of the state of the art about the local Langlands conjecture. 

With the local Langlands conjecture, it is fundamentally important to explore relations between information carried by the arithmetic data associated with the local $L$-parameters and that carried by the representations in the local $L$-packets, via the local Langlands reciprocity. Following the historical remarks by B. Gross in \cite{G22}, it is clear that the pioneering work of B. Gross and D. Prasad 
(\cite{GP92,GP94}) indicates that certain types of the branching problem on representations in $\Pi_F(G)$ when $G$ is a special orthogonal group are governed by the sign of the local root numbers 
associated with the corresponding local $L$-parameters, assuming the validity of the local Langlands reciprocity. The local Gross-Prasad conjecture for special orthogonal groups (\cite{GP92, GP94}) has been extended 
by W. T. Gan, B. Gross and D. Prasad in \cite{GGP12, GGP20} to cover all classical groups, which is now called the Gan-Gross-Prasad conjecture for classical groups over local fields. 
Definitely, the local Gan-Gross-Prasad conjecture is one of the most influential examples that predicts the interplay between the arithmetic information associated with the local $L$-parameters and 
the harmonic analysis associated with the local $L$-packets (or more precisely the local Vogan packets, see \eqref{lvp} for definition) attached to the corresponding local $L$-parameters via the local Langlands reciprocity. 

The local Gan-Gross-Prasad conjecture for classical groups with generic $L$-parameters is now a theorem accumulated from a series of works pioneered by J.-L. Waldspurger in \cite{W10, W12a, W12b} and then in \cite{MW12}, and followed by others in \cite{BP16, BP20, GI16, At18, X23, X, Lu20, Lu21, Ch23, Ch24, Ch25, CL22, CCZ25}. The local Gan-Gross-Prasad conjecture for more general $L$-parameters (\cite{GGP20}) is widely open in general, except for some recent progress for general linear groups (\cite{Gur22, Cha22, Cha22p, CC25}). 

It is clear that the local Gan-Gross-Prasad conjecture is closely related to the local descents of representations of classical groups via certain degenerate Whittaker models, called the Bessel models and the Fourier-Jacobi models (see \cite{GGP20} for instance). The local descents were developed as the local analogy of the automorphic descent of D. Ginzburg, S. Rallis, and D. Soudry (\cite{GRS11}) to 
prove the local functorial transfer (backward functoriality) from general linear groups to quasisplit classical groups for certain types of local $L$-parameters (see \cite{JS03, JS12, JZ18} for instance). 

Inspired by the local Gan-Gross-Prasad conjecture and the theory of local descents, two of the authors of this paper introduced in \cite{JZ18} a descent method for generic local $L$-parameters (see Section \ref{ssec-LVP} for definition). The $L$-parameter descent yields an explicit branching (spectral) decomposition at the first occurrence index (\cite[Theorem 1.7]{JZ18}) for any members of a given local $L$-packet $\Pi_\varphi(G)$ where 
$G$ is a quasisplit special orthogonal group or its pure inner form.  The method clearly works for other classical groups. Conjecture 1.8 in \cite{JZ18} asserts a close relation between the first occurrence index of the local descents of $\pi\in\Pi_\varphi(G)$ and the structure of the $L$-parameter descent of $\varphi$. 
Such an explicit branching (spectral) decomposition as given in Theorem 1.7 of \cite{JZ18} indicates that the local Langlands reciprocity may carry much deeper information from the arithmetic data associated with $\varphi$ to representations of $G(F)$ in $\Pi_\varphi(G)$ beyond what is described by the local Gan-Gross-Prasad conjecture. 

In this paper, we are going to take the approach in \cite{JZ18} forward and develop a method of {\it consecutive descents of enhanced $L$-parameters}. Based on the rationality of the 
local Langlands correspondences and the normalization of the arithmetic data from the local Gan-Gross-Prasad conjecture for generic $L$-parameters, this method enables us to define the notion of 
{\it arithmetic wavefront sets} $\wf_\ari(\pi)$ for $\pi \in \Pi_\varphi(G)$ and determine their $F$-rational structures. 

The wavefront sets of representations of $G(F)$ are important invariants that determine the size of the infinite-dimensional representations and the singular supports of the distribution characters 
of those representations. 
The notion of the {\it wavefront set} of $\pi\in\Pi_F(G)$ when $F$ is an archimedean local field was first introduced by R. Howe in 1981 (\cite{H81}) by essentially considering the 
wavefront set of the distribution character $\Theta_\pi$ of $\pi$, as a special situation, but refined version of the general notion of the wavefront set or singular support of distributions as introduced by L. H\"ormander around 1970 (\cite{Hor71}). Since then, some variants of wavefront sets have been introduced for $\pi\in\Pi_F(G)$ over all local fields $F$.

When $F$ is non-archimedean, an asymptotic expansion of the distribution character $\Theta_\pi$ near the identity was established by R. Howe (\cite{H74}) and Harish-Chandra (\cite{HC78})
\begin{align}\label{thetapi}
\Theta_\pi=\sum_{\CO\in\CN_F(\Fg)_\circ}c_\CO\wh{\mu}_\CO,\quad c_\CO\in\BC,
\end{align}
where $\CN_F(\Fg)_\circ$ denotes the set of $F$-rational nilpotent orbits in the Lie algebra $\Fg(F)$ of the group $G(F)$ under the adjoint action of $G(F)$ and
$\wh{\mu}_\CO$ denotes the Fourier transform of the (normalized) measure $\mu_\CO$ on the nilpotent orbit $\CO$. From this asymptotic expansion of $\Theta_\pi$, one defines the {\bf analytic wavefront set} of $\pi\in\Pi_F(G)$ to be
\begin{align}\label{wf-tr}
\wf_\tr(\pi):=\{\CO\in\CN_F(\Fg)_\circ\ \mid\ c_\CO\neq 0\},
\end{align}
where $c_\CO$'s are as in \eqref{thetapi} (see also \cite{Hd85, Prz90} for some relevant discussions). 
C. M\oe glin and J.-L. Waldspurger define in \cite{MW87} (see also \cite{K87}) the 
{\bf algebraic wavefront set} $\wf_\wm(\pi)$ of $\pi\in\Pi_F(G)$ by means of the {\it degenerate Whittaker models} of $\pi$ associated with any nilpotent element $X$ in  the set $\CN_F(\Fg)$ of all nilpotent elements in $\Fg(F)$ (see Section \ref{ssec-TJM} for some details).

One of the main results in \cite{MW87} shows that when $F$ is non-archimedean, the set of the maximal members, which is denoted by $\wf_\wm(\pi)^\mx$, of the algebraic wavefront set $\wf_\wm(\pi)$ coincides with the set of maximal members, which is denoted by $\wf_\tr(\pi)^\mx$, of the analytic wavefront set $\wf_\tr(\pi)$, that is,
\begin{align}\label{alg-ana-wfs}
\wf_\wm(\pi)^\mx=\wf_\tr(\pi)^\mx
\end{align}
for $\pi\in\Pi_F(G)$. We mention that \eqref{alg-ana-wfs} holds for the $F$-rational topology as given in Definition \ref{defn:order}. When $F$ is archimedean, from the work of D. Barbasch and D. Vogan (\cite{BV80}) and that of W. Rossmann (\cite{R95}), one expects that the identity in \eqref{alg-ana-wfs} should still hold. However, only some very special situations have been studied (see \cite{V78, Mat87, Mat90, Mat92, GS15, GGS17} for instance). It should be mentioned that when $F$ is archimedean, there are more invariants (associated variety, characteristic cycle and wavefront cycle, etc.) of $\pi$ closely related to the wavefront set of $\pi$. Those invariants form a rich theory. We refer to the papers of W. Rossmann (\cite{R95}), of D. Vogan (\cite{V17}), of J. Adams and D. Vogan (\cite{AV21}), and of W. Schmid and K. Vilonen (\cite{SV00}) for an excellent survey and detailed discussions of the theory, 
in particular, for the relation between the wavefront sets and the construction of the unitary dual of the real reductive groups.  

The arithmetic wavefront sets $\wf_\ari(\pi)$ for $\pi\in\Pi_F(G)$ introduced in this paper take information from the arithmetic side of the local Langlands reciprocity. 
We restrict ourselves in this paper to the situation that $G$ is a {\it classical group} and 
the representations $\pi\in\Pi_F(G)$ have {\it generic} local $L$-parameters. Under such technical assumptions, 
we develop the basic theory of arithmetic wavefront sets for all local fields of characteristic zero, by 
introducing a method of {\it consecutive descents of enhanced $L$-parameters} (Sections \ref{sec-CGRNO}--\ref{sec-AWFS}), and establish the conjectural structures and their refinements of the arithmetic wavefront sets for the archimedean case (Sections \ref{sec-ADA}--\ref{sec-FRM}). 

In order to define the arithmetic wavefront set $\wf_\ari(\pi)$, we extract arithmetic information from the enhanced $L$-parameter $(\varphi,\chi)$ associated with $\pi$ by 
the method of the consecutive descents of enhanced $L$-parameters (Sections 
\ref{ssec-LPD} and \ref{ssec-WFSA}). This method generalizes the one originally used by two of the authors in \cite{JZ18}. The technical base of this method is the 
local Gan-Gross-Prasad conjecture for classical groups (\cite{GGP12}), which is now a theorem as mentioned above. 

The classical groups $G$ considered in this paper include $F$-quasisplit groups and their pure inner forms. For completeness, the metaplectic double cover $\Mp_{2n}(F)$ of the symplectic group $\Sp_{2n}(F)$ is also included. 
According to the local Langlands conjecture for $G$ over $F$, for each local $L$-parameter $\varphi$, there exists a finite subset ($L$-packet) $\Pi_\varphi(G)$ of $\Pi_F(G)$ that is assigned to $\varphi$ via the local Langlands correspondence for $G$. Such an assignment is unique, up to a choice of the $F$-rationality, for the classical groups 
considered in this paper (see \cite{Hm22, K23} for detailed discussion of such a uniqueness issue in general). Following \cite{V93}, one may define the local Vogan packet $\Pi_\varphi[G^*]$ 
associated to $\varphi$ for each $F$-quasisplit group $G^*$ (see \eqref{lvp} for definition). 
For classical groups, the component group $\CS_\varphi$ associated to $\varphi$ is an abelian 
$2$-group. The Vogan version of the local Langlands conjecture (\cite{V93}) asserts that for each $\pi\in\Pi_\varphi[G^*]$, there exists a unique character $\chi$ of $\CS_\varphi$ assigned to $\pi$. We may write 
\begin{align}\label{pia}
   \pi=\pi_a(\varphi,\chi).  
\end{align}
Here $a\in F^\times$ indicates the dependence of the $F$-rationality of the local Langlands correspondence (the assignment), which will be explained in Section \ref{ssec-RLLC}. 
In this case, the pair $(\varphi,\chi)$ is called the {\sl enhanced $L$-parameter of $\pi$}. The local 
$L$-parameter $\varphi$ is called {\it generic} if the local Vogan packet contains at least one 
generic member, i.e. a member with a non-zero Whittaker model with a choice of Whittaker 
data. We refer to Section \ref{sec-LLC} for the details and notations. 

By a conjecture of F. Shahidi in \cite{Sh90}, all tempered local $L$-packets $\Pi_\varphi(G^*)$ of $F$-quasisplit group $G^*$ are expected to be generic. 
By the Whittaker normalization of Shahidi, we may write (non-canonically) a generic member 
in $\Pi_\varphi(G^*)$ as $\pi=\pi(\varphi,{\mathds 1})$, where ${\mathds 1}$ is the trivial character of $\CS_\varphi$. If $G\not=\GL_n$, the local $L$-packet $\Pi_\varphi(G)$ may contain more than one member in general. 
The generic representations such as $\pi=\pi(\varphi,{\mathds 1})$ in the local Vogan packet $\Pi_\varphi[G^*]$ have the largest possible size (the Gelfand-Kirillov dimension) and 
the $F$-stable nilpotent orbit $\CO^\st(\pi)$ that contains the $F$-rational nilpotent orbits in  
the wavefront set $\wf_\wm(\pi)^\mx$ or $\wf_\tr(\pi)^\mx$ of $\pi$ must be the regular nilpotent orbit in $\CN_F(\Fg)_\circ^\st$. 
It remains an interesting problem to understand the wavefront set of any member $\pi$ 
in the local Vogan packet $\Pi_\varphi[G^*]$ that {\sl may not be generic}. 

We introduce in Definition \ref{defn:pd} the descents of enhanced $L$-parameters $(\varphi,\chi)$, which is denoted by $\FD_\Fl(\varphi, \chi)$ as in \eqref{Fl-pd}, based on explicit calculations of the local root numbers in the local Gan-Gross-Prasad conjecture as explained in Section \ref{ssec-LGGP}. First, we show that for a given $(\varphi,\chi)$, there exists an integer $\Fl>0$, such that 
the $\Fl$-th descent $\FD_\Fl(\varphi, \chi)$ is non-empty (Proposition \ref{prop:DNE}). 
The largest integer $\Fl$ such that $\FD_\Fl(\varphi, \chi)$ is non-empty is denoted by 
$\Fl_0=\Fl_0(\varphi, \chi)$ and is called the {\it first occurrence index} of $(\varphi,\chi)$. 
Then we show that the $\Fl_0$-th descent $\FD_{\Fl_0}(\varphi, \chi)$, which is called 
the {\it first descent}, only consists of discrete enhanced $L$-parameters 
(Theorem \ref{thm:DFD}, which is an extension of \cite[Theorem 4.6]{JZ18}). 

In \cite{CJLZ24}, we show that if $\pi=\pi_a(\varphi,\chi)$ as in \eqref{pia}, the first occurrence index 
$\Fl_0=\Fl_0(\varphi, \chi)$ is equal to the first occurrence index for the local descent tower of $\pi$ and 
the local descent of $\pi$ at $\Fl_0$ consists of only discrete series representations whose enhanced $L$-parameters 
are those belonging to $\FD_{\Fl_0}(\varphi, \chi)$ (See \cite[Theorems 1.4, 1.5]{CJLZ24} for more details).
Some applications of this discreteness to representation theory can also be found in \cite{CJLZ24}. In particular, \cite[Theorem 1.6]{CJLZ24} established a new submodule theorem for any irreducible admissible representations with generic $L$-parameters.

In order to extract the arithmetic information from consecutive descents of enhanced $L$-parameters, we introduce the notion of ordered partitions
in Definition \ref{defn:partition}, and more notions of combinatorial nature in Section \ref{ssec-WFSA}.
For an ordered partition $\underline{\Fl}:=(\ell_1,\ell_2,\dots,\ell_k)$ of an integer $\Fn$, 
we define as in \eqref{Fs} a {\it pre-tableau} $\Fs_{\underline{\Fl}}$ associated with $\underline{\Fl}$ to be a sequence of pairs as follows:
\[
\Fs_{\underline{\Fl}}=\left((\ell_1,q^h_{1}),(\ell_2,q^h_{2}),\dots, (\ell_k,q^h_{k})\right),
\]
where $q^h_i$ is a $1$-dimensional $\epsilon_i$-Hermitian form. 
The set of {\it pre-tableaux for an enhanced $L$-parameter $(\varphi,\chi)$}, which is denoted by $\CT_a(\varphi,\chi)$ and defined in 
Definition \ref{LYT},  is a collection of the combinatorial data from the consecutive 
descents of the given enhanced $L$-parameter $(\varphi,\chi)$. 
The set of {\it $L$-descent pre-tableaux}, denoted by $\CL_a(\varphi,\chi)$, is the subset of 
$\CT_a(\varphi,\chi)$ with decreasing partitions $\udl{\Fl}$. As explained in \eqref{LY}, there is a canonical way to construct admissible sesquilinear Young tableaux from given pre-tableaux. We denote by $\CY_a(\varphi,\chi)$ the set of admissible sesquilinear Young tableaux constructed from the set of $L$-descent pre-tableaux $\CL_a(\varphi,\chi)$ and call it 
the set of {\it $L$-descent Young tableaux}. 
It is important to show that the set of $L$-descent pre-tableaux $\CL_a(\varphi,\chi)$ 
is non-empty (Proposition \ref{prop:LDPT}).  

The process to construct those combinatorial objects forms basic steps towards our 
definition of {\it wavefront sets for enhanced $L$-parameters} $\wf_a(\varphi,\chi)$ in 
Definition \ref{defn:AWFS}, which is the subset of $F$-rational nilpotent orbits in $\CN_F(\Fg_n)_\circ$ that corresponds canonically with $\CY_a(\varphi,\chi)$. 
The first main result in the theory of consecutive descents of enhanced $L$-parameters is to show that the wavefront set $\wf_a(\varphi,\chi)$ for the enhanced $L$-parameter $(\varphi,\chi)$ is independent of the choice of the rationality of the local Langlands correspondence and the normalization of the arithmetic data from the local Gan-Gross-Prasad conjecture (Theorem \ref{inv-pi}). 
Hence we are able to define the 
{\bf arithmetic wavefront set} $\wf_\ari(\pi)$ for any $\pi\in\Pi_\varphi[G^*]$ as follows:
\begin{align}\label{awfs0}
    \wf_\ari(\pi):= \wf_a(\varphi,\chi)
\end{align}
if $\pi=\pi_a(\varphi,\chi)$. See Section \ref{ssec-AWF} for more details. 

\begin{thm}[Theorem \ref{inv-pi}]\label{thm-inv}
 The arithmetic wavefront set $\wf_\ari(\pi)$ in \eqref{awfs0} is a well-defined invariant of $\pi$. 
\end{thm}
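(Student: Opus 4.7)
The plan is to show that although the pair $(\varphi,\chi)$ attached to a representation $\pi$ depends on the rationality choice $a \in F^\times$ used to pin down the local Langlands correspondence for $G$, the resulting set of $F$-rational nilpotent orbits $\wf_a(\varphi,\chi)$ does not. Suppose $\pi = \pi_a(\varphi,\chi) = \pi_{a'}(\varphi,\chi')$ for two admissible choices $a, a' \in F^\times$. By the normalization conventions recorded in Section \ref{ssec-RLLC}, I would first produce an explicit character $\eta_{a,a'}$ of $\CS_\varphi$, depending only on the pair $(a,a')$ and not on $\pi$, such that $\chi' = \chi \cdot \eta_{a,a'}$. This twist formula, which reflects the quadratic ambiguity in the Whittaker normalization for classical groups and the change of additive character underlying the local root numbers, is the starting point.

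The technical core is a compatibility lemma: the consecutive descent construction is equivariant under the twist $(a, \chi) \mapsto (a', \chi \cdot \eta_{a,a'})$. The descent sets $\FD_\Fl(\varphi,\chi)$ of Definition \ref{defn:pd} are cut out by local root-number identities arising from the local Gan-Gross-Prasad conjecture, and these identities are themselves sensitive to the additive character and Whittaker datum encoded in $a$. A careful root-number computation should show that the change $a \mapsto a'$ multiplies each relevant root number by exactly the value of $\eta_{a,a'}$ at the corresponding component-group element, so that the descents $\FD_\Fl(\varphi,\chi)$ and $\FD_\Fl(\varphi,\chi')$ are carried to one another by the relabeling that sends each $1$-dimensional $\epsilon_i$-Hermitian form $q^h_i$ to a representative of the same $F$-rational equivalence class. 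In particular, the first occurrence indices $\Fl_0(\varphi,\chi)$ and $\Fl_0(\varphi,\chi')$ coincide, and the discreteness statement of Theorem \ref{thm:DFD} transports across the twist unchanged.

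Iterating this lemma through the consecutive descent procedure yields a natural bijection
\[
\CT_a(\varphi,\chi)\ \longleftrightarrow\ \CT_{a'}(\varphi,\chi')
\]
that preserves the underlying partition $\udl{\Fl} = (\ell_1, \ldots, \ell_k)$ of each pre-tableau and replaces each $q^h_i$ by an equivalent Hermitian form. This bijection restricts to the subsets of $L$-descent pre-tableaux $\CL_a(\varphi,\chi) \leftrightarrow \CL_{a'}(\varphi,\chi')$ and hence to the sets of $L$-descent Young tableaux $\CY_a(\varphi,\chi) \leftrightarrow \CY_{a'}(\varphi,\chi')$. Since the canonical correspondence between admissible sesquilinear Young tableaux and $F$-rational nilpotent orbits in $\CN_F(\Fg_n)_\circ$ set up in Section \ref{ssec-WFSA} only sees the equivalence classes of the Hermitian data, one concludes $\wf_a(\varphi,\chi) = \wf_{a'}(\varphi,\chi')$, which proves the well-definedness. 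The main obstacle is the compatibility lemma in the second paragraph: it requires tracking, term by term, how the normalization of each local root number changes under the twist and matching this against the action of $\eta_{a,a'}$ on $\CS_\varphi$ dictated by the local Gan-Gross-Prasad dichotomy. Once this compatibility is isolated, the remainder of the argument is a structural induction that commutes with the twist.
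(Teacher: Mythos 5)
Your proposal follows essentially the same route as the paper: translate a change of Whittaker normalization $a \mapsto b$ into a twist of the distinguished character by an explicit $\eta$, show the descent construction is equivariant under this simultaneous twist via root-number identities, and push the resulting identification through the combinatorial layers $\CT \to \CL \to \CY \to \wf$. This is precisely the content of Proposition \ref{prop:LLC} together with Proposition \ref{inv-prop} in the paper.

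There is one genuine gap you would hit on execution: for $G_n^* = \Mp_{2n}$ the change of Whittaker datum does \emph{not} fix the $L$-parameter. The correct relation, recorded in \eqref{twist-mp} and Proposition \ref{prop:LLC}, is $\pi_a(\varphi,\chi) = \pi_b(\varphi(z'), \chi\cdot\eta_{z'})$, where $\varphi(z')$ is the twist of $\varphi$ by the quadratic character $(\cdot,z')_F$. Your opening sentence ``$\pi = \pi_a(\varphi,\chi) = \pi_{a'}(\varphi,\chi')$'' with the same $\varphi$ on both sides is therefore only valid when $G_n^* \neq \Mp_{2n}$, and the compatibility lemma you propose to prove would come out false as stated in the metaplectic case. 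This is not a cosmetic point: the paper's Proposition \ref{inv-prop} has two separate statements, \eqref{inv} for the linear groups and \eqref{inv-mp} for $\Mp_{2n}$, and its proof has to run a mutual induction that alternates between $\Sp$ and $\Mp$ (and separately between odd and even $\SO$), precisely because the descent of a symplectic enhanced parameter lands in the metaplectic world where the twist acts on $\varphi$ as well. So the structure of your induction needs to interlock these pairs of group types; a single-type induction will not close. Beyond this, your observation that the discriminant constraints in Definition \ref{LYT} are preserved under $a\cdot z \mapsto (az')\cdot(zz')$ is exactly the mechanism the paper uses to show the one-dimensional forms $q^h_{(\Fl)}$ are literally equal (not just equivalent) on both sides, so the sets of pre-tableaux coincide outright rather than merely biject.

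Two small notational points: the normalization parameters live in $\CZ = F^\times/\BN E^\times$ rather than all of $F^\times$, and one should carry the form $q$ along in the notation $\wf_a(\varphi,\chi,q)$ since the inductive step changes the Witt tower.
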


The arithmetic wavefront set $\wf_\ari(\pi)$ for any $\pi\in\Pi_\varphi[G^*]$ enriches the theory of wavefront sets for representations of $G(F)$ over any local field $F$. From the local Langlands 
reciprocity, it is natural to make the following {\it Wavefront Set Conjecture}.

\begin{conj}[Wavefront Set]\label{conj:wfss}
Let $F$ be a local field of characteristic zero and $G$ be a classical group over $F$ as considered in this paper. For any $\pi\in\Pi_F(G)$ with a generic local $L$-parameter, the following identities hold:
\[
\wf_\ari(\pi)^\mx=\wf_\wm(\pi)^\mx=\wf_\tr(\pi)^\mx,
\]
where $\wf_\square(\pi)^\mx$ denotes the subset of all maximal members in the wavefront set $\wf_\square(\pi)$, under the $F$-stable topological order over $\CN_F(\Fg)_\circ$
given in Definition \ref{defn:order}.
\end{conj}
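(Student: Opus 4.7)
The plan is to reduce Conjecture \ref{conj:wfss} to the single central identity $\wf_\ari(\pi)^\mx = \wf_\wm(\pi)^\mx$. When $F$ is non-archimedean, the identity $\wf_\wm(\pi)^\mx = \wf_\tr(\pi)^\mx$ is \eqref{alg-ana-wfs}, proved in \cite{MW87}. When $F$ is archimedean, one combines the central identity with the corresponding archimedean comparison between the Whittaker and analytic wavefront sets, using the refined $F$-rational information already obtained in Theorems \ref{thm1} and \ref{thm2} to pin down the common maximal orbit within its $F$-stable class.

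For the inclusion $\wf_\ari(\pi)^\mx \subseteq \wf_\wm(\pi)^\mx$, I would start from an $L$-descent Young tableau in $\CY_a(\varphi,\chi)$ associated with a maximal consecutive descent sequence $\underline{\Fl}=(\ell_1,\dots,\ell_k)$ and build a non-zero generalized Whittaker functional on $\pi$ of partition type $\underline{\Fl}$ with the prescribed $F$-rational class. At each stage, the local Gan--Gross--Prasad conjecture for generic $L$-parameters, now a theorem, converts the non-emptiness of the $L$-descent $\FD_{\ell_j}$ into the non-vanishing of a Bessel or Fourier--Jacobi period on the representation produced by the previous descent. Theorem \ref{thm:DFD} guarantees that the descent at the first occurrence index selects a discrete $L$-parameter, hence an irreducible summand on which to iterate. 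The one-dimensional Hermitian forms $q^h_j$ recorded in the pre-tableau $\Fs_{\underline{\Fl}}$ collate into the orthogonal decomposition carving out the correct $F$-rational orbit from its $F$-stable class.

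For the reverse inclusion $\wf_\wm(\pi)^\mx \subseteq \wf_\ari(\pi)^\mx$, I would induct on the length of the partition. From a non-zero generalized Whittaker functional of type $\underline{\Fl}$ on $\pi$, the leading row $\ell_1$ can be extracted as a non-vanishing Bessel (or Fourier--Jacobi) period at level $\ell_1$ via Mackey-type unfolding of the outermost unipotent integration; by Gan--Gross--Prasad this forces $\FD_{\ell_1}(\varphi,\chi)\neq\emptyset$, with $\ell_1=\Fl_0(\varphi,\chi)$ under maximality. Passing to an irreducible summand in the descent and iterating reconstructs $\underline{\Fl}$ as an $L$-descent pre-tableau in $\CL_a(\varphi,\chi)$, and the Hermitian forms $q^h_j$ appearing at each stage are pinned down by the Vogan character $\chi$ together with the normalisation parameter $a\in F^\times$ from Section \ref{ssec-RLLC}. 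Theorem \ref{thm-inv} then ensures that the resulting tableau, and the corresponding $F$-rational orbit, depends only on $\pi$.

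The hard part will be the passage from iterated non-vanishing of Bessel or Fourier--Jacobi periods to the existence of a generalized Whittaker functional of exactly the composite partition $\underline{\Fl}$, together with the identification of its $F$-rational, rather than merely $F$-stable, orbit. Gan--Gross--Prasad records only existence of non-zero periods, so realising their composition as a genuine degenerate Whittaker integral will require exchange-of-roots and Mackey-style manipulations in the spirit of Ginzburg--Rallis--Soudry, and the bookkeeping of the $q^h_j$'s needed to recover the $F$-rational class is tightly bound to the Langlands correspondence normalisation. For archimedean $F$, a further obstacle is the still-open equivalence $\wf_\wm(\pi)^\mx = \wf_\tr(\pi)^\mx$ at the level of maximal $F$-rational orbits, which in the generic-$L$-packet setting I would approach via the wavefront cycle machinery of Schmid--Vilonen \cite{SV00} and Adams--Vogan \cite{AV21}, feeding into the archimedean refinements already established in Theorems \ref{thm1} and \ref{thm2}.
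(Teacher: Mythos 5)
Conjecture \ref{conj:wfss} is, in the paper, precisely that: a conjecture, not a theorem. The paper never offers a general proof of the displayed identities. What it does is (i) develop the machinery making the statement precise (Definitions \ref{defn:pd}--\ref{defn:AWFS}, Theorem \ref{inv-pi}), (ii) establish the archimedean structure theorems \ref{thm1}, \ref{thm2} \emph{about} $\wf_\ari(\pi)^\mx$ alone, and (iii) verify the conjectured equalities only in restricted families: $\pi$ generic (Proposition \ref{exmp:generic}, which reduces to the Whittaker normalization plus \cite{MW87,Mat92}), $p$-adic $\SO_{2n+1}$ with a subregular orbit in $\wf_\wm(\pi)$ (Proposition \ref{prop:subr}), and the $\SO_7$ and tempered-unipotent families of Appendix \ref{sec-Eg} cross-checked against Waldspurger's computations. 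Your write-up is therefore not an alternative proof of a theorem --- there is no proof in the paper to compare it against.

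That said, the shape of your argument is exactly the research program the paper implicitly outlines, and you have correctly located where it breaks. The inclusion $\wf_\ari(\pi)^\mx \subseteq \wf_\wm(\pi)^\mx$ requires passing from iterated non-vanishing of Bessel/Fourier--Jacobi periods (which GGP gives you one step at a time, between $\pi$ and a representation on the next group in the chain) to a non-zero generalized Whittaker functional on $\pi$ itself for the composite partition $\underline{\Fl}$. This ``exchange of roots'' step is not automatic: Lemma 3.1 of \cite{JZ18} handles the subregular case for $\SO_{2n+1}$, which is exactly why Proposition \ref{prop:subr} is restricted to that setting, and the general composite case is open. The reverse inclusion $\wf_\wm(\pi)^\mx \subseteq \wf_\ari(\pi)^\mx$ has a twin difficulty: you must show that a nonzero degenerate Whittaker functional of composite type always refines to an iterated Bessel/Fourier--Jacobi period landing on an element of the local Vogan packet of the descended $L$-parameter, and that the resulting one-dimensional forms $q^h_j$ match the ones prescribed by the Vogan character. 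Finally, the archimedean ingredient $\wf_\wm(\pi)^\mx = \wf_\tr(\pi)^\mx$ at the level of $F$-rational orbits is itself still open in general (the paper cites \cite{Mat90,Mat92} only for special cases). Each of these is a genuine missing piece; your proposal names them but does not fill them, so what you have is a correct roadmap of the obstructions rather than a proof.
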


It is clear that the local Langlands correspondence or reciprocity provides a bridge linking between the arithmetic data (local $L$-parameters and related factors) and the harmonic analysis data (representations and related local factors). The local Gan-Gross-Prasad conjecture for classical groups asserts that the certain branching properties 
(restriction problem) on the representation side (via the Bessel models and the Fourier-Jacobi models) can be detected 
by the sign of the local symplectic root numbers associated with the corresponding enhanced local $L$-parameters. From 
understanding along this line, the local {\bf Wavefront Set Conjecture} (Conjecture \ref{conj:wfss}) takes one step 
further from the local Gan-Gross-Prasad conjecture to determine the wavefront set $\wf_\wm(\pi)$ on the representation side by means of the wavefront set 
$\wf_\ari(\pi)$ defined by means of the corresponding local arithmetic data (the enhanced local $L$-parameters). 
In order to show that the arithmetic wavefront sets $\wf_\ari(\pi)$ can be completely computed  explicitly, we prove Theorems \ref{thm1} and \ref{thm2} when $F=\BR$. Hence Conjecture \ref{conj:wfss} provides an effective way to 
compute the wavefront sets $\wf_\wm(\pi)$, which will be substantial to the understanding the fundamental problems in the representation theory for classical groups defined over local fields of characteristic zero, which includes the unitary dual problem as explained by Adams and Vogan in \cite{AV21}. 
We mention that in the archimedean case \cite[Theorem F]{GGS17} gives a direct analog of Conjecture \ref{conj:wfss} for general linear groups. 

We expect that the arithmetic wavefront set $\wf_\ari(\pi)$ should be a subset of the algebraic wavefront set $\wf_\wm(\pi)$ or the analytic wavefront set $\wf_\tr(\pi)$, which 
will be discussed in our future work. 
One understanding of Conjecture \ref{conj:wfss} is that the maximal $F$-rational nilpotent orbits in either $\wf_\wm(\pi)$ or $\wf_\tr(\pi)$ 
should be completely determined by the consecutive descents of the enhanced $L$-parameter of $\pi$. 

Conjecture \ref{conj:wfss} will be restated with full details in Section \ref{ssec-MC} 
(Conjecture \ref{conj:main}). We show in Proposition \ref{exmp:generic} that if 
$G^*$ is an $F$-quasisplit classical group, and $\pi\in\Pi_F(G^*)$ is generic, then 
Conjecture \ref{conj:wfss} holds. The proof combines the theory of consecutive descents of 
enhanced $L$-parameters in Sections \ref{ssec-LPD} and \ref{ssec-WFSA}, with the work of 
Whittaker normalization of the generic local $L$-packets (\cite{Ar13}) (Shahidi's tempered $L$-packet conjecture) and the work of M\oe glin and Waldspurger (\cite{MW87}) and the work 
of Matumoto (\cite{Mat90,Mat92}). Combining Proposition \ref{exmp:generic} with 
the theory of consecutive descents of enhanced $L$-parameters in Sections \ref{ssec-LPD} and \ref{ssec-WFSA} and \cite[Lemma 3.1]{JZ18}, we show that when $F$ is $p$-adic and 
$G=\SO_{2n+1}$, if the algebraic wavefront set $\wf_\wm(\pi)$ contains an $F$-rational 
subregular nilpotent orbit in $\CN_F(\Fg)_\circ$, then Conjecture \ref{conj:wfss} holds for 
such a representation $\pi\in\Pi_F(G)$ (see Proposition \ref{prop:subr} for details). 
In \cite{JLZ-NA}, we discuss further evidence for Conjecture \ref{conj:wfss} by 
connecting our work in this paper with the work of J.-L. Waldspurger \cite{W18, W19, W20} on 
unipotent representations of $p$-adic $\SO_{2n+1}$. Indeed, an upper bound conjecture for wavefront sets, which is equivalent to the Jiang conjecture (\cite{J14}), is formulated 
in \cite{HLLS24}, which is expected to agree with the upper bound of our arithmetic wavefront sets associated with a local $L$-packet, and with the wavefront sets computed by
Waldspurger (for $\SO_{2n+1}$). This expectation has been verified in two families in \cite[Example 11.8]{HLLS24}. It will be an interesting problem to explore the relation of our work in this paper with the recent work of 
D. Ciubotaru, L. Mason-Brown, and E. Okada (\cite{CMBO21, CMBO22,  CMBO24, CMBO23}) for certain special unipotent 
representations of $p$-adic reductive groups, and with the work of J. Adams and D. Vogan 
on the explicit determination of the associated varieties for real reductive groups 
(\cite{AV21}). Also on the arithmetic side, weak Arthur packets are studied in a recent work \cite{GO24}, in terms of the geometry of the unipotent locus of the dual Langlands group, which should be compared with our results. 



The first part of this paper (Sections \ref{sec-CGRNO}--\ref{sec-AWFS}) focuses on the general theory of the arithmetic wavefront sets, while the second part of this paper (Sections \ref{sec-ADA}--\ref{sec-FRM}) is to determine the arithmetic wavefront sets $\wf_\ari(\pi)$ when $F$ is archimedean. Since $F=\BC$, if $\varphi$ is generic, 
then the local $L$-packet $\Pi_\varphi(G)$ 
contains only generic members. Hence we only need to work when $F=\BR$. In this case, we obtain 
precise structures as given in the following theorems, which illustrate the power of our consecutive descent method.  

\begin{thm}[Uniqueness]\label{thm1}
Let $F$ be an archimedean local field and 
$\varphi$ be a generic $L-parameter$ of $G_n^*$. For any 
$\pi$ belonging to the local Vogan packet $\Pi_\varphi[G_n^*]$, 
the $F$-rational nilpotent orbits in $\wf_\ari(\pi)^\mx$ determine 
the unique $F$-stable nilpotent orbit $\CO^\st(\pi)$ in $\CN_F(\Fg)_\circ^\st$ associated 
with the unique partition $\udl{p}(\pi)$. 
\end{thm}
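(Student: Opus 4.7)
The strategy is to show that among all $L$-descent Young tableaux in $\CY_a(\varphi,\chi)$ attached to an enhanced $L$-parameter $(\varphi,\chi)$ of $\pi$, those corresponding to maximal $F$-rational nilpotent orbits all share a single underlying partition $\udl{p}(\pi)$; this common shape is then the partition of the $F$-stable orbit $\CO^\st(\pi)$. Recall that each element of the set $\CL_a(\varphi,\chi)$ of $L$-descent pre-tableaux has the form $\Fs_{\udl{\Fl}} = ((\ell_1, q^h_1), \ldots, (\ell_k, q^h_k))$ with $\udl{\Fl} = (\ell_1, \ldots, \ell_k)$ a decreasing partition of $\Fn$ produced by iterating the descent operation $\FD_{\ell_i}$ of Definition \ref{defn:pd}. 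In the $F$-stable topological order on $\CN_F(\Fg)_\circ$, two $F$-rational orbits are comparable exactly when their underlying partitions are comparable in dominance order, so maximality of an orbit is equivalent to maximality of its shape among all attainable $\udl{\Fl}$.

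The first reduction is to $F = \BR$: when $F = \BC$, every member of the local Vogan packet $\Pi_\varphi[G_n^*]$ is generic, and Proposition \ref{exmp:generic} identifies $\wf_\ari(\pi)^\mx$ with the unique regular $F$-rational orbit, so $\udl{p}(\pi)$ is the regular partition. For $F = \BR$, I would induct on the rank $n$. The outermost row is forced: since $\ell_1 = \Fl_0(\varphi,\chi)$ is by definition the largest integer such that $\FD_{\ell_1}(\varphi,\chi)$ is non-empty (Proposition \ref{prop:DNE}), the first row of every maximal $L$-descent Young tableau has the common length $\ell_1$.

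The inductive step must control the subsequent descent indices. Although $\FD_{\ell_1}(\varphi,\chi)$ typically contains several enhanced parameters, indexed by the two one-dimensional $\epsilon_1$-Hermitian forms together with the accompanying Witt data, Theorem \ref{thm:DFD} guarantees that each of them is a discrete enhanced $L$-parameter of a classical group of strictly smaller rank. Over $\BR$, generic discrete $L$-parameters decompose explicitly as multiplicity-free sums of irreducible $\CW_\BR$-representations carrying prescribed Hermitian structures, and this explicit structure pins down the first occurrence index of each descended parameter in terms of Hermitian invariants already present in $\varphi$. Applying the induction hypothesis to each element of $\FD_{\ell_1}(\varphi,\chi)$ and then taking the dominance-maximum of the resulting shapes yields a common second row length $\ell_2$; iterating this procedure produces the uniquely determined partition $\udl{p}(\pi) = (\ell_1, \ell_2, \ldots, \ell_k)$.

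The hard part is exactly this independence of choice: one must verify that when distinct intermediate enhanced parameters compete inside $\FD_{\ell_i}(\varphi,\chi)$, maximizing the next descent index produces the same value across all competing branches. Over $\BR$ the branching at each step is governed by a single sign of a one-dimensional Hermitian form, which makes the analysis a finite combinatorial exercise, and the multiplicity-free structure of generic $L$-parameters of classical groups over $\BR$ blocks any compensating cancellations of descent indices across branches. With uniqueness of $\udl{p}(\pi)$ established, each maximal $F$-rational orbit in $\wf_\ari(\pi) = \wf_a(\varphi,\chi)$ necessarily lies in the single $F$-stable orbit $\CO^\st(\pi)$ attached to $\udl{p}(\pi)$, which is the conclusion of Theorem \ref{thm1}.
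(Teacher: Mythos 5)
Your outline captures the right starting points (reduction to $F=\BR$, induction, the discreteness of first descents from Theorem \ref{thm:DFD}), and your Proposition \ref{prop:unique}-style step — that the different members of $\FD_{\Fl_0}(\varphi,\chi)$ yield the same downstream partition — is indeed required. But there is a genuine gap that your induction does not close, and it is exactly the hard part of the paper's argument.

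You implicitly assume that every maximal tableau in $\CL_a(\varphi,\chi,q)$ begins with first row of length $\Fl_0 = \Fl_0(\varphi,\chi)$, and then induct only along first-occurrence descents. But the set $\CT_a(\varphi,\chi,q)$ in Definition \ref{LYT} is built by taking the union over \emph{all} $0 < \Fl \le \Fn$ with $\FD_\Fl \neq \varnothing$, not only $\Fl = \Fl_0$; the decreasing subset $\CL_a$ therefore contains pre-tableaux whose first row is strictly shorter than $\Fl_0$. In the dominance (stable topological) order, a partition with smaller first part is not automatically dominated by $\udl{p}(\varphi,\chi)$: for it to be dominated one must check \emph{all} partial sums $\sum_{j\le i} p_j' \le \sum_{j\le i} p_j$, and the case $i\ge 2$ requires knowing that a sub-maximal first descent cannot be compensated later by larger subsequent rows. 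Without ruling this out you have not excluded incomparable maximal elements with a different shape, so you have not established uniqueness of $\udl{p}(\pi)$. This is precisely what the paper proves via the stronger Theorem \ref{thm:main}, whose proof reduces to Lemma \ref{lem8.5}: for \emph{any} descent sequence starting from $(\varphi,\chi,q)$, the $i$-th descended parameter $\varphi_i'$ must contain a summand $\CZ$-equivalent to the $i$-th first-descent parameter $\varphi_i$, hence $\dim\varphi_i' \ge \dim\varphi_i$ and the partial-sum inequalities follow. The proof of Lemma \ref{lem8.5} is not a "finite combinatorial exercise" in the sense you suggest; it requires the collapse-operation machinery of Section \ref{sec-CO} (Lemma \ref{lem:DC}) to track how an arbitrary descent differs from a first descent. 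The competing-branches issue you correctly flag is handled by the paper's Lemma \ref{pre-tableau-eq}, but that is the easier of the two missing ingredients; the dominance over non-first-descent paths is the crux, and your proposal does not supply it.
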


As indicated in the work of C.-C. Tsai (\cite{Ts24}), such a uniqueness result may not hold in general when $F$ is non-archimedean, although the work (\cite{W18, W19, W20, CMBO24}) on special 
unipotent representations still suggests the uniqueness.  See also \cite{Ts23a} for subtle issues regarding different notions of wavefront sets,  and \cite{Ts23b} for a computation of wavefront sets of certain supercuspidal representations at the Lie algebra level in the flavor of the Shalika germ expansion and the Bruhat-Tits buildings. 


\begin{thm}[$F$-Rationality]\label{thm2}
Let $F$ be an archimedean local field and 
$\varphi$ be a generic $L-parameter$ of $G_n^*$. For any 
$\pi$ belonging to the local Vogan packet $\Pi_\varphi[G_n^*]$, 
the following hold.
\begin{enumerate}
    \item 
    If the component group $\CS_\varphi$ is trivial, then $\wf_\ari(\pi)^\mx$ consists of all the $F$-rational regular nilpotent orbits in $\CN_F(\Fg)_\circ$.
    \item 
    If the component group $\CS_\varphi$ is nontrivial, then
    \[
    \wf_\ari(\pi)^\mx = \{\CO(\pi)\},
    \]
    with $\CO(\pi):=\CO_a(\varphi,\chi)$ if $\pi=\pi_a(\varphi,\chi)$ for some $\chi\in\wh{\CS_\varphi}$, where $\CO_a(\varphi,\chi)$ 
    is the unique $F$-rational nilpotent orbit as given in Definition \ref{defn:maxtab}.
    \item 
    Every $F$-rational nilpotent orbit in $\wf_\ari(\pi)^\mx$  is $F$-distinguished 
    in the sense of Definition \ref{defn:dist-orbit}.
\end{enumerate}
\end{thm}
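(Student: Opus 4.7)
The plan is to reduce everything to the combinatorics of the set $\CY_a(\varphi,\chi)$ of $L$-descent Young tableaux, whose definition via the consecutive descents of enhanced $L$-parameters gives $\wf_\ari(\pi)=\wf_a(\varphi,\chi)$ a concrete parametrization. Since $F=\BC$ forces genericity of every member of $\Pi_\varphi[G_n^*]$ (all $\epsilon$-Hermitian forms being equivalent over $\BC$), only $F=\BR$ needs treatment. I would also fix a rationality parameter $a\in F^\times$ at the start, noting that by Theorem \ref{inv-pi} the conclusion is independent of $a$.

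For part (1), if $\CS_\varphi$ is trivial then $\chi={\mathds 1}$ is forced and $\pi$ is the unique generic member of $\Pi_\varphi(G_n^*)$ inside the $F$-quasisplit group. Proposition \ref{exmp:generic} identifies the stable partition $\udl{p}(\pi)$ with the regular one and matches $\wf_\ari(\pi)^\mx$ with $\wf_\wm(\pi)^\mx$; at each stage of the consecutive descent the sign constraint coming from the local Gan-Gross-Prasad condition is vacuous (because the component group remains trivial along the descent in the generic case), so every admissible choice of the $1$-dimensional $\epsilon_i$-Hermitian form $q^h_i$ is realized. The canonical construction \eqref{LY} from $\CL_a(\varphi,{\mathds 1})$ to $\CY_a(\varphi,{\mathds 1})$ then exhausts precisely the $F$-rational forms inside the regular stable orbit.

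For part (2), Theorem \ref{thm1} already supplies the unique $F$-stable orbit $\CO^\st(\pi)$ together with its partition $\udl{p}(\pi)$; the task is to establish $F$-rational rigidity. The idea is to track the descent step by step: at the $i$-th stage, the nontrivial restriction of $\chi$ to the component group of the intermediate parameter determines, via the local root number equation of Section \ref{ssec-LGGP}, a unique discriminant for the Hermitian line $q^h_i$. Hence the pre-tableau $\Fs_{\udl{\Fl}}\in\CL_a(\varphi,\chi)$ is uniquely determined up to the combinatorial equivalences that preserve the resulting sesquilinear Young tableau, and therefore the underlying $F$-rational orbit. Combining this with Theorem \ref{thm:maximal-rational-order}, which pins down which rational orbit sits at the top of the $F$-stable topological order, identifies the singleton $\wf_\ari(\pi)^\mx=\{\CO_a(\varphi,\chi)\}$ with the orbit of Definition \ref{defn:maxtab}.

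For part (3), $F$-distinguishedness is to be read off each $L$-descent Young tableau in $\CY_a(\varphi,\chi)^\mx$. By Theorem \ref{thm:DFD}, the first descent $\FD_{\Fl_0}(\varphi,\chi)$ consists only of discrete enhanced $L$-parameters, so the corresponding column block of the tableau admits no Levi direction along which the associated nilpotent orbit could be parabolically induced, which is the Lie-algebraic criterion of Definition \ref{defn:dist-orbit}. Iterating the observation along the consecutive descents transfers the criterion to the full orbit. The main obstacle lies in (2): proving that the numerical data forced by the local Gan-Gross-Prasad root number identities not only fix the stable class but actually pin down the discriminants of every column of the tableau, thereby preventing other rational forms within the same stable orbit from entering $\wf_\ari(\pi)^\mx$; this is precisely where the archimedean refinement Theorem \ref{thm:maximal-rational-order} does the real work.
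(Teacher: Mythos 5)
Your treatment of Parts (1) and (2) follows the paper's route in outline: Part (1) reduces to Proposition~\ref{exmp:generic} and the parametrization by ${\rm Stab}_\CZ(\mathds 1)$, and Part (2) relies on the uniqueness of the discriminant $q^h_i$ at each descent stage (the content of Proposition~\ref{prop:pretab} and Definition~\ref{defn:maxtab}) together with the maximality results Theorem~\ref{thm:main} and Theorem~\ref{thm:maximal-rational-order}. That is essentially the structure of the paper's argument.

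Part (3) has a genuine gap. You claim that $F$-distinguishedness of $\CO_a(\varphi,\chi,q)$ follows from the discreteness of the first descent (Theorem~\ref{thm:DFD}), on the grounds that a discrete $L$-parameter leaves ``no Levi direction along which the associated nilpotent orbit could be parabolically induced.'' But discreteness of the $L$-parameter $\phi\in\FD_{\Fl_0}(\varphi,\chi)$ is a statement about $\phi$ having multiplicity-one summands of good parity; it says nothing about the stabilizer Levi of a nilpotent element $X$ with Young tableau $\Fs_a(\varphi,\chi,q)$. The correct criterion is that the reductive part of the stabilizer, $M_X=\prod_i\Isom(V^h_{(p_i)},q^h_{(p_i)})$, be $F$-anisotropic; over $\BR$ this forces each $q^h_{(p_i)}$ to be definite, i.e.\ whenever two consecutive rows of the pre-tableau have the same length $p_i=p_{i+1}$ their one-dimensional forms $q^h_i$ and $q^h_{i+1}$ must have the same discriminant. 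This is not a consequence of discreteness: it is a nontrivial identity between the discriminants forced by the local Gan--Gross--Prasad root-number normalization, and it must be checked. The paper's Proposition~\ref{prp:dist} does exactly that, case by case for $\SO_{2n}$, $\SO_{2n+1}$, $\Sp_{2n}$/$\Mp_{2n}$, and $\RU_\Fn$, showing $\disc(q^h_1)=\disc(q^h_2)$ whenever $p_1=p_2$ by relating the discriminants to the first-occurrence data (e.g.\ the equality $l_{s_\chi}=1$ forced by $p_1=p_2$ in the inequalities \eqref{eq:p1-p2-SO-odd}--\eqref{eq:p1-p2-u}). Without an argument of this type, the anisotropy of $M_X$ is unproved, and your ``iterating the observation along the consecutive descents'' step does not hold up.
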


It is important to mention that compared with the calculation by means of the atlas project in \cite{AV21}, the uniqueness of $F$-rational nilpotent orbit $\CO(\pi)$ in Part (2) of Theorem \ref{thm2} is outstanding. In Theorem \ref{cor:wf-max-unique-archimedean}, we show that $\CO_a(\varphi,\chi)$ is the 
unique maximal orbit in $\wf_a(\varphi,\chi)$ under the $F$-stable topological order over $\CN_F(\Fg)_\circ$ (as defined in Definition \ref{defn:order}), while 
in Theorem \ref{thm:maximal-rational-order}, we show that 
$\CO_a(\varphi,\chi)$ is the 
unique maximal orbit in $\wf_a(\varphi,\chi)$ under the $F$-rational topological order over $\CN_F(\Fg)_\circ$ (as defined in Definition \ref{defn:order}). It is important to mention that at the representation side, B. Harris proves in 
\cite{Hb12}, Part (3) of Theorem \ref{thm2} holds when $\pi$ is tempered. Since tempered representations have generic $L$-parameters, Part (3) of Theorem \ref{thm2} covers more general situation. 
We refer to \cite{M96} for relevant discussion for the non-archimedean case. 

In the process of proving Theorem \ref{thm1}, we develop much refined structures for 
the arithmetic wavefront sets in the archimedean case, compared with the general structures discussed 
in Section \ref{sec-AWFS}. The key step to develop such refined structures is Proposition 
\ref{prop:fd}, which shows that for any enhanced $L$-parameter $(\varphi,\chi)$ with $\varphi$ generic, the first descent $\FD_{\Fl_0}(\varphi,\chi)$ consists of only a finite number of 
enhanced $L$-parameters $(\phi,\chi')$ with all $\phi$ explicitly determined by $(\varphi,\chi)$, when $F$ is archimedean. Compared with Theorem \ref{thm:DFD}, 
Proposition \ref{prop:fd} yields a much refined structure due to the nature of archimedean 
local Langlands parameters. The significance of Proposition \ref{prop:fd} and also 
Theorem \ref{thm:DFD} in the representation theory of classical groups over local fields 
is discussed in detail in \cite[Theorems 1.4, 1.5, 1.6]{CJLZ24}). On the way to proving Theorem \ref{thm1}, we prove 
a stronger result (Theorem \ref{thm:main}). The proof of Theorem \ref{thm:main} reduces to 
a technical lemma (Lemma \ref{lem8.5}), which is of combinatorial nature and whose proof needs explicit information about the relation between the descents of enhanced $L$-parameters and the {\it collapse} of enhanced $L$-parameters as developed in Section \ref{sec-CO}. 
The main refined structure is about the $F$-rational structure of the nilpotent orbits in 
$\wf_a(\varphi,\chi)^\mx=\wf_\ari(\pi)^\mx$ when $\pi=\pi_a(\varphi,\chi)$.
It is Theorem \ref{cor:wf-max-unique-archimedean}, which says that 
$\wf_a(\varphi,\chi)^\mx$, under the $F$-stable topological order over $\CN_F(\Fg)_\circ$, consists of a single $F$-rational $F$-distinguished nilpotent orbit $\CO_a(\varphi,\chi)$ if the component group $\CS_\varphi$ is nontrivial. This maximality is refined in Theorem \ref{thm:maximal-rational-order} under the $F$-rational topological order over $\CN_F(\Fg)_\circ$, which yields the final 
$F$-rational structure of the arithmetic wavefront set $\wf_\ari(\pi)$. 

It is very important to mention that we use as an input the Vogan version of the local Langlands correspondence for the classical groups considered in this paper. Since it is known over 
archimedean local fields, the archimedean theory of arithmetic wavefront sets developed in this paper is unconditional. As explained in \cite{K23}, the Vogan version of the local Langlands conjecture 
for even special orthogonal groups over non-archimedean local fields is still in progress, for instance, although it is known in many situations. Hence for accuracy, we may have to take it as an assumption for those incomplete cases, when we discuss the theory over non-archimedean local fields. 

From the computation point of view, D. Johnson computed the associated varieties for irreducible Harish-Chandra modules for $\SU(p,q)$ based on the Robinson-Schensted algorithm in \cite{Jns}. 
Based on our results in Section \ref{sec-PCR}, we also make a program based on Mathematica to compute the arithmetic wavefront for the discrete series of $\RU(p,q)$, which is available upon request. Our program obtains the same signed Young diagram as Johnson did in \cite{Jns}, which provides strong numerical evidence for the Wavefront Set Conjecture (Conjecture \ref{conj:wfss} or more precisely Conjecture \ref{conj:main}) for the discrete series. A proof of the Wavefront Set Conjecture for 
$\wf_\ari(\pi)$ and $\wf_\tr(\pi)$ has recently been found for real classical groups in a forthcoming work of D. Liu, Z. Luo, J. Ma and L. Zhang (2025). In fact, it is proved that as in 
\eqref{alg-ana-wfs}, the Wavefront Set Conjecture for $\wf_\ari(\pi)$ and $\wf_\tr(\pi)$ holds for $\BR$-rational topology, which is a refinement of Conjecture \ref{conj:wfss}. We believe that such a refinement holds in general.

It is clear that the notion of arithmetic wavefront sets and the relevant conjectures can be formulated without difficulty over finite fields. In this case, we refer to an excellent work of Z. Wang in \cite{Wan24} (see also \cite{PW23}), where the approach and algorithm, as developed in this paper, are capable of explicitly constructing the wavefront set $\wf_\ari$ and verifying the Wavefront Set Conjecture \ref{conj:wfss}. 
With the complete results of Wang for finite field case, it is natural to expect that the methods and algorithms as developed in this paper can be implemented for all local fields in general.

Finally, let us make some remarks on the approach we take towards the theory of wavefront sets of irreducible admissible representations of general reductive groups $G$ over local fields $F$. 
The main idea is to understand the relation between the arithmetic data on the one hand and the harmonic analysis on the other hand via the local Langlands correspondence. The main contribution of this paper is to set up the theory on the arithmetic side, which can be completely proved for the archimedean case. The non-archimedean case, with strong evidence provided, remains to be done in general, which will be considered in our future work. Meanwhile, it is possible to extend our theory from generic $L$-parameters to more general local $L$-parameters, which is the case through our calculation of some special cases. It is a more serious problem to extend the theory to cover general reductive algebraic groups over any local fields. It is our goal to figure out 
the impacts of this arithmetic theory to the harmonic analysis and representations of $G(F)$. 
Naturally, it is important to 
introduce the notion of arithmetic wavefront set for automorphic representations of 
$G(\BA)$ for any reductive algebraic group $G$ defined over any number fields or global fields $k$ ($\BA$ is the ring of adeles of $k$), which will be discussed in our forthcoming work. 
Some preliminary work towards this global theory was already set up in \cite{J14} with a recent progress in \cite{JL24, LS22, LS, LLS24, CK24}. It is important to mention that 
J. Adams and D. Vogan made a conjecture on the global coherence on the local wavefront sets for automorphic representations (\cite[Conjecture 3.11]{AV21}) based on the local analytic wavefront sets. It is clear that the wavefront set conjecture (Conjecture \ref{conj:wfss}) 
will explain the relation between the analytic approach in \cite{AV21} and the arithmetic approach in this paper. 

\quad

Let us describe the organization of the paper. 
In Section \ref{sec-CGRNO}, we recall the classical groups $G$ considered in this paper, which include $F$-quasisplit groups and their pure inner forms. The discussion in this paper also includes the metaplectic double cover $\Mp_{2n}(F)$ of the symplectic group $\Sp_{2n}(F)$. We discuss the $F$-stable nilpotent orbits and $F$-rational nilpotent orbits in the Lie algebra $\Fg(F)$ of $G(F)$. There are several ways to parameterize the $F$-rational nilpotent orbits in $\Fg(F)$. 
In Section \ref{ssec-RNO}, we follow the work of R. Gomez and
C.-B. Zhu (\cite{GZ14}) to parameterize the $F$-rational nilpotent orbits by admissible $\epsilon$-Hermitian Young tableaux as defined in \eqref{defn:SYT} and Definition  \ref{defn:AYT}.

In Section \ref{sec-LLC}, we first recall the formulation of the local Langlands conjecture for $G$ over local fields $F$. We write out the structure of local $L$-parameters and recall
the local $L$-packets, including the local Vogan packets in Section \ref{ssec-LVP}.
In Section \ref{ssec-RLLC} we discuss the $F$-rational structures through the local Langlands correspondences, which are important to the formulation of the local Gan-Gross-Prasad conjecture 
(Section \ref{ssec-LGGP}) and the theory of consecutive descents of enhanced $L$-parameters (Section \ref{sec-AWFS}). For those arithmetic applications, we also consider the 
contragredient of enhanced $L$-parameters in Section \ref{ssec-C}, and write down twisted 
distinguished characters in Section \ref{ssec-DC}. 

Section \ref{ssec-LGGP} is to write the explicit arithmetic information from the 
local Gan-Gross-Prasad conjecture for classical groups. 
For the formulation of the local Gan-Gross-Prasad conjecture and for the definition of algebraic wavefront sets, we introduce the twisted Jacquet modules associated with any nilpotent orbit in Section \ref{ssec-TJM}. After writing the explicit $F$-rational structure for the nilpotent orbits associated to the partitions of type $[p_1,1^{\Fn-p_1}]$ in Section 
\ref{ssec-RNO-D}, we write down explicit data defining the twisted Jacquet modules of Bessel type and of Fourier-Jacobi type in Section \ref{ssec:TJMp1}. Section \ref{ssec-ED} provides 
the explicit arithmetic data from the local Gan-Gross-Prasad conjecture, case by case. 

In Section \ref{sec-AWFS} we develop the main framework of the consecutive descents of enhanced $L$-parameters, which serves the base for the definition of the arithmetic wavefront sets for $\pi\in\Pi_F(G)$. It contains the main conjecture and the basic properties of the arithmetic wavefront sets. 
Sections \ref{ssec-LPD} and \ref{ssec-WFSA} form a theory of consecutive descents of 
enhanced $L$-parameters of generic type. 
Definition \ref{defn:pd} defines the descents of enhanced $L$-parameters $(\varphi,\chi)$ with $\varphi$ generic, which is denoted by $\FD_\Fl(\varphi, \chi)$ 
as in \eqref{Fl-pd}, based on explicit calculations of the local root numbers in the local Gan-Gross-Prasad conjecture as explained in Section \ref{ssec-LGGP}. In Section \ref{ssec-WFSA}, 
we introduce the notions of {\it pre-tableaux} $\Fs_{\udl\Fl}$ 
in \eqref{Fs}, 
{\it pre-tableaux for enhanced $L$-parameters} $\CT_a(\varphi,\chi)$ 
in Definition \ref{LYT}, 
{\it $L$-descent pre-tableaux} $\CL_a(\varphi,\chi)$ in Proposition \ref{prop:LDPT}, 
{\it $L$-descent Young tableaux} $\CY_a(\varphi,\chi)$ before Definition \ref{defn:AWFS}, 
and {\it $L$-descent partitions} $\CP_a(\varphi,\chi)$ before Proposition  \ref{prop:Lp}. 
They form basic steps towards our 
definition of {\it wavefront sets for enhanced $L$-parameters} $\wf_a(\varphi, \chi)$ in 
Definition \ref{defn:AWFS}. 
The wavefront set $\wf_a(\varphi, \chi)$ for the enhanced $L$-parameter $(\varphi, \chi)$ has nice invariance properties against the normalization of the
rationality of the local Langlands correspondence and the arithmetic data in the local 
Gan-Gross-Prasad conjecture (Theorem \ref{inv-pi}). Hence we are able to define the {\it arithmetic wavefront set} $\wf_\ari(\pi)$ in \eqref{ariwfs}, which is an invariant of $\pi$ 
(Theorem \ref{inv-pi}). Proposition \ref{prop:LDPT} asserts that the set $\CL_a(\varphi,\chi)$ is non-empty, the archimedean case of which is Proposition \ref{prop:DNE-A}, while the proof 
for non-archimedean case involves both local and global arguments and is left to Appendix \ref{App-A}. 
Proposition \ref{inv-prop} asserts that the set $\CT_a(\varphi,\chi)$ 
of pre-tableaux for enhanced $L$-parameters has the expected invariance properties against the normalization of the rationality of the local Langlands correspondence and the arithmetic data in the local Gan-Gross-Prasad conjecture, which forms a key step towards the proof of 
Theorem \ref{inv-pi}. 
In Section \ref{ssec-WFS-ZOrbits}, we introduce the notion of wavefront set associated to 
$\CZ$-orbits of enhanced $L$-parameters $\wf_a(\varphi,\ScO_\chi)$ (Definition \ref{WSO}) and study their basic properties (Proposition \ref{inv-O}). They may not be directly relevant to the main theme of this paper, but we believe that they are interesting for further exploration.
In Section \ref{ssec-MC}, we write down the wavefront set conjecture (Conjecture \ref{conj:wfss}) with full details, i.e. Conjecture \ref{conj:main}. Propositions 
\ref{exmp:generic} and \ref{prop:subr} provide some evidence supporting Conjecture \ref{conj:main}. 

Section \ref{sec-ADA} is a preparation for the proof of Theorem \ref{thm1} and Theorem \ref{thm2}, 
which contains the explicit information that are specializations to the archimedean case of the arithmetic data developed in Sections \ref{sec-LLC} and \ref{ssec-LGGP} from the rationality of the local Langlands correspondence and the local Gan-Gross-Prasad conjecture for classical groups. 

Section \ref{sec-PCR} is devoted to the proof of Theorem \ref{thm1}. In Section \ref{ssec-FD}, we are able to present
in Proposition \ref{prop:fd} the explicit $L$-parameter descent at the first occurrence index, with explicit
description of the local $L$-parameters and distinguished characters. Those explicit arithmetic data from Proposition \ref{prop:fd} serve as a base for the construction of the unique partition $\udl{p}(\varphi,\chi)$ needed for Theorem \ref{thm1} (Proposition \ref{prp:partition}), and as the key step towards the construction of 
the $F$-rational structure of the set $\wf_a(\varphi,\chi)^\mx$, which yields the unique $F$-stable orbit $\CO^\st(\varphi,\chi)$ in $\CN_F(\Fg)_\circ^\st$ corresponding 
to the partition $\udl{p}(\varphi,\chi)$.
In order to prove Theorem \ref{thm2}, it is enough to prove the maximality of the partition $\udl{p}(\varphi,\chi)$ in the set $\CP_a(\varphi,\chi)$ of $L$-descent partitions associated to the enhanced $L$-parameter $(\varphi,\chi)$. We do this by proving a stronger result in Theorem \ref{thm:main}, whose proof is reduced to Lemma \ref{lem8.5}. Lemma \ref{lem8.5} is a technical result in combinatorics and will be proved in Section \ref{sec-CO} by means of the understanding of the collapses and the descents of enhanced $L$-parameters. Finally, we discuss the $F$-rational nilpotent orbits in 
the wavefront set $\wf_a(\varphi,\chi)$ under the $F$-stable topological order over $\CN_F(\Fg)_\circ$ (Theorem \ref{cor:wf-max-unique-archimedean}), which is refined in Section \ref{sec-FRM} 
under the $F$-rational topological order over $\CN_F(\Fg)_\circ$ (Theorem \ref{thm:maximal-rational-order}). The proof of Theorem \ref{thm:maximal-rational-order} occupies Section \ref{sec-FRM}, which consists of two subsections that treat the two cases: $E=\BR$ and 
$E=\BC$ separately. The technical input is the notion of signed Young diagrams and their 
partial ordering as introduced by D. Djokovi\'c in \cite{D81, D82}. 

Appendix \ref{App-A} proves the non-archimedean case of Proposition \ref{prop:LDPT} that the set $\CL_a(\varphi,\chi)$ of $L$-descent pre-tableaux is nonempty for any enhanced $L$-parameter $(\varphi,\chi)$ with $\varphi$ generic. 


\quad

It is important to mention that the basic arithmetic datum $(\varphi,\chi)$ in this introductory section will be written as $(\varphi,\chi,q)$ in the main body of this paper with an $\Fn$-dimensional non-degenerate $\epsilon$-Hermitian form $q$, which 
defines the classical group $G$. From the discussion in the main body of this paper, $q$ is completely determined by the enhanced $L$-parameter $(\varphi,\chi)$ and the choice of the rationality of 
the local Langlands conjecture related to $a$. 

\quad

We would like to thank T. Kaletha for his communication about the current state of art on the Vogan version of the local Langlands correspondence related to his more general version as discussed in \cite{K23}; to thank D. Vogan for his warm encouragement when we were writing up this paper and providing us some important references closely related to the project; to thank J. Adams for his informative emails which provide some background references on the subject and for his explicit atlas computations of some lower rank cases by using their method in \cite{AV21}; to thank B. Gross for giving us highly encouraging feedback on a earlier version of this paper and sending us his recent paper (\cite{G22}); to thank Jiajun Ma for bringing our attention to the software based computation by D. Johnson in \cite{Jns}; to thank C.-C. Tsai for sending us his very interesting paper (\cite{Ts24}); and to thank A.-M. Aubert, R. Howe, M. Nevins, T. Przebinda, M.-F. Vigneras, and C.-B Zhu for their interests and helpful comments on an earlier version of this paper. Last but not least, We are very grateful to the referee of this paper for his/her valuable comments and thoughtful suggestions.

\section{Classical Groups and Rational Nilpotent Orbits}\label{sec-CGRNO}


\subsection{Classical groups and their pure inner forms}\label{ssec-CGIF}
The classical groups considered in this paper include the unitary groups  $\RU_{\Fn}$, special orthogonal groups $\SO_{\Fn}$, symplectic groups $\Sp_{2n}$ and metaplectic groups $\Mp_{2n}$,
following the notations in \cite{JZ17, JZ18, JZ20}, which are compatible with those in \cite{GGP12, Ar13, KMSW14, Mok15, AG17a}.

We assume that the local field $F\ne \BC$ in this paper. The reason is that since the generic $L$-packets of complex classical groups contain only one element, which is generic,
the discussion in this paper is trivial when $F=\BC$. Thus if $F$ is archimedean, we mean that $F$ is real; otherwise, we mean that $F$ is $p$-adic, which is a finite extension of the field $\BQ_p$ of $p$-adic numbers for some prime $p$.
Let $F(\delta)$ be a quadratic field extension of $F$, with $\delta=\sqrt{d}$ for a non-square $d\in F^\times$.
Let $E$ be either $F$ or $F(\delta)$.
Denote by $c\colon x\mapsto \bar{x}$ \label{pg:iota} the unique nontrivial element in the Galois group $\Gal(E/F)$ if $E\ne F$; and $c={\rm id}_E$, $\bar{x}=x$ if $E=F$.

Let $(V,q_V)$ be an $\Fn$-dimensional vector space $V$ over $E$, equipped with a  non-degenerate $\epsilon$-Hermitian form
$q=q_V$ with $\epsilon=\pm 1$.
More precisely, if $E=F$, $q$ is symmetric or symplectic; and
if $E=F(\delta)\neq F$, $q$ is Hermitian or skew-Hermitian.
Write $\eps_q=-1$ if $q$ is symplectic or skew-Hermitian; otherwise $\eps_q=1$.
Denote by $G_n=\Isom(V,q)^\circ$ (or $\Mp_{2n}(F)$ in the metaplectic case) the identity connected component of
the isometry group of the space $(V,q)$, with $n=\lfloor\frac{\Fn}{2}\rfloor$.
Let $\Fr$ be the Witt index of $(V,q)$
and $(V_{\an},q)$ be the anisotropic kernel of $(V,q)$ with dimension $\Fd_0$.
Then the $F$-rank of $G_n$ is $\Fr$ and $\Fd_0=\Fn-2\Fr$.
Consider the following polar decomposition for $V$
$$
V=V^{+}\oplus V_{\an}\oplus V^{-},
$$
where $V^{\pm}$ are maximal totally isotropic subspaces dual to each other.
Take a basis $\{e_{\pm 1},\dots,e_{\pm \Fr}\}$ for $V^{\pm}$ such that
$$
q(e_{i},e_{-j})=\delta_{i,j}
$$
for all $1\leq i,j\leq\Fr$.
Choose an orthogonal basis $\{e'_{1},\dots,e'_{\Fd_0}\}$ of $V_{\an}$ and put
$$
d_i=q(e'_{i},e'_{i})\in F^\times \text{ or }\delta\cdot F^\times, \text{ for }1\leq i\leq \Fd_0.
$$
Note that if $(V,q)$ is a Hermitian space, then
$(V,\delta \cdot q)$ is skew-Hermitian and $\Isom(V,q)=\Isom(v,\delta\cdot q)$ gives us the identical unitary groups.
However, for the induction purpose, we will consider both cases in this paper.
If $(V,q)$ is symplectic, then $\Fn=2n=2\Fr$ and $V_{\an}=\{0\}$.
We put the above bases together in the following order to form a basis of $(V,q)$
\begin{equation}\label{eq:basis}
\FB\colon e_{1},\dots,e_{\Fr},e'_{1},\dots,e'_{\Fd_0},e_{-\Fr},\dots,e_{-1},
\end{equation}
and fix the following full isotropic flag in $(V,q)$:
$$
\Span\{e_{1}\}\subset\Span\{e_{1},e_{2}\}\subset
\cdots\subset
\Span\{e_{1},\dots,e_{\Fr}\},
$$
which defines a minimal parabolic $F$-subgroup $P_0$ of $G_n$.
With respect to the order of the basis in \eqref{eq:basis}, the group $G_n$ is also defined by
the following symmetric (or symplectic) matrix:
\[
J_{\Fr}^\Fn=\begin{pmatrix}
&&w_\Fr\\&J_{0}^{\Fd_0}&\\ \epsilon_q w_{\Fr}&&
\end{pmatrix}_{\Fn\times\Fn}
\text{ where }
w_\Fr=\begin{pmatrix}
&w_{\Fr-1}\\1&	
\end{pmatrix}_{\Fr\times \Fr},
\]
which is defined inductively, and
$J_{0}^{\Fd_0}=\diag\{d_{1},\dots,d_{\Fd_0}\}$.
Moreover, the Lie algebra $\Fg_n$ of $G_n$ is defined by
\[
\Fg_n=\{A\in {\rm {End}}_E(V)\ \mid\ \bar{A}^{t}J_{\Fr}^\Fn+ J_{\Fr}^\Fn A=0\},	
\]
with the Lie bracket $[A,B]=AB-BA$.
For simplicity, set $A^*=(J_{\Fr}^\Fn)^{-1} \bar{A}^{t} J_{\Fr}^\Fn$.
In this paper, we take the following $\Ad(G_n)$-invariant non-degenerate $F$-bilinear form $\kappa$ on $\Fg_n\times \Fg_n$:
\[
\kappa(A,B)=\tr(AB^*)/2=\tr(A^*B)/2.	
\]
Alternatively, $\kappa$ can be computed by 
\begin{equation}\label{eq:kappa}
2\kappa(A,B)= \sum_{i=1}^{\Fr}\apair{A(e_{i}),B(e_{-i})}+\epsilon_q \apair{A(e_{-i}),B(e_{i})}+\sum_{j=1}^{\Fd_0}\frac{\apair{A(e'_{j}),B(e'_{j})}}{\apair{e'_{j},e'_{j}}}.	
\end{equation}
Note that this $\Ad(G_n)$-invariant symmetric bilinear form is the same as the one in \cite{GZ14} and is proportional to the Killing form.
For an $\eps$-Hermitian space $V$ of dimension $\Fn$, we define its discriminant by
\[
\disc(V)=(-1)^{\frac{\Fn(\Fn-1)}{2}}\det(V) \in   \begin{cases}  \delta^\Fn \cdot F^\times /\BN E^\times,	& \textrm{if }E=F(\delta)\textrm{ and }\epsilon = -1, \\
F^\times / \BN E^\times, & \textrm{otherwise,}
\end{cases}
\]
where $\BN E^\times:=\{x\bar{x}\ \mid\  x\in E^\times\}$, unless $E=F$
 and $V$ is a symplectic space in which case this definition is not applicable.

\subsection{Rational nilpotent orbits}\label{ssec-RNO}

Let $\CN_F(\Fg_n)$ be the set of $F$-rational nilpotent elements in the Lie algebra $\Fg_n(F)$. We denote by $\CN_F(\Fg_n)_\circ$ the set of all $F$-rational orbits on $\CN_F(\Fg_n)$ under the adjoint action of $G_n(F)$ and by $\CN_F(\Fg_n)_\circ^\st$ the set of $F$-stable orbits on $\CN_F(\Fg_n)$. 
Recall that $X_1, X_2\in \Fg_n(F)$ are $F$-stably conjugate if there exists $g\in G_n(\ol{F})$ such that ${\rm Ad}(g)(X_1)=X_2$, where $\ol{F}$ is an algebraic closure of $F$. Thus each $F$-stable orbit is a finite union of $F$-rational orbits.

\begin{defn} \label{defn:partition}
An ordered partition of $\frak n$ is a sequence $\udl{\Fl}= (\ell_1,\ell_2,\ldots, \ell_k)$ of positive integers such that $\sum^k_{i=1}\ell_i = \frak{n}$.
If moreover $\ell_1\geq \ell_2\geq \cdots \geq \ell_k$, then we call $\udl{\Fl}$ a decreasing partition and write 
\[
\udl{\Fl} = [\ell_1, \ell_2,\ldots, \ell_k].
\]
\end{defn}

Throughout this paper, by partitions we simply mean decreasing partitions. 
It is a classical theorem that the set $\CN_F(\Fg_n)_\circ^\st$ of
$F$-stable nilpotent orbits is finite and is parameterized by partitions of $\Fn$ of certain type according to the structure of $\Fg_n$. For this paper, the set $\CN_F(\Fg_n)_\circ$ of
$F$-rational nilpotent orbits of $\Fg_n(F)$ plays an essential role.
The classification of the set $\CN_F(\Fg_n)_\circ$ has been discussed with details in \cite{Car85, CM93},  \cite[Section I.6]{W01} and \cite{GZ14} for instance.

For a partition $\underline{p}$ of $\Fn$,
we also use the notation of exponential form
\begin{equation}\label{eq:partition}
\underline{p}=[p_1^{m_1},p_2^{m_2},\dots,p_r^{m_r}],	
\end{equation}
where $p_1> p_2>\cdots>p_r>0$, $m_i$ is the multiplicity of $p_i$ in $\underline{p}$
and $\sum_{i=1}^{r}m_ip_i=\Fn$. 
For an $\Fn$-dimensional non-degenerate $\epsilon$-Hermitian vector space $(V,q)$ with $G_n=\Isom(V,q)^\circ$, we define $\CP(V,q)$ to be the set of partitions $\underline{p}$
with the following constraints:
\begin{itemize}
	\item if $G_n$ is symplectic  or metaplectic, $m_i$ is even for all odd parts $p_i$;
	\item if $G_n$ is orthogonal, $m_i$ is even for all even parts $p_i$ except that the totally even partitions (those with $p_i$ even and $m_{i}$ even for all $i$) correspond to two stable orbits, which are labeled by $\underline{p}_I$ and $\underline{p}_{II}$ respectively.
\end{itemize}
Note that if $G_n$ is unitary, there is no constraint on $\underline{p}$.
From \cite{CM93}, there exists a canonical correspondence between the set $\CN_F(\Fg_n)_\circ^\st$ and the set $\CP(V,q)$ that is is one-to-one except for 
totally even partitions when $G_n$ is a special even orthogonal group (over which it is two-to-one). This correspondence can be explicitly constructed as
follows.

 Let $\CO\subset\CO^\st$ be an $F$-rational orbit contained in the  $F$-stable nilpotent orbit $\CO^\st$ in $\CN_F(\Fg_n)_\circ^\st$. Take $X\in\CO$ and form $\{X,\hbar,Y\}$, an ${\mathfrak{sl}}_2$-triple in $\Fg_n(F)$.
 The restriction of the $\Fg_n(F)$-module $(V,q)$ to the ${\mathfrak {sl}}_2$-triple yields 
 the following decomposition of the space $V$:
\begin{equation}\label{eq:V-p}
V=\bigoplus_{i=1}^r V_{(p_i)}	
\end{equation}
where $V_{(p_i)}$ is the isotypic component of an irreducible $p_i$-dimensional representations $E^{p_i}$ of ${\mathfrak {sl}}_2$ over $E$ and
$p_1>p_2>\cdots>p_r>0$.
Denote by $m_i$  the multiplicity of $E^{p_i}$ in $V_{(p_i)}$, i.e., $V_{(p_i)}\cong m_{i}E^{p_i}$.
We obtain in this way the partition $\underline{p}=[p_1^{m_1},p_2^{m_2},\dots, p_r^{m_r}]$ of $\Fn=\dim(V)$ associated to $\CO$, 
where $m_i$ and $p_i$ are uniquely determined by $V_{(p_i)}$. We claim that the so obtained partition $\udl{p}$ belongs to $\CP(V,q)$. In fact, when $G_n$ is symplectic, if a part $p_i$ is odd, then
the irreducible representation $E^{p_i}$ of ${\mathfrak {sl}}_2$ is of orthogonal type, and hence the multiplicity $m_i$ must be even. The same argument is applicable to the case when $G_n$ is orthogonal. If one takes a different $X'\in\CO$, then one produces an equivalent decomposition of the vector space $V$, and hence obtains the same partition $\udl{p}$. This yields the
correspondence from $\CO$ to $\udl{p}$. By definition, any two $F$-rational orbits $\CO_1$ and $\CO_2$ in the $F$-stable orbit $\CO^\st$ are conjugate to each other by $G_n(\ovl{F})$, where $\ovl{F}$ is the algebraic closure of $F$. From the discussion above, it is clear that $\CO_1$ and $\CO_2$ must produce the same partition $\udl{p}$.
Hence the above construction yields the explicit correspondence between the set $\CN_F(\Fg_n)_\circ^\st$ and the set $\CP(V,q)$.

In order to parameterize the $F$-rational orbits in $\CN_F(\Fg_n)_\circ$, we recall from \cite{GZ14} the notion of {\sl sesquilinear Young tableau} associated to the partition $\udl{p}$. 
For a given partition $\underline{p}=[p_1^{m_1},p_2^{m_2},\dots, p_r^{m_r}]$ of $\Fn$ as in \eqref{eq:partition} with $p_1> p_2>\cdots>p_r>0$, to each $p_i^{m_i}$, 
we first assign an $m_i$-dimensional $\eps_i$-Hermitian space $(U^{h}_{(p_i)},q^{h}_{(p_i)})$ for $1\leq i\leq r$.
In order to assign the rationality for the whole $p_i^{m_i}$, we have to introduce a Hermitian structure for the $p_i$-dimensional space $E^{p_i}$ over $E$.
For any $k\geq 1$, the  $k$-dimensional irreducible representation $E^k$ of ${\mathfrak{sl}}_2$ over $E$
is equipped an invariant Hermitian form $q_k$ of sign $(-1)^{k-1}$, which is unique up to scalar.
When $k$ is odd, choose $q_k$ such that $\disc(q_k)=1$. We then assign the rationality to $p_i^{m_i}$ with $i=1,2,\ldots,r$ to be
the $(-1)^{p_i-1}\eps_i$-Hermitian space given by
\begin{equation}\label{eq:U-h-full}
\left(U^{h}_{(p_i)}\otimes_E E^{p_i}, q^{h}_{(p_i)}\otimes q_{p_i}\right).
\end{equation}
The datum 
\begin{align}\label{defn:SYT}
\left(\udl{p},\{(U^{h}_{(p_i)}, q^{h}_{(p_i)})\}\right)
\end{align}
is called a {\it sesquilinear Young tableau} associated to the partition $\udl{p}$, following \cite[Definition 3.2]{GZ14}. Two sesquilinear Young tableaux 
\[
\left(\udl{p},\{(U^{h}_{(p_i)}, q^{h}_{(p_i)})\}\right)\quad 
\text{and}\quad 
\left(\udl{p}',\{(W^{h}_{(p'_j)}, q'^{h}_{(p'_j)})\}\right)
\]
are called {\it equivalent} if $\underline{p}=\underline{p}'$, and for each index $i$ with $i=1,2,\ldots,r$, the spaces $(U^{h}_{(p_i)},q^{h}_{(p_i)})$ and $(W^{h}_{(p_i)},q'^{h}_{(p_i)})$ are isometric.

When the partition $\ul{p}$ belongs to $\CP(V,q_V)$, following \cite[Definition 3.4]{GZ14}, we have the {\sl admissibility} 
with the given $\eps$-Hermitian vector space $(V,q_V)$ of a sesquilinear Young tableau  
associated to the partition $\udl{p}$.

\begin{defn}\label{defn:AYT}
Given an $\Fn$-dimensional $\eps$-Hermitian vector space $(V,q_V)$,
a sesquilinear Young tableau 
$\left(\udl{p},\{(U^{h}_{(p_i)}, q^{h}_{(p_i)})\}\right)$ 
as defined in \eqref{defn:SYT} is called {\rm admissible} for $(V,q_V)$ if 
\begin{enumerate}
\item the partition $\udl{p}$ belongs to $\CP(V,q_V)$;
\item the Hermitian space
\[
\left(U^{h}_{(p_i)}\otimes_E E^{p_i}, q^{h}_{(p_i)}\otimes q_{p_i}\right)
\]
as defined in \eqref{eq:U-h-full} for each $p_i^{m_i}$ is $\eps$-Hermitian for all $1\leq i\leq r$; and 
\item the $\eps$-Hermitian space 
\[
\left(\bigoplus^r_{i=1}U^{h}_{(p_i)}\otimes_E E^{p_i}, \bigoplus^r_{i=1}q^{h}_{(p_i)}\otimes q_{p_i}\right)
\]
is isometric to the given $\Fn$-dimensional $\eps$-Hermitian vector space $(V,q_V)$.
\end{enumerate}
\end{defn}

Now, we are ready to parametrize the $F$-rational orbits in $\CN_F(\Fg_n)_\circ$ in terms of 
admissible $\eps$-Hermitian Young tableaux based on the above orbit-partition correspondence 
between the set $\CN_F(\Fg_n)_\circ^\st$ and the set $\CP(V,q)$.
As above, we take $X\in\CO$ and form $\{X,\hbar,Y\}$, an ${\mathfrak{sl}}_2$-triple in $\Fg_n(F)$. For $j\in\BZ$, set
\begin{equation}\label{eq:V-i}
V_j=\{v\in V\ \mid\ \hbar(v)=jv\}	
\end{equation}
to be the weight space of $\hbar$ with weight $j$.
Note that for $j\geq 0$, $X^j\vert_{V_{-j}}\colon V_{-j}\to V_{j}$ is invertible and its inverse is denoted by $X^{-j}\vert_{V_{j}}$.
Following \eqref{eq:V-p}, we define
\begin{equation}\label{eq:X-V-i}
V^h_{(p_i)}:=V_{(p_i)}\cap V_{p_i-1}	
\end{equation}
which is the highest weight subspace of the isotypic component $V_{(p_i)}$.
Define the  $(-1)^{p_i-1}\eps$-Hermitian form $q_{(p_i)}^h$ on $V^h_{(p_i)}$ by
\begin{equation}\label{eq:X-q-i}
q^h_{(p_i)}(v,w)=q_V\left(X^{-(p_i-1)}\vert_{V^h_{(p_i)}}(v), w\right) \text{ for all }v,w\in 	V^h_{(p_i)}.
\end{equation}
From \cite[Section 3.1]{GZ14}, $q_{(p_i)}^h$ is a well-defined non-degenerate form on $V^h_{(p_i)}$.
In particular, following \cite[(3.6)]{GZ14}, when $p_i$ is odd, we have
\begin{equation}\label{eq:V-pi-odd}
(V^h_{(p_i)},q^h_{(p_i)})\cong \left(V_{0}\cap V_{(p_i)},(-1)^{\frac{p_i-1}{2}}q\vert_{V_{0}\cap V_{(p_i)}}\right).	
\end{equation}
At this point, we obtain an assignment that for each $i=1,2,\ldots,r$, it assigns to the $p_i$-parts $p_i^{m_i}$ of the partition $\udl{p}$ a non-degenerate $(-1)^{p_i-1}\eps$-Hermitian space
$(V^h_{(p_i)},q^h_{(p_i)})$. 
The partition $\udl{p}$ together with the assignment is a {\it sesquilinear Young tableau}, as defined in \eqref{defn:SYT}. 

Finally, we have to establish the admissibility of the constructed $\eps$-Hermitian Young tableau associated to the ${\mathfrak{sl}}_2$-triple $\{X,\hbar,Y\}$ in $\Fg_n$, based on \eqref{eq:X-q-i}. 
By the choice of the $(-1)^{p_i-1}$-Hermian form $q_{p_i}$ on $E^{p_i}$, we have
\begin{align}\label{Vpi}
(V_{(p_i)},q_{(p_i)})=(V_{(p_i)},q_V\vert_{V_{(p_i)}})\cong \left(V^{h}_{(p_i)}\otimes_E E^{p_i}, q^{h}_{(p_i)}\otimes q_{p_i}\right).
\end{align}
When $p_{i}$ is even, the identification in \eqref{Vpi} can be deduced from a straightforward computation; and when $p_i$ is odd, it follows from \eqref{eq:V-pi-odd}.
Hence we obtain that 
$\left(\underline{p},\{(V^h_{(p_i)},q^h_{(p_i)})\}\right)$ 
is an admissible sesquilinear Young tableau for $(V,q_V)$ corresponding to $\CO$, where $(V^h_{(p_i)},q^h_{(p_i)})$ as given in \eqref{Vpi} is assigned to $p_i^{m_i}$ for $i=1,2,\ldots,r$.

We denote by $\CY(V,q)$ the set of equivalence classes of admissible sesquilinear Young tableaux for an $\epsilon$-Hermitian vector space $(V,q)$, where $q=q_V$ for short.

\begin{prop}[\cite{GZ14}]\label{prop:GZ}
The above construction yields a one-to-one correspondence between the set $\CN_F(\Fg_n)_\circ$ of the $F$-rational nilpotent orbits in $\CN_F(\Fg_n)$ and
the set $\CY(V,q)$ of the equivalence classes of admissible sesquilinear Young tableaux
for $(V, q)$, except that when $G_n$ is an even special orthogonal group, an admissible sesquilinear Young tableau with only even parts may correspond to two $F$-rational nilpotent orbits.  
\end{prop}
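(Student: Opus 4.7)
The plan is to establish the correspondence in three stages---well-definedness, surjectivity, and injectivity---following the strategy of \cite{GZ14}, with the exceptional case of even special orthogonal groups handled separately.

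For well-definedness, given $\CO\in\CN_F(\Fg_n)_\circ$, choose $X\in\CO$ and apply the Jacobson--Morozov theorem over $F$ to produce an $\Fs\Fl_2$-triple $\{X,\hbar,Y\}$ in $\Fg_n(F)$. The isotypic decomposition \eqref{eq:V-p}, the highest weight spaces \eqref{eq:X-V-i}, and the forms \eqref{eq:X-q-i} are then intrinsic to the $\Fs\Fl_2$-module structure on $V$. By the Kostant--Malcev theorem, any two $\Fs\Fl_2$-triples completing $X$ are conjugate by an element of the unipotent radical of the centralizer $Z_{G_n}(X)$, so the resulting tableau data $(V^h_{(p_i)},q^h_{(p_i)})$ is determined up to isometry. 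Replacing $X$ by $gXg^{-1}$ for $g\in G_n(F)$ transports the triple and the decomposition equivariantly, yielding isometric data. Hence $\CO\mapsto (\udl{p},\{(V^h_{(p_i)},q^h_{(p_i)})\})$ descends to a well-defined map $\Psi\colon\CN_F(\Fg_n)_\circ\to \CY(V,q)$; the admissibility of the resulting tableau follows from the identification \eqref{Vpi}, which gives an isometry $(V,q_V)\cong\bigoplus_i(V^h_{(p_i)}\otimes_F F^{p_i},q^h_{(p_i)}\otimes q_{p_i})$.

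For surjectivity, given an admissible tableau $(\udl{p},\{(U^h_{(p_i)},q^h_{(p_i)})\})\in\CY(V,q)$, place on each $F^{p_i}$ the standard principal $\Fs\Fl_2$-triple preserving $q_{p_i}$ up to the appropriate sign, let $X_{p_i}$ denote the nilpotent element, and form
\[
X\ =\ \bigoplus_{i=1}^r \mathrm{id}_{U^h_{(p_i)}}\otimes X_{p_i}
\]
acting on $\bigoplus_i U^h_{(p_i)}\otimes_F F^{p_i}$. Then $X$ preserves $\bigoplus_i q^h_{(p_i)}\otimes q_{p_i}$, and condition (3) of Definition \ref{defn:AYT} supplies an $F$-isometry with $(V,q_V)$, transporting $X$ into an element of $\CN_F(\Fg_n)$ whose associated $\Fs\Fl_2$-decomposition reproduces the given tableau. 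For injectivity, suppose $X,X'\in\CN_F(\Fg_n)$ yield equivalent tableaux. Completing both to $\Fs\Fl_2$-triples, the equivalence supplies, for each $i$, an $F$-isometry of the highest weight spaces $V^h_{(p_i)}\cong V'^h_{(p_i)}$; assembling these and tensoring with the identity on each $F^{p_i}$ produces $g\in\Isom(V,q_V)(F)$ with $\Ad(g)(X)=X'$. In types where $G_n=\Isom(V,q)^\circ$ equals the full isometry group, this concludes injectivity.

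The main obstacle is the exceptional failure of injectivity for even special orthogonal groups. When $G_n=\SO_\Fn$ with $\Fn$ even and $\udl{p}$ is totally even, the element $g$ produced above may lie in the non-identity component of $\RO_\Fn(F)$, so the induced conjugation on $\Fg_n(F)$ is outer with respect to $G_n(F)=\SO_\Fn(F)$. Two distinct $F$-stable orbits $\udl{p}_I$ and $\udl{p}_{II}$, exchanged by the outer automorphism of $\RO_\Fn$, produce the same sesquilinear tableau, and each may further split into several $F$-rational orbits whose tableau data coincides. This is exactly the exception recorded in the statement, and it reflects the discrepancy between $\Isom(V,q)$ and its identity component; all other classical group cases admit a clean bijection by the argument above.
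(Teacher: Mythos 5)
The paper does not give its own proof of this proposition; it is cited directly from \cite{GZ14}, so there is no in-text argument to compare against. Your three-stage outline (well-definedness via Jacobson--Morozov and Kostant, surjectivity by direct construction, injectivity by assembling isometries of highest-weight spaces) is the standard route and is the one taken in \cite{GZ14}; the overall structure is sound.

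However, your description of the even special orthogonal exception is off and, as written, contradicts the statement. You assert that ``two distinct $F$-stable orbits $\udl{p}_I$ and $\udl{p}_{II}$ \ldots produce the same sesquilinear tableau, and each may further split into several $F$-rational orbits whose tableau data coincides.'' If each stable orbit contained several rational orbits with identical tableau data, a tableau could have more than two preimages, which is exactly what the proposition rules out. The correct picture is the opposite: within a fixed $F$-stable orbit the tableau data \emph{does} separate the rational orbits, and the two-to-one failure arises solely because the tableau is an $\RO(V,q)(F)$-invariant while $G_n(F)=\SO(V,q)(F)$ has index two. For a totally even partition, $Z_{\RO(V,q)(F)}(X)\subset\SO(V,q)(F)$ (each factor acts as $h\otimes\mathrm{id}$ on $V_{(p_i)}$ with determinant $(\det h)^{p_i}=1$ since $p_i$ is even), so the $\RO$-orbit of $X$ is the disjoint union of exactly two $\SO(V,q)(F)$-orbits, one in $\udl{p}_I$ and one in $\udl{p}_{II}$, both with the same tableau. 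Conversely, if $\udl{p}$ has an odd part $p_i$, the centralizer already contains an element of determinant $(-1)^{p_i}=-1$ (a reflection on $V^h_{(p_i)}$), so any $g\in\RO(V,q)(F)$ realizing the conjugation can be corrected into $\SO(V,q)(F)$, and injectivity holds. You should also note that for odd special orthogonal groups the central element $-I$ has determinant $-1$ and conjugation by it is trivial, so the same correction applies and no exception occurs; as written, your remark that ``all other classical group cases admit a clean bijection'' silently assumes this without argument.
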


It is clear that for any $a\in E^\times$, if we change $(V,q)$ to $(V,a\cdot q)$ by scaling, we obtain a natural bijection between $\CY(V, q)$ and $\CY(V, a\cdot q)$.


\section{Local Langlands Correspondence}\label{sec-LLC}


\subsection{Enhanced {$L$}-parameters and local Vogan packets}\label{ssec-LVP}

According to \cite{Ar13}, generic local $L$-parameters are the localization of the generic global Arthur parameters without the assumption of the generalized Ramanujan conjecture. They are explicitly given as follows.

Let $\CW_E$ be the local Weil group of $E$.
Then $\CW_\BC=\BC^\times$, $\CW_\BR=\BC^\times\cup j\BC^\times $,
where $j^2=-1$ and $j z j^{-1}=\bar{z}$ for $z\in \BC^\times$; and
when $E$ is $p$-adic, $\CW_E=\CI_E\rtimes \apair{\rm {Frob}}$ is the semi-direct product of the inertia group $\CI_E$ of $E$ and a geometric Frobenius element  ${\rm {Frob}}$.
The local Langlands group of $E$, also called the Weil-Deligne group, is defined by
\begin{align}\label{LLG}
\CL_E=\begin{cases}
	\CW_E &\text{if $E$ is archimedean},\\
	\CW_E\times\SL_2(\BC) &\text{if $E$ is $p$-adic}.	
\end{cases}	
\end{align}
We take the Galois version of the $L$-group ${^LG}:= G^\vee \rtimes {\rm Gal}(\overline F/F)$ of $G$, where $G^\vee$ is the complex dual group of $G$.
Note that if $G=\Mp_{2n}$, then $G^\vee=\Sp_{2n}(\BC)$; and if $G$ is $F$-split, then the action of ${\rm Gal}(\overline F/F)$ on $G^\vee$ is trivial.

Following \cite{Ar13}, when $G_n=G_n^*$ is  $F$-quasisplit,
an $L$-parameter 
\[
\varphi\ \colon\ \CL_F\longrightarrow{^LG_n}
\]
of $G^*_n$ is attached to a datum $(L^*,\varphi^{L^*},\udl{\beta})$ with the following properties:
\begin{enumerate}
\item $L^*$ is a Levi subgroup of $G_n$ of the form
$$
L^*=\GL_{n_1}(E)\times\cdots\times\GL_{n_t}(E)\times G_{n_0};
$$
\item $\varphi^{L^*}$ is a local $L$-parameter of $L^*$ given by
$$
\varphi^{L^*}:=\varphi_1\oplus\cdots\oplus\varphi_t\oplus\varphi_0\ \colon\ \CL_{F}\rightarrow {^L\!L^*},
$$
where $\varphi_j\ \colon\ \CL_E\longrightarrow\GL_{n_j}(\BC)$ is a local tempered $L$-parameter of $\GL_{n_j}(E)$ for $j=1,2,\ldots,t$, and $\varphi_0$ is a local tempered $L$-parameter
of $G_{n_0}^*$ (an $L$-parameter is called tempered if the image of the Weil group is bounded);
\item $\udl{\beta}:=(\beta_1,\cdots,\beta_t)\in \BR^t$, such that $\beta_1>\beta_2>\cdots>\beta_t>0$; and
\item the $L$-parameter $\varphi$ can be expressed as
$$
\varphi=(\varphi_1\otimes|\cdot|^{\beta_1}\oplus{}^c  \varphi_1^\vee \otimes|\cdot|^{-\beta_1})\oplus\cdots\oplus
(\varphi_t\otimes|\cdot|^{\beta_t}\oplus {}^c  \varphi_t^\vee \otimes|\cdot|^{-\beta_t})\oplus\varphi_0,
$$
where $|\cdot|$ is the absolute value of $E$ and ${}^c\varphi_j^\vee$ denotes the conjugate  dual of $\varphi_j$ for $j=1,2,\ldots,t$.
\end{enumerate}
Recall that the element $c\in \Gal(E/F)$ is introduced in Section \ref{ssec-CGIF}. Here and thereafter, by conjugate of a Weil group representation, we mean conjugation by an element of
$\CW_F$ which is mapped to $c$. Thus in the case that $G_n$ is unitary, an $L$-parameter $\varphi$ is equivalent to a conjugate self-dual $\Fn$-dimensional representation of $\CL_E$ of signature $(-1)^{\Fn-1}$ (cf. \cite{GGP12}) that is continuous and semisimple.

We denote by $\Pi_F(G_n)$ the set of equivalence classes of irreducible admissible  representations of $G_n(F)$. Here if $F$ is archimedean, we take the representations of 
$G_n(F)$ to be of Casselman-Wallach type (\cite{Cas89, Wal92}). As explained in \cite{K23}, the local Langlands conjecture for the pair $(G_n,F)$ 
predicts that there exists a unique correct partition of the set $\Pi_F(G_n)$ into local $L$-packets $\Pi_\varphi(G_n)$, which are expected to be a finite set, as $\varphi$ runs over the set of local $L$-parameters of $G_n$. 

In this paper, we are going to use the Vogan version of the 
local Langlands conjecture (\cite{V93}), which needs to define the local Vogan packet associated to $\varphi$, as defined by 
\begin{equation}\label{lvp}
{\Pi}_\varphi[G_n^*]:=\bigcup_{G_n}{\Pi}_\varphi(G_n)
\end{equation}
where $G_n$ runs over all pure inner $F$-forms of the given $F$-quasisplit $G_n^*$ and ${\Pi}_\varphi(G_n)$ is the local $L$-packet of $G_n$. The $L$-packet ${\Pi}_\varphi(G_n)$ is defined to be {\it empty} if the parameter $\varphi$ is not $G_n$-relevant (see \cite[Section 3.4]{B79} and \cite[Section 9.2]{Ar13}).
A local $L$-parameter $\varphi$ of $G_n^*$ is called {\it generic} if the associated local 
Vogan packet ${\Pi}_{\varphi}[G_n^*]$ contains a generic member when $G^*_n\ne \Mp_{2n}$, i.e., a member with a non-zero Whittaker model with respect to a certain Whittaker datum for $G_n^*$. 
When $G^*_n=\Mp_{2n}$, $\varphi$ is called {\it generic} if it is a generic $L$-parameter of $\SO_{2n+1}$ (see \cite{At18} for instance).
The set of all generic local $L$-parameters of $G_n^*$, up to equivalence, is denoted by ${\Phi}_\gen(G_n^*)$.

The Vogan version of the local Langlands conjecture for generic $L$-parameters $\varphi\in {\Phi}_\gen(G_n^*)$
asserts a one-to-one parameterization of the local Vogan packet ${\Pi}_\varphi[G_n^*]$ by the character group $\wh{\CS_\varphi}$ (the Pontryagin dual) of the component group 
$\CS_\varphi$, which is defined to be 
\[
\CS_\varphi:=S_\varphi/S_\varphi^\circ
\]
where $S_\varphi$ to be the centralizer of the image of $\varphi$ in $G^\vee$, and $S^\circ_\varphi$ to be its identity connected component group. This means that given $\varphi\in {\Phi}_\gen(G_n^*)$, for any $\pi\in {\Pi}_\varphi[G_n^*]$, there exists a unique character $\chi\in\wh{\CS_\varphi}$, such that $\pi=\pi_\Fw(\varphi,\chi)$, where $\Fw$ 
indicates 
that the uniqueness depends on the choice of the local Whittaker datum of $G_n^*$.
We refer to \cite[Sections 9--11]{GGP12} for a more detailed discussion about the Vogan version of the local Langlands conjecture for the classical groups under consideration.
In the case that $G^*_n=\SO_{2n}$, we use the weak local Langlands correspondence in \cite{AG17a} 
(also in \cite[Section 2B]{JZ18}) and replace $\Pi_\varphi(G_n)$ by the equivalence classes $\Pi_\varphi(G_n)/\sim_{\rm c}$ under the outer action of $\RO_{2n}$, i.e. conjugation by an element ${\rm c}\in \RO_{2n}\setminus \SO_{2n}$ with $\det(\rm{c})=-1$. 

This is the version of the local Langlands conjecture needed for this paper. It is known 
from the work of R. Langlands (\cite{L89}) that when $F$ is archimedean, the local Langlands 
conjecture is true, while the explicit parameterization follows from a series of work by D. Shelstad as explained in \cite[Section 2]{K23} (and also \cite[Theorem 6.3]{V93}). When $F$ is non-archimedean, as explained in \cite{K23} (and also \cite{Hm22}), after the fundamental work of J. Arthur (\cite{Ar13}), followed by \cite{Mok15}, the local Langlands conjecture for quasisplit classical groups is obtained, based on the local Langlands conjecture for $\GL_n(F)$ 
in \cite{Hn00, HT01, Sch13}. 
There are more recent works to extend those known cases to the pure inner forms of quasisplit 
classical groups or the metaplectic double cover of the symplectic groups 
from \cite{M11, GS12, MW12, KMSW14, AG17a, MR18, CZ21a, CZ21b}. As explained in \cite[Section 2]{K23}, the complete theory for the 
Vogan version of the local Langlands conjecture as stated above is still in progress for some cases of even special orthogonal groups, for instance. For accuracy, we may have to 
consider it as an assumption for those incomplete cases when we discuss 
the theory over non-archimedean local fields.

In order to discuss more explicit information from the Vogan version of the local Langlands conjecture for the classical groups considered in this paper, we are going to study more 
explicit structure of the generic $L$-parameters. 
For any generic $L$-parameter $\varphi\in{\Phi}_\gen(G_n^*)$, one may easily figure out the structure of the
abelian $2$-group $\CS_\varphi$. See \cite[Section 8]{GGP12} for instance. 
Write
\begin{equation}\label{decomp}
\varphi=\bigoplus_{i\in \RI}m_i\varphi_i,
\end{equation}
which is the decomposition of $\varphi$ into simple and generic ones.
The simple, generic local $L$-parameter $\varphi_i$ can be written as $\rho_i\boxtimes\mu_{b_i}$,  where $\rho_i$ is an 
irreducible representation of $\CW_E$ and $\mu_{b_i}$ is the irreducible representation of $\SL_2(\BC)$ of dimension $b_i$.

In the decomposition \eqref{decomp},
$\varphi_i$ is called {\it of good parity} if $\varphi_i$ is of the same type as $\varphi$. 
We denote by $\RI_{\gp}$ the subset of
$\RI$ consisting of the indices $i$ such that $\varphi_i$ is of good parity; and by $\RI_{\bp}$ the subset of
$\RI$ consisting of the indices $i$ such that $\varphi_i$ is (conjugate) self-dual, but not of good parity. We set $\RI_{\nsd}$ to be 
the set of  indices for the pairs $\{\varphi_i, {}^c\varphi_i^\vee\}$ which are not (conjugate) self-dual.
Hence we may write $\varphi\in{\Phi}_\gen(G_n^*)$ in the following more explicit way:
\begin{equation}\label{edec}
\varphi=\bigoplus_{i\in \RI_\gp}m_i\varphi_i \oplus \bigoplus_{j\in \RI_\bp}2m'_j\varphi_j \oplus \bigoplus_{k\in \RI_\nsd}m_k(\varphi_k\oplus{}^c \varphi_k^\vee),
\end{equation}
where $2m'_j=m_j$ in \eqref{decomp} for $j\in \RI_{\bp}$.
According to this explicit decomposition, it is easy to know that
\begin{equation}\label{2grp}
\CS_\varphi\cong\BZ_2^{\#\RI_\gp}\quad \text{or}\quad  \BZ_2^{\#\RI_\gp-1}.
\end{equation}
The latter case occurs if $G_n$ is even orthogonal and some orthogonal summand $\varphi_i$ for $i\in \RI_\gp$ has odd dimension, or if $G_n$ is symplectic.

In all cases, for any $\varphi\in{\Phi}_\gen(G_n^*)$ we write elements of $\CS_\varphi$ in the following form
\begin{equation}\label{eq:e-i}
(e_i)_{i\in \RI_{\gp}}\in \BZ_{2}^{\#\RI_{\gp}}, \text{ (or simply denoted by $(e_{i})$)}, 
{\rm with}\ e_i\in\{0,1\},
\end{equation}
where each $e_i\in \{0,1\}$ corresponds to $\varphi_i$-component in the decomposition \eqref{edec} for $i\in \RI_\gp$.
In the case that $\wh{G}=\SO_N(\BC)$, denote by $A_\varphi$ the component group  ${\rm Cent}_{\RO_N(\BC)}(\varphi)/{\rm Cent}_{\RO_N(\BC)}(\varphi)^\circ$.
Then $\CS_\varphi$ consists of elements in $A_\varphi$ with determinant $1$ and is a subgroup of index $1$ or $2$.
Also write elements of $A_\varphi$ in the form
$(e_i)$ where $e_i\in\{0,1\}$ corresponds to the $\varphi_i$-component in the decomposition \eqref{edec} for $i\in \RI_\gp$.
When $G_n$ is even orthogonal and some $\varphi_i$ for $i\in \RI_\gp$ has odd dimension or $G_n$ is symplectic, $(e_i)_{i\in \RI_{\gp}}$ is in $\CS_\varphi$ if and only if $\sum_{i\in\RI_\gp}e_i\dim\varphi_i$ is even.

\subsection{Rationality of local Langlands correspondence} \label{ssec-RLLC}

\subsubsection{$\CZ$-action} Define
\[
\CZ=F^\times/ \BN E^\times \cong
\begin{cases} 
F^\times/ F^{\times2}, & \textrm{if }E=F,\\
\textrm{Gal}(E/F), & \textrm{if }E=F(\delta).
\end{cases}
\]
Let $\varphi\in \Phi_\gen(G_n^*)$. For $a\in \mathcal{Z}$, define $\eta_a\in \widehat{\CS_\varphi}$ by\footnote{For $\Mp_{2n}$ the character $\eta_a$ is denoted by $\eta[a]$ in \cite{GGP12}; we change the notation for uniformity.}
\begin{equation}\label{eta}
\eta_a((e_i)_{i\in \RI_{\gp}})=
\begin{cases} 
\prod_{i\in \RI_{\gp}}(\det \varphi_i)(a)^{e_i}, & \textrm{if }E=F\textrm{ and }G_n^*\neq \Mp_{2n},\\
\prod_{i\in \RI_\gp} \left(\varepsilon(\varphi_i)\varepsilon(\varphi_i(a))(a,-1)_F^{\dim \varphi_i/2}\right)^{e_i}, & \textrm{if }E=F\textrm{ and }G_n^*=\Mp_{2n},\\
\prod_{i\in \RI_{\gp}}\omega_{E/F}(a)^{e_i\dim \varphi_i}, & \textrm{if }E=F(\delta),
\end{cases}
\end{equation}
where $(\cdot, \cdot)_F$ is the Hilbert symbol defined over $F$, and $\omega_{E/F}$ is the quadratic character of $\CZ$ given by the local class field theory. Note that $\eta_a$ is trivial if $G_n$ is odd special orthogonal.
Then we have a $\CZ$-action on $\widehat{\CS_\varphi}$ by
\[
a\ \colon\ \widehat{\CS_\varphi} \to \widehat{\CS_\varphi},\quad \chi\mapsto \chi\cdot \eta_a
\]
for every $a\in\CZ$. 

Throughout the paper, we fix a nontrivial additive character $\psi^F$ of $F$. If $G_n^*$ is a skew-Hermitian unitary group, we also fix a nontrivial  additive character $\psi^E$ of $E$ as
\begin{equation} \label{psi-rel}
\psi^E\ \colon\ E\to \BC^\times,\quad \psi^E(x)=\psi^F\left(\frac{1}{2}{\rm tr}_{E/F}(\delta x)\right),
\end{equation}
which is trivial on $F$ and is conjugate self-dual.

\subsubsection{Linear classical groups}
For a fixed Whittaker datum $\Fw$ of $G_n^*\neq \Mp_{2n}$, there exists a natural bijection (\cite[Section 10]{GGP12})
\begin{align}\label{rllc-w}
\iota_\Fw\ \colon\ \Pi_\varphi[G_n^*]\to \wh{\CS_\varphi},\quad \pi_\Fw(\varphi,\chi)\mapsto \chi\in \wh{\CS_\varphi}
\end{align}
satisfying the endoscopic and twisted endoscopic character identities (\cite{Ar13, K16}). Under a fixed bijection $\iota_\Fw$, which depends on the choice of the Whittaker datum $\Fw$, if $\pi\in {\Pi}_\varphi[G_n^*]$ satisfies the condition
\[
\pi=\pi_\Fw(\varphi,\chi),
\]
then we call the datum $(\varphi,\chi)$ the {\it enhanced $L$-parameter} of $\pi$ under the bijection $\iota_\Fw$. 

The rationality of the local Langlands correspondences for linear classical groups can be explicitly determined as follows.
\begin{itemize}
\item
If $\Fn$ is even, with $\psi^F$ and $\psi^E$ fixed as above, the Whittaker data are parameterized by $\CZ$ (\cite[Section 10]{GGP12}), and we denote by $\iota_a$, $a\in\CZ$ the corresponding bijection
\[
\iota_a\ \colon\ \Pi_\varphi[G_n^*]\to\wh{\CS_\varphi}, \quad \pi_a(\varphi,\chi)\mapsto \chi\in\wh{\CS_\varphi}.
\]
In this case it holds that
\begin{equation}\label{twist-even}
\pi_1(\varphi,\chi)=\pi_a(\varphi,\chi\cdot \eta_a).
\end{equation}
\item
If $\mathfrak{n}$ is odd, then the Whittaker datum and the  bijection \eqref{rllc-w} are unique. In this case we write 
\[
\iota_1\ \colon\ \Pi_\varphi[G_n^*]\to\wh{\CS_\varphi}, \quad \pi_1(\varphi,\chi)\mapsto \chi\in\wh{\CS_\varphi}
\]
for this unique bijection. For odd special orthogonal groups, for convenience we  put
\begin{equation} \label{twist-odd}
\pi_a(\varphi,\chi) := \pi_1(\varphi,\chi)
\end{equation}
for any $a\in\CZ$.
\end{itemize}
There are some subtle issues for unitary groups. In this paper, when $G_n$ is odd unitary we choose the $F$-quasisplit inner form $G_n^*$ to be of discriminant 1. 
Thus if $\pi_a(\varphi,\chi)\in\Pi_\varphi(G_n)$ for a Hermitian unitary group $G_n=\RU(V)$ with $a=1$ when $\Fn$ is odd, then 
\begin{equation} \label{uni-disc}
    \chi(-1_\varphi) = \omega_{E/F}(\disc(V)),
   \end{equation}
    where  $-1_\varphi$ denotes the image of $-\RI_\Fn\in G_n^\vee=\GL_\Fn(\BC)$ in $\CS_\varphi$. Note that in this case if $a\in \CZ$ is the unique nontrivial norm class, then \eqref{eta} reads that
\[
\eta_a((e_i)_{i\in \RI_{\gp}})=\prod_{i\in \RI_{\gp}}(-1)^{e_i\dim \varphi_i}.
\]
In particular, $\eta_a(-1_\varphi)=(-1)^\Fn$.
Moreover, we make the following remarks. 
\begin{itemize}
\item
 For an $\Fn$-dimensional Hermitian space $V$ with $\Fn$ even, the space $W=\delta\cdot V$  is skew-Hermitian so that
$\RU(V)\cong \RU(W)$.  Recall that the additive characters $\psi^F$ and $\psi^E$ are fixed and related via \eqref{psi-rel}.
Then for any $a\in\CZ$, the resulting bijections $\iota_a$ are the same for $\Pi_\varphi[\RU(V)]$ and $\Pi_\varphi[\RU(W)]$.

\item
The character $\eta_a$ also plays a role for odd unitary groups.
In the case of $p$-adic odd unitary groups, the two pure inner forms $\RU(V^*)$ and $\RU(V)$ are isomorphic via rescaling the $\epsilon$-Hermitian form by the unique nontrivial class $a\in \CZ$, and $\pi_1(\varphi,\chi)\in \Pi_\varphi(\RU(V^*))$  is identified with
$\pi_1(\varphi,\chi\cdot\eta_a)\in \Pi_\varphi(\RU(V))$. 
A similar remark applies to the case that $F=\BR$, via the isomorphism $\RU(p,q)\cong \RU(q,p)$.  We also stress that the local Langlands correspondence for Hermitian and skew-Hermitian odd unitary groups is the same. For convenience, we put
\begin{equation}\label{twist-odd-u}
\pi_a(\varphi,\chi) : = \pi_1(\varphi,\chi\cdot\eta_a)
\end{equation}
for any $a\in \CZ$.
\end{itemize}

\subsubsection{Metaplectic groups}
Recall that the $L$-group of $G_n^*=\Mp_{2n}$ is $\Sp_{2n}(\BC)\times {\rm Gal}(\overline F/F)$. The local Langlands correspondence 
for $\Mp_{2n}$ is realized through the theta correspondence and the local Langlands correspondence for $\SO_{2n+1}$. 
More precisely, following \cite[Section 11]{GGP12} and \cite{GS12}, a bijection
\[
\iota_1=\iota_{\psi^F}\ \colon\ \Pi_\varphi(\Mp_{2n})\longrightarrow \wh{\CS_\varphi}, \quad \pi_1(\varphi,\chi)\mapsto \chi\in \wh{\CS_\varphi}
\]
is achieved for the fixed nontrivial additive character $\psi^F$ of $F$ and the local theta correspondence gives a bijection between the local $L$-packet 
$\Pi_\varphi(\Mp_{2n})$
and  the local Vogan packet $\Pi_\varphi[\SO(V_{2n+1})]$, where  $V_{2n+1}$ is the $(2n+1)$-dimensional split orthogonal space with trivial discriminant.
Note that if we replace $\psi^F$ by $\psi^F_a(x):=\psi^F(ax)$ for $a\in \CZ$ in the local theta correspondence, then $\iota_a:=\iota_{\psi^F_a}$ satisfies that
\begin{equation} \label{twist-mp}
\pi_1(\varphi, \chi)=\pi_a(\varphi(a),\chi\cdot \eta_a),
\end{equation}
where $\varphi(a)$ denotes the tensor product of $\varphi$ with the Hilbert symbol $(\cdot, a)_F$, and we regard $\eta_a$ as a character of $\CS_{\varphi(a)}$ via the canonical isomorphism $\CS_\varphi\cong \CS_{\varphi(a)}$.


\

In summary of \eqref{twist-even}, \eqref{twist-odd}, \eqref{twist-odd-u} and \eqref{twist-mp}, we have that

\begin{prop} \label{prop:LLC}
For $\varphi\in \Phi_\gen(G_n^*)$, it holds that
\[
\pi_a(\varphi,\chi) = \begin{cases} \pi_1(\varphi, \chi\cdot\eta_a), & \textrm{if }G_n^*\neq \Mp_{2n}, \\
\pi_1(\varphi(a), \chi\cdot\eta_a), & \textrm{if }G_n^*=\Mp_{2n}
\end{cases}
\]
for any $a\in \CZ$ and $\chi \in \wh{\CS_\varphi}$. 
\end{prop}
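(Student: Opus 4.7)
The plan is to treat Proposition \ref{prop:LLC} as a direct bookkeeping consolidation of the rationality formulas \eqref{twist-even}, \eqref{twist-odd}, \eqref{twist-odd-u} and \eqref{twist-mp} already established case by case in Section \ref{ssec-RLLC}. Accordingly, the proof splits into four cases according to the type of $G_n^*$: linear with $\mathfrak{n}$ even, odd special orthogonal, odd unitary, and $\Mp_{2n}$. The only uniform input is that for every generic $\varphi\in\Phi_\gen(G_n^*)$ under consideration the component group $\CS_\varphi$ is an abelian $2$-group by \eqref{2grp}, so $\eta_a\in\wh{\CS_\varphi}$ is valued in $\{\pm 1\}$ and satisfies $\eta_a^{-1}=\eta_a$. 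This observation lets me invert each formula by substituting $\chi \mapsto \chi\cdot\eta_a$.

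For the even linear case the claim is immediate from \eqref{twist-even}: setting $\chi' = \chi\cdot\eta_a$ in $\pi_1(\varphi,\chi')=\pi_a(\varphi,\chi'\cdot\eta_a)$ and using $\eta_a^2 = \mathds{1}$ gives $\pi_a(\varphi,\chi) = \pi_1(\varphi,\chi\cdot\eta_a)$. The odd special orthogonal case is even shorter: by the convention \eqref{twist-odd} one has $\pi_a(\varphi,\chi)=\pi_1(\varphi,\chi)$, and the remark following \eqref{eta} tells us $\eta_a$ is trivial on $\CS_\varphi$ in this case, so $\chi\cdot\eta_a=\chi$. The odd unitary case is literally the definition \eqref{twist-odd-u}.

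The metaplectic case requires slightly more care. Starting from \eqref{twist-mp}, written as $\pi_1(\varphi,\chi) = \pi_a(\varphi(a), \chi\cdot \eta_a)$, I replace $\varphi$ by $\varphi(a)$. Since $(\,\cdot\,,a)_F$ has order two, $(\varphi(a))(a) = \varphi$, and the canonical isomorphism $\CS_{\varphi}\cong \CS_{\varphi(a)}$ identifies the component groups compatibly. Under this identification, the character $\eta_a$ of $\CS_{\varphi(a)}$ given by \eqref{eta} transports to $\eta_a$ on $\CS_\varphi$, because the quantities $\det\varphi_i$ and $\varepsilon(\varphi_i)\varepsilon(\varphi_i(a))(a,-1)_F^{\dim\varphi_i/2}$ entering \eqref{eta} are compatible with the twist (the $\dim\varphi_i$'s, and the parity conditions defining $\RI_\gp$, are unchanged). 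Substituting $\chi\mapsto \chi\cdot\eta_a$ as in the even case and applying $\eta_a^2=\mathds{1}$ yields the desired $\pi_a(\varphi,\chi) = \pi_1(\varphi(a), \chi\cdot\eta_a)$.

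The step I expect to be the only nontrivial one is the identification of $\eta_a$ on $\CS_\varphi$ with $\eta_a$ on $\CS_{\varphi(a)}$ in the metaplectic case; this is not a genuine obstacle but is the place where one must verify that the epsilon-factor formula \eqref{eta} is invariant under $\varphi\leftrightarrow\varphi(a)$, compatibly with the canonical isomorphism of component groups. Once this compatibility is in hand, the proposition reduces to four routine substitutions.
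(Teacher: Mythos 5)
Your proof is correct and takes essentially the same route as the paper, whose "proof" of this proposition is literally the single phrase "In summary of \eqref{twist-even}, \eqref{twist-odd}, \eqref{twist-odd-u} and \eqref{twist-mp}"; you have simply spelled out the four inversions (using $\eta_a^2=\mathds 1$ and, in the metaplectic case, $(\varphi(a))(a)=\varphi$) that the paper leaves implicit.
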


In general, when the choice of $\iota_a$ is clear from the context, we occasionally suppress the Whittaker datum and simply write $\pi(\varphi,\chi)=\pi_a(\varphi,\chi)$.

\subsection{Contragredience}\label{ssec-C} 

For the philosophy of local descent and its connection with wavefront sets which will be discussed in the next section, it is necessary to take the contragredient for the representation of the smaller member in a relevant pair of classical groups in the local Gan-Gross-Prasad conjecture.

We refer to \cite{AV16, K13} for general discussions about contragredient representations. Let $G^*=G_n^*$ be a connected quasi-split classical group over $F$. For $a\in \CZ$, put
\[
a^\vee:=\begin{cases} a, & \textrm{if }\Fn\textrm{ is odd},\\
-a, & \textrm{if }\Fn\textrm{ is even}.\end{cases}
\]
Let $\Fc^\vee\in \Aut(G^\vee)$ be the Chevalley involution
of $G^\vee$, and let ${}^L\Fc= \Fc^\vee\rtimes \textrm{id}$ be the automorphism of ${}^LG$.  Let $\varphi\in \Phi_\gen(G^*)$ and $\chi\in \wh{\CS_\varphi}$. 
The automorphism $\Fc^\vee$ induces an isomorphism $\CS_\varphi\cong \CS_{{}^L\Fc\circ\varphi}$. By \cite[Theorem 5.9]{K13}, the contragredient representation of $\pi_a(\varphi,\chi)$ is given by
\[
\pi_a(\varphi,\chi)^\vee = \pi_{ a^\vee} ({}^L\Fc\circ\varphi, \chi \circ (\Fc^\vee)^{-1}).
\]

For an enhanced $L$-parameter $(\varphi,\chi)$ as above, we introduce
its {\it contragredient} $(\wh\varphi, \wh\chi)$ to be the enhanced $L$-parameter of $\pi_a(\varphi, \chi)^\vee$ under the same Whittaker datum $a$, that is, we require that
\[
\pi_a(\varphi, \chi)^\vee = \pi_a(\wh\varphi, \wh\chi).
\]
It is easy to see that this notion does not depend on  $a$.  The explicit formulas of $(\wh\varphi,\wh\chi)$ for various classical groups have been worked out (\cite{GI16, AG17b, At18}).  For convenience, we summarize them in the following proposition and refer to the details in \cite[Proposition B.4]{AG17b} and \cite[Section 3]{At18}. 

\begin{prop}\label{prop:dualdata}
For an enhanced $L$-parameter $(\varphi,\chi)$ as above, the contragredient $(\wh\varphi, \wh\chi)$ can be written, case by case, as follows:
\begin{itemize}
\item $G^*=\SO_\Fn^*$, $(\wh\varphi, \wh\chi) = (\varphi, \chi)$.
\item $G^*=\Sp_{2n}$, $(\wh\varphi, \wh\chi) = (\varphi, \chi \cdot \eta_{-1})$.

\item $G^*=\Mp_{2n}$,  $(\wh\varphi, \wh\chi) = (\varphi(-1), \chi\cdot \eta_{-1})$.

\item $G^* =\RU_\Fn^*$, $(\wh\varphi, \wh\chi) = (\varphi^\vee, \chi\cdot\eta)$ where $\eta$ is trivial if $\Fn$ is odd and $\eta = \eta_{-1}$ if $\Fn$ is even.
\end{itemize}
In all cases as listed above, the following equality 
\[
(\wh\varphi, \wh{\chi\cdot \eta_a}) = (\wh\varphi, \wh\chi\cdot \eta_a)
\]
holds for any $a\in \CZ$.
\end{prop}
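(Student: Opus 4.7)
The plan is to reduce the case-by-case verification to the general formula
\[
\pi_a(\varphi,\chi)^\vee = \pi_{a^\vee}({}^L\Fc\circ\varphi,\chi\circ(\Fc^\vee)^{-1})
\]
of Kaletha (\cite{K13}), already cited in the paragraph preceding the proposition, and then to pin down both the Chevalley-twisted parameter ${}^L\Fc\circ\varphi$ and the character shift needed to pass from the Whittaker normalization $a^\vee$ back to $a$ via Proposition \ref{prop:LLC}.

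First, I would identify ${}^L\Fc\circ\varphi$ group by group, using the standard description of the Chevalley involution on the dual side. For $\SO_\Fn^*$ the dual group $\SO_\Fn(\BC)$ is preserved by an inner automorphism that realizes the contragredient on the standard representation, so ${}^L\Fc\circ\varphi\simeq \varphi$ and likewise $\Fc^\vee$ acts trivially on $\CS_\varphi$. For $\Sp_{2n}$ (with dual $\SO_{2n+1}(\BC)$) and for $\Mp_{2n}$ (with dual $\Sp_{2n}(\BC)$) one has ${}^L\Fc\circ\varphi\simeq \varphi$ as well, since the standard representation of each of these dual groups is self-dual. For $\RU_\Fn^*$, conjugation by the longest Weyl element on $\GL_\Fn(\BC)$ yields ${}^L\Fc\circ\varphi\simeq \varphi^\vee$. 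In every case the induced map on $\CS_\varphi$ is trivial, so $\chi\circ(\Fc^\vee)^{-1}=\chi$ under the canonical identification $\CS_\varphi\cong \CS_{{}^L\Fc\circ\varphi}$.

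Next, I would compare normalizations. When $\Fn$ is odd, $a^\vee=a$ and the formulas collapse immediately to $(\wh\varphi,\wh\chi)=({}^L\Fc\circ\varphi,\chi)$, giving the $\SO_{2n+1}$ and odd $\RU_\Fn^*$ cases. When $\Fn$ is even, $a^\vee=-a$, and I apply Proposition \ref{prop:LLC} to convert $\pi_{-a}(\cdot,\cdot)$ into $\pi_a(\cdot,\cdot)$: for linear classical groups this produces a twist of the character by $\eta_{-a}\cdot\eta_a^{-1}=\eta_{-1}$ (after absorbing $\eta_a$ into the composition and using $\eta_{-a}=\eta_{-1}\eta_a$ in the abelian $2$-group $\wh{\CS_\varphi}$), and for $\Mp_{2n}$ it additionally replaces $\varphi$ by $\varphi(-1)$. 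This yields precisely the claimed formulas in the four bullets. The compatibility identity $(\wh\varphi,\wh{\chi\cdot \eta_a})=(\wh\varphi,\wh\chi\cdot\eta_a)$ then follows because in every case $\wh\chi$ is obtained from $\chi$ by multiplication by a fixed character (either trivial or $\eta_{-1}$), and $\wh\varphi$ does not depend on $\chi$; hence the map $\chi\mapsto\wh\chi$ is $\wh{\CS_\varphi}$-equivariant.

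The main obstacle I expect is bookkeeping rather than conceptual: one must carefully track how $\eta_a$ transforms when $\varphi$ is replaced by $\varphi(a)$ in the metaplectic case, and ensure that the canonical isomorphism $\CS_\varphi\cong\CS_{\varphi(-1)}$ used in transporting characters is compatible with both the Chevalley involution and the Weil-character normalizations appearing in \eqref{eta}. A secondary subtlety is the unitary even case \eqref{uni-disc}: one must verify that the prescribed twist by $\eta_{-1}$ respects the discriminant condition $\chi(-1_\varphi)=\omega_{E/F}(\disc V)$ so that $(\wh\varphi,\wh\chi)$ does correspond to a representation on the correct pure inner form, which amounts to computing $\eta_{-1}(-1_\varphi)$ directly from \eqref{eta}.
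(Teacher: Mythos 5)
The paper does not actually prove Proposition~\ref{prop:dualdata}; it explicitly defers to \cite[Proposition~B.4]{AG17b} and \cite[Section~3]{At18}. Your strategy---apply Kaletha's Theorem~5.9 (already quoted in the paper), identify ${}^L\Fc\circ\varphi$ case by case, and then use Proposition~\ref{prop:LLC} to trade the Whittaker datum $a^\vee$ back for $a$---is indeed the natural route, and it is presumably close in spirit to what those references do, so your self-contained derivation is a genuine addition.

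However, the assertion that this ``yields precisely the claimed formulas in the four bullets'' is not correct for $G^*=\SO_{2n}^*$. For $\Fn=2n$ even your recipe produces a twist by $\eta_{-a}\cdot\eta_a^{-1}=\eta_{-1}$ for every linear classical group, exactly as you state; but the first bullet of the proposition says $(\wh\varphi,\wh\chi)=(\varphi,\chi)$ with no $\eta_{-1}$, whereas the second bullet (for $\Sp_{2n}$) does have the $\eta_{-1}$. Both groups have $\Fn$ even, so the parity of $\Fn$ alone cannot distinguish them. The character $\eta_{-1}$ is not trivial on $\CS_\varphi$ in general for even orthogonal $\varphi$: from~\eqref{eta}, $\eta_{-1}((e_i))=\prod_i(\det\varphi_i,-1)_F^{e_i}$, and there is no reason all the $(\det\varphi_i,-1)_F$ vanish. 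Moreover, the discrepancy cannot be resolved by letting $\Fc^\vee$ act nontrivially on $\CS_\varphi$: the Kaletha map $\chi\mapsto\chi\circ(\Fc^\vee)^{-1}$ is a group automorphism of $\wh{\CS_\varphi}$, while $\chi\mapsto\chi\cdot\eta_{-1}$ is a translation, and these coincide on all $\chi$ only if $\eta_{-1}$ is trivial. The missing ingredient for $\SO_{2n}^*$ is the weak local Langlands correspondence modulo $\sim_{\rm c}$: the Chevalley involution of $\SO_{2n}(\BC)$ is outer when $n$ is odd, the MVW involution identifies $\pi^\vee$ with the $\RO_{2n}$-conjugate of $\pi$, and the $\eta_{-1}$ twist is absorbed in the $\sim_{\rm c}$-equivalence class; this is exactly the content your argument glosses over when you assert uniformly that ``the induced map on $\CS_\varphi$ is trivial.''

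Two smaller points are handled correctly in outline but deserve to be made explicit. For $\Mp_{2n}$, because $\eta_a$ in \eqref{eta} depends on $\varphi$, the characters $\eta_{-a}$ on $\CS_\varphi$ and $\eta_a$ on $\CS_{\varphi(-1)}$ live on different groups; the needed identity $\eta_{-a}^{(\varphi)}\cdot\bigl(\eta_a^{(\varphi(-1))}\bigr)^{-1}=\eta_{-1}^{(\varphi)}$ under the canonical isomorphism $\CS_\varphi\cong\CS_{\varphi(-1)}$ does hold, but only after one observes that the root numbers $\varepsilon(\varphi_i(-1))$ are $\pm1$ so their squares drop out, a verification you flag but leave out. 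For even unitary groups, the twist $\eta_{-1}$ is trivial whenever $-1\in\BN E^\times$ (e.g.\ over $\BR$), and in general one still has to check compatibility with \eqref{uni-disc}; you correctly identify this as a subtlety but should carry it through. Filling in the $\SO_{2n}^*$ case is the essential missing step.
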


\subsection{Twisted distinguished characters} \label{ssec-DC}

We explain some distinguished characters from the local Gan-Gross-Prasad conjecture by using local root numbers, and introduce certain twists by elements of $\CZ$ 
that will be used in the construction of $L$-parameters in Section~\ref{sec-AWFS}. For 
convenience, we adapt some convention from \cite{GGP12}. 

Assume that $\varphi$ and $\phi$ are two generic $L$-parameters
of opposite types, written in the form \eqref{edec} with index sets $\RI_\gp$ and $\RI_\gp'$, respectively. We write the elements of the corresponding component groups $\CS_\varphi$ and $\CS_\phi$ as $(e_i)_{i\in \RI_\gp}$ and $(e_{i'})_{i'\in \RI_\gp'}$ respectively. Recall the additive characters $\psi^F$ and $\psi^E$ of $F$ and $E$ respectively given by \eqref{psi-rel}.
We refer to \cite[Section 5]{GGP12} for the definition of local root numbers that will be used below.

\subsubsection{}
If  $E=F$, $\varphi$ and $\phi$ are self-dual of even dimensions, define a pair of characters $(\chi_{\varphi,\phi}, \chi_{\phi,\varphi})\in \wh{\CS_\varphi}\times\wh{\CS_\phi}$ by
\begin{equation} \label{char-O}
 \chi_{\varphi,\phi}((e_i)_{i\in \RI_\gp}):= \prod_{i\in \RI_\gp}\left(\varepsilon(\varphi_i\otimes\phi) (\det \varphi_i)(-1)^{\dim \phi/2}(\det \phi)(-1)^{\dim\varphi_i/2}\right)^{e_i},
\end{equation}
and a similar formula defines $\chi_{\phi,\varphi}$. In the above, when $\dim \varphi_i$ is odd, then $\phi$ is symplectic so that 
$\det\phi$ is trivial and $(\det \phi)(-1)^{\dim\varphi_i/2}$ is understood to be 1.
Note that the local root number
\[
\varepsilon(\varphi_i\otimes\phi):=\varepsilon(\varphi_i\otimes\phi, \psi^F)
\]
 does not depend on the choice of the additive character $\psi^F$. In this case, for $z\in \CZ$ define
 \begin{equation}
 (\chi_{\varphi, \phi}^z, \chi_{\phi,\varphi}^z) = (\chi_{\varphi, \phi}\cdot \eta_z, \chi_{\phi,\varphi}\cdot \eta_z),
 \end{equation}
 where $\eta_z$ is given by \eqref{eta}. Note that in \eqref{char-O}, we follow the convention in \cite{GGP12} to write
 \[
 (\det\varphi)(-1)^\alpha=(\det\varphi)((-1)^\alpha), 
 \]
 which will be used in the whole paper. 

\subsubsection{}
If $E=F$, $\varphi$ is odd orthogonal and $\phi$ is symplectic,  define $(\chi_{\varphi,\phi}, \chi_{\phi,\varphi})$ to be the restriction of
\[
(\chi_{\varphi_1,\phi},\chi_{\phi,\varphi_1})\in
\wh{\CS_{\varphi_1}}\times\wh{\CS_\phi}
\]
to $\CS_{\varphi}\times \CS_\phi$, where
$\varphi_1:=\varphi\oplus\BC$.
We also consider the twist $\varphi(z)$ with $z\in \CZ$, and make the canonical identification $\CS_{\varphi(z)}\cong \CS_\varphi$. Then we define
\begin{equation} \label{char-spmp}
(\chi^z_{\varphi, \phi}, \chi^z_{\phi,\varphi})=(\chi_{\varphi(z), \phi} \cdot \eta_z, \chi_{\phi, \varphi(z)})\in \widehat{\CS_\varphi}\times\widehat{\CS_\phi}.
\end{equation}

\subsubsection{}
If $E=F(\delta)$, $\varphi$ and $\phi$ are conjugate self-dual, define $(\chi_{\varphi,\phi}, \chi_{\phi,\varphi})\in \wh{\CS_\varphi}\times\wh{\CS_\phi}$ by\footnote{For unitary groups, \cite{GGP12, GI16} use $\psi^E_{-2}$
 to compute local root numbers for the Bessel case, and use $\psi^E_2$ for the Fourier-Jacobi case. Our modification will be more convenient for the uniform computation of local descent, and it causes a change in the formulation of the local Gan-Gross-Prasad conjecture in Section \ref{sec:GGP-U-B}.}
\begin{equation} \label{char-U}
 \chi_{\varphi,\phi}((e_i)_{i\in \RI_\gp})= \prod_{i\in \RI_\gp}\varepsilon(\varphi_i\otimes\phi, \psi^E_2) ^{e_i},
\end{equation}
and a similar formula defines $\chi_{\phi,\varphi}$, where $\psi^E$ is given by \eqref{psi-rel} and 
\[
\psi^E_2(x) := \psi^E(2x) = \psi^F(\tr_{E/F}(\delta x)),\quad x\in E.
\]
Assume that $\Fl= \dim \varphi - \dim\phi\geq 0$. In this case, for $z\in \CZ$ define
\begin{equation}\label{chi-a}
(\chi^z_{\varphi,\phi}, \chi^z_{\phi,\varphi})= (\chi_{\varphi, \phi}\cdot \eta_z, \chi_{\phi,\varphi}\cdot \eta_{(-1)^\Fl z}).
\end{equation}

By convention, in the above cases we allow $\varphi$ or $\phi$ to be zero. In such a case we formally take $\chi_{\varphi, \phi}$ and $\chi_{\phi,\varphi}$ to be the trivial characters of $\CS_\varphi$ and $\CS_\phi$ respectively.

\section{Local Gan-Gross-Prasad Conjecture}\label{ssec-LGGP}


It is important to point out that the local Gan-Gross-Prasad conjecture (\cite{GGP12}) 
is now a well-established theorem for classical groups and generic local $L$-parameters over all local fields of characteristic zero. More precisely, it is proved for all $p$-adic cases by J.-L. Waldspurger (\cite{W10, W12a, W12b}),
C. M\oe glin and J.-L. Waldspurger (\cite{MW12}), R. Beuzart-Plessis (\cite{BP16}),  W. T. Gan and A. Ichino (\cite{GI16}), and H. Atobe (\cite{At18}). It is proved for real unitary groups by R. Beuzart-Plessis \cite{BP20}, and H. Xue (\cite{X23, X}). Finally, it is proved for real special orthogonal groups  by  Z. Luo (\cite{Lu20, Lu21}), C. Chen (\cite{Ch23, Ch24, Ch25}), C. Chen and Z. Luo (\cite{CL22}), and finally for the Fourier-Jacobi cases over $\BR$ by C. Chen, R. Chen, and J. Zou(\cite{CCZ25}) and C. Chen (\cite{Ch23, Ch24}). 

In order to state the local Gan-Gross-Prasad conjecture, we have to introduce 
certain twisted Jacquet modules of Bessel type and Fourier-Jacobi type. We will do this in 
a more general setting because we need them for the definition of algebraic wavefront sets in 
Section \ref{sec-AWFS}.

\subsection{Certain twisted Jacquet modules}\label{ssec-TJM}
Let $G$ be a connected classical group over $F$, and $\Fg$ be its Lie algebra. Then the set $\CN_F(\Fg)$ of all nilpotent elements of $\Fg(F)$ is an algebraic conic $G$-variety defined over $F$. As in Section~\ref{ssec-RNO}, we denote by $\CN_F(\Fg)_\circ$ the set of all $F$-rational nilpotent orbits in $\CN_F(\Fg)$ under the adjoint action of $G(F)$.

Following \cite{CM93}, for $X\in \CN_F(\Fg)$, by the Jacobson-Morozov Theorem, there exists an ${\mathfrak{sl}}_2$-triple $\{X,\hbar,Y\}$. By a theorem of B. Kostant, such ${\mathfrak{sl}}_2$-triple is unique up to conjugation.
Under the adjoint action of $\hbar$ on $\Fg$,
one has the decomposition $\Fg=\bigoplus\limits_{i\in\BZ}\ \Fg^{\hbar}_i$, where
\[
\Fg^{\hbar}_i:=\{x\in\Fg \mid {\rm {ad}}(\hbar)(x)=ix\}.
\]
Denote by
\[
\Fu:=\bigoplus_{i\leq -1}\ \Fg^{\hbar}_i,\quad \Fp:=\bigoplus_{i\leq 0}\ \Fg^{\hbar}_i,\quad \text{and}\quad 	
\Fm:=\Fg^{\hbar}_0.
\]
Denote by $U_X=U:=\exp(\Fu)$,
$P:=\{g\in G\mid \Ad(g)(\Fp)\subset\Fp\}$,
and   $M:=\{g\in G\mid \Ad(\hbar)=\hbar)\}$.
Then $P$ is a parabolic subgroup of $G$ defined over $F$ and has the  decomposition $P=M\ltimes U$, whose conjugacy class only depends on the $F$-stable orbit of $X$.
That is,
\begin{equation*}
M=\Isom(V_0,q\vert_{V_0})^0\times \prod_{i>0}\GL(V_i).	
\end{equation*}
where $V_i$ is defined in \eqref{eq:V-i}.
By convention, we may take $\Fg_0=\Fg$ when $X=0$.

Let us recall the smooth oscillator representation of $U_X$ associated to $X$ from \cite[Section 3.3]{GZ14}, \cite{JLS16} or \cite{GGS17}.
Recall the fixed  nontrivial  additive character $\psi^F$ of $F$.
Fix an $\Ad(G)$-invariant non-degenerate bilinear form $\kappa$ on $\Fg$.
Let
\begin{equation}  \label{uX2}
\Fu_{X,2}:=\bigoplus_{i\leq -2}\ \Fg^{\hbar}_i
\end{equation}
and
$
U_{X,2}:=\exp(\Fu_{X,2}).
$
Define the character $\psi_{X}$ on $U_{X,2}$ by
\[
\psi_X(\exp(A)):=\psi^F(\kappa(X,A))	\qquad \text{for $A\in \Fu_{X,2}$.}
\]
Following \cite[Section 3.3]{GZ14}, define the symplectic form on $\Fg_{-1}$ by $\kappa_{-1}(A,B):=\kappa({\rm {ad}}(X)A,B)$ for $A,B\in\Fg_{-1}$.
We obtain the Heisenberg group $\CH_{X}=\Fg_{-1}\times F$ associated to the symplectic space $(\Fg_{-1},\kappa_{-1})$,
where $\{0\}\times F$ is the center of $\CH_{X}$.
In particular, $(A,0)(B,0)=(A+B,\kappa_{-1}(A,B))$ for $A,B\in\Fg_{-1}$.
This yields a surjective group homomorphism $\alpha_X\colon U_X \to \CH_X$ given by
\[
\alpha_X(\exp(A)\exp(Z))=(A,\kappa(X,Z))\qquad  \text{ for  } A\in\Fg_{-1}, Z\in \Fu_{X,2}.	
\]
The character $\psi_X$ factors through $U_{X,2}$ and defines a character of the center of $\CH_X$.
By the Stone-von Neumann theorem, there exists a unique irreducible unitarizable smooth representation of $\CH_{X}$ with central character $\psi_X$, which is denoted by $(\omega_{\psi_X},V_{\psi_X})$.
It can be lifted as a representation of $U_X$ by
$u\cdot v:=\omega_{\psi_X}(\alpha_X(u))v$ for $u\in U_X$ and $v\in V_{\psi_X}$, which is denoted by $(\omega_{\psi_X},V_{\psi_X})$ and called the smooth oscillator representation.
We remark that if $\Fg_{-1}=\{0\}$ then $U_{X,2}=U_X$ and $\omega_{\psi_X}$ is the character $\psi_X$.

Let $(\pi,V_\pi)$ be a smooth representation of $G(F)$, or possibly the metaplectic cover of $G(F)$ if $G=\Sp_{2n}$.
Following \cite{MW87, GZ14, GGS17}, define {\it the space of generalized Whittaker models of $\pi$} associated to $X$ to be
\begin{equation} \label{whit-model}
{\rm {Wh}}_X(\pi)=\Hom_{U_X}(V_\pi,V_{\psi_X}),
\end{equation}
and define the generalized Whittaker quotient to be the $U_X$-coinvariants (the twisted Jacquet module)
\begin{equation} \label{whit-quo}
\CJ_X(\pi)=V_\pi\,\wh{\otimes}\, V^\vee_{\psi_X}\ /\ \overline{ \{ \pi\otimes\omega^\vee_{\psi_X}(u)v-v\ 
\mid\  u\in U_X, v\in V_\pi\,\wh{\otimes}\, V^\vee_{\psi_X}\}}.
\end{equation}
 Here $V_\pi\,\wh{\otimes}\, V^\vee_{\psi_X}$ is the complete projective tensor product for $F=\BR$, and the closure is necessary only when $F=\BR$. Then by the Frobenius reciprocity,
 \[
 {\rm {Wh}}_X(\pi)\neq 0 \ \Longleftrightarrow \ \CJ_X(\pi)\neq 0.
 \]
Note that up to conjugation the generalized Whittaker models of $\pi$ only depend on the $F$-rational nilpotent orbit of $X$ in $\CN_F(\Fg)_\circ$.
Denote by $M_{X}:=\{g\in M\ \mid\ \Ad(g)(X)=X\}$ and $M_{X}^\circ$  the identity component $M_X$.
More precisely,
\[
M_X=\prod^{r}_{i=1}\Isom(V^{h}_{(p_i)},q^h_{(p_i)}),
\]
where $V^{h}_{(p_i)}$ and $q^h_{(p_i)}$ are defined in \eqref{eq:X-V-i} and \eqref{eq:X-q-i}.
Then $M_{X}$ is also the centralizer of the ${\mathfrak{sl}}_2$-triple $\{X,\hbar,Y\}$.
If $\Fg_{-1}\ne \{0\}$, then there exists a central cover $\wt{M}_X$ of $M_X$ and
a representation of
\[
\wt{P}_X:=\wt{M}^\circ_X\ltimes U_{X},
\]
extended from the oscillator-Heisenberg representation $(\omega_{\psi_X},V_{\psi_X})$ of $U_X$.
For convenience, we still denote by $(\omega_{\psi_X},V_{\psi_X})$ the extension of $\wt{P}_X$.
Note that ${\rm {Wh}}_X(\pi)$ is an $M_X$-module if $\Fg_{-1}=\{0\}$ or an $\wt{M}_X$-module if $\Fg_{-1}\ne \{0\}$.

\subsection{Rational nilpotent orbits for $[p_1,1^{\Fn-p_1}]$}\label{ssec-RNO-D}

As in \cite{JZ18}, in order to set up the rationality in the local Gan-Gross-Prasad conjecture and for the purpose of the local descents, we would like to make more precise the $F$-rational structure of the nilpotent orbits associated to partitions of type $[p_1,1^{\Fn-p_1}]$ with $p_1>1$, which 
produce the twisted Jacquet modules of Bessel type or Fourier-Jacobi type (Section \ref{ssec:TJMp1}).

Denote by $\CO^{\st}_{p_1}$ the unique $F$-stable nilpotent orbit associated with the partition $[p_1,1^{\Fn-p_1}]$.
Following Section \ref{ssec-RNO},
to parameterize the $F$-rational nilpotent orbits in $\CO^{\st}_{p_1}$,
we need to assign  an $\eps_1$-Hermitian form $(V^h_{(p_1)},q^h_{(p_1)})$ and an $\eps$-Hermitian form $(V_{(1)},q_{(1)})$
where $\eps_1=(-1)^{p_1-1}\eps$ and $\dim V^h_{(p_1)}=1$.
By definition,  $V_{(p_1)}$ has the polar decomposition $V_{(p_1)}^+ \oplus \Fe\oplus V_{(p_1)}^-$,
where $\Fe$ is an anisotropic line if $p_1$ is odd, and is zero otherwise, and
$V_{(p_1)}^+$ and $V_{(p_1)}^-$ are dual to each other.
In particular when $p_1$ is odd,  $q_{(p_1)}$ can be determined by $q_{(p_1)}\vert_{\Fe}=q^h_{(p_1)}$.
The assigned data yield the admissible Young tableaux for $(V,q)$ with the partition $[p_1,1^{\Fn-p_1}]$ if and only if
\[
(V,q)\cong (V_{(p_1)}^+\oplus \Fe \oplus V_{(p_1)}^-,q_{(p_1)})\oplus (V_{(1)},q_{(1)}).
\]
Furthermore, from $[p_1,1^{\Fn-p_1}]\in\CP(V)$, we obtain some constraints on $p_1$, in order for  $\CO^{\st}_{p_1}$ to be defined over $F$ (i.e., $\CO^{\st}_{p_1}\cap \CN_F(\Fg_n)\ne \varnothing$).
More precisely, if $V$ is orthogonal and $\Fn\ne 2\Fr$, then $p_1$ is odd, and
$\CO^{\st}_{p_1}$ is defined over $F$
if and only if $p_1\leq 2\Fr+1$;
if $V$ is orthogonal and $\Fn=2\Fr$, then $p_1$ is odd, and
$\CO^{\st}_{p_1}$ is defined over $F$ if and only if $p_1\leq 2\Fr-1$;
if $V$ is symplectic, then $p_1$ is even, and $\CO^{\st}_{p_1}$ is defined over $F$ for all $p_1\leq \Fn$; and finally
if $G_n$ is unitary, then there is no constrain  on $p_1$, and $\CO^{\st}_{p_1}$ is defined over $F$ if and only if $p_1\leq 2\Fr+1$.

For convenience, we take an explicit ${\mathfrak {sl}}_2$-triple $\{X,\hbar, Y\}$ for each $F$-rational orbit in $\CO^{\st}_{p_1}$ with $X\in \CO^{\st}_{p_1}$.
It follows that the $X$'s are the chosen representatives of $F$-rational orbits in $\CO^{\st}_{p_1}$.
With the basis given in \eqref{eq:basis}, we can uniformly choose $\hbar\in \Fg_n$ for all $F$-rational orbits, which is defined by, for $x\in \FB$,
\[
\hbar(x)= \begin{cases}
	\mp(p_1-2i+1)e_{\pm i}  &\text{ if $x=e_{\pm i}$ and } 1\leq i\leq  \lfloor\frac{p_1}{2}\rfloor,\\
	0 &\text{ otherwise.}
\end{cases}
\]
With such a chosen $\hbar$, we obtain the eigenspaces in \eqref{eq:V-i}.
For $v,w\in V$, define the element $A_{v,w}$ in $\Fg\subset\End(V)$ by
\[
A_{v,w}(x)= \eps q(x,v)w- q(x,w)v.	
\]
Throughout this paper, we choose and fix an vector $e$ in $V$ based on the parity of $p_1$:
\begin{itemize}
	\item if $p_1$ is odd, we take $e$ to be an anisotropic vector in $V_0$;
	\item if $p_1$ is even, we take $e=\frac{\eps}{2}e_{-m}$,
\end{itemize}
where $V_{0}=\{v\in V\ \mid\ \hbar(v)=0\}$ and  $m=\lfloor\frac{p_1}{2}\rfloor$.
Note that if $p_1$ is odd, then $V$ is not symplectic and there always exist anisotropic vectors if $V_{0}$ is not zero.
With the so chosen vector $e$, for $\varsigma\in E^\times$, define
\begin{equation} \label{xe,var}
X_{e,\varsigma}=A_{ e, \bar{\varsigma} e_{-m}}+\sum_{i=1}^{m-1}A_{ e_{i+1}, \bar{\varsigma} e_{-i}}.	
\end{equation}
Alternatively, $X_{e,\varsigma}$ is determined by
\begin{itemize}
\item
$ e_{i}\mapsto -\varsigma e_{i+1}$,
$e_{-(i+1)}\mapsto \bar{\varsigma} e_{-i}$  for $1\leq i\leq m-1$, and
$e_{-1}\mapsto 0$;	
\item
$ e_m\mapsto -\varsigma e$,
$e\mapsto \eps  \bar{\varsigma} \apair{e,e}e_{-m}$
and $X\vert_{V_{0}\cap e^\perp}=0$ when $p_1$ is odd;
\item
$e_m\mapsto \frac{\bar{\varsigma}-\eps \varsigma }{2}e_{-m}$ and $X\vert_{V_{0}}=0$ when $p_1$ is even.
\end{itemize}
Thus, $X_{e,\varsigma}$ is in $\CO^{\st}_{p_1}$ when $\eps \varsigma\ne \bar{\varsigma}$ for  $p_1$ even.
Note that if $p_1$ is even, $A_{e,\varsigma_1 e_{-m}}=A_{e,\varsigma_2 e_{-m}}$
if and only if $\bar{\varsigma}_1-\eps \varsigma_1=\bar{\varsigma}_2-\eps \varsigma_2$.
For convenience, when $p_1$ is even, we may choose
\[
\varsigma\in \{x\in E^\times\ \mid\ \eps x+\bar{x}=0\}.
\]
With the above choice, we have that $X_{e,\varsigma}(e_m)= \bar{\varsigma} e_{-m}$. Finally, we may complete $X_{e,\varsigma}$ and $\hbar$ to an ${\mathfrak {sl}}_2$-triple in $\Fg$
and obtain the decomposition $V=V_{(p_1)}\oplus V_{(1)}$ as in \eqref{eq:V-p}.
More precisely,
\[V_{(p_1)}= {\rm {Span}}\{e_1,e_2,\dots,e_{m}\}\oplus Ee \oplus {\rm {Span}}\{e_{-m},e_{-(m-1)},\dots,e_{-1}\}\]
and
$V_{(1)}=V_{(1)}^h=V_{0}\cap e^\perp$.
Note that $V_{(p_1)}^h=Ee_{-1}$ and
\begin{equation}\label{eq:X-highest-weight}
X^{p_1-1}(e_{1})=\begin{cases}
	(-\varsigma\bar{\varsigma})^{m-1}\bar{\varsigma} e_{-1} &\text{ if $p_1$ is even,}\\
	\eps\cdot(-\varsigma\bar{\varsigma})^m\apair{e,e}e_{-1} &\text{ if $p_1$ is odd.}\\	
\end{cases}	
\end{equation}
By \eqref{eq:X-q-i} and \eqref{eq:X-highest-weight}, $q^{h}_{(p_1)}(e_{-1},e_{-1})$ is equivalent to the one-dimensional $(-1)^{p_1-1}\eps$-Hermitian form defined by
\[
q'_{e,\varsigma}(e_{-1},e_{-1}):= \begin{cases}
	(-1)^{m-1}\varsigma &\text{ if $p_1$ is even,}\\
	(-1)^m \apair{e,e} &\text{ if $p_1$ is odd.}\\	
\end{cases}
\]
Denote the resulting  $\epsilon$-Hermitian form on $V_{(p_1)}\cong Ee_{-1}\otimes F^{p_1}$ by
\[
q_{(p_1), e, \varsigma}:=q'_{e,\varsigma}\otimes q_{p_1}.
\]
Therefore, we obtain the admissible Young tableau associated to $X_{e,\varsigma}$:
\begin{equation}\label{eq:X-p-1}
([p_1,1^{\Fn-p_1}],\{(Ee_{-1},q'_{e,\varsigma}),(V_{0}\cap e^{\perp},q_V\vert_{V_{0}\cap e^{\perp}})\}).	
\end{equation}
The following lemma summarizes the above calculation.
\begin{lem}
For $\Fr\geq 1$,
there is a one-to-one correspondence between the $F$-rational nilpotent orbits in $\CO^{\st}_{p_1}$ and
\begin{itemize}
	\item $\Isom(V_{\an})^\circ$-orbits on the anisotropic lines in $V_{\an}$ if $p_1=2\Fr+1$;
	\item 	$F^\times/\BN E^{\times}$ otherwise.
\end{itemize}
\end{lem}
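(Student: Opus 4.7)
The plan is to invoke Proposition~\ref{prop:GZ} to translate the counting of $F$-rational nilpotent orbits in $\CO^{\st}_{p_1}$ into the counting of equivalence classes of admissible sesquilinear Young tableaux for $(V,q)$ with underlying partition $\underline{p}=[p_1,1^{\Fn-p_1}]$. Since $p_1$ is odd whenever $V$ is orthogonal or unitary (and even when $V$ is symplectic), the exceptional ``two-orbit'' case of Proposition~\ref{prop:GZ} does not arise, so the correspondence is genuinely one-to-one.

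By Definition~\ref{defn:AYT}, an admissible Young tableau of shape $[p_1,1^{\Fn-p_1}]$ is specified by a pair consisting of a $1$-dimensional $(-1)^{p_1-1}\eps$-Hermitian form $(V^h_{(p_1)},q^h_{(p_1)})$ and an $\eps$-Hermitian form $(V_{(1)},q_{(1)})$ of dimension $\Fn-p_1$, subject to the constraint
\[
\bigl(V^h_{(p_1)}\otimes_F F^{p_1},\ q^h_{(p_1)}\otimes q_{p_1}\bigr)\oplus (V_{(1)},q_{(1)})\ \cong\ (V,q).
\]
By Witt's cancellation theorem the form $(V_{(1)},q_{(1)})$ is determined up to isometry by $(V^h_{(p_1)},q^h_{(p_1)})$ together with $(V,q)$, so equivalence classes of admissible tableaux are in bijection with those isometry classes of $1$-dimensional $(-1)^{p_1-1}\eps$-Hermitian forms that extend to $(V,q)$. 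The next step is to verify, via the explicit representative $X_{e,\varsigma}$ constructed in \eqref{xe,var} and the resulting tableau \eqref{eq:X-p-1}, that this $1$-dimensional form is precisely $q'_{e,\varsigma}$, which is $\apair{e,e}$ up to an explicit sign when $p_1$ is odd and $\varsigma$ (mod norms) when $p_1$ is even.

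I would then split into the two cases of the conclusion. When $p_1<2\Fr+1$ (or $p_1=2\Fr$ in the symplectic case), the complement $(V_{(1)},q_{(1)})$ still has positive Witt index, so Witt's theorem implies that every $1$-dimensional $(-1)^{p_1-1}\eps$-Hermitian form occurs as $(V^h_{(p_1)},q^h_{(p_1)})$. The isometry classes of such $1$-dimensional forms are naturally parameterized by $F^\times/\BN E^\times$, yielding the second bullet point. When $p_1=2\Fr+1$, the complement has Witt index zero, so $V_{(1)}$ is forced to be an anisotropic hyperplane in $V_{\an}$, and $V^h_{(p_1)}$ is forced to be the orthogonal $1$-dimensional anisotropic line. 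Two such tableaux are equivalent iff the associated anisotropic lines in $V_{\an}$ are conjugate under the connected isometry group $\Isom(V_{\an})^\circ$ (again using Witt's theorem to promote any line-isometry to a full isometry, and noting that for anisotropic kernels the conjugation issue with the non-identity component does not produce spurious identifications). This gives the first bullet point.

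The main technical point to be careful about is the case $p_1=2\Fr+1$ for even orthogonal groups, where one must check that the distinction between $\Isom(V_{\an})$ and $\Isom(V_{\an})^\circ$ correctly reflects the passage from $\RO(V)$-conjugacy to $\SO(V)=G_n$-conjugacy on nilpotent elements. The verification reduces to the standard fact that, for an anisotropic quadratic space $V_{\an}$, an element of $\RO(V_{\an})\setminus \SO(V_{\an})$ can be realized inside the stabilizer of a chosen anisotropic line exactly when that stabilizer itself meets both components, and the resulting equivalence relation on lines is the one generated by $\Isom(V_{\an})^\circ$. Everything else is a direct bookkeeping of Witt classes.
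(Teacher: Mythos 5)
Your proposal takes a genuinely different route from the paper's argument. The paper deduces the lemma as a ``summary'' of the explicit computation preceding it: it constructs concrete representatives $X_{e,\varsigma}$ of each $F$-rational orbit in $\CO^{\st}_{p_1}$, computes the associated admissible sesquilinear Young tableau \eqref{eq:X-p-1} directly, and reads off the parameterization ($\apair{e,e}$ for $p_1$ odd, $\varsigma$ for $p_1$ even). Your approach is more abstract: it starts from Proposition~\ref{prop:GZ} to reduce to counting admissible tableaux of shape $[p_1, 1^{\Fn-p_1}]$, then uses Witt cancellation to see that such a tableau is determined by the isometry class of the $1$-dimensional form $q^h_{(p_1)}$, and finally uses Witt extension/embedding arguments to characterize which classes occur. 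Both are correct; the tradeoff is that the paper's constructive route also produces the explicit representatives $X_{e,\varsigma}$, the characters $\psi_{X_{e,\varsigma}}$, and the stabilizers $M_{X_{e,\varsigma}}$ that are used throughout the rest of Sections~\ref{ssec:TJMp1}--\ref{ssec-ED}, whereas your route proves only the bijection.

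A few points in your write-up are imprecise, though none invalidate the argument. First, the claim that ``$p_1$ is odd whenever $V$ is orthogonal or unitary'' is false in the unitary case (the paper explicitly notes there is no constraint on $p_1$ for unitary groups); it happens not to matter, because the exceptional two-orbit phenomenon in Proposition~\ref{prop:GZ} occurs only for even special orthogonal groups with totally even partitions, and $[p_1,1^{\Fn-p_1}]$ with $p_1$ odd is never totally even. Second, in the case $p_1<2\Fr+1$ the reason every class of $q^h_{(p_1)}$ occurs is slightly different depending on the parity of $p_1$: for $p_1$ even (hence $V$ symplectic), $V_{(p_1)}=V^h_{(p_1)}\otimes F^{p_1}$ is split symplectic for \emph{every} choice of $q^h_{(p_1)}$, so embeddability is automatic and the Witt index of the complement plays no role; for $p_1$ odd, the relevant observation is that $V\ominus\BH^{(p_1-1)/2}$ still has positive Witt index, hence contains every anisotropic line. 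Your phrase ``the complement $(V_{(1)},q_{(1)})$ still has positive Witt index'' is not the right statement in the even case (there $V_{(1)}$ can be zero). Third, your discussion of $\Isom(V_{\an})^\circ$ versus $\Isom(V_{\an})$ is somewhat circular as written; the cleaner argument is that for $\dim V_{\an}\geq 2$ the stabilizer of any anisotropic line $L$ in $\RO(V_{\an})$ contains $\{\pm 1\}\times\RO(L^\perp)$ and thus meets both connected components, so $\SO(V_{\an})$-orbits coincide with $\RO(V_{\an})$-orbits, which by Witt's extension theorem coincide with isometry classes of lines; the case $\dim V_{\an}\leq 1$ is trivial.
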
 	


\subsection{Twisted Jacquet module associated to $[p_1,1^{\Fn-p_1}]$}\label{ssec:TJMp1}

The twisted Jacquet module associated to the partition of type $[p_1,1^{\Fn-p_1}]$ splits into two types: the Bessel type and the Fourier-Jacobi type, according to $p_1$ is odd and even,
respectively. Note that when $p_1=1$ the stable nilpotent orbit corresponds to the zero nilpotent orbit.
However, the twisted Jacquet modules of the Bessel type and Fourier-Jacobi type for the classical groups can still be defined for $p_1=1$ (\cite{GGP12}), which will also be given in this section.

First, for $X_{e,\varsigma}$ as given by \eqref{xe,var}, to compute $\kappa(u,X_{e,\varsigma})$ for $u\in \Fu_{X_{e,\varsigma},2}$,
we introduce the following identity.
\begin{lem}\label{lm:kappa-X-A}
For $X\in \Fg$, we have
\[
\kappa(X,A_{v,w})=\frac{1}{2}(\apair{X(v),w}+\overline{\apair{X(v),w}}).
\]
\end{lem}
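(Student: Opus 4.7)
The plan is to compute $\kappa(X,A_{v,w})$ directly using the trace formula $\kappa(A,B)=\tr(AB^*)/2$ stated in \eqref{eq:kappa}, rather than expanding via the sum over the basis $\FB$.

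First, I would verify that $A_{v,w}\in\Fg$, i.e.\ that $A_{v,w}^*=-A_{v,w}$, by checking the skew-adjointness relation
\[
q(A_{v,w}x,y)+q(x,A_{v,w}y)=0
\]
for all $x,y\in V$. This is a direct calculation: expanding both terms using the definition of $A_{v,w}$ and then repeatedly applying the $\epsilon$-Hermitian identity $q(u,z)=\epsilon\overline{q(z,u)}$ (together with $\epsilon^2=1$ and $\bar\epsilon=\epsilon$) makes the two contributions cancel. Consequently $\kappa(X,A_{v,w})=\tr(X A_{v,w}^*)/2=-\tr(X A_{v,w})/2$.

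Next, I would exploit the rank-two decomposition
\[
A_{v,w}=\epsilon\,\bigl(q(\cdot,v)\otimes w\bigr)-\bigl(q(\cdot,w)\otimes v\bigr),
\]
where $(f\otimes u)(x):=f(x)u$. Using the elementary fact that $\tr\bigl((f\otimes u)\circ T\bigr)=f(Tu)$ for any $T\in\End_E(V)$ and any $E$-linear functional $f$, I obtain
\[
\tr(X\,A_{v,w})=\epsilon\, q(Xw,v)-q(Xv,w).
\]

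Finally, since $X\in\Fg$ satisfies $q(Xu,z)=-q(u,Xz)$, we have $q(Xw,v)=-q(w,Xv)$, and the $\epsilon$-Hermitian symmetry gives $q(w,Xv)=\epsilon\,\overline{q(Xv,w)}$. Hence $\epsilon\, q(Xw,v)=-\overline{q(Xv,w)}$, and substituting back yields
\[
\kappa(X,A_{v,w})=-\tfrac12\bigl(\epsilon\,q(Xw,v)-q(Xv,w)\bigr)=\tfrac12\bigl(q(Xv,w)+\overline{q(Xv,w)}\bigr),
\]
as claimed. There is no real obstacle here; the only thing to be careful about is bookkeeping the $\epsilon$-Hermitian sign conventions (particularly whether $q$ is taken $E$-linear in the first or second argument) so that the interactions between $X\in\Fg$, the definition of $A_{v,w}$, and the trace formula line up correctly.
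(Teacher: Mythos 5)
Your proof is correct, but it takes a genuinely different route from the paper's. The paper expands $\kappa(X,A_{v,w})$ through the basis formula \eqref{eq:kappa} (the second, ``alternative'' expression for $\kappa$), substitutes the definition of $A_{v,w}$, then plugs the basis decomposition of $v$ and $w$ back into the resulting triple sum to collapse it to $2\kappa(X,A_{v,w})=\apair{X(v),w}-\eps\apair{X(w),v}$, finishing with $X=-X^*$ and the $\eps$-Hermitian symmetry. You instead go straight to the primary definition $\kappa(A,B)=\tr(AB^*)/2$, observe $A_{v,w}^*=-A_{v,w}$ (which is worth stating explicitly, as the paper uses it implicitly), decompose $A_{v,w}$ as a difference of two rank-one operators, and invoke the identity $\tr(f\otimes u)=f(u)$ to reach the same intermediate formula $\tr(X\,A_{v,w})=\eps\,q(Xw,v)-q(Xv,w)$. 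Both arguments then coincide in the last step. Your route is more coordinate-free and avoids the basis bookkeeping; the paper's is more elementary in the sense that it requires nothing beyond direct substitution, at the cost of a longer computation. One caveat your proof quietly relies on: the convention that $q$ is $E$-linear in the first argument (otherwise $q(\cdot,v)$ is not an $E$-linear functional and $A_{v,w}\notin\End_E(V)$); that convention is indeed the one forced by the definition of $A_{v,w}$, but it would be worth flagging, since you explicitly remark that keeping track of this convention is the one delicate point.
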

\begin{proof}
By \eqref{eq:kappa}, we have
\begin{align}
 2\kappa(X,A_{v,w})\label{eq:decomp-X-A}
=&\sum_{i=1}^{\Fr}\left(\eps \overline{\apair{e_{-i},v}}\apair{X(e_{i}),w}-\overline{\apair{e_{-i},w}}\apair{X(e_{i}),v}\right)\nonumber\\
&\qquad +\sum_{i=1}^{\Fr}\left( \overline{\apair{e_{i},v}}\apair{X(e_{-i}),w}-\eps \overline{\apair{e_{i},w}}\apair{X(e_{-i}),v}\right) \\
&\qquad\qquad +\sum_{j=1}^{\Fd_0}
\left(\eps\frac{\overline{\apair{e'_{j},v}}}{\apair{e'_{j},e'_{j}}}
\apair{X(e'_{j}),w}-
\frac{\overline{\apair{e'_{j},w}}}{\apair{e'_{j},e'_{j}}}\apair{X(e'_{j}),v}\right).\nonumber
\end{align}
According to the basis $\FB$ in \eqref{eq:basis},
for each $x\in V$, we may decompose $x$ as
\[
x=\sum^{\Fr}_{i=1}\apair{x,e_{-i}}e_{i}+\eps \sum^{\Fr}_{i=1}\apair{x,e_{i}}e_{-i}
+\sum_{i=1}^{\Fd_0}\frac{\apair{x,e'_{i}}}{\apair{e'_i,e'_i}}e'_i.
\]
Plugging the decomposition for $v$ and for $w$ into \eqref{eq:decomp-X-A}, we obtain
\[
2\kappa(X,A_{v,w})=\apair{X(v),w}-\eps \apair{X(w),v}.
\]
As $X=-X^*$, $\apair{X(w),v}=-\apair{w,X(v)}=-\eps\overline{\apair{X(v),w}}$.
This completes the proof.
\end{proof}

Recall the Lie algebra $\Fu_{X_{e,\varsigma}, 2}$ from \eqref{uX2}. By Lemma \ref{lm:kappa-X-A}, we obtain the homomorphism
\[
f\colon \Fu_{X_{e,\varsigma}, 2} \to F^m
\]
defined by, for $u\in \Fu_{X_{e,\varsigma}, 2}$,
\begin{align*}
f(u)=&(\kappa(u,A_{e_{2},\bar{\varsigma} e_{-1}}),\kappa(u,A_{e_{3},\bar{\varsigma} e_{-2}}),\dots,\kappa(u,A_{e_{m},\bar{\varsigma} e_{-(m-1)}}),\kappa(u,A_{e,\bar{\varsigma} e_{-m}}))\\
=&\frac{1}{2}(\tr_{E/F}\varsigma\apair{ue_{2}, e_{-1}},\dots,\tr_{E/F}\varsigma\apair{ue_{m}, e_{-(m-1)}},\tr_{E/F}\varsigma \apair{ue, e_{-m}})
\end{align*}
where $m=\lfloor\frac{p_1}{2}\rfloor$. Then $\kappa(\cdot, X_{e,\varsigma})$ is the composition of $f$ and the addition map $F^m\to F$. The character of $U_{X_{e,\varsigma}, 2}$ is given by
\begin{equation} \label{psix}
\psi_{X_{e,\varsigma},2}(\exp (u))=\psi^F(\kappa(u, X_{e,\varsigma})),\quad u\in \Fu_{X_{e,\varsigma}, 2}.
\end{equation}
We obtain that
\[
M_{X_{e,\varsigma}}=\Isom(V_{(1)}, q),
\]
where $V_{(1)}=V_0\cap e^\perp$. By the general construction in Section \ref{ssec-TJM}, we can formulate the twisted Jacquet modules of either Bessel type or Fourier-Jacobi type.

\subsubsection{Bessel type} 
If $\epsilon=1$ and $p_1>1$ is odd, then $\Fu_{X_{e,\varsigma}}=\Fu_{X_{e,\varsigma},2}$ and $\omega_{X_{e,\varsigma}}=\psi_{X_{e,\varsigma}}$. For a smooth representation $(\pi, V_\pi)$ of $G_n$, we define its twisted Jacquet module with respect to $(U_{X_{e,\varsigma}}, \psi_{X_{e, \varsigma}})$ by
\[
\CJ_{X_{e, \varsigma}}(\pi)=V_\pi /\overline{\{\pi(u)v-\psi_{X_{e, \varsigma}}(u)v\ \mid\  u\in U_{X_{e, \varsigma}}, v\in V_\pi\}},
\]
as a smooth module of $M^\circ_{X_{e,\varsigma}}$.

If $\epsilon=1$ and $p_1=1$, then the corresponding nilpotent orbit is zero. In this case, given any anisotropic vector $e\in V$, we  write 
$\CJ_{0_{e,\varsigma}}(\pi)$ for the restriction of $\pi$ to the connected stabilizer of $e$ in $G_n(F)$. 

\subsubsection{Fourier-Jacobi type} 
If $\epsilon=-1$ and $p_1$ is even, then we have the oscillator-Heisenberg representation
$(\omega_{\psi_{X_{e,\varsigma}}}, V_{\psi_{X_{e, \varsigma}}})$ of $\wt{M}_{X_{e,\varsigma}}\ltimes U_{X_{e,\varsigma}}$. For a smooth representation
$(\pi, V_\pi)$ of $G_n(F)$ (or $\wt{G}_n(F)$), we define the $U_{X_{e, \varsigma}}$-coinvariants
\[
\CJ_{X_{e, \varsigma}}(\pi)=V_\pi\,\wh{\otimes}\, V^\vee_{\psi_{X_{e,\varsigma}}}/\overline{\{\pi\otimes \omega^\vee_{\psi_{X_{e,\varsigma}}}(u)v-v\ \mid\  u\in U_{X_{e, \varsigma}}, v\in V_\pi
\,\wh{\otimes}\, V^\vee_{\psi_{X_{e,\varsigma}}}\}},
\]
as a smooth module of $\wt{M}_{X_{e,\varsigma}}$ (or $M_{X_{e,\varsigma}}$).

If $\epsilon =-1$ and $p_1=1$, then the corresponding nilpotent orbit is zero. In this case, we write $\omega_{\psi_{0_{e, \varsigma}}}$ for the oscillator representation 
of $\wt{G}_n(F)$ and write $\CJ_{0_{e,\varsigma}}(\pi)$ for the restriction of $\pi\,\wh{\otimes}\,\omega_{\psi_{0_{e, \varsigma}}}$ to $\wt{G}_n(F)$ (or $G_n(F)$).

In the skew-Hermitian case, choose a splitting character $\mu: E^\times/\BN E^\times \to \BC^\times$ whose restriction to $F^\times$ is the quadratic character $\omega_{E/F}$. Then we may and do remove the double cover and use the oscillator-Heisenberg representation $(\omega_{\psi_{X_{e,\varsigma}},\mu},V_{\psi_{X_{e,\varsigma}},\mu})$ of the unitary group $M_{X_{e,\varsigma}}$ in the above.

\subsection{Explicit data}\label{ssec-ED}

We are now ready to write down the exact information that we need for this paper from the local Gan-Gross-Prasad conjecture (\cite{GGP12}). We will do this case by case, in order to take into account the compatibility of the local conjectures (\cite[\S18]{GGP12}) in our formulation through the rationality of nilpotent orbits.

\subsubsection{Special orthogonal groups}\label{sec:GGP-SO}
Let $(\varphi, \phi) \in \Phi_\gen(\SO(V^*)\times \SO(W^*))$, where $V^*\supset W^*$ is a relevant pair of orthogonal spaces of dimensions $\Fn>\Fn'$ with opposite parities. Then the local Gan-Gross-Prasad conjecture asserts that up to isometry there is a unique relevant pure inner form $\SO(V)\times \SO(W)$ and  a unique pair of representations $(\pi, \pi')\in \Pi_\varphi(\SO(V))\times \Pi_\phi(\SO(W))$ up to $\sim_{\rm c}$ equivalence,  such that
\[
\Hom_{P_{X_{e,\varsigma}}}(\pi\boxtimes \pi', \psi_{X_{e,\varsigma}})\neq 0,
\]
with  $p_1=\Fn-\Fn'$, $m=\lfloor p_1/2\rfloor$, $W^\perp=\BH^m+F e$ and $X_{e, \varsigma}\in \CO^{\st}_{p_1}$. Here and below, $\BH$ denotes a hyperbolic plane (over $E$).
Moreover, the pair $(\pi, \pi')$ is determined by
\[
\pi=\pi_a(\varphi, \chi_{\varphi,\phi}),\quad \pi'=\pi_a(\phi,\chi_{\phi,\varphi}),
\]
where $(\chi_{\varphi, \phi}, \chi_{\phi, \varphi})$ is given by \eqref{char-O}, and the (weak) local Langlands correspondence is via $\iota_a$ with
\[
a=-\disc(V)\disc(W)=(-1)^\Fn\langle e, e\rangle.
\]

\subsubsection{Symplectic-metaplectic groups}   \label{sec:GGP-Sp}
Let $(\varphi, \phi) \in \Phi_\gen(\Sp(V)\times \Mp(W))$ or $\Phi_\gen(\Mp(V)\times \Sp(W))$, where $V\supset W$ is a pair of symplectic spaces of dimensions  $2n\geq 2n'$. Then the local Gan-Gross-Prasad conjecture asserts that for $\varsigma\in F^\times$,  there is  a unique pair of representations $(\pi, \pi')\in \Pi_\varphi(\Sp(V))\times \Pi_\phi(\Mp(W))$ or $\Pi_\varphi(\Mp(V))\times \Pi_\phi(\Sp(W))$ such that
\[
\Hom_{\wt{P}_{X_{e,\varsigma}}}(\pi\boxtimes \pi', \omega_{\psi_{X_{e, \varsigma}}})\neq 0,
\]
with $p_1=2n-2n'$, $m=p_1/2$, $W^\perp =\BH^m$ and $X_{e, \varsigma}\in \CO^{\st}_{p_1}$.
Moreover, the pair $(\pi, \pi')$ is determined by
\[
\pi=\pi_1(\varphi, \chi^a_{\varphi,\phi}),\quad \pi'=\pi_1(\phi,\chi^a_{\phi,\varphi}),
\]
where  $(\chi^a_{\varphi, \phi}, \chi^a_{\phi, \varphi})$ is given by \eqref{char-spmp}, $a=(-1)^{m-1}\varsigma$, and the local Langlands correspondence is via $\iota_1$. The above is a reformulation of \cite[Proposition 18.1]{GGP12}.

\subsubsection{Unitary groups: Bessel type}  \label{sec:GGP-U-B}
Let $(\varphi, \phi) \in \Phi_\gen(\RU(V^*)\times \RU(W^*))$, where $V^*\supset W^*$ is a relevant pair of Hermitian spaces of dimensions $\Fn>\Fn'$ with opposite parities. Then the local Gan-Gross-Prasad conjecture asserts that  
up to isometry there is a unique relevant pure inner form $\RU(V)\times \RU(W)$ and  a unique pair of representations $(\pi, \pi')\in \Pi_\varphi(\RU(V))\times \Pi_\phi(\RU(W))$,  such that
\[
\Hom_{P_{X_{e,\varsigma}}}(\pi\boxtimes \pi', \psi_{X_{e,\varsigma}})\neq 0,
\]
with  $p_1=\Fn-\Fn'$, $m=\lfloor p_1/2\rfloor$, $W^\perp=\BH^m+E e$ and $X_{e, \varsigma}\in \CO^{\st}_{p_1}$.
Moreover, the pair $(\pi, \pi')$ is determined by
\[
\pi=\pi_a(\varphi, \chi_{\varphi,\phi}),\quad \pi'=\pi_{-a}(\phi,\chi_{\phi,\varphi}),
\]
where $(\chi_{\varphi, \phi}, \chi_{\phi, \varphi})$ is given by \eqref{char-U}, and 
\[
a=(-1)^{\Fn-1}\disc(V)\disc(W) = \langle  e,e \rangle.
\]

\subsubsection{Unitary groups: Fourier-Jacobi type} \label{sec:GGP-U-FJ}
Fix a splitting character $\mu$ that is used to define the oscillator-Heisenberg representation. Let $(\varphi, \phi) \in \Phi_\gen(\RU(V^*)\times \RU(W^*))$, where $V^*\supset W^*$ is a relevant pair of skew-Hermitian spaces of dimensions $\Fn \geq \Fn'$ with the same parity. Then the local Gan-Gross-Prasad conjecture asserts that for $\varsigma\in F^\times$,
up to isometry there is a unique relevant pure inner form $\RU(V)\times \RU(W)$ and  a unique pair of representations $(\pi, \pi')\in \Pi_\varphi(\RU(V))\times \Pi_\phi(\RU(W))$,  such that
\[
\Hom_{P_{X_{e,\varsigma}}}(\pi\boxtimes \pi', \omega_{\psi_{X_{e,\varsigma}},\mu})\neq 0,
\]
with  $p_1=\Fn-\Fn'$, $m=p_1/2$, $W^\perp =\BH^m$ and $X_{e, \varsigma}\in \CO^{\st}_{p_1}$.
Moreover, the pair $(\pi, \pi')$ is determined by
\[
\pi=\pi_1(\varphi, \chi_{\varphi(\mu^{-1}),\phi}\cdot \eta_a),\quad \pi'=\pi_1(\phi,\chi_{\phi,\varphi(\mu^{-1})}\cdot\eta_a),
\]
where $(\chi_{\varphi(\mu^{-1}), \phi}, \chi_{\phi, \varphi(\mu^{-1})})$ is given by \eqref{char-U},
$
a= (-1)^{m-1}\varsigma,
$
and the local Langlands correspondence is via $\iota_1$.

\

The above recipes for unitary groups can be easily verified using  \cite[\S17]{GGP12} and \cite{GI16}, and the properties of local Langlands correspondence and local root numbers that we discussed earlier.


\section{Wavefront Sets}\label{sec-AWFS}


With the preparation in Sections \ref{sec-CGRNO}, \ref{sec-LLC},  and \ref{ssec-LGGP}, 
we are ready to enter the the main theme of this paper. We develop the method of consecutive descents of enhanced $L$-parameters and introduce combinatorial objects that carry the $F$-rational structures or arithmetic data.
This leads to our definition of {\it arithmetic wavefront sets} for enhanced $L$-parameters 
of generic type. We study the basic structures of the arithmetic wavefront sets and discuss the Wavefront Set Conjecture.

\subsection{Descent of enhanced $L$-parameters}\label{ssec-LPD}

Recall that $E=F$ or $F(\delta)$, and $c\in \textrm{Gal}(E/F)$ is the identity or Galois conjugation, respectively. To unify the notations, denote  $\widetilde{\Phi}_\gen(G_n^*)=\Phi_\gen(G_n^*)$ if $E=F$, and $\widetilde{\Phi}_\gen(G_n^*)= \Phi_\gen(G_n^*)\bigcupdot\Phi_\gen(G_n^*)\otimes\mu$ the set of all $\mathfrak{n}$-dimensional conjugate self-dual $\CL_E$-representations if $E=F(\delta)$, where $\mu$ is any  character of $E^\times$ such that $\mu|_{F^\times}=\omega_{E/F}$  is the quadratic character of $\CZ=F^\times/\BN E^\times$ given by the local class field theory. Recall from
Section \ref{ssec-LVP} that if $E=F(\delta)$ then $\Phi_\gen(G_n^*)$ consists of conjugate self-dual $\CL_E$-representations  of signature $(-1)^{\Fn-1}$. This modification is made to avoid the choice of a splitting character $\mu$ in the Fourier-Jacobi model of  unitary groups. By abuse of terminology, elements of $\wt{\Phi}_\gen(G_n^*)$ will be also called generic $L$-parameters. 

Let $\varphi\in\widetilde{\Phi}_\gen(G_n^*)$ be an $L$-parameter given by the form \eqref{edec}. For $\chi\in \widehat{\CS_\varphi}$, define its $\CZ$-orbit
\[
\ScO_\CZ(\chi)=\{\chi\cdot\eta_z \in \widehat{\CS_\varphi}\ \mid\ z\in\CZ\}.
\]
We are going to define the notion of parameter descent  for the enhanced $L$-parameter  $(\varphi,\chi)$ with $\varphi\in \widetilde{\Phi}_\gen(G_n^*)$ and $\chi\in
\widehat{\CS_\varphi}$, along with an $\Fn$-dimensional $\epsilon$-Hermitian vector space $(V, q)$ over $E$.

As in Section \ref{ssec-CGIF}, let $(V, q)$ be an $\Fn$-dimensional $\epsilon$-Hermitian vector space over $E$. Let $\Fl$ be an integer with $0<\Fl\leq \Fn$ such that
$
(-1)^{\Fl-1}=\epsilon
$
if $E=F$. There is no restriction on the parity of $\Fl$ if $E=F(\delta)$. That is, we will consider both Bessel and Fourier-Jacobi descents for the unitary group case. 
Consider relevant pairs of $F$-quasisplit classical groups in the sense of the local Gan-Gross-Prasad conjecture:
\[
(G_n^*, H_{\lfloor(\mathfrak{n}-\Fl)/2\rfloor}^*)=\begin{cases}
(\SO_\mathfrak{n}, \SO_{\mathfrak{n}-\Fl}), & \textrm{if }E=F\textrm{ and }\epsilon=1,\\
(\Sp_{2n}, \Mp_{2n-\Fl}) \textrm{ or }(\Mp_{2n}, \Sp_{2n-\Fl}), & \textrm{if }E=F\textrm{ and }\epsilon=-1,\\
(\RU_\mathfrak{n}, \RU_{\mathfrak{n}-\Fl}), & \textrm{if }E=F(\delta).
\end{cases}
\]
If $\Fl=\Fn$, we regard $H_{\lfloor(\mathfrak{n}-\Fl)/2\rfloor}^*$ as the trivial group. Note that $\Fl$ is odd in the Bessel case and is even in the Fourier-Jacobi case.

\begin{defn}[$L$-parameter Descent]\label{defn:pd}
For any given enhanced $L$-parameter $(\varphi,\chi)$ with 
$\varphi\in \widetilde{\Phi}_\gen(G_n^*)$ and $\chi\in\widehat{\CS_\varphi}$,
the $\Fl$-th  descent of the enhanced $L$-parameter $(\varphi,\chi)$ along $z\in \CZ$, which is denoted by $\FD_{\Fl}^z(\varphi,\chi)$, is defined to be the set of contragredients $(\wh\phi, \wh{\chi'})$ of all enhanced $L$-parameters $(\phi,\chi')$ with 
$\phi\in \widetilde{\Phi}_\gen(H_{\lfloor(\mathfrak{n}-\Fl)/2\rfloor}^*)$ and $\chi'\in\wh{\CS_\phi}$ satisfying the following conditions:
\begin{enumerate}
\item  the type of $\phi$ is opposite to that of $\varphi$, and 
\item the following 
\[
(\chi^z_{\varphi,\phi}, \chi^z_{\phi, \varphi} ) = (\chi, \chi'),
\]
holds, where $(\chi^z_{\varphi,\phi},\chi^z_{\phi,\varphi})$ is defined, case by case, as in Section {\rm \ref{ssec-DC}}.
\end{enumerate}
The $\Fl$-th  descent of the enhanced $L$-parameter $(\varphi,\chi)$  is defined to be
\begin{equation}\label{Fl-pd}
\FD_\Fl(\varphi, \chi) := \bigcupdot_{z\in \CZ} \FD_\Fl^z(\varphi, \chi).
\end{equation}
\end{defn}

Note that condition (1) in Definition \ref{defn:pd} is automatic when $E=F$, and is required when $E=F(\delta)$ due to our modification $\widetilde\Phi_
\gen(G_n^*)$.
Recall that the contragredient $(\wh\phi,\wh{\chi'})$ of $(\phi,\chi')$ is defined as in Section \ref{ssec-C} and is explicitly given in Proposition \ref{prop:dualdata}, 
case by case.  
We note from Section \ref{ssec-DC} that except for the symplectic-metaplectic case, it holds that
\begin{equation} \label{chiz}
\chi^z_{\varphi,\phi} = \chi_{\varphi, \phi}\cdot \eta_z.
\end{equation}
We will see in Section \ref{sec-LPDC} that when $F=\BR$, for the character $\chi_{\varphi, \phi}^z$ given by \eqref{char-spmp}  in the symplectic-metaplectic case,
equation \eqref{chiz} also holds.

\begin{prop}\label{prop:DNE}
For any enhanced $L$-parameter $(\varphi,\chi)$ with $\varphi\in\widetilde{\Phi}_\gen(G_n^*)$ and $\chi\in\wh{\CS_\varphi}$, there exists an integer $\Fl$ with $0<\Fl\leq\Fn$ such that
the $\Fl$-th descent $\FD_\Fl(\varphi, \chi)$ is non-empty.
\end{prop}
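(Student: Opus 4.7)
The plan is to prove existence by explicit construction: for suitable $\Fl$, I will build a generic $L$-parameter $\phi\in\widetilde{\Phi}_\gen(H_{\lfloor(\Fn-\Fl)/2\rfloor}^*)$ of opposite type to $\varphi$ together with an element $z\in\CZ$ such that $\chi^z_{\varphi,\phi}=\chi$. Setting $\chi':=\chi^z_{\phi,\varphi}$ and passing to contragredients then places $(\wh\phi,\wh{\chi'})$ in $\FD_\Fl^z(\varphi,\chi)\subseteq\FD_\Fl(\varphi,\chi)$, yielding the claim.

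First I would write $\varphi=\bigoplus_{i\in\RI}m_i\varphi_i$ as in \eqref{edec}, so that $\chi\in\wh{\CS_\varphi}$ is prescribed by its values $\chi(e_i)\in\{\pm1\}$ on the generators $e_i$ of \eqref{eq:e-i} for $i\in\RI_\gp$, subject to the parity constraint noted after \eqref{2grp} when $\CS_\varphi$ is an index-two subgroup of $A_\varphi$. By the explicit formulas \eqref{char-O}, \eqref{char-spmp}, \eqref{char-U} of Section~\ref{ssec-DC} combined with \eqref{eta}, the value $\chi^z_{\varphi,\phi}(e_i)$ factors as a product of (a) a local root number $\varepsilon(\varphi_i\otimes\phi,\psi)$ for a suitable additive character, (b) correction factors built from $\det\varphi_i$, $\det\phi$, $\dim\varphi_i$, $\dim\phi$, and (c) a Hilbert symbol contribution from $\eta_z$. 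Choosing $\Fl$ of the correct parity (namely $(-1)^{\Fl-1}=\epsilon$ when $E=F$, with no restriction in the unitary case), I would then take $\phi$ to be a direct sum of characters of $\CL_E$ of opposite type to $\varphi$ of dimension dictated by $\Fl$, exploiting the standard behaviour of local $\varepsilon$-factors under twists of varying ramification to prescribe the signs $\varepsilon(\varphi_i\otimes\phi)$ independently for $i\in\RI_\gp$. After matching these signs, the remaining discrepancy lies in a single $\eta_z$-orbit and is absorbed by an appropriate choice of $z\in\CZ$.

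The principal obstacle is the case-by-case bookkeeping across the four setups (special orthogonal, symplectic--metaplectic, unitary Bessel, unitary Fourier--Jacobi). The symplectic--metaplectic case is particularly delicate because, by \eqref{char-spmp}, the $z$-twist replaces $\varphi$ by $\varphi(z)$, so the choices of $\phi$ and $z$ must be coupled rather than sequential; an additional Hilbert symbol factor $(z,-1)_F^{\dim\varphi_i/2}$ must be absorbed into the construction. A further subtlety arises when $\CS_\varphi$ has index two in $A_\varphi$, where the allowed range of $\chi$ is precisely the locus cut out by the parity condition on $\sum_{i\in\RI_\gp}e_i\dim\varphi_i$; this cut is exactly matched by the simultaneous parity constraint imposed on $\dim\phi$ by the opposite-type requirement, which is why the construction remains feasible. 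As an alternative, one may appeal to the local Gan--Gross--Prasad theorem for generic parameters (established in the works cited in Section~\ref{ssec-LGGP}): applied to $\pi=\pi_a(\varphi,\chi)$, it furnishes some dual $\phi$ of opposite type and some $\pi'\in\Pi_\phi[H^*]$ with non-zero Bessel or Fourier--Jacobi period, which by the character identities recalled in Section~\ref{ssec-ED} forces $(\chi,\chi')=(\chi^z_{\varphi,\phi},\chi^z_{\phi,\varphi})$ for some $z\in\CZ$ and hence the non-emptiness of $\FD_\Fl(\varphi,\chi)$.
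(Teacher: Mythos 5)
The paper splits the proof along the archimedean/non-archimedean divide: the archimedean case is Proposition~\ref{prop:DNE-A}, which explicitly constructs a discrete $\phi$ of the correct dimension by choosing $\beta_j$ in the ``gaps'' of the sign alternating set $\sgn(\chi)$ and then reading off the root numbers from the closed formulas of Section~\ref{sec-LPDC}; the non-archimedean case is Corollary~\ref{cor:DNE-NA}, which follows from the proof of Proposition~\ref{prop:LDPT} in Appendix~\ref{App-A} — namely $\FD_1(\varphi,\chi)\neq\varnothing$ via~\cite{JZ18} Proposition~1.6 for orthogonal/unitary groups, and $\FD_2(\varphi,\chi)\neq\varnothing$ for the symplectic--metaplectic case deduced from the orthogonal statement $\FD_3\neq\varnothing$, which is itself proved by a genuinely \emph{global} argument (Fourier expansion of a cuspidal automorphic form on $\SO_\Fn(\BA)$, Lemma~\ref{claim}).

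Your first route (direct construction of $\phi$ as a sum of characters, matching signs of $\varepsilon(\varphi_i\otimes\phi)$) is essentially what the paper does in the archimedean case, where the $\varepsilon$-factor calculus of Section~\ref{sec-LRN} lets one place $\beta_j\in(\alpha_{i_j},\alpha_{i_j+1})$ and the resulting formulas for $\chi_{\varphi,\phi}$ give exactly $\chi\cdot\eta_{z(\chi)}$. Over a $p$-adic field, however, the key step you assert — ``exploiting the standard behaviour of local $\varepsilon$-factors under twists of varying ramification to prescribe the signs $\varepsilon(\varphi_i\otimes\phi)$ independently for $i\in\RI_\gp$'' — is a genuine gap. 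For a fixed family $\{\varphi_i\}$ the root numbers $\varepsilon(\varphi_i\otimes\xi)$ are far from freely prescribable as $\xi$ ranges over characters, and the paper does not attempt such a construction non-archimedeanly. Relatedly, your claim that ``the remaining discrepancy lies in a single $\eta_z$-orbit and is absorbed by a choice of $z$'' can fail: by \eqref{eta}, when $E=F$ and all $\det\varphi_i$ are trivial, $\eta_z={\mathds 1}$ for every $z$ and the $\CZ$-orbit of $\chi$ is a singleton, so there is nothing to absorb.

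Your second route (invoke the local Gan--Gross--Prasad theorem for $\pi=\pi_a(\varphi,\chi)$) is closer to the paper's $p$-adic proof, but there is still a gap: the local GGP theorem as recalled in Section~\ref{ssec-ED} is a \emph{uniqueness} statement — for a given \emph{pair} of generic parameters $(\varphi,\phi)$ it identifies the unique $(\pi,\pi')$ in the product of Vogan packets with a nonzero Bessel/Fourier--Jacobi period. It does \emph{not} by itself furnish, for a given $\pi$, an opposite-type generic $\phi$ and $\pi'\in\Pi_\phi[H^*]$ with a nonzero period; one must first show that some twisted Jacquet module $\CJ_X(\pi)$ attached to a hook partition $[p_1,1^{\Fn-p_1}]$ is nonzero and that the representation it supports lies in a packet with a generic $L$-parameter. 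That existence statement is precisely what~\cite{JZ18} Proposition~1.6 supplies in the Bessel case, and what the global Fourier-expansion argument of Lemma~\ref{claim} supplies in the case needed to treat $\Sp_{2n}/\Mp_{2n}$. Without that input, the appeal to local GGP does not close the argument.
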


The archimedean case of Proposition \ref{prop:DNE} will be given in Proposition \ref{prop:DNE-A}. The non-archimedean case of Proposition \ref{prop:DNE} (Corollary \ref{cor:DNE-NA}) follows from the the proof of Proposition \ref{prop:LDPT}, which will be given in Appendix 
\ref{App-A}.

\quad

The {\it first occurrence index} of the descents of the enhanced $L$-parameter $(\varphi, \chi)$ is defined to be
\begin{equation} \label{FO}
\Fl_0=\Fl_0(\varphi, \chi):=\max\{0<\Fl\leq \Fn\ \mid\ \FD_\Fl(\varphi, \chi)\neq \varnothing\};
\end{equation}
and the $\Fl_0$-th  descent $\FD_{\Fl_0}(\varphi, \chi)$ is called the {\it first descent} of the enhanced $L$-parameter $(\varphi,\chi)$.
At the first occurrence index $\Fl_0=\Fl_0(\varphi, \chi)$, the first descent $\FD_{\Fl_0}(\varphi, \chi)$ has much better property, which is explained below.
As in \cite{MT02, JZ18}, we recall that an $L$-parameter $\varphi$ as in \eqref{decomp} or an enhanced $L$-parameter $(\varphi,\chi)$ is called discrete if every irreducible summand $\varphi_i$ in \eqref{decomp} is of good parity and has multiplicity $m_i=1$. 

\begin{thm}[Discreteness of First Descent]\label{thm:DFD}
For any enhanced $L$-parameter $(\varphi,\chi)$ with $\varphi\in\widetilde{\Phi}_\gen(G_n^*)$ and $\chi\in\wh{\CS_\varphi}$, the first descent $\FD_{\Fl_0}(\varphi, \chi)$ at 
the first occurrence index $\Fl_0=\Fl_0(\varphi, \chi)$ contains only discrete enhanced $L$-parameters. 
\end{thm}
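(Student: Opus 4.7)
The plan is to argue by contradiction using the maximality of $\Fl_0$. Recall that $(\phi,\chi')$ is \emph{discrete} precisely when $\phi$ is multiplicity-free and every irreducible summand of $\phi$ is (conjugate) self-dual of \emph{good parity} (that is, of the same type as $\phi$ itself); equivalently, $S_\phi$ is finite. So suppose that some $(\wh\phi,\wh{\chi'})\in \FD_{\Fl_0}^{z}(\varphi,\chi)$ has $\phi$ non-discrete. Then $\phi$ admits a decomposition $\phi=\phi'\oplus\tau$, where $\tau$ falls into one of three "removable" shapes:
\begin{itemize}
\item[(i)] $\tau=\rho\oplus{}^c\rho^\vee$ with $\rho$ not (conjugate) self-dual, i.e.\ $\rho$ indexed by $\RI_\nsd$;
\item[(ii)] $\tau=\rho$ an irreducible (conjugate) self-dual representation of type opposite to $\phi$, i.e.\ $\rho$ indexed by $\RI_\bp$;
\item[(iii)] $\tau=\rho\oplus\rho$ with $\rho$ of good parity occurring in $\phi$ with multiplicity $\geq 2$.
\end{itemize}
I would show that in each case, $(\varphi,\chi)$ also descends to an enhanced parameter based on $\phi'$ at level $\Fl_0+\dim\tau>\Fl_0$, contradicting the definition of the first occurrence index in \eqref{FO}.

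The verification is a root-number bookkeeping. Using multiplicativity of local $\varepsilon$-factors in the second variable, one factors $\chi_{\varphi,\phi}=\chi_{\varphi,\phi'}\cdot \chi_{\varphi,\tau}$, with the analogous factorization (in the other variable) for $\chi_{\phi,\varphi}$; under the canonical embedding $\CS_{\phi'}\hookrightarrow\CS_\phi$ (in which the generator $e_\rho$ for the piece $\tau$ is omitted), the restriction of $\chi'$ to $\CS_{\phi'}$ provides the candidate character $\chi''$. In case (i), $\varepsilon(\varphi_i\otimes\tau)=\varepsilon(\varphi_i\otimes\rho)\varepsilon(\varphi_i\otimes{}^c\rho^\vee)$, and the local functional equation forces this to equal $(\det\varphi_i)(\cdot)^{\dim\rho}$ up to a global sign depending only on $\dim\rho$; in case (iii), $\varepsilon(\varphi_i\otimes(\rho\oplus\rho))=\varepsilon(\varphi_i\otimes\rho)^2$ is a square; and in case (ii), the bad-parity hypothesis makes $\varepsilon(\varphi_i\otimes\rho)$ computable via the conjugate-self-duality of opposite type, yielding exactly an $\eta_{z''}$ twist as in \eqref{eta}. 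In every case, $\chi_{\varphi,\phi'}$ and $\chi$ differ by a character of the form $\eta_{z''}$, so after adjusting the ambient element of $\CZ$ from $z$ to $zz''$ (and checking the analogous compatibility on the $\chi_{\phi,\varphi}$-side), we land in $\FD_{\Fl_0+\dim\tau}^{zz''}(\varphi,\chi)$, producing the required nonempty higher descent.

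The main obstacle will be the uniformity of this computation across the four families (orthogonal, symplectic--metaplectic, Hermitian unitary, skew-Hermitian unitary) and the careful tracking of the twists $\eta_z$ and the Whittaker normalization $a$ as encoded in Proposition~\ref{prop:LLC}. The symplectic--metaplectic case is the most delicate, because the character $\chi^z_{\varphi,\phi}$ in \eqref{char-spmp} depends on $z$ via $\varphi(z)$ in addition to the $\eta_z$ twist; here one must verify that the canonical identification $\CS_\varphi\cong\CS_{\varphi(z)}$ interacts correctly with the removal of $\tau$. A closely analogous contradiction argument is carried out in \cite[Theorem 4.6]{JZ18} for the $\SO$-case, and the strategy here is to extend that template case-by-case, using the explicit formulas recorded in Section~\ref{ssec-DC} and Section~\ref{ssec-ED}. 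Once no such removable $\tau$ can exist, $\phi$ is multiplicity-free with every summand of good parity, which by definition means $(\phi,\chi')$ is discrete; equivalently, $(\wh\phi,\wh{\chi'})$ is discrete by Proposition~\ref{prop:dualdata}, completing the proof.
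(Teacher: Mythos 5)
Your overall strategy — exploit the maximality of $\Fl_0$ by removing a piece from $\phi$ and showing the descent propagates to a strictly larger index — is exactly the strategy of the paper, and is also the strategy already visible in \cite[Theorem 4.6]{JZ18}. The paper, however, performs the removal in one shot: it sets
$\phi_o := \bigoplus_{i'\in \RI'_\gp,\, m_{i'}\text{ odd}}\phi_{i'}$
(the ``discrete part'' of $\phi$), observes that the complement $\phi' := \phi\ominus\phi_o$ is always of the single uniform shape $\sigma\oplus{}^c\sigma^\vee$, and then proves $\chi_{\varphi,\phi'}={\mathds 1}$ in one stroke via \cite[Proposition 5.1]{GGP12}, which gives $\varepsilon(\varphi_i\otimes(\sigma\oplus{}^c\sigma^\vee))=\det(\varphi_i\otimes\sigma)(-1)$. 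All three of your removable shapes (i)--(iii) are special cases of $\sigma\oplus{}^c\sigma^\vee$, so the paper's reduction is both cleaner and uniform across the four families; in particular, the $\eta_z$-twist that you worry about in the symplectic--metaplectic case does not have to be tracked, because the identity $\chi_{\varphi,\phi}=\chi_{\varphi,\phi_o}$ is proved at the level of the untwisted distinguished characters, after which the $z$-twists cancel on both sides automatically.

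There is a genuine gap in your case (ii). Removing a \emph{single} copy of a bad-parity irreducible $\rho$ from $\phi$ does not produce an admissible target for a further descent: since $\rho$ is (conjugate) self-dual of type opposite to $\phi$, it must appear in $\phi$ with \emph{even} multiplicity, and after removing one copy the remaining multiplicity is odd, so $\phi\ominus\rho$ is no longer (conjugate) self-dual of the type required by $\widetilde{\Phi}_\gen(H^*_{\lfloor(\Fn-\Fl)/2\rfloor})$. Moreover, in the Bessel case $\dim\rho$ may be odd, which would even change the parity of $\Fl$. The fix is to remove $\rho\oplus\rho\ (\cong\rho\oplus{}^c\rho^\vee)$ instead of a single copy, so that case (ii) lands back in the unified shape $\sigma\oplus{}^c\sigma^\vee$ — exactly what the paper's one-shot construction of $\phi_o$ and $\phi'$ accomplishes implicitly. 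With this correction, your three-case iteration is equivalent to the paper's argument, only less economical; the paper also sidesteps any ambiguity about how $\varepsilon(\varphi_i\otimes\rho)^2$ interacts with the $\det$-factors in \eqref{char-O} by computing $\chi_{\varphi,\phi'}$ once for the aggregate complement.
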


\begin{proof}
By definition, the first descent $\FD_{\Fl_0}(\varphi, \chi)$ is non-empty. For any $(\wh{\phi},\wh{\chi'})\in \FD_{\Fl_0}(\varphi, \chi)$, we have to show that $\phi$ is a discrete $L$-parameter of 
$H_{\lfloor(\mathfrak{n}-\Fl_0)/2\rfloor}^*$. From Definition \ref{defn:pd}, both 
$\varphi$ and $\phi$ must be generic $L$-parameters, but be of opposite types. We may write 
them in the form 
\eqref{edec}, with index sets $\RI_\gp$ and $\RI'_\gp$ respectively. 
More explicitly, we may write $\phi$ as 
\[
\phi=\bigoplus_{i'\in \RI_\gp'}m_{i'}\phi_{i'} \oplus \bigoplus_{j' \in \RI_\bp'}2m'_{j'}\phi_{j'} \oplus \bigoplus_{k' \in \RI'_\nsd}m_{k'}(\phi_{k'}\oplus{}^c \phi_{k'}^\vee).
\]
Take the following summand of $\phi$:
\[
\phi_o := \bigoplus_{i'\in \RI'_\gp, \, m_{i'} \textrm{ is odd}} \phi_{i'}.
\]
It is clear that $\phi_o$ is a discrete $L$-parameter. It is enough to show that there exists a character $\chi_o\in\wh{\CS_{\phi_o}}$ such that the enhanced $L$-parameter $(\wh{\phi_o},\wh{\chi_o})$ belongs to a descent of enhanced $L$-parameter $(\varphi,\chi)$. In fact, if $(\wh{\phi_o},\wh{\chi_o})\in \FD_{\Fl_o}(\varphi, \chi)$ for some integer $\Fl_o$ with $0<\Fl_o\leq\Fn$, then from the definition of $\phi_o$ and 
by dimension counting, we must have that $\Fl_o=\Fl_0+\dim\phi-\dim\phi_o\geq\Fl_0$. Since $\Fl_0=\Fl_0(\varphi, \chi)$ is the first occurrence index of $(\varphi,\chi)$, we must have that $\Fl_o=\Fl_0$, 
which implies that $\phi=\phi_o$. Hence $\phi$ is a discrete $L$-parameter. 

Now we are going to prove that $(\wh{\phi_o},\wh{\chi_o})\in \FD_{\Fl_o}(\varphi, \chi)$ for $\Fl_o:=\Fl_0+\dim\phi-\dim\phi_o\geq \Fl_0$ and some $\chi_o\in \wh{\CS_{\phi_o}}$. By assumption, $(\wh{\phi},\wh{\chi'})\in \FD_{\Fl_0}^z(\varphi,\chi)$ for some $z\in \CZ$, which gives that $\chi_{\varphi,\phi}^z=\chi$. 
Then it suffices to prove that
\begin{equation} \label{chi=chio}
\chi_{\varphi, \phi} = \chi_{\varphi, \phi_o},
\end{equation} 
because it implies that
\[
(\wh{\phi_o}, \wh{\chi_o})\in \FD_{\Fl_o}^z(\varphi, \chi)\subset \FD_{\Fl_o}(\varphi,\chi)
\]
with $\chi_o:=\chi_{\phi_o,\varphi}^z$. Put
$
\phi':=\phi\ominus\phi_o,
$
which is of the form
\begin{equation} \label{phi'}
\phi'=\sigma\oplus{}^c\sigma^\vee
\end{equation}
for some representation $\sigma$ of $\CL_E$. Since 
$\chi_{\varphi,\phi} = \chi_{\varphi,\phi_o}\cdot \chi_{\varphi, \phi'}$, to prove \eqref{chi=chio} we only need to show that
\begin{equation} \label{chi'trivial}
\chi_{\varphi, \phi'}={\mathds 1}\in\wh{\CS_\varphi},
\end{equation}
the trivial character of $\CS_\varphi$. We prove \eqref{chi'trivial}
for $E=F$ or $F(\delta)$ separately.

\quad

{\bf Case 1.} $E=F$.  We only prove \eqref{chi'trivial} for special orthogonal groups, and the proof for the symplectic-metaplectic case is similar.  Note that in this case $\phi'=\sigma\oplus\sigma^\vee$ and in particular $\det\phi'=1$. Thus from \eqref{char-O} we have that
\[
 \chi_{\varphi,\phi'}((e_i)_{i\in \RI_\gp})= \prod_{i\in \RI_\gp}\left(\varepsilon(\varphi_i\otimes\phi') (\det \varphi_i)(-1)^{\dim \sigma}\right)^{e_i}.
\]
Since $\varphi_i$ is self-dual and $\phi'=\sigma\oplus\sigma^\vee$, by \cite[Proposition 5.1]{GGP12} we have that
\[
\varepsilon(\varphi_i\otimes\phi') = \det(\varphi_i\otimes \sigma)(-1) = (\det\varphi_i)(-1)^{\dim\sigma}(\det\sigma)(-1)^{\dim\varphi_i},
\]
which gives that
\begin{equation} \label{chi'formula}
\chi_{\varphi,\phi'}((e_i)_{i\in \RI_\gp})= \prod_{i\in \RI_\gp}\left( (\det \sigma)(-1)^{\dim \varphi_i}\right)^{e_i}.
\end{equation}
We have two cases:
\begin{itemize}
    \item $\varphi$ is symplectic. In this case $\dim\varphi_i$ is even. Hence following the convention in Section~\ref{ssec-DC} or \cite{GGP12}, we have 
    \[
    (\det \sigma)(-1)^{\dim\varphi_i}=(\det \sigma)((-1)^{\dim\varphi_i}) =1
    \]
    for each $i\in\RI_\gp$. From  \eqref{chi'formula}, we have that 
    \[
    \chi_{\varphi,\phi'}((e_i)_{i\in\RI_\gp})=1
    \]
    for any $(e_i)_{i\in\RI_\gp}\in\CS_\varphi$,  which gives  \eqref{chi'trivial}.
    
    \item $\varphi$ is orthogonal. In this case $(e_i)_{i\in\RI_\gp}\in \CS_\varphi$ if and only if $\sum_{i\in\RI_\gp}e_i\dim\varphi_i$ is even. Then by  \eqref{chi'formula} we have that
    \[
    \chi_{\varphi,\phi'}((e_i)_{i\in\RI_\gp}) = (\det\sigma)(-1)^{\sum_{i\in\RI_\gp}e_i\dim\varphi_i}
    =(\det\sigma)((-1)^{\sum_{i\in\RI_\gp}e_i\dim\varphi_i})
    =1
    \]
    for any $(e_i)_{i\in\RI_\gp}\in\CS_\varphi$,
    which gives  \eqref{chi'trivial}.
\end{itemize}

\quad

{\bf Case 2.} $E=F(\delta)$.  Recall from \eqref{char-U} that  
\[
\chi_{\varphi, \phi'}((e_i)_{i\in \RI_\gp}) = \prod_{i\in \RI_\gp}\varepsilon(\varphi_i\otimes\phi', \psi^E_2)^{e_i}.
\]
Since $\varphi_i$ is conjugate self-dual and $\phi'$ is of the form \eqref{phi'},
by \cite[Proposition 5.1]{GGP12} we have that
\[
\varepsilon(\varphi_i\otimes\phi',\psi^E_2) = \varepsilon\left((\varphi_i\otimes\sigma)\oplus{}^c(\varphi_i\otimes \sigma)^\vee, \psi^E_2\right)=1,
\]
which proves \eqref{chi'trivial}.
\end{proof}

It is worthwhile to point out that when $F=\BR$, Theorem \ref{thm:DFD} has a further refinement, which is given in Proposition \ref{prop:fd}. The significance of Theorem \ref{thm:DFD} 
in representation theory is discussed in \cite{CJLZ24} (See \cite[Theorems 1.3, 1.4, 1.5, 1.6]{CJLZ24} for instance).

\subsection{Wavefront set for enhanced $L$-parameters}\label{ssec-WFSA}

Recall from \eqref{psi-rel} that we have fixed the nontrivial additive character $\psi^F$ and $\psi^E$ of $F$ and $E$, respectively. As in Section \ref{ssec-RLLC}, for a classical group $G_n=\Isom(V,q)^\circ$ or $\Mp(V)$, a choice of
$a\in\CZ$ determines a local Langlands correspondence $\iota_a$ for $G_n$.

Let $\FT_a(G^*_n)$ be the set of triples $(\varphi,\chi,q)$, consisting of  $\varphi\in \widetilde{\Phi}_\gen(G_n^*)$, $\chi\in \widehat{\CS_\varphi}$ and an isometry class $q$ of $\Fn$-dimensional non-degenerate $\epsilon_q$-Hermitian form $(V,q)$ such that
\[
\pi_a(\varphi\otimes\mu,\chi)\in \Pi_F(G_n)
\]
with $G_n=\textrm{Isom}(V,q)^\circ$ or $\Mp(V)$, where $\mu$ is trivial or a splitting character according to whether $\varphi\in \Phi_\gen(G_n^*)$ or not  in the case that $E=F(\delta)$. Note that $q$ is determined by the enhanced $L$-parameter $(\varphi,\chi)$ 
and $a\in \CZ$
by means of the rationality of the local Langlands correspondence. We keep it in the notation in order to track the rationality of nilpotent orbits.

In order to define the wavefront set associated to each triple $(\varphi,\chi,q)$ in $\FT_a(G^*_n)$, 
we introduce a notion of {\it pre-tableaux associated with ordered partitions}, which aims to collect combinatorial data from the $L$-parameter descents (Definition \ref{defn:pd}). 

Let  $\underline{\Fl}:=(\ell_1,\ell_2,\dots,\ell_k)$ be an ordered partition of $\Fn$ (Definition \ref{defn:partition}).
Define a {\it pre-tableau} $\Fs_{\underline{\Fl}}$ associated to the given partition $\underline{\Fl}$
to be a sequence of pairs as follows:
\begin{align}\label{Fs}
\Fs_{\underline{\Fl}}=\left((\ell_1,q^h_{1}),(\ell_2,q^h_{2}),\dots, (\ell_k,q^h_{k})\right),
\end{align}
where $q^h_i$ is a $1$-dimensional $\epsilon_i$-Hermitian form with $\epsilon_i=\pm 1$. We call $\Fs_{\underline{\Fl}}$ {\it decreasing} if the partition $\udl{\Fl}$ is decreasing, i.e.
$\ell_1\geq\ell_2\geq\cdots\geq\ell_k>0$.
We call $\Fs_{\underline{\Fl}}$ {\it admissible} for $(V,q_V)$ if each $q^h_{i}$ is $(-1)^{\ell_i-1}\epsilon$-Hermitian and the $\epsilon$-Hermitian space
\[
\left(\bigoplus^k_{i=1}E\otimes_E E^{\ell_i}, \bigoplus^k_{i=1}q_i^h\otimes q_{\ell_i}\right)
\]
is isometric to $(V,q_V)$.

If $(\ell,q^h)$ is a pair consisting of a positive integer $\ell$ and a $1$-dimensional $\epsilon_0$-Hermitan form $q^h$ with $\epsilon_0=\pm 1$, we define
\[
(\ell,q^h)\star \Fs_{\underline{\Fl}}:=\left((\ell,q^h),(\ell_1,q^h_{1}),(\ell_2,q^h_{2}),\dots, (\ell_k,q^h_{k})\right)
\]
to be the pre-tableau associated to the ordered partition
$[\ell,\ell_1,\ell_2,\dots,\ell_k]$. 
If $\FS:=\{\Fs_{\udl{\Fl}}\}$ is a set of certain pre-tableaux $\Fs_{\udl{\Fl}}$ associated to some ordered partitions $\udl{\Fl}$, we define
\begin{align}\label{add-PT}
(\ell,q^h)\star \FS
:=\{(\ell,q^h)\star \Fs_{\underline{\Fl}} \ \mid \ \Fs_{\udl{\Fl}}\in \FS\}.
\end{align}By convention, if $\ell_1=0$ and $q^h_1$ is the zero form, we also put
\begin{equation}\label{compose0}
(\ell, q^h)\star\{(\ell_1, q^h_1)\} := \{(\ell, q^h)\}.
\end{equation}
For any triple $(\varphi,\chi,q)\in \FT_a(G_n^*)$, we are able to define the set of pre-tableaux associated with the given triple $(\varphi,\chi,q)$ inductively by means of the consecutive descents of enhanced $L$-parameters. 

For convenience, we introduce the following element of $\CZ$,
\[
[\disc(q)]:=\begin{cases} \disc(q)/\delta^\Fn, & \textrm{if }E=F(\delta) \textrm{ and }\epsilon_q=-1, \\
\disc(q), & \textrm{otherwise}.
\end{cases}
\]

\begin{defn}[Pre-tableaux for $(\varphi,\chi,q)$]\label{LYT}
For a given triple $(\varphi,\chi,q)\in \FT_a(G_n^*)$, the associated set of pre-tableaux, which is denoted by $\CT_a(\varphi,\chi,q)$, is defined by the following inductive process:
\begin{enumerate}
\item If $\Fn\leq 1$, we define $\CT_a(\varphi, \chi, q) := \{(\Fn, q^h_{(\Fn)})\}$, as in Definition \ref{defn:AYT}. 
\item If $\Fn\geq 2$, we define $\CT_a(\varphi, \chi, q)$ inductively  by the following formula:
\begin{equation} \label{YA}
\CT_a(\varphi,\chi,q):=\bigcup_{0<\Fl\leq \Fn} \bigcup_{z\in \CZ} \bigcup_{(\wh\phi,\wh{\chi'})\in\FD_{\Fl}^z(\varphi,\chi)}
 (\Fl,q^h_{(\Fl)}) \star  \CT_{a}(\wh\phi,\wh{\chi'},q'),
\end{equation}
where $(\wh\phi,\wh{\chi'},q')\in \FT_{a}(H_{\lfloor(\mathfrak{n}-\Fl)/2\rfloor}^*)$,
and $q_{(\Fl)}^h$'s are isometric classes of one-dimensional $(-1)^{\Fl-1}\epsilon_q$-Hermitian forms satisfying that
\begin{itemize}
	\item $q \cong q_{(\Fl)}\oplus q'$, where $q_{(\Fl)}= q_{(\Fl)}^h\otimes q_\Fl$ is as given in Definition \ref{defn:AYT} and $q'$ is the orthogonal complement. 
	\item $a$, $z$ and $q^h_{(\Fl)}$ are related by
	\[
	a \cdot z = \begin{cases} (-1)^\Fn \cdot \disc(q_{(\Fl)}^h), & \textrm{if }E=F, \\
	 \, [\disc(q^h_{(\Fl)})], & \textrm{if }E=F(\delta).
	 \end{cases}
	\]
\end{itemize}
\end{enumerate}
\end{defn}

Note that we apply the convention \eqref{compose0} in the inductive definition \eqref{YA} of $\CT_a(\varphi,\chi,q)$ when $\Fl=\Fn$. It will be useful to specify the one-dimensional sesquilinear form $q^h_{(\Fl)}$ in Definition \ref{LYT}  as follows. 

\begin{prop} \label{prop:rational}
Assume that $(\varphi,\chi,q)\in \FT_a(G_n^*)$,  $(\wh{\phi},\wh{\chi'})\in \FD_{\Fl}^z(\varphi,\chi)$ for some $0<\Fl\leq \Fn$ and $z\in\CZ$, and $q^h_{(\Fl)}$ is a one-dimensional 
sesquilinear form  such that $\CT_a(\varphi,\chi,q)$ contains a non-empty subset of the form
\[
(\Fl, q^h_{(\Fl)})\star \CT_{a}(\wh{\phi}, \wh{\chi'}, q')
\]
subject to the conditions in Definition \ref{LYT}.
Then $q^h_{(\Fl)}$ is determined explicitly as follows. 
\begin{enumerate}
    \item \label{FD-odd-SO-rational} If $G_n^*=\SO_{2n+1}^*$, then 
    \[
    \disc(q^h_{(\Fl)}) = \disc(q)\cdot \det(\phi). 
    \]
    In this case $z= -a \cdot \disc(q^h_{(\Fl)})$.
  \item \label{FD-rational} If $G_n^*\neq\SO_{2n+1}^*$,  then 
  \[
  [\disc(q^h_{(\Fl)})] = a\cdot z.
  \]
\end{enumerate}

\end{prop}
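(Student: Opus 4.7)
The proof is largely a matter of unpacking Definition~\ref{LYT}. The defining relation supplied there reads
\[
a\cdot z \;=\; \begin{cases} (-1)^\Fn\cdot \disc(q^h_{(\Fl)}), & E=F,\\ [\disc(q^h_{(\Fl)})], & E=F(\delta),\end{cases}
\]
and the crucial observation is that $\Fn$ is odd precisely when $G_n^*=\SO_{2n+1}^*$; in every other case $\Fn$ is even, so $(-1)^\Fn=1$.

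For part (2), assume $G_n^*\neq\SO_{2n+1}^*$. If $E=F(\delta)$, the defining relation above is already $[\disc(q^h_{(\Fl)})]=a\cdot z$. If $E=F$, so $G_n^*$ is $\SO_{2n}^*$, $\Sp_{2n}$, or $\Mp_{2n}$, then $\Fn$ is even, the bracket $[\cdot]$ reduces to $\disc$, and the defining relation again yields $\disc(q^h_{(\Fl)})=a\cdot z$.

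For part (1), with $G_n^*=\SO_{2n+1}^*$ one has $(-1)^\Fn=-1$, hence $a\cdot z=-\disc(q^h_{(\Fl)})$; multiplying by $a$ and using $a^2=1$ in $\CZ=F^\times/F^{\times 2}$ gives $z=-a\cdot \disc(q^h_{(\Fl)})$. For the identity $\disc(q^h_{(\Fl)})=\disc(q)\cdot\det(\phi)$, I plan to proceed in three steps: (i) a direct computation with the tensor product $q_{(\Fl)}=q^h_{(\Fl)}\otimes q_\Fl$ together with the normalization $\disc(q_\Fl)=1$ (valid since $\Fl$ is odd) yields $\disc(q_{(\Fl)})=(q^h_{(\Fl)})^\Fl = \disc(q^h_{(\Fl)})$ in $F^\times/F^{\times 2}$; (ii) multiplicativity of the discriminant along the orthogonal decomposition $q=q_{(\Fl)}\oplus q'$ gives $\disc(q)=\disc(q^h_{(\Fl)})\cdot \disc(q')$; (iii) invoke the standard identification $\det(\phi)=\disc(q')$ for the descent group $H^*=\SO(W)\cong \SO_{2m}$ with $m=n-(\Fl-1)/2$, whereby the determinant character of the $2m$-dimensional orthogonal $L$-parameter corresponds via local class field theory to the discriminant of the underlying quadratic space.

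The main obstacle is step (iii): one must take care with the outer-automorphism ambiguity of $\RO_{2m}/\SO_{2m}$ in the weak local Langlands correspondence used in this paper for even special orthogonal groups. The required relation, however, is a standard feature of the Langlands duality for $\SO_{2m}$, and it is internally consistent with the Gan-Gross-Prasad recipe recalled in Section~\ref{sec:GGP-SO} via the formula $a=-\disc(V)\disc(W)$, from which (iii) can be read off after matching conventions.
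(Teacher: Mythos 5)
Your proof is correct and follows essentially the same route as the paper: part (2) is a direct unpacking of the relation in Definition~\ref{LYT}, and part (1) combines $\disc(q^h_{(\Fl)})=\disc(q)\cdot\disc(q')$ from the orthogonal decomposition $q\cong q_{(\Fl)}\oplus q'$ with the local Langlands identity $\disc(q')=\det(\phi)$ for the even special orthogonal group at the next stage of the descent. (Two minor points you could tighten: in step (i) the expression $(q^h_{(\Fl)})^\Fl$ should read $\det(q^h_{(\Fl)})^\Fl\cdot\det(q_\Fl)$ with the sign bookkeeping that makes the cancellation work for $\Fl$ odd, and the outer-automorphism ambiguity you flag in step (iii) is in fact harmless, since conjugation by $\RO_{2m}\setminus\SO_{2m}$ fixes $\det(\phi)$ and $\disc(q')$, which is why the paper simply cites the correspondence.)
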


\begin{proof}
We explain the proof for  \eqref{FD-odd-SO-rational}. The other cases in \eqref{FD-rational} are direct consequences of  Definition \ref{LYT}.
If $G_n^*=\SO_{2n+1}^*$, then $(\phi, \chi')\in \FD_{\Fl}^z(\varphi,\chi)$
 is an enhanced $L$-parameter of an even special orthogonal group 
 $\SO(V', q')$. By the local Langlands correspondence,
 \[
 \disc(q') = \det(\phi).
 \]
The assertions of the proposition in this case follow from the fact that 
 \[
 \disc(q^h_{(\Fl)}) = \disc(q)\cdot \disc(q'),
 \]
 and the conditions in Definition \ref{LYT}.
\end{proof}

We denote by $\CL_a(\varphi,\chi,q)$ the subset of $\CT_a(\varphi,\chi,q)$ consisting of all decreasing pre-tableaux in $\CT_a(\varphi,\chi,q)$, which are called the {\it $L$-descent pre-tableaux} associated to $(\varphi,\chi,q)$. It is important to know that the set 
$\CL_a(\varphi,\chi,q)$ is non-empty for a given datum. 

\begin{prop}\label{prop:LDPT}
The set of $L$-descent pre-tableaux $\CL_a(\varphi,\chi,q)$ is non-empty for any given triple $(\varphi,\chi,q)\in \FT_{a}(G_n^*)$. 
\end{prop}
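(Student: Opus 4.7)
The plan is to proceed by induction on the dimension $\Fn$, aiming to construct a single decreasing pre-tableau in $\CT_a(\varphi,\chi,q)$ by always descending at the first occurrence index. The base case $\Fn\leq 1$ is immediate from clause (1) of Definition \ref{LYT}, since the unique pre-tableau $\{(\Fn, q^h_{(\Fn)})\}$ is vacuously decreasing. For the inductive step, Proposition \ref{prop:DNE} guarantees that the first occurrence index $\Fl_0=\Fl_0(\varphi,\chi)$ is well defined, so there exist $z\in\CZ$ and an enhanced $L$-parameter $(\wh\phi,\wh{\chi'})\in\FD_{\Fl_0}^z(\varphi,\chi)$. The associated triple $(\wh\phi,\wh{\chi'},q')$ lies in $\FT_a(H^*_{\lfloor(\mathfrak{n}-\Fl_0)/2\rfloor})$ with $\dim q' = \Fn-\Fl_0<\Fn$, so the inductive hypothesis furnishes at least one decreasing pre-tableau $\Fs\in\CL_a(\wh\phi,\wh{\chi'},q')$.

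The construction is then to form $(\Fl_0,q^h_{(\Fl_0)})\star\Fs\in\CT_a(\varphi,\chi,q)$, where $q^h_{(\Fl_0)}$ is determined by the compatibility conditions of Definition \ref{LYT} (as made explicit in Proposition \ref{prop:rational}). To conclude that this composite pre-tableau lies in $\CL_a(\varphi,\chi,q)$, I must verify that its first two parts are in non-increasing order, i.e.\ that $\Fl_0 \geq \Fl_1$, where $\Fl_1$ is the first part of the chosen $\Fs$. Since $\Fs$ arose from the inductive construction, $\Fl_1$ may be taken to be the first occurrence index $\Fl_0(\wh\phi,\wh{\chi'})$ of the descendant. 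Hence the entire argument reduces to the monotonicity claim
\[
\Fl_0(\wh\phi,\wh{\chi'})\ \leq\ \Fl_0(\varphi,\chi)
\]
for some choice of $(\wh\phi,\wh{\chi'})\in\FD_{\Fl_0}(\varphi,\chi)$.

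This monotonicity is the heart of the proof and will be the main obstacle. Its plausibility is supported by Theorem \ref{thm:DFD}, which asserts that $\phi$ is a discrete $L$-parameter. Discreteness forces each simple summand of $\phi$ to have multiplicity one, so the component group $\CS_\phi$ has a constrained structure that limits how large a further descent can be. Concretely, one would analyze the explicit formulas for the distinguished characters $\chi_{\wh\phi,\psi}$ of Section \ref{ssec-DC} together with the local root number identities used to prove Theorem \ref{thm:DFD}, and show that whenever $\FD_{\Fl'}(\wh\phi,\wh{\chi'})$ is non-empty for some $\Fl'$, one can match the corresponding local root number data back to a descent of $(\varphi,\chi)$ of length at least $\Fl'$; by definition of $\Fl_0(\varphi,\chi)$, this forces $\Fl'\leq \Fl_0(\varphi,\chi)$.

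The execution of this monotonicity step splits naturally along the dichotomy $F=\BR$ versus $F$ non-archimedean. In the archimedean case, the refined description of the first descent in Proposition \ref{prop:fd} makes $\phi$ and the admissible $\chi'$ essentially explicit, reducing the claim to a finite combinatorial check on the component groups and signs. In the non-archimedean case, where no such explicit archimedean classification is available, the natural route is to globalize $(\varphi,\chi)$ to a cuspidal automorphic representation, transfer the monotonicity question to a global statement about the first occurrence in an automorphic descent tower, and exploit the global argument outlined in Appendix \ref{App-A}. In either case, once monotonicity is secured, the induction closes and $\CL_a(\varphi,\chi,q)$ is non-empty.
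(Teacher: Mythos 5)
Your plan of always descending at the first occurrence index leads you straight into a monotonicity claim $\Fl_0(\wh\phi,\wh{\chi'})\leq \Fl_0(\varphi,\chi)$ that you correctly identify as the heart of the matter, but you treat it as a plausible consequence of discreteness that can be secured by "matching local root number data" or by globalization. This is a genuine gap. In the archimedean case that monotonicity is not a small check but is essentially the content of Proposition~\ref{prp:partition}~(1), whose proof requires the full strength of the explicit archimedean first-descent description (Proposition~\ref{prop:fd}) together with a case-by-case analysis producing the inequalities $p_1\geq p_2$. In the non-archimedean case no such explicit description of the first descent is available (Theorem~\ref{thm:DFD} gives only discreteness), and the paper does not prove --- and does not claim --- that the first-descent sequence yields a decreasing partition there. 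Your proposed globalization is also not the right tool: the global argument in Appendix~\ref{App-A} is engineered to show that a \emph{short} descent $\FD_3(\varphi,\chi)$ (resp.\ $\FD_1$, $\FD_2$) is non-empty, not to compare first occurrence indices across consecutive descents.

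The paper's actual proof sidesteps the monotonicity problem entirely by descending \emph{minimally} rather than maximally. For special orthogonal and unitary groups it cites \cite[Proposition 1.6]{JZ18} to conclude $\FD_1(\varphi,\chi)\neq\varnothing$; iterating gives a pre-tableau with partition $[1^\Fn]$, which is trivially decreasing. For the symplectic and metaplectic cases it reduces (via Lemma~\ref{claim}, proved by a global Fourier-expansion argument showing $\FD_3\neq\varnothing$ for $\SO_\Fn$, $\Fn\geq 5$) to showing $\FD_2(\varphi,\chi)\neq\varnothing$, yielding a pre-tableau with partition $[2^n]$, again automatically decreasing. In short: you do not need to find a maximal decreasing pre-tableau to prove non-emptiness of $\CL_a(\varphi,\chi,q)$; a constant partition suffices, and that is both far easier to obtain and avoids the monotonicity issue that your approach cannot resolve without essentially reproving (and in the non-archimedean case, newly proving) the much deeper structural results of Section~\ref{sec-PCR}.
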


The archimedean case of Proposition \ref{prop:LDPT} will follow from the explicit 
construction of the first descent $\FD_{\Fl_0}(\varphi,\chi)$ in Proposition \ref{prop:fd}.
The non-archimedean case of Proposition \ref{prop:LDPT} will be proved in  Appendix \ref{App-A} by using representation theory and the result from the local Gan-Gross-Prasad 
conjecture. 

\quad

We are going to construct admissible sesquilinear Young tableaux in $\CY(V,q)$ from $L$-descent pre-tableaux in $\CL_a(\varphi,\chi,q)$, which is a non-empty set by Proposition \ref{prop:LDPT}, by the composition of 
admissible sesquilinear Young tableaux for the $\Fn$-dimensional $\epsilon$-Hermitian space $(V,q)$. 

For any admissible decreasing pre-tableau 
\begin{align}\label{Fs-decreasing}
\Fs_{\udl{\Fl}}=\left((\ell_1,q^h_{1}),(\ell_2,q^h_{2}),\dots, (\ell_k,q^h_{k})\right)
\end{align}
for $(V,q)$, each pair $(\ell_i,q_i^h)$ is associated to 
a sesquilinear Young tableau
$\left([\ell_i],\{(E,q_i^h)\}\right).$
Since the pair $(\ell_i,q_i^h)$ is admissible, we deduce that the space $(E\otimes_F F^{\ell_i},q_i^h\otimes q_{\ell_i})$ must be $\epsilon$-Hermitian. Since $\Fs_{\udl{\Fl}}$ is decreasing, 
we may write the partition 
\[
\udl{\Fl}=[p_1^{m_1},p_2^{m_2},\ldots,p_r^{m_r}]=\udl{p}
\] 
with $p_1>p_2>\cdots>p_r>0$. For each $j=1,2,\ldots,r$, to $p_j^{m_j}$ we assign the following $\epsilon$-Hermitian space
\[
\big(E^{m_j}\otimes_E E^{p_j},\big(\bigoplus_iq_i^h\big)\otimes q_{p_j}\big),
\]
where the summation index $i$ runs in the following set
\[
\{m_1+\cdots+m_{j-1}+1,m_1+\cdots+m_{j-1}+2,\dots, m_1+\cdots+m_{j-1}+m_j\}
\]
if $j>1$; and if $j=1$, the summation index $i$ runs in the set $\{1,2,\cdots,m_1\}$. From this construction, it is clear that the admissible decreasing pre-tableau $\Fs_{\udl{\Fl}}$ 
as in \eqref{Fs-decreasing} for the $\Fn$-dimensional $\epsilon$-Hermitian space $(V,q)$ determines a unique 
\begin{align}\label{LY}
\left(\udl{p},\{(V_{(p_j)}^h,q_{(p_j)}^h)\}\right),
\end{align}
which is an admissible sesquilinear Young tableau for the $\Fn$-dimensional $\epsilon$-Hermitian space $(V,q)$, with $\udl{p}=\udl{\Fl}$ and 
\[
(V_{(p_j)}^h,q_{(p_j)}^h)=\big(E^{m_j},\bigoplus_iq_i^h\big)
\]
as above. 

Recall from Definition \ref{defn:AYT} that $\CY(V,q)$ consists of the equivalence classes of 
admissible sesquilinear Young tableaux for $(V, q_V)$. 
We denote by $\CY_a(\varphi,\chi,q)$ the subset of $\CY(V,q)$ consisting of the 
admissible sesquilinear Young tableaux as displayed in \eqref{LY} 
for the $\Fn$-dimensional $\epsilon$-Hermitian space $(V,q)$ that are constructed from the set of the $L$-descent pre-tableaux $\CL_a(\varphi,\chi,q)$ associated with the given triple $(\varphi,\chi,q)\in \FT_{a}(G_n^*)$.
By Proposition \ref{prop:GZ}, the set $\CN_F(\Fg_n)_\circ$ of all $F$-rational nilpotent orbits in $\CN_F(\Fg_n)$ and the set $\CY(V,q)$ of the equivalence classes of 
admissible sesquilinear Young tableaux for $(V, q_V)$ are canonically in one-to-one correspondence.

\begin{defn}[Wavefront Set for $(\varphi,\chi)$]\label{defn:AWFS}
The wavefront set associated to a triple $(\varphi,\chi,q)\in \FT_{a}(G_n^*)$, which is denoted by
$\wf_a(\varphi,\chi,q)$, is defined to be the subset of $\CN_F(\Fg_n)_\circ$ that correspond canonically with $\CY_a(\varphi,\chi,q)$, which is constructed from the set of $L$-descent pre-tableaux $\CL_a(\varphi,\chi,q)$.
\end{defn}

In order to understand the structure of the wavefront set $\wf_a(\varphi,\chi,q)$ for each 
triple $(\varphi,\chi,q)\in \FT_{a}(G_n^*)$, it is necessary to understand the 
partitions $\udl{p}$ and the rational properties carried by the admissible sesquilinear 
Young tableau in $\CY_a(\varphi,\chi,q)$ as displayed in \eqref{LY}.
We collect all the partitions of $\Fn$ from $\CY_a(\varphi,\chi,q)$:
\[
    \udl{p}=[p_1^{m_1},p_2^{m_2},\ldots,p_r^{m_r}]
    \]
with $p_1>p_2>\cdots>p_r>0$, and denote by $\CP_a(\varphi,\chi,q)$ the subset of the partitions of $\Fn$ so obtained. Those partitions of $\Fn$ in $\CP_a(\varphi,\chi,q)$
are called the {\it $L$-descent partitions} associated with the given triple 
$(\varphi,\chi,q)\in \FT_{a}(G_n^*)$.

From the local Gan-Gross-Prasad conjecture as discussed in 
Section \ref{ssec-LGGP} and the descent of enhanced $L$-parameters in 
Definition \ref{defn:pd}, it is not hard to deduce the following proposition. 

\begin{prop}\label{prop:Lp}
For any triple $(\varphi,\chi,q)\in \FT_{a}(G_n^*)$, the $L$-descent partitions 
\[
\udl{p}=[p_1^{m_1},p_2^{m_2},\ldots,p_r^{m_r}]\in \CP_a(\varphi,\chi,q)
\]
with $p_1>p_2>\cdots>p_r>0$ have the following property.
\begin{enumerate}
    \item When $G_n$ is a special orthogonal group $\SO_\Fn$,  all the parts $p_i$ are odd integers.
    \item When $G_n$ is a symplectic group $\Sp_{2n}$ or a metaplectic group $\Mp_{2n}$, 
    all the parts $p_i$ are even integers. 
\end{enumerate}
\end{prop}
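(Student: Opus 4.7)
My approach is a straightforward induction on $\Fn=\dim V$, tracing the parity constraint on the descent index $\Fl$ built into Definition \ref{defn:pd} together with the inductive construction of $\CT_a(\varphi,\chi,q)$ in Definition \ref{LYT}. Two observations do all the work. First, recall from the setup at the start of Section \ref{ssec-LPD} that $\FD_\Fl^z(\varphi,\chi)$ is only defined for those $\Fl$ satisfying $(-1)^{\Fl-1}=\epsilon$ when $E=F$: thus $\Fl$ is forced to be odd in the orthogonal case ($\epsilon=1$) and forced to be even in the symplectic/metaplectic case ($\epsilon=-1$). Second, inspecting the table of relevant pairs $(G_n^*,H^*_{\lfloor(\Fn-\Fl)/2\rfloor})$, the descended group $H^*_{\lfloor(\Fn-\Fl)/2\rfloor}$ is again special orthogonal if $G_n^*$ is, and lies again in the symplectic/metaplectic family if $G_n^*$ does. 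So the parity condition is preserved all the way down the recursion.

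With these two points in hand, the induction proceeds cleanly. The base case $\Fn\leq 1$ is vacuous or handled by the first clause of Definition \ref{LYT}, where the unique pair $(\Fn,q^h_{(\Fn)})$ trivially satisfies the parity requirement (there is nothing to descend). For the inductive step, every pre-tableau in $\CT_a(\varphi,\chi,q)$ arises via the recursion \eqref{YA} in the form
\[
\Fs=(\Fl,q^h_{(\Fl)})\star \Fs'
\]
for some $\Fs'\in\CT_a(\wh\phi,\wh{\chi'},q')$ with $(\wh\phi,\wh{\chi'})\in\FD_\Fl^z(\varphi,\chi)$ and the descended triple $(\wh\phi,\wh{\chi'},q')$ living in strictly smaller dimension. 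The leading entry $\Fl$ has the required parity by the first observation, and the inductive hypothesis applies to $(\wh\phi,\wh{\chi'},q')$ because of the second observation, giving the required parity for every entry of $\Fs'$.

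Finally, an $L$-descent partition $\udl{p}=[p_1^{m_1},\ldots,p_r^{m_r}]\in\CP_a(\varphi,\chi,q)$ is extracted from a decreasing $L$-descent pre-tableau
\[
\Fs_{\udl{\Fl}}=\bigl((\ell_1,q^h_1),\ldots,(\ell_k,q^h_k)\bigr)\in\CL_a(\varphi,\chi,q)
\]
by listing the $\ell_i$ in decreasing order and reading off multiplicities. Since each part $p_j$ equals some $\ell_i$, the parity property above transfers verbatim, yielding (1) and (2). I foresee no real obstacle: the proposition is a direct bookkeeping consequence of the parity convention imposed on $\Fl$ in Definition \ref{defn:pd} and the inductive assembly of pre-tableaux in Definition \ref{LYT}, with no input from the local Gan-Gross-Prasad conjecture or the rationality discussion needed beyond the combinatorial shape of the recursion.
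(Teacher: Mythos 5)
Your proof is correct and is precisely the argument the paper has in mind; the paper itself simply asserts the proposition as "not hard to deduce" from Definition \ref{defn:pd} and the descent setup, without writing out a proof. Your induction, tracing the parity constraint $(-1)^{\Fl-1}=\epsilon$ on the descent index together with the observation that the descended groups stay within the orthogonal (resp.\ symplectic/metaplectic) family, is exactly the bookkeeping the authors had in mind.
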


In particular,  there is a one-to-one correspondence $\CL_a(\varphi, \chi,q)\to \CY_a(\varphi,\chi,q)$ which sends an $L$-descent pre-tableau as in \eqref{Fs-decreasing}
to an admissible sesquilinear Young tableau as in \eqref{LY}. By definition, it is clear that the $L$-descent partitions in $\CP_a(\varphi,\chi,q)$ 
do not depend on the choice of the Whittaker data $a\in\CZ$ for $G_n^*$. For reference, we still keep it in the notation. 

In Section \ref{ssec-RNO}, we have explicitly constructed the canonical correspondence between 
the set $\CN_F(\Fg_n)_\circ^\st$ of the $F$-stable nilpotent orbits of $\Fg_n$ and the set 
$\CP(V,q)$ of all partitions of $\Fn$ with constraints described there. It is clear that 
the dominating order on partitions can be transferred to the topological order on the set 
$\CN_F(\Fg_n)_\circ^\st$. We may define the  {\it $F$-rational topological order} and the
{\it $F$-stable topological order} on the set $\CN_F(\Fg_n)_\circ$ as follows. 

\begin{defn} \label{defn:order}
For an $F$-rational orbit $\CO \in \CN_F(\Fg_n)_\circ$, denote by $\CO^\st\in \CN_F(\Fg_n)_\circ^\st$ the corresponding $F$-stable orbit. For two $F$-rational orbits 
$\CO_1, \CO_2 \in \CN_F(\Fg_n)_\circ$, we say that 
\begin{enumerate}
\item 
$\CO_1\leq_F \CO_2$ under the $F$-rational topological order if $\CO_1$ is contained in the Hausdorff closure of $\CO_2$.

\item 
$\CO_1 \leq_\st \CO_2$ under the $F$-stable topological order if $\CO_1^\st$ is contained in the Zariski closure of $\CO_2^\st$.

\end{enumerate}
\end{defn}

Note that in the above, if $\CO_1\leq_F\CO_2$, then $\CO_1\leq_\st \CO_2$. We denote by $\wf_a(\varphi,\chi,q)^\mx$ the subset of $\wf_a(\varphi,\chi,q)$ 
consisting of all the maximal members under the $F$-stable topological order. 

\subsection{Arithmetic wavefront set}\label{ssec-AWF}

As before, $G_n^*$ is an $F$-quasisplit classical group, which is defined as $G_n^*=\Isom(V,q)^\circ$ or $\Mp(V)$ with an $\Fn$-dimensional non-degenerate $\eps$-Hermitian vector space $(V,q)$.
By the local Langlands correspondence with the rational structure, as explained in Section \ref{ssec-RLLC}, for any generic local $L$-parameter $\varphi$ of $G_n^*$ and $a\in\CZ$, we can write any member
$\pi$ in the local Vogan packet $\Pi_\varphi[G_n^*]$ as
\[
\pi=\pi_{a}(\varphi,\chi)=\pi_{a}(\varphi,\chi,q)
\]
for a unique $\chi\in\wh{\CS_\varphi}$. We define the {\bf arithmetic wavefront set} of $\pi$ by
\begin{align}\label{ariwfs}
\wf_\ari(\pi):=\wf_a(\varphi,\chi,q)
\end{align}
where  the wavefront set $\wf_a(\varphi,\chi,q)$ of the given triple $(\varphi,\chi,q)\in\FT_{a}(G^*_n)$ is defined in Definition \ref{defn:AWFS}.

\begin{thm}\label{inv-pi}
Let $F$ be a local field of characteristic zero.
Assume that $G_n^*$ is an $F$-quasisplit classical group, defined as $G_n^*=\Isom(V,q)^\circ$ for an $\Fn$-dimensional non-degenerate $\eps$-Hermitian vector space $(V,q)$, or
the metaplectic double cover $\Mp_{2n}(F)$ of the symplectic group $\Sp_{2n}(F)$. Let $\varphi$ be a generic local $L$-parameter of $G_n^*$ as given in Section {\rm \ref{ssec-LVP}}.
For any $\pi$ belonging to the local Vogan packet $\Pi_\varphi[G_n^*]$, the arithmetic wavefront set $\wf_\ari(\pi)$ is an invariant of $\pi$.
\end{thm}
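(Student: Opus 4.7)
The plan is to reduce the problem to showing that the set of $L$-descent pre-tableaux $\CL_a(\varphi,\chi,q)$ transforms equivariantly when we change the rationality datum $a\in\CZ$ used in the local Langlands correspondence. Concretely, if $\pi=\pi_a(\varphi,\chi,q)=\pi_{a'}(\varphi',\chi',q)$ for two choices $a,a'\in\CZ$, Proposition \ref{prop:LLC} tells us that $\varphi'=\varphi$ and $\chi'=\chi\cdot\eta_{a/a'}$ in the non-metaplectic case, while in the metaplectic case $\varphi'=\varphi(a/a')$ and $\chi'=\chi\cdot\eta_{a/a'}$ under the canonical identification $\CS_\varphi\cong\CS_{\varphi(a/a')}$. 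Since Definition \ref{defn:AWFS} constructs $\wf_a(\varphi,\chi,q)$ by first building $\CL_a(\varphi,\chi,q)\subset\CT_a(\varphi,\chi,q)$, then canonically producing admissible sesquilinear Young tableaux $\CY_a(\varphi,\chi,q)$ by the recipe in \eqref{LY}, and finally invoking the canonical bijection of Proposition \ref{prop:GZ}, it suffices to exhibit an equality
\[
\CT_a(\varphi,\chi,q)=\CT_{a'}(\varphi',\chi',q),
\]
which automatically restricts to an equality of the decreasing subsets $\CL_a$ and $\CL_{a'}$.

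The plan is to establish this equality by induction on $\Fn=\dim V$. The base case $\Fn\leq 1$ is immediate from Definition \ref{LYT}(1). For the inductive step, Definition \ref{LYT} expresses each side as a union, indexed by $0<\Fl\leq\Fn$ and by a choice of one-dimensional $(-1)^{\Fl-1}\epsilon_q$-Hermitian form $q^h_{(\Fl)}$, of sets of the form $(\Fl,q^h_{(\Fl)})\star\CT_a(\wh\phi,\wh{\chi''},q')$ with $(\wh\phi,\wh{\chi''})\in\FD_\Fl^z(\varphi,\chi)$ and the constraint linking $a$, $z$ and $q^h_{(\Fl)}$ spelled out in Proposition \ref{prop:rational}. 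Given $q^h_{(\Fl)}$, the constraint determines $z=z(a)$ and $z'=z(a')$ by $az=a'z'=\kappa(q^h_{(\Fl)})$, where $\kappa(q^h_{(\Fl)})$ is $(-1)^\Fn\cdot\disc(q^h_{(\Fl)})$ or $[\disc(q^h_{(\Fl)})]$ as appropriate. The key technical identity to verify is therefore
\[
\FD_\Fl^{z}(\varphi,\chi)=\FD_\Fl^{z'}(\varphi',\chi')\qquad\text{whenever}\qquad az=a'z',
\]
after which the inductive hypothesis applied to $(\wh\phi,\wh{\chi''},q')$ (still with the same $a$, but the same bijection lemma at smaller rank) closes the induction.

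For the linear classical groups, the identity above follows from \eqref{chiz} and its companion $\chi^z_{\phi,\varphi}=\chi_{\phi,\varphi}\cdot\eta_z$ (which I would read off case by case from Section \ref{ssec-DC}): replacing $z$ by $z'$ and simultaneously $\chi$ by $\chi\cdot\eta_{a/a'}$ twists both sides of the distinguishedness equations in Definition \ref{defn:pd} by the same character $\eta_{a/a'}=\eta_{z'/z}$, so the solution sets for $(\wh\phi,\wh{\chi''})$ coincide; compatibility of the contragredient operation with $\eta$-twists is guaranteed by the last assertion of Proposition \ref{prop:dualdata}. In the symplectic-metaplectic case one must also absorb the parameter twist $\varphi\mapsto\varphi(a/a')$ via \eqref{char-spmp}, and in the unitary Fourier-Jacobi case one must keep track of the sign $(-1)^\Fl$ in \eqref{chi-a}.

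The main obstacle will be the bookkeeping required to verify the single-step identity $\FD_\Fl^z(\varphi,\chi)=\FD_\Fl^{z'}(\varphi',\chi')$ uniformly across the four families in Section \ref{ssec-DC}, because the definitions of $\chi^z_{\varphi,\phi}$, the Langlands bijections $\iota_a$, and the contragredient recipe $(\wh\varphi,\wh\chi)$ each introduce their own twists. The point is that these twists cancel consistently: the change in $\chi$ forced by changing the Whittaker datum from $a$ to $a'$ is precisely $\eta_{a/a'}$, and this is also precisely the change produced in $\chi^z_{\varphi,\phi}$ by the compensating change of $z$ to $z'$. Once this cancellation is confirmed in each case, and the metaplectic twist $\varphi\mapsto\varphi(a/a')$ is shown to descend through $\FD_\Fl^z$ by the same change of variables, the inductive argument propagates the equality to the full sets $\CT_a(\varphi,\chi,q)$, and hence to their decreasing subsets $\CL_a(\varphi,\chi,q)$, completing the proof that $\wf_\ari(\pi)$ depends only on $\pi$.
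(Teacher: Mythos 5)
Your overall strategy is the paper's: reduce the invariance of $\wf_{\ari}(\pi)$ to the invariance of the pre-tableau set $\CT_a(\varphi,\chi,q)$ under simultaneous change of Whittaker datum and enhanced $L$-parameter, and then prove that invariance by induction on $\Fn$ using the recursive structure of Definition \ref{LYT}. This matches Proposition \ref{inv-prop} and the two-line deduction of the theorem from it.

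However, the key single-step identity you propose, namely $\FD_\Fl^z(\varphi,\chi)=\FD_\Fl^{z'}(\varphi',\chi')$ whenever $az=a'z'$, is false as a literal set equality in every case where the descended group carries a nontrivial $\CZ$-action (even special orthogonal, unitary, symplectic, metaplectic). Write $z_0 = a/a' = z'/z$. Your argument correctly shows that the two descents pick out the same set of $L$-parameters $\phi$: the first distinguishedness equation $\chi^z_{\varphi,\phi}=\chi$ is stable because the $\eta_{z_0}$-twist of $\chi$ is matched by the $\eta_{z_0}$-twist of $\chi^z_{\varphi,\phi}$ under $z\mapsto z'$. But the second equation $\chi^z_{\phi,\varphi}=\chi''$ is a recipe whose output is $\chi''$; under $z\mapsto z'$ the left side picks up $\eta_{z_0}$ while the right side is not given and instead changes, so the descended characters become $\chi''\cdot\eta_{z_0}$, not $\chi''$. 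The paper's Proposition \ref{inv-prop} states the correct identity, e.g.\ in the unitary case $\FD_\Fl^{zz'}(\varphi,\chi\cdot\eta_{z'})=\FD_\Fl^z(\varphi,\chi)\cdot\eta_{z'}$, where the twist on the output characters is explicit. This matters because the induction hypothesis that then applies is $\CT_{a'}(\wh\phi,\wh{\chi''}\cdot\eta_{z_0},q')=\CT_a(\wh\phi,\wh{\chi''},q')$, which is a nontrivial equality between pre-tableau sets with \emph{different} Whittaker data; your parenthetical ``still with the same $a$'' suggests the recursion can be run at a single fixed $a$, which is not the case. Fixing your argument therefore requires replacing the false set equality of descents with this twisted form, and then invoking the inductive hypothesis in its full two-Whittaker-datum form. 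The $\Sp$/$\Mp$ interlocking, which you flag as an obstacle, also needs the $\varphi\mapsto\varphi(z')$ twist threaded through the recursion; the paper does this in Case 3 of the proof of Proposition \ref{inv-prop} by alternating between the two groups in the induction.
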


\begin{proof}
We need to show that $\wf_a(\varphi,\chi,q)$ only depends on $\pi$, and is independent of the choice of the Whittaker datum $a$ for $G_n^*$.
Let
$
b = a \cdot z',
$
where $z'\in\CZ$.
By the properties of the local Langlands correspondence given in Proposition \ref{prop:LLC},
\[
\pi = \begin{cases} \pi_{b}(\varphi, \chi\cdot \eta_{z'}), & \textrm{if }G_n^*\neq \Mp_{2n}, \\
\pi_{b}(\varphi(z'), \chi\cdot \eta_{z'}), & \textrm{otherwise}.
\end{cases}
\]
By Proposition \ref{inv-prop} below,  $\CT_a(\varphi, \chi, q)$ is an invariant of $\pi$. 
Hence its subset $\CL_a(\varphi,\chi, q)$ and the corresponding $\CY_a(\varphi,\chi, q)$ 
are invariants of $\pi$, and so is 
the wavefront set $\wf_a(\varphi,\chi,q)$. 
\end{proof}

\begin{prop} \label{inv-prop}
 Let $(\varphi,\chi,q)\in \FT_{a}(G_n^*)$, and let  $b=a\cdot z'$ for some  $z'\in\CZ$. Then the following hold.
\begin{enumerate}
\item If $G_n^*\neq \Mp_{2n}$, then
\begin{equation}  \label{inv}
\CT_b(\varphi, \chi\cdot\eta_{z'}, q) = \CT_a(\varphi, \chi, q).
\end{equation}
\item If $G_n^*=\Mp_{2n}$, then
\begin{equation} \label{inv-mp}
\CT_b(\varphi(z'), \chi\cdot\eta_{z'}, q) = \CT_a(\varphi, \chi, q).
\end{equation}
\end{enumerate}
 \end{prop}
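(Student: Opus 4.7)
The plan is to prove both (1) and (2) by simultaneous induction on $\mathfrak{n}$. The base case $\mathfrak{n}\leq 1$ is immediate, since by Definition \ref{LYT} the set $\CT_a(\varphi,\chi,q)$ then reduces to the singleton $\{(\mathfrak{n},q^h_{(\mathfrak{n})})\}$, which depends only on $q$. For the inductive step, the central observation is that $\CZ$ is a $2$-torsion group (both $F^\times/F^{\times 2}$ and $\Gal(E/F)$ have exponent $2$), so $z'^2=1$ in $\CZ$ and hence $b\cdot(zz')=(az')(zz')=az$ for every $z\in\CZ$. By Proposition \ref{prop:rational}, under the reindexing $z\mapsto z''=zz'$ the one-dimensional form $q^h_{(\Fl)}$ appearing in the inductive formula \eqref{YA} is preserved, and so is its orthogonal complement $q'$.

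The next step is to construct a bijection
\begin{equation*}
\FD_\Fl^z(\varphi,\chi)\ \longrightarrow\ \FD_\Fl^{z''}(\varphi,\chi\cdot\eta_{z'}),\qquad (\wh\phi,\wh{\chi'})\longmapsto(\wh\phi,\wh{\chi''})
\end{equation*}
for part (1), and the analogous bijection $\FD_\Fl^z(\varphi,\chi)\to\FD_\Fl^{z''}(\varphi(z'),\chi\cdot\eta_{z'})$ for part (2). The twist $\varphi\mapsto\varphi(z')$ in (2) is forced by the $\varphi(z)$-dependence of $\chi_{\varphi(z),\phi}$ in \eqref{char-spmp}, together with the identity $\varphi(z')(z'')=\varphi(z)$ inside $\CZ$. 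Using the formulas of Section \ref{ssec-DC}, a direct verification shows that $\chi''=\chi'\cdot\eta_{z'}$ in the orthogonal and unitary Fourier-Jacobi cases, $\chi''=\chi'$ in the symplectic-metaplectic case, and $\chi''=\chi'\cdot\eta_{(-1)^\Fl z'}$ in the unitary Bessel case; the corresponding formulas for the contragredients follow via Proposition \ref{prop:dualdata}. The induction hypothesis, applied to the smaller group $H^*_{\lfloor(\mathfrak{n}-\Fl)/2\rfloor}$, then yields the equality of the trailing sets $\CT_a(\wh\phi,\wh{\chi'},q')=\CT_b(\wh\phi,\wh{\chi''},q')$, and assembling the pieces over $\Fl$ and $z$ completes the inductive step. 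In the symplectic-metaplectic case one invokes whichever of (1) or (2) is appropriate on the smaller group, mirroring the alternation $\Sp\leftrightarrow\Mp$ under descent.

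The principal obstacle I anticipate is the sign bookkeeping in the unitary Bessel case: with $\Fl$ odd and $(-1)^\Fl=-1$, the bijection produces $\wh{\chi''}=\wh{\chi'}\cdot\eta_{-z'}$, whereas a straightforward application of the induction hypothesis on the smaller group would twist by $\eta_{z'}$, leaving a discrepancy of $\eta_{-1}$. This discrepancy must be absorbed using the parity-dependent change of the Whittaker datum in the Bessel descent ($a\mapsto -a$, as recorded in \S\ref{sec:GGP-U-B}), combined with the reparametrization $\pi_{a^\vee}(\wh\phi,\cdot)=\pi_a(\wh\phi,\cdot\cdot\eta_{a^\vee/a})$ from Proposition \ref{prop:LLC}; the analysis splits into subcases based on the parities of $\mathfrak{n}$ and $\mathfrak{n}-\Fl$. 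Once this compensation is worked out, the remaining verifications are routine manipulations of the formulas from Sections \ref{ssec-DC} and \ref{ssec-ED}.
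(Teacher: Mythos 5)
Your overall strategy matches the paper's: induction on $\Fn$ interlocking across group types, a reindexing $z\mapsto zz'$ exploiting that $\CZ$ is $2$-torsion, matching of the one-dimensional forms $q^h_{(\Fl)}$ via Proposition \ref{prop:rational}, and an application of the induction hypothesis on the descent target. The identification that the twist $\varphi\mapsto\varphi(z')$ is forced in the metaplectic case by the $\varphi(z)$-dependence in \eqref{char-spmp} is also correct.

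However, the twist formulas you give for the second component are wrong in two cases, and the error is concrete. In the unitary case the defining relation $\chi^z_{\phi,\varphi}=\chi_{\phi,\varphi}\cdot\eta_{(-1)^\Fl z}$ reindexes to
\[
\chi^{zz'}_{\phi,\varphi}=\chi_{\phi,\varphi}\cdot\eta_{(-1)^\Fl zz'}=\chi_{\phi,\varphi}\cdot\eta_{(-1)^\Fl z}\cdot\eta_{z'}=\chi^z_{\phi,\varphi}\cdot\eta_{z'},
\]
because $(-1)^\Fl zz'=\bigl((-1)^\Fl z\bigr)\cdot z'$ and $\eta$ is a group homomorphism. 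You instead expanded as if $\eta_{(-1)^\Fl zz'}=\eta_{(-1)^\Fl z}\cdot\eta_{(-1)^\Fl z'}$, which produces the spurious factor $\eta_{(-1)^\Fl}$. Hence $\chi''=\chi'\cdot\eta_{z'}$ uniformly in the unitary case (both Bessel and Fourier--Jacobi, independently of the parity of $\Fl$), exactly as recorded in \eqref{2b}. There is no discrepancy of $\eta_{-1}$ to absorb, and the envisaged repair via the $a\mapsto-a$ Whittaker-datum change in \S\ref{sec:GGP-U-B} is unnecessary; attempting it would also disrupt the clean inductive set-up, since $b=az'$ is fixed at the outset by the statement rather than chosen to cancel a parity.

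The symplectic-metaplectic formula $\chi''=\chi'$ is likewise incorrect. Tracking \eqref{char-spmp} through the identities \eqref{spmpchi} and using $z'^2=1$ gives, for $\Sp\to\Mp$ descent, the pair $(\wh\phi(z'),\wh{\chi'}\cdot\eta_{z'})$, and for $\Mp\to\Sp$ descent the pair $(\wh\phi,\wh{\chi'}\cdot\eta_{z'})$ (cf.\ \eqref{sp-mp} and \eqref{mp-sp}): the second component carries a nontrivial twist by $\eta_{z'}$ in both directions, which is precisely what the induction hypothesis in the alternating $\Sp\leftrightarrow\Mp$ pattern is needed to absorb. Once these two twist formulas are corrected to $\eta_{z'}$, your argument closes and coincides with the paper's proof; as written, however, the bijection you posit does not land in $\FD_\Fl^{zz'}(\varphi,\chi\cdot\eta_{z'})$ in those cases, so the inductive step does not go through.
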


 \begin{proof}
We prove the proposition by induction on $\Fn$, and the induction process interlocks different types of classical groups.

{\bf Case 1.} $G_n^*=\SO_\Fn^*$. If we start from the case that $\Fn$ is odd, then by induction hypothesis we are reduced to the case of an even special orthogonal group, and vice versa. 
Let us assume that $G_n^* = \SO_{2n}^*$ and give the proof in this case, and the proof for $G_n^*=\SO_{2n+1}^*$ is similar. 

In this case we have that $H_{\lfloor(\Fn-\Fl)/2\rfloor}^* = \SO^*_{2n-\Fl}$, where $\Fl$ is odd. Recall from \eqref{eta} that for any $\phi\in\Phi_\gen(\SO_{2n-\Fl}^*)$ and $z\in\CZ$, the character $\eta_z\in \wh{\CS_\phi}$ is trivial. By Definition \ref{defn:pd} and direct calculations, we see that
\[
 \FD_\Fl^{zz'}(\varphi, \chi\cdot \eta_{z'}) = \FD^z_\Fl(\varphi, \chi).
\]
We find from Definition \ref{LYT} that above two local descents give the same quadratic form $q_{(\Fl)}^h$:
\[
b\cdot zz' = a \cdot z = \disc(q_{(\Fl)}).
\]
By  the induction hypothesis on odd special orthogonal groups, we have that
\begin{equation} \label{Ya-inv}
\bigcup_{(\wh\phi, \wh{\chi'})\in  \FD_\Fl^{zz'}(\varphi, \chi\cdot \eta_{z'}) } \CT_{b}(\wh\phi,\wh{\chi'}, q') = \bigcup_{(\wh\phi, \wh{\chi'})\in  \FD^z_\Fl(\varphi, \chi)} \CT_a(\wh\phi, \wh{\chi'} ,q').
\end{equation}
Composing with the tableaux $(\Fl, q_{(\Fl)}^h)$ and taking the union over $\Fl$ and $z\in \CZ$  give \eqref{inv}.

{\bf Case 2.} Assume that $G_n^*=\RU_{\Fn}^*$. Then $H^*_{\lfloor n-\Fl/2\rfloor}=\RU_{\Fn-\Fl}^*$.  
In this case we have that
\begin{equation}\label{2b}
\FD_\Fl^{zz'}(\varphi, \chi\cdot \eta_{z'}) = \FD^z_\Fl(\varphi, \chi)\cdot \eta_{z'},
\end{equation}
where the right hand side is defined as
\[
 \FD^z_\Fl(\varphi, \chi)\cdot \eta_{z'}:=\left\{ (\wh\phi, \wh{\chi'}\cdot \eta_{z'})\ \mid\  (\wh{\phi}, \wh{\chi'})\in \FD^z_\Fl(\varphi, \chi)\right\}.
\]
By Definition \ref{LYT}, the two descents in \eqref{2b} give the same Hermitian form $q^h_{(\Fl)}$:
\[
b\cdot zz' = a \cdot z =  [ \disc(q^h_{(\Fl)})].
\]
By the induction hypothesis, for any $(\wh\phi, \wh{\chi'})\in \FD^z_\Fl(\varphi, \chi)$ we have that
\begin{equation} \label{even-ind}
\CT_{b}(\wh{\phi},\wh{\chi'}\cdot \eta_{z'} , q') = \CT_a(\wh\phi, \wh{\chi'}, q').
\end{equation}
Then \eqref{2b} and \eqref{even-ind} show that equation \eqref{Ya-inv} holds in this case, which implies \eqref{inv}.

{\bf Case 3.} $G_n^* = \Sp_{2n}$ or $\Mp_{2n}$. We  first recall from the proof of \cite[Proposition 18.1]{GGP12} that, for odd orthogonal and symplectic $L$-parameters $\varphi$ and $\phi$ respectively, it holds that
\begin{equation} \label{spmpchi}
\chi_{\varphi(z'),\phi}  =  \chi_{\varphi, \phi(z')},\quad \chi_{\phi, \varphi(z')}\cdot \eta_{z'} = \chi_{\phi(z'), \varphi},
\end{equation}
where $z'\in \CZ$.

{\bf Case 3.1.} $G_n^*=\Sp_{2n}$. Let $(\varphi, \chi, q) \in \FT_{a}(\Sp_{2n})$.  For $z\in \CZ$, using \eqref{spmpchi} we check from  \eqref{spmpchi} and Definition \ref{LYT} that
\begin{equation} \label{sp-mp}
\FD^{zz'}_\Fl(\varphi, \chi\cdot \eta_{z'}) = \{(\wh{\phi}(z'), \wh{\chi'}\cdot\eta_{z'})\ \mid\  (\wh\phi, \wh{\chi'}) \in \FD^z_\Fl(\varphi, \chi)\}.
\end{equation}
Note that $b \cdot zz' = a \cdot z$, which give the same  $\disc(q^{h}_{(\Fl)})$. By the induction hypothesis \eqref{inv-mp} on $\Mp_{2n-\Fl}$,
\[
\CT_{b}(\wh{\phi}(z'), \wh{\chi'}\cdot\eta_{z'}, q') = \CT_a(\wh{\phi}, \wh{\chi'}, q')
\]
for any $(\wh\phi, \wh{\chi'})\in \FD^{z}_\Fl(\varphi, \chi)$. This reduces to the case of metaplectic groups.

{\bf Case 3.2.} $G_n^*=\Mp_{2n}$. Let $(\varphi,\chi, q)\in \FT_{a}(\Mp_{2n})$.  Switching the notation $\varphi$ and $\phi$ in \eqref{spmpchi}, we check that
\begin{equation} \label{mp-sp}
\FD^{zz'}_\Fl(\varphi(z'),\chi\cdot \eta_{z'},q) = \{ (\wh\phi, \wh{\chi'}\cdot \eta_{z'})\ 
\mid\ (\wh\phi, \wh{\chi'})\in \FD^z_\Fl(\varphi, \chi)\}.
\end{equation}
Again, $b \cdot zz' = a \cdot z$ gives the same discriminant $\disc(q^h_{(\Fl)})$. By the induction hypothesis \eqref{inv} on $\Sp_{2n-\Fl}$,
\[
\CT_{b}(\wh{\phi}, \wh{\chi'}\cdot \eta_{z'}, q') = \CT_a(\wh\phi, \wh{\chi'}, q')
\]
for any $(\wh\phi, \wh{\chi'})\in \FD^z_\Fl(\varphi, \chi)$.  This reduces to the case of symplectic groups.

We combine {\bf Case 3.1} and {\bf Case 3.2} together and finish the induction process.
\end{proof}

By Theorem \ref{inv-pi}, for any $\pi\in\Pi_\varphi[G_n^*]$, if $\pi=\pi_a(\varphi,\chi,q)\in\Pi_\varphi(G_n)$ for some $\chi\in\wh{\CS_\varphi}$ and some $F$-pure inner form $G_n=\Isom(V,q)^\circ$ 
of $G_n^*$, then we have
\begin{equation}\label{pi-inv}
\wf_\ari(\pi)^\mx  =\wf_a(\varphi,\chi,q)^\mx.
\end{equation} 

\subsection{Wavefront set and $\CZ$-orbit}\label{ssec-WFS-ZOrbits}
In order to understand refined structures of the arithmetic wavefront sets $\wf_\ari(\pi)$ when $\pi$ runs in the local Vogan packet $\Pi_F[G_n^*]$, we are going to define the wavefront set associated to a $\CZ$-orbit as follows. 

\begin{defn}[Wavefront Set for $(\varphi,\ScO_\chi)$] \label{WSO}
Let $\varphi\in \widetilde{\Phi}_\gen(G_n^*)$ and let $\ScO_\chi\subset \wh{\CS_\varphi}$ be a pointed $\CZ$-orbit, that is, a $\CZ$-orbit $\ScO$ with base point $\chi\in\ScO$. The wavefront set associated to $(\varphi,\ScO_\chi)$ and a  Whittaker datum $a\in\CZ$ for $G_n^*$, which is denoted by $\wf_a(\varphi,\ScO_\chi)$, is defined to be
\[
\wf_a(\varphi,\ScO_\chi) = 
\begin{cases} 
\bigcup_{\chi'\in\ScO_\chi}\wf_a(\varphi,\chi', q'), & \textrm{if }G_n^*\neq\Mp_{2n},\\
\bigcup_{z'\in\CZ}\wf_a(\varphi(z'), \chi\cdot\eta_{z'}, q), & \textrm{if }G_n^*=\Mp_{2n},
\end{cases}
\]
where $q'$ is uniquely determined by $(\varphi,\chi')$ such that $(\varphi,\chi',q')\in \FT_a(G^*_n)$.
\end{defn}

Note that the base point  plays a role only for $G_n^*=\Mp_{2n}$.
In view of Definition \ref{WSO}, the union 
\[
\CT_a(\varphi,\ScO_\chi) := 
\begin{cases}
  \bigcup_{\chi'\in\ScO_\chi}\CT_a(\varphi,\chi', q'), & \textrm{if }G_n^*\neq\Mp_{2n},\\
\bigcup_{z'\in\CZ}\CT_a(\varphi(z'), \chi\cdot\eta_{z'}, q), & \textrm{if }G_n^*=\Mp_{2n},
\end{cases}
\]
is also an interesting object. Let $\FP_a(\varphi, \chi, q)$ be the set of ordered partitions associated to the set of pre-tableaux $\CT_a(\varphi, \chi, q)$.  Then we have the following invariance property for $\FP_a(\varphi, \chi, q)$.

\begin{prop} \label{inv-O}
The set of partitions $\FP_a(\varphi, \chi, q)$ satisfies the following properties.
 \begin{enumerate}
 \item If $G_n^*\neq \Mp_{2n}$, then $\FP_a(\varphi, \chi, q)$ only depends on the $\CZ$-orbit $\ScO_\chi\subset \wh{\CS_\varphi}$ of $\chi$, and does not depend on $a$.
 \item If $G_n^* =\Mp_{2n}$, then for $z'\in \CZ$ it holds that
 \begin{equation} \label{Mp-inv}
 \FP_a(\varphi, \chi,q ) = \FP_a(\varphi(z'), \chi\cdot \eta_{z'}, q).
 \end{equation}
 \end{enumerate}
\end{prop}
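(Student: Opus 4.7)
The strategy is to deduce the proposition directly from Proposition \ref{inv-prop} together with the observation that the passage from $\CT_a(\varphi, \chi, q)$ to $\FP_a(\varphi, \chi, q)$ forgets all of the one-dimensional sesquilinear forms $q^h_i$ occurring in the pre-tableaux. Since the only $a$-dependence visible in the inductive formula \eqref{YA} enters through the discriminant constraint $a\cdot z = [\disc(q^h_{(\Fl)})]$ (see Proposition \ref{prop:rational}), this dependence will drop out after forgetting the forms.

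I proceed by induction on $\Fn$. The base case $\Fn\leq 1$ is immediate, since $\CT_a(\varphi, \chi, q) = \{(\Fn, q^h_{(\Fn)})\}$ gives $\FP_a(\varphi, \chi, q) = \{[\Fn]\}$. For the inductive step, I take partitions on both sides of \eqref{YA} to obtain
\[
\FP_a(\varphi,\chi, q) = \bigcup_{0<\Fl\leq \Fn}\ \bigcup_{z\in\CZ}\ \bigcup_{(\wh\phi, \wh{\chi'})\in \FD^z_\Fl(\varphi,\chi)} \Fl\star \FP_a(\wh\phi, \wh{\chi'}, q').
\]
Since the union is taken over all $z\in\CZ$, the indexing set of pairs $(\wh\phi, \wh{\chi'})$ collapses to $\FD_\Fl(\varphi,\chi) = \bigcup_{z}\FD^z_\Fl(\varphi,\chi)$, which is manifestly independent of $a$. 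By the induction hypothesis, each inner term $\FP_a(\wh\phi, \wh{\chi'}, q')$ depends only on $\wh\phi$ and on the $\CZ$-orbit $\ScO_{\wh{\chi'}}\subset \wh{\CS_{\wh\phi}}$, not on $a$ or on the specific $q'$ produced at that stage. Hence the right-hand side can be rewritten as a union indexed only by $\wh\phi$ and $\ScO_{\wh{\chi'}}$, showing that $\FP_a(\varphi,\chi,q)$ is independent of $a$.

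For the orbit invariance in part (1), I combine this $a$-independence with Proposition \ref{inv-prop}(1): applying it to $(\varphi, \chi, q)\in \FT_a(G_n^*)$ and $z'\in\CZ$ gives $\FP_{az'}(\varphi, \chi\cdot\eta_{z'}, q) = \FP_a(\varphi, \chi, q)$, and then dropping the $a$-dependence yields $\FP_a(\varphi, \chi\cdot\eta_{z'}, q) = \FP_a(\varphi, \chi, q)$ (whenever the left-hand side is defined on an appropriate $q'$). To verify this orbit invariance directly from the recursive formula, I use the identities established in the proof of Proposition \ref{inv-prop} (Cases 1-3) that describe how $\FD^z_\Fl(\varphi,\chi)$ transforms under $\chi\mapsto \chi\cdot \eta_{z'}$: in each case, either the descent is unchanged (special orthogonal with $\eta_z$ trivial) or the associated character $\wh{\chi'}$ is shifted by $\eta_{z'}$, which preserves its $\CZ$-orbit $\ScO_{\wh{\chi'}}$. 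Thus the indexing set of pairs $(\wh\phi, \ScO_{\wh{\chi'}})$ appearing in the formula is unchanged, giving the claim.

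For the metaplectic case in part (2), the same inductive setup works: the $a$-independence of $\FP_a(\varphi, \chi, q)$ for $G_n^*=\Mp_{2n}$ follows identically, since the recursion passes to the symplectic descent at the next level, for which the induction hypothesis applies. Combining this $a$-independence with Proposition \ref{inv-prop}(2), which asserts $\FP_{az'}(\varphi(z'), \chi\cdot\eta_{z'}, q) = \FP_a(\varphi, \chi, q)$, immediately yields the desired equality \eqref{Mp-inv}. The main point requiring care in the write-up will be verifying that the induction on $\Fn$ interlocks correctly between $\Sp_{2n}$ and $\Mp_{2n}$ via the transformation formulas \eqref{sp-mp} and \eqref{mp-sp}, but no new ideas are needed beyond what is already present in the proof of Proposition \ref{inv-prop}.
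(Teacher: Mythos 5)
Your proposal reaches the same conclusion by a genuinely different ordering of the argument, and the route is sound in spirit. The paper proves the orbit-invariance identity $\FP_a(\varphi,\chi,q)=\FP_a(\varphi,\chi\cdot\eta_{z'},q_{z'})$ first, by re-tracing the same case-by-case transformation formulas for $\FD^z_\Fl$ that appear in the proof of Proposition \ref{inv-prop}, and then extracts $a$-independence as a corollary via Proposition \ref{inv-prop}. You instead establish $a$-independence directly, by observing that once the sesquilinear forms are forgotten, the union $\bigcup_{z\in\CZ}\FD^z_\Fl(\varphi,\chi)=\FD_\Fl(\varphi,\chi)$ is manifestly a Whittaker-datum-free object, and then deduce orbit-invariance from Proposition \ref{inv-prop}. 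This avoids replaying the case analysis and is arguably more transparent, since the only place $a$ enters Definition \ref{LYT} is through the discriminant constraint on $q^h_{(\Fl)}$, which the passage $\CT_a\rightsquigarrow\FP_a$ deliberately discards.

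There are, however, two imprecisions you should tidy up before this can stand as a proof. First, when you invoke the inductive hypothesis as ``each inner term $\FP_a(\wh\phi,\wh{\chi'},q')$ depends only on $\wh\phi$ and the $\CZ$-orbit $\ScO_{\wh{\chi'}}$, not on $a$ or $q'$,'' that is the phrasing of part (1), which does not apply when $\wh\phi$ is a metaplectic $L$-parameter. Part (2) only asserts invariance under the \emph{simultaneous} twist $(\wh\phi,\wh{\chi'})\mapsto(\wh\phi(z''),\wh{\chi'}\cdot\eta_{z''})$; twisting $\wh{\chi'}$ alone does change $\FP_a$ in the metaplectic case. Fortunately your $a$-independence step only actually needs $a$- and $q'$-independence of the inner terms --- not their full orbit invariance --- and the paper does establish $a$-independence for $\Mp_{2n}$ as a consequence of \eqref{Mp-inv} and Proposition \ref{inv-prop}(2); but since this is not part of the proposition's stated conclusion, you must make it part of your strengthened inductive hypothesis and carry it through the $\Sp$--$\Mp$ interlocking explicitly. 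Second, your parenthetical ``(whenever the left-hand side is defined on an appropriate $q'$)'' flags but does not resolve the fact that $(\varphi,\chi\cdot\eta_{z'},q)$ need not lie in $\FT_a(G_n^*)$ for the same $q$: the paper handles this carefully by tracking the rescaled form $q_{z'}=z'\cdot q$ and proving $\FP_a(\varphi,\chi,q)=\FP_a(\varphi,\chi\cdot\eta_{z'},q_{z'})$. You should do the same, rather than leave it as a caveat.
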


\begin{proof}
For $z'\in \CZ$, write  
$q_{z'} =z'\cdot q$ for short.  As before, let $b = a\cdot z'$.  To prove Part (1),  we need to show that
\begin{equation} \label{P-inv}
\FP_a(\varphi, \chi, q) = \FP_a(\varphi, \chi\cdot\eta_{z'}, q_{z'}),
\end{equation}
and that it does not depend on $a$, namely
\begin{equation} \label{a-inv}
\FP_{a}(\varphi, \chi, q) = \FP_{b}(\varphi, \chi, q).
\end{equation}
Moreover we need to prove \eqref{Mp-inv} for $G_n^*=\Mp_{2n}$, which gives Part (2). 

The proof of \eqref{Mp-inv}--\eqref{a-inv} in general cases  is again by induction on $\Fn$.

{\bf Case 1.}
$G_n^*=\SO_{\Fn}^*$.  If $\Fn$ is odd,  then the $\CZ$-action on $\wh{\CS}_\varphi$ is trivial. In this case \eqref{P-inv} holds trivially, and \eqref{a-inv} follows from Part (1) of 
Proposition \ref{inv-prop}.

Assume that $G_n^*= \SO_{2n}^*$, and that
\begin{equation}  \label{PO}
(\Fl, q^h_{(\Fl)})\star \CT_{a}(\wh\phi, \wh{\chi'}, q')\subset \CT_a(\varphi,\chi, q),
\end{equation}
where
$(\wh\phi, \wh{\chi'})\in \FD^z_\Fl(\varphi,\chi)$, that is,
\[
\chi_{\varphi, \phi}=\chi\cdot\eta_z,\quad \chi_{\phi, \varphi} = \chi'.
\]
Then we have $(\wh\phi, \wh{\chi'})\in \FD^{zz'}(\varphi, \chi\cdot\eta_{z'})$, and we check from Definition \ref{LYT} that
\begin{equation} \label{PO'}
(\Fl, z'\cdot q^h_{(\Fl)})\star \CT_{a}(\wh\phi, \wh{\chi'}, q'_{z'})\subset \CT_a(\varphi,\chi\cdot\eta_{z'}, q_{z'}).
\end{equation}
Clearly \eqref{PO} and \eqref{PO'} give the same ordered partitions. This proves \eqref{P-inv}.

By Part (1) of Proposition \ref{inv-prop}, we have that 
\[
\CT_{b}(\varphi, \chi, q_{z'}) = \CT_a(\varphi, \chi\cdot\eta_{z'}, q_{z'}).
\]
Combining with \eqref{P-inv} we obtain that
\[
\FP_{b}(\varphi, \chi, q) = \FP_{b}(\varphi, \chi, q_{z'}) = \FP_a(\varphi, \chi, q),
\]
which proves \eqref{a-inv}.

{\bf Case 2.} $G_n^* = \RU_\Fn^*$.  Assume that 
\begin{equation}  \label{PU}
(\Fl, q^h_{(\Fl)})\star \CT_{a}(\wh\phi, \wh{\chi'}, q')\subset \CT_a(\varphi,\chi, q),
\end{equation}
where $(\wh\phi, \wh{\chi'})\in \FD^z_\Fl(\varphi,\chi)$. Then $\chi_{\varphi, \phi} =\chi\cdot\eta_z$, $\chi_{\phi,\varphi} = \chi'\cdot\eta_{(-1)^\Fl z}$, which gives that 
\[
(\wh\phi, \wh{\chi'\cdot\eta_{z'}}) \in \FD^{zz'}(\varphi, \chi\cdot\eta_{z'}).
\]
Since $a \cdot z = [\disc(q^h_{(\Fl)})]$,  we have that 
\begin{equation} \label{PU'}
(\Fl, z'\cdot q^h_{(\Fl)})\star \CT_{a}(\wh\phi, \wh{\chi'\cdot\eta_{z'}}, q')\subset \CT_a(\varphi,\chi\cdot\eta_{z'}, q_{z'}).
\end{equation}
Then \eqref{PU} and \eqref{PU'} give the same ordered partitions by the induction hypothesis.
The independence of $a$  follows from  \eqref{P-inv} and Part (1) of Proposition \ref{inv-prop},  by similar arguments as used in {\bf Case 1} for even special orthogonal groups.

Finally, we prove the rest of the proposition for symplectic and metaplectic groups together.

{\bf Case 3.1.} $G_n^*=\Sp_{2n}$. Assume that
\[
(\Fl, q^h_{(\Fl)})\star \CT_{a}(\wh\phi, \wh{\chi'}, q') \subset \CT_a(\varphi,\chi,q),
\]
where $(\wh\phi, \wh{\chi'})\in \FD^z_\Fl(\varphi,\chi)$.  By \eqref{sp-mp}, we have that 
\[
(\Fl, z'\cdot q^h_{(\Fl)})\star \CT_{a}(\wh\phi(z'), \wh{\chi'}\cdot \eta_{z'}, q') \subset \CT_a(\varphi, \chi\cdot\eta_{z'}, q).
\]
By the induction hypothesis \eqref{Mp-inv} on $\Mp_{2n-\Fl}$, we have that 
\[
\FP_{a}(\wh\phi(z'), \wh{\chi'}\cdot \eta_{z'}, q') = \FP_{a}(\wh\phi, \wh{\chi'}, q').
\]
This reduces to the case of metaplectic groups.

{\bf Case 3.2.} $G_n^*=\Mp_{2n}$. Assume that
\[
(\Fl, q^h_{(\Fl)})\star \CT_{a}(\wh\phi, \wh{\chi'}, q') \subset \CT_a(\varphi,\chi,q),
\]
where $(\wh\phi, \wh{\chi'})\in \FD^z_\Fl(\varphi,\chi)$.  By \eqref{mp-sp}, we have that 
\[
(\Fl, z'\cdot q^h_{(\Fl)})\star \CT_{a}(\wh\phi, \wh{\chi'}\cdot\eta_{z'}, q') \subset \CT_a(\varphi(z'), \chi\cdot \eta_{z'}, q).
\]
By the induction hypothesis \eqref{P-inv} on $\Sp_{2n-\Fl}$, we have that 
\[
\FP_{a}(\wh\phi, \wh{\chi'}\cdot\eta_{z'}, q') = \FP_{a}(\wh\phi, \wh{\chi'}, q').
\]
This reduces to the case of symplectic groups.

The above reductions in {\bf Case 3.1} and {\bf Case 3.2} finish the proof of \eqref{P-inv} and \eqref{Mp-inv} for $G_n^*=\Sp_{2n}$ or $\Mp_{2n}$. The independence of $a$ for $\Sp_{2n}$ follows from \eqref{P-inv}
and Part (1) of Proposition \ref{inv-prop}. Similarly, the independence of $a$ for $\Mp_{2n}$ follows from \eqref{Mp-inv} and Part (2) of Proposition \ref{inv-prop}. We are done. 
\end{proof}

\subsection{Wavefront set conjecture}\label{ssec-MC}

We are ready to state our conjecture on the wavefront sets defined from three different 
perspectives. 

The notion of the {\bf algebraic wavefront set} $\wf_\wm(\pi)$ of $\pi\in\Pi_F(G)$ for 
general connected reductive group $G$ defined over $F$ can be defined by means of 
the generalized Whittaker model ${\rm {Wh}}_X(\pi)$ (see \eqref{whit-model}) associated to any nilpotent element $X$ in $\CN_F(\Fg)$, as defined in Section \ref{ssec-TJM}. More precisely, we define 
\begin{equation}\label{wf-wm}
\wf_\wm(\pi)={\{\CO \in \CN_F(\Fg_n)_\circ\ \mid\ {\rm {Wh}}_X(\pi)\ne 0, \  X\in \CO\}}.		\end{equation}

\begin{conj}[Wavefront Set]\label{conj:main}
Let $F$ be a local field of characteristic zero.
Assume that $G_n^*$ is an $F$-quasisplit classical group, defined as $G_n^*=\Isom(V,q)^\circ$ for an $\Fn$-dimensional non-degenerate $\eps$-Hermitian vector space $(V,q)$, or
the metaplectic double cover $\Mp_{2n}(F)$ of the symplectic group $\Sp_{2n}(F)$. Let $\varphi$ be a generic local $L$-parameter of $G_n^*$ as given in Section \ref{ssec-LVP}.
For any $\pi$ belonging to the local Vogan packet $\Pi_\varphi[G_n^*]$, the following identities hold:
\[
\wf_\wm(\pi)^\mx=\wf_\ari(\pi)^\mx=\wf_\tr(\pi)^\mx,
\]
where the algebraic wavefront set $\wf_\wm(\pi)$ is defined in \eqref{wf-wm}, the analytic wavefront set $\wf_\tr(\pi)$ is defined as in \eqref{wf-tr},
and $\wf_\square(\pi)^\mx$ denotes the subset of all maximal members in the wavefront set $\wf_\square(\pi)$ under the $F$-stable topological order on $\CN_F(\Fg_n)_\circ$.
\end{conj}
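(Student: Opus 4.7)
The plan is to split the conjecture into the two equalities
\[
\wf_\ari(\pi)^\mx = \wf_\wm(\pi)^\mx \quad \text{and} \quad \wf_\wm(\pi)^\mx = \wf_\tr(\pi)^\mx,
\]
and to treat them quite differently. The second equality I would take as the harmonic-analytic input: it is a theorem of Mœglin--Waldspurger \cite{MW87} in the non-archimedean case, and in the archimedean case I would invoke the descent from the work of Barbasch--Vogan and Rossmann from the characteristic cycle/wavefront cycle of $\pi$ to the maximal orbits in the character expansion. Thus the bulk of the proof is the first equality, which is the genuinely new content and which I propose to attack by induction on $\Fn = \dim V$, interlocking different types of classical groups in the manner of Proposition~\ref{inv-prop}.

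For the inclusion $\wf_\ari(\pi)^\mx \subseteq \wf_\wm(\pi)^\mx$, I would start from a maximal admissible sesquilinear Young tableau in $\CY_a(\varphi,\chi,q)$, arising from an $L$-descent pre-tableau $\Fs_{\udl\Fl} = ((\ell_1,q_1^h),\ldots,(\ell_k,q_k^h))$ built by iterating first descents. The first step picks out $(\wh\phi,\wh{\chi'}) \in \FD_{\ell_1}^z(\varphi,\chi)$, and the local Gan--Gross--Prasad theorem for generic parameters (see Section~\ref{ssec-ED}) supplies a unique pair $(\pi,\pi')$ with a nonzero Bessel or Fourier--Jacobi pairing against the character $\psi_{X_{e,\varsigma}}$ or the oscillator representation $\omega_{\psi_{X_{e,\varsigma}}}$. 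Translating that pairing to the twisted Jacquet module in \eqref{whit-quo} shows that $\CJ_{X_{e,\varsigma}}(\pi)$ has $\pi'^\vee$ as an $M_{X_{e,\varsigma}}$-subquotient. The inductive hypothesis applied to $\pi'^\vee$ realizes the tail tableau inside $\wf_\wm(\pi'^\vee)$, and assembling the generalized Whittaker data for $\pi'^\vee$ on $M_{X_{e,\varsigma}}^\circ$ back up to $\pi$ via the parabolic structure of $P_{X_{e,\varsigma}}$ places the full tableau in $\wf_\wm(\pi)$.

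For the reverse inclusion $\wf_\wm(\pi)^\mx \subseteq \wf_\ari(\pi)^\mx$, I would take a maximal orbit $\CO \in \wf_\wm(\pi)^\mx$ with partition $\udl p = [p_1,\dots,p_r]$ (necessarily satisfying Proposition~\ref{prop:Lp}). A standard exact-sequence argument on the twisted Jacquet module associated to $[p_1,1^{\Fn-p_1}]$ produces a nonzero Bessel or Fourier--Jacobi model for $\pi$ against some irreducible $\pi'$ on $H_{\lfloor(\Fn-p_1)/2\rfloor}$. The local Gan--Gross--Prasad theorem then forces the enhanced $L$-parameter of $\pi'^\vee$ to lie in $\FD_{p_1}^z(\varphi,\chi)$ for the $z \in \CZ$ dictated by the rationality of the sesquilinear datum $(V^h_{(p_1)},q^h_{(p_1)})$, which is matched with the discriminant identities of Proposition~\ref{prop:rational} and the explicit recipes of Section~\ref{ssec-ED}. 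The induction hypothesis on $\pi'^\vee$ then yields a tail $L$-descent pre-tableau in $\CL_a(\wh\phi,\wh{\chi'},q')$, and prepending $(p_1,q^h_{(p_1)})$ produces the required element of $\CL_a(\varphi,\chi,q)$.

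The main obstacle in both directions is matching \emph{maximality}, i.e.\ identifying the first occurrence index $\Fl_0(\varphi,\chi)$ of \eqref{FO} with the largest part $p_1$ of any maximal partition in $\wf_\wm(\pi)$. The discreteness Theorem~\ref{thm:DFD} and, over $\BR$, the refined Proposition~\ref{prop:fd} give one direction; the opposite direction requires ruling out generalized Whittaker models with $p_1 > \Fl_0$, which for archimedean $F$ should be accessible via associated-variety estimates à la Vogan, but for non-archimedean $F$ is subtle in view of the non-uniqueness examples of Tsai \cite{Ts22}. A second, more technical difficulty is to verify at each inductive step that the $F$-rational refinement coming from the descent pre-tableau agrees with the $F$-rational orbit actually supporting the Whittaker model; this reduces to the compatibility between the discriminant formulas in Proposition~\ref{prop:rational} and the rationality constraints in Sections~\ref{sec:GGP-SO}--\ref{sec:GGP-U-FJ}, and is precisely where the normalization of the local Langlands correspondence via the Whittaker datum $a \in \CZ$ enters.
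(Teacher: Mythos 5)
The statement you are attempting to prove is \emph{Conjecture}~\ref{conj:main}, not a theorem: the paper does not prove it. What the paper establishes is a handful of special cases --- Proposition~\ref{exmp:generic} (the generic-representation case, via Shahidi's tempered $L$-packet conjecture and \cite{MW87, Mat90, Mat92}), Proposition~\ref{prop:subr} (the subregular case for $p$-adic $\SO_{2n+1}$, using \cite[Lemma 3.1]{JZ18}), and the numerical/combinatorial checks for $\SO_7$ and tempered unipotent representations in Appendix~\ref{sec-Eg} --- together with unconditional structure theorems about the \emph{arithmetic} side alone over $\BR$ (Theorems~\ref{thm:main}, \ref{cor:wf-max-unique-archimedean}, \ref{thm:maximal-rational-order}). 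Your proposal should therefore not be read as a reconstruction of a proof in the paper, but as a strategy for a result the authors themselves leave open, and to their credit the paper signals exactly the points at which your sketch would need new input.

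On the substance: the decomposition into $\wf_\ari^\mx=\wf_\wm^\mx$ and $\wf_\wm^\mx=\wf_\tr^\mx$ matches the paper's framing, but note that the second equality is only a theorem over non-archimedean $F$ (\cite{MW87}); over $\BR$ the paper explicitly says it is merely \emph{expected} and known only in special situations, so ``invoking Barbasch--Vogan and Rossmann'' is a hope, not a citation. More seriously, the forward inclusion hinges on the ``assembling'' step: having a Bessel/Fourier--Jacobi pairing of $\pi$ against $\pi'$ via the $[p_1,1^{\Fn-p_1}]$-orbit, and a generalized Whittaker model of $\pi'$ for the tail orbit, does not by itself yield a generalized Whittaker model of $\pi$ for the concatenated nilpotent --- the composition of twisted Jacquet functors is not the twisted Jacquet functor for the bigger orbit, and this is precisely the ``raising nilpotent orbits'' phenomenon studied in \cite{JLS16}. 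The reverse inclusion has a symmetric gap: a nonzero $\CJ_{X_{e,\varsigma}}(\pi)$ for an orbit in $\wf_\wm(\pi)^\mx$ need not contain an irreducible tempered $M_{X_{e,\varsigma}}^\circ$-subquotient $\pi'$ with a generic $L$-parameter to which the Gan--Gross--Prasad theorem and the descent formalism apply; \cite[Lemma 3.1]{JZ18} settles this only for subregular orbits in $\SO_{2n+1}$, which is exactly why Proposition~\ref{prop:subr} is stated in that restricted form. Finally, your closing remark about ruling out $p_1>\Fl_0$ identifies the real crux; the paper's Theorem~\ref{thm:main} does this on the arithmetic side over $\BR$, but the corresponding upper bound on $\wf_\wm(\pi)$ is precisely what the conjecture asks for and is not supplied here. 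In short: a reasonable roadmap, honestly flagged, but with the same two gaps (composition of descents, and the upper bound on $\wf_\wm$) that keep the statement a conjecture in the paper itself.
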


As first evidence, the Wavefront Set Conjecture (Conjecture \ref{conj:main}) holds in the case that the algebraic wavefront set contains regular or subregular nilpotent orbits. The regular case amounts to the case of generic representations, which is essentially known by the properties of the local Langlands correspondence. We will present the proof in the subregular case for $p$-adic odd special orthogonal groups. 

\begin{prop}\label{exmp:generic}
Assume that $\pi$ is an irreducible generic representation of an  $F$-quasisplit $G^*_n$. Then Conjecture \ref{conj:main} holds for $\pi$.
\end{prop}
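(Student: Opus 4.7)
The plan begins by separating the three wavefront sets into pairs. The equality $\wf_\wm(\pi)^\mx = \wf_\tr(\pi)^\mx$ is already known: for non-archimedean $F$ this is the theorem of M\oe glin and Waldspurger recalled in \eqref{alg-ana-wfs}, while for archimedean $F$ and generic $\pi$ it follows from Matumoto \cite{Mat90, Mat92}. It therefore suffices to prove the middle identity $\wf_\ari(\pi)^\mx = \wf_\wm(\pi)^\mx$, and the strategy is to show that both sides consist of exactly the same collection of $F$-rational regular (principal) nilpotent orbits.

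By Shahidi's tempered $L$-packet conjecture, established in this setting by \cite{Ar13} and its extensions, there is a Whittaker datum $a \in \CZ$ such that $\pi = \pi_a(\varphi, \mathds{1})$. Then Proposition \ref{prop:LLC} yields $\pi_a(\varphi, \mathds 1) = \pi_b(\varphi, \eta_{a/b})$ (up to the metaplectic twist $\varphi \mapsto \varphi(a/b)$), so $\pi$ admits a nonzero $\Fw = b$-Whittaker functional precisely when $\eta_{a/b}$ restricts trivially to $\CS_\varphi$. Consequently, by the cited theorems of M\oe glin--Waldspurger and Matumoto, $\wf_\wm(\pi)^\mx$ is exactly the set of $F$-rational regular orbits $\CO_b$ attached to Whittaker data in $\{b \in \CZ : \eta_{a/b}|_{\CS_\varphi} = \mathds 1\}$.

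On the arithmetic side, I would compute $\wf_a(\varphi, \mathds{1}, q)$ by the descent machinery of Sections \ref{ssec-LPD}--\ref{ssec-WFSA}. The crucial observation is that by taking the companion parameter $\phi = 0$ in Definition \ref{defn:pd}, the distinguished character $\chi_{\varphi, 0}$ is trivial by the empty-product convention in \eqref{char-O}, \eqref{char-spmp}, \eqref{char-U}, so $\chi^z_{\varphi, 0} = \eta_z$. Hence the pair $(\wh 0, \wh{\mathds 1})$ belongs to $\FD_\Fl^z(\varphi, \mathds 1)$ at the maximal admissible $\Fl$ (equal to $\Fn$ for odd orthogonal, symplectic, metaplectic, and unitary groups; and $\Fl = \Fn-1$ followed by a terminal $\Fl=1$ tail for even orthogonal groups) precisely when $\eta_z|_{\CS_\varphi} = \mathds 1$. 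Using \eqref{compose0} and Proposition \ref{prop:rational}, each such $z$ produces one $L$-descent pre-tableau whose associated admissible sesquilinear Young tableau realizes the principal partition ($[\Fn]$, $[\Fn-1,1]$, or $[2n]$) together with the one-dimensional Hermitian form $q^h$ dictated by $a \cdot z$; this corresponds exactly to the $F$-rational regular orbit attached to the Whittaker datum $b = a \cdot z$. Nonemptiness of $\CL_a(\varphi, \mathds 1, q)$ is guaranteed by Proposition \ref{prop:LDPT}, and nothing larger than a regular orbit can occur in $\wf_\ari(\pi)$ since regular orbits are maximal in the $F$-stable topological order.

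Matching the two indexing sets is then immediate, since both $\wf_\ari(\pi)^\mx$ and $\wf_\wm(\pi)^\mx$ turn out to be indexed by $\{b \in \CZ : \eta_{a/b}|_{\CS_\varphi} = \mathds 1\}$ under the correspondence $b = a \cdot z$, which yields the desired equality. The main technical obstacle I expect is the case-by-case verification that the bijection between admissible one-dimensional sesquilinear forms $q^h$ and Whittaker data $b$ produced by the descent agrees with the parametrization of $F$-rational regular orbits fixed in Section \ref{ssec-ED} via the local Gan--Gross--Prasad compatibility formulas: in particular, for metaplectic groups the extra twist in Proposition \ref{prop:LLC} and in \eqref{char-spmp} must be tracked carefully, and for unitary groups both the Bessel ($\Fl$ odd) and Fourier--Jacobi ($\Fl$ even) terminal descents must be reconciled so as to account for the entire set of $F$-rational regular orbits attached to $\pi$.
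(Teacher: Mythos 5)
Your proposal is correct and follows essentially the same route as the paper's proof, but in substantially more detail. The paper's proof is a single short paragraph: it observes that the sesquilinear Young tableaux in $\wf_\ari(\pi)^\mx$ are parametrized by $\{z\in\CZ \mid \eta_z={\mathds 1}\}$, asserts that the local Langlands correspondence identifies these with $\wf_\wm(\pi)^\mx$, and quotes \cite{MW87, Mat92} for $\wf_\wm(\pi)^\mx=\wf_\tr(\pi)^\mx$. You flesh out exactly the first two steps: you explain that the terminal descent at $\Fl=\Fn$ (resp.\ $\Fl=\Fn-1$ plus a size-one tail for $\SO_{2n}$) uses the companion $\phi=0$ with $\chi_{\varphi,0}={\mathds 1}$ by stipulation, so that the set of admissible $z$ is precisely the stabilizer of ${\mathds 1}$, and you make explicit on the Whittaker-model side that the same set indexes the $F$-rational regular orbits supporting Whittaker functionals, via Proposition \ref{prop:LLC}. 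The ``technical obstacle'' you flag at the end (reconciling the parametrization of one-dimensional forms $q^h$ by $z$ with the parametrization of regular orbits by Whittaker data $b=a\cdot z$, including the metaplectic twist and the Bessel/Fourier--Jacobi bookkeeping for unitary groups) is a real but routine compatibility check that the paper elides entirely with the phrase ``following the local Langlands correspondence for classical groups''; your more explicit treatment is correct and arguably clearer, but it is the same argument.
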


\begin{proof}
Since $\pi$ is generic, there exists $a\in \CZ$ such that $\pi  = \pi_a(\varphi,{\mathds 1})$ under the local Langlands correspondence $\iota_a$.
Recall the $\CZ$-action on $\widehat{\CS_\varphi}$. By Proposition \ref{prop:LLC}, the Whittaker data with respect to which $\pi$ is generic are parametrized by 
$a\cdot{\rm Stab}_{\CZ}({\mathds 1})$, where 
\[
{\rm Stab}_{\CZ}({\mathds 1}) = \{z\in\CZ \mid \eta_z = {\mathds 1}\}.
\]
Thus by Proposition \ref{inv-prop} and \eqref{pi-inv}, $\wf_\ari(\pi)^\mx=\wf_\wm(\pi)^\mx$  consists of the regular nilpotent orbits corresponding to $a\cdot {\rm Stab}_{\CZ}({\mathds 1})$. 
In addition, from \cite{Mat92, MW87}, one has $\wf_\wm(\pi)^\mx=\wf_\tr(\pi)^\mx$.
Therefore, Conjecture \ref{conj:main} holds for generic representations, i.e.,
if $\pi$ is generic, then
\[
\wf_\ari(\pi)^\mx=\wf_\wm(\pi)^\mx=\wf_\tr(\pi)^\mx.
\]
\end{proof}

\begin{prop}\label{prop:subr}
Let $G_n$ be an odd special orthogonal group over a $p$-adic field $F$, and let $\pi$ be an irreducible representation of $G_n$ with generic $L$-parameter. Assume that $\wf_\wm(\pi)^\mx$ contains an $F$-rational subregular nilpotent orbit $([\Fn-2,1^{2}],\{(Fe_{-1},q'_{e,\varsigma}),(V_{0}\cap e^{\perp},q_V\vert_{V_{0}\cap e^{\perp}})\})$ for some anisotropic vector $e$ $($cf. \eqref{eq:X-p-1}$)$. Then Conjecture \ref{conj:main} holds for $\pi$.
\end{prop}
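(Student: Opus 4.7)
The plan is to combine three ingredients: the consecutive descent of enhanced $L$-parameters developed in Sections \ref{ssec-LPD}--\ref{ssec-WFSA}; the bridge \cite[Lemma 3.1]{JZ18} connecting the algebraic wavefront set to local Bessel descents; and Proposition \ref{exmp:generic} applied to the descended representation. First, I observe that the maximality of $[\Fn-2, 1^2]$ in $\wf_\wm(\pi)^\mx$ rules out the regular orbit $[\Fn]$ from $\wf_\wm(\pi)$, so $\pi$ is not Whittaker-generic and, writing $\pi = \pi_a(\varphi, \chi)$ under the local Langlands correspondence, the character $\chi \in \wh{\CS_\varphi}$ is non-trivial.

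The hypothesis that the prescribed $F$-rational subregular orbit belongs to $\wf_\wm(\pi)$ amounts, by \eqref{wf-wm} and Section \ref{ssec-TJM}, to the non-vanishing of the Bessel-type twisted Jacquet module $\CJ_{X_{e,\varsigma}}(\pi)$ constructed in Section \ref{ssec:TJMp1} with $p_1 = \Fn - 2$. Via the Bessel model this furnishes a pair $(\pi, \pi')$ with $\pi'$ a character of $\SO(V_0 \cap e^\perp)$, and by \cite[Lemma 3.1]{JZ18} combined with the maximality assumption, $\Fl_0 = \Fn - 2$ is the first occurrence index of $\pi$ in the Bessel descent. I would then invoke the local Gan-Gross-Prasad theorem for special orthogonal groups with generic $L$-parameters (Section \ref{sec:GGP-SO}) to transport this to the enhanced $L$-parameter side: $\FD_{\Fn-2}^z(\varphi, \chi) \neq \varnothing$ for the $z \in \CZ$ uniquely dictated by the Bessel data $(e, \varsigma)$, with the distinguished character governed by the root number recipe \eqref{char-O}.

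The descended parameter $\phi$ corresponds to a two-dimensional orthogonal character, whose wavefront set is the trivial zero orbit, so Proposition \ref{exmp:generic} applies trivially at this level. Unwinding Definition \ref{LYT} together with the rationality constraints of Proposition \ref{prop:rational} then produces an $L$-descent pre-tableau
\[
(\Fn - 2,\, q'_{e, \varsigma}) \star \CT_a(\wh{\phi}, \wh{\chi'}, q')
\]
in $\CL_a(\varphi, \chi, q)$ whose associated admissible sesquilinear Young tableau is precisely the one featured in the statement, placing this $F$-rational subregular orbit in $\wf_\ari(\pi)$. For maximality within $\wf_\ari(\pi)^\mx$, the fact that the first occurrence index equals $\Fn - 2$ precludes any $L$-descent pre-tableau with first part $\Fn$, while any pre-tableau with first part strictly less than $\Fn - 2$ corresponds to an $F$-stable orbit strictly below $[\Fn-2, 1^2]$ under the $F$-stable topological order. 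Combined with the M\oe glin-Waldspurger identity $\wf_\wm(\pi)^\mx = \wf_\tr(\pi)^\mx$ from \cite{MW87} in the $p$-adic case, this yields Conjecture \ref{conj:main} for $\pi$.

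The main obstacle is the fine matching, in the middle step, between the specific subregular Bessel data $(e, \varsigma)$ appearing on the representation side and the discriminant and norm conditions of Definition \ref{LYT} and Proposition \ref{prop:rational} that govern the pre-tableau on the $L$-parameter side; in particular, one must verify that the scalar $z \in \CZ$ extracted from the GGP recipe agrees with the one dictated by $[\disc(q'_{e,\varsigma})]$, so that the $F$-rational structures of the subregular orbit on the two sides literally coincide rather than merely lie in the same $F$-stable orbit. This rationality bookkeeping is exactly what the detailed form of \cite[Lemma 3.1]{JZ18} is designed to supply, and it is the step that requires the most care.
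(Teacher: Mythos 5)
Your argument correctly carries the forward inclusion: given the hypothesized $F$-rational subregular orbit in $\wf_\wm(\pi)^\mx$, \cite[Lemma 3.1]{JZ18} produces a character $\tau$ of $M_X^\circ\cong\SO_2$ with non-vanishing Bessel $\Hom$-space, the GGP theorem of Section \ref{sec:GGP-SO} converts this to a point of $\FD^z_{\Fn-2}(\varphi,\chi)$, and Definition \ref{LYT} together with the trivial wavefront set of the $\SO_2$-character produces the $L$-descent pre-tableau whose associated orbit matches the given one. Your observation that $\Fl_0(\varphi,\chi)=\Fn-2$ (so that $\wf_\ari(\pi)^\mx$ lies in $\CO^{\rm st}_{\Fn-2}$) is also correct, since $\pi$ is not generic and so $\FD_\Fn(\varphi,\chi)=\varnothing$. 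Likewise, the rationality matching that you flag as the main obstacle — between the $z\in\CZ$ coming from the GGP recipe and $[\disc(q'_{e,\varsigma})]$ — is exactly what the identity $a=(-1)^{\Fn}\langle e,e\rangle$ in Section \ref{sec:GGP-SO} and the constraint $a\cdot z=(-1)^{\Fn}\disc(q^h_{(\Fl)})$ in Definition \ref{LYT} are set up to make automatic, so this is not where the difficulty lies.

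The genuine gap is that you only prove one inclusion of the claimed equality. Showing that the hypothesized orbit (and, by the same argument, every orbit of $\wf_\wm(\pi)^\mx$) lands in $\wf_\ari(\pi)^\mx$, and that $\wf_\ari(\pi)^\mx$ consists only of subregular orbits, does not by itself give $\wf_\ari(\pi)^\mx\subset\wf_\wm(\pi)^\mx$: there could \emph{a priori} be $F$-rational subregular orbits lying in $\wf_\ari(\pi)^\mx$ but not in $\wf_\wm(\pi)^\mx$. The paper's proof closes this explicitly: since every orbit of $\wf_\ari(\pi)^\mx$ lies in the single $F$-stable subregular class $\CO^{\rm st}_{\Fn-2}$, it is a Bessel-type orbit, so the local GGP theorem applies in the \emph{reverse} direction — the descent datum $(\phi,\chi')\in\FD^z_{\Fn-2}(\varphi,\chi)$ that produced the $L$-descent pre-tableau forces non-vanishing of the corresponding Bessel twisted Jacquet module of $\pi$, placing that $F$-rational orbit in $\wf_\wm(\pi)$, and maximality puts it in $\wf_\wm(\pi)^\mx$. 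Without this second application of GGP, you have established only $\wf_\wm(\pi)^\mx\subset\wf_\ari(\pi)^\mx$, which is not Conjecture \ref{conj:main}.
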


\begin{proof}   By assumption, $\pi$ has a generalized Whittaker model associated to the partition $[\Fn-2,1^2]$ but $\pi$ is not generic by Proposition \ref{exmp:generic}. It follows that the subregular stable nilpotent orbit $\CO^{\rm st}_{\Fn-2}$ is defined over $F$, i.e., $\CO^{\rm st}_{\Fn-2}$ contains $F$-rational subregular orbits. Since $\pi$ is not generic, every member of $\wf_\wm(\pi)^\mx$ is contained in $\CO^{\rm st}_{\Fn-2}$, and $M_X^\circ$ is isomorphic to $\SO_2$, where $X=X_{e,\varsigma}$ in the notation of Section \ref{ssec-RNO-D}.

Let $\varphi$ be the $L$-parameter of $\pi$. By \cite[Lemma 3.1]{JZ18}, there exists a character $\tau$ of $M_X^\circ$ such that
\[
\Hom_{M_{X}^\circ\ltimes U_X}(V_\pi, \tau\otimes\psi_{X_{e,\varsigma}})\ne 0.
\]
Let $\phi$ be the $L$-parameter of $\tau$.
By the local Gan-Gross-Prasad conjecture as discussed in Section \ref{sec:GGP-SO}, we may take the local Langlands correspondence $\iota_a$  such that  $(\phi,\chi')$   belongs to $\FD^{1}_{\Fn-2}(\varphi,\chi)$ where $\pi=\pi(\varphi,\chi)$ and $\tau=\pi_a(\phi,\chi')$.
By Definition \ref{defn:AWFS}, we obtain that
\[
([\Fn-2,1^2],\{(Fe_{-1},q'_{e,\varsigma}),(V_0\cap e^\perp,q\vert_{V_0\cap e^\perp})\})\in \wf_a(\varphi,\chi,q).
\]
It follow that $\wf_\wm(\pi)^\mx\subset \wf_\ari(\pi)^\mx$.
Since every member of  $\wf_\ari(\pi)^\mx$ is contained in $\CO^{\rm st}_{\Fn-2}$ as well, the local Gan-Gross-Prasad conjecture is applicable for all $F$-rational nilpotent orbits in $\wf_\ari(\pi)^\mx$.
Thus $\wf_\ari(\pi)^\mx\subset \wf_\wm(\pi)^\mx$ and we conclude that
\[
\wf_\ari(\pi)^\mx=\wf_\wm(\pi)^\mx.
\]
This finishes the proof. We remark that since $\pi$ is not generic, it can be shown that $M_X^\circ$ is non-split, i.e., $M_X^\circ\not\cong F^\times$.
\end{proof}

\section{Arithmetic Data: Archimedean Case}\label{sec-ADA}


In order to determine the explicit structures of the arithmetic wavefront sets when $F=\BR$, we are going to write down the arithmetic data explicitly for $F=\BR$, which are 
more precise than those for general local fields of characteristic zero.

\subsection{Weil group representation and local root number}\label{sec-LRN}
We first collect some standard facts on Weil groups and their representations in the archimedean case.
Let $|\cdot|$ be the usual absolute value on $\mathbb{C}$, and $|\cdot |_\mathbb{C}=|\cdot |^2$. Denote $[z]=z/|z|$, $z\in \BC^\times$. We have the Weil groups $\CW_\mathbb{C}=\mathbb{C}^\times$ and $\CW_\mathbb{R}=\mathbb{C}^\times\cup\mathbb{C}^\times j$, where
$j^2=-1$, $jzj^{-1}=\bar{z}$,  and $z\in\mathbb{C}^\times.$ 
 The local class field theory in this case yields
\begin{align}\label{cft}
\CW_\mathbb{R}^\textrm{ab}\stackrel{\approx}{\longrightarrow}\mathbb{R}^\times,\quad z\mapsto z\bar{z}=|z|_\mathbb{C}, \quad j\mapsto -1.
\end{align}
The  representations of Weil groups can be listed as follows.
\begin{itemize}
\item Characters $\varsigma_{t, \alpha}$ of $\CW_\mathbb{C}$ with $t\in \BC$ and $\alpha\in \BZ$  defined by\footnote{The character $\chi_{2\alpha}: z\mapsto (z/\bar{z})^\alpha$ of $\BC^\times$ with $\alpha\in\frac{1}{2}\BZ$ defined in \cite{At20}, is the character $\varsigma_{0, -2\alpha}$ in our notation; we put the minus sign for technical reasons which will be clear later.}
\[
\varsigma_{t, \alpha}(z)= |z|_\mathbb{C}^t[z]^{-\alpha};
\]
\item Characters $\sigma_{t, \pm}$ of $\CW_\mathbb{R}$ with $t\in\mathbb{C}$  defined by
\[
\sigma_{t, \pm}(z)=|z|^{2t}=|z|^t_\mathbb{C},\quad \sigma_{t,\pm}(j)=\pm1,
\]
which factor through $\mathbb{R}^\times$ via the isomorphism in \eqref{cft};
\item Two-dimensional representations $\sigma_{t, \alpha}$ of $\CW_\mathbb{R}$ with $t\in\mathbb{C}$ and $\alpha\in \mathbb{Z}$  defined by
\[
\sigma_{t, \alpha}(re^{i\theta})=\begin{pmatrix} r^{2t}e^{i\alpha\theta} & \\ & r^{2t}e^{-i\alpha\theta} \end{pmatrix} \quad {\rm and} \quad 
\sigma_{t, \alpha}(j)=\begin{pmatrix} & (-1)^\alpha \\ 1 & \end{pmatrix}.
\]
Equivalently, one has that
\[
\sigma_{t, \alpha}=\textrm{Ind}^{\CW_\mathbb{R}}_{\CW_\mathbb{C}} (\varsigma_{t, \alpha}),
\]
where $\varsigma_{t,\alpha}$ is the character of $\CW_\BC$ given in the above. It is clear that
\[
\sigma_{t,\alpha}|_{\CW_\mathbb{C}}\cong \varsigma_{t,\alpha}\oplus \varsigma_{t,-\alpha}.
\]
\end{itemize}
The following are well-known:
\begin{itemize}
\item $\sigma_{t,\alpha}\cong \sigma_{t,-\alpha}$;
\item $\sigma_{t,\alpha}$ is irreducible if and only if $\alpha\neq 0$;
\item $\sigma_{t,0}\cong \sigma_{t,+}\oplus \sigma_{t,-}$.
\end{itemize}
It follows that $\sigma_{t,\pm}$ and $\sigma_{t, \alpha}$ with $t\in\mathbb{C}$ and $\alpha>0$ are all the irreducible continuous representations of $\CW_\mathbb{R}$.
Some more useful facts are recalled below:
\begin{itemize}
\item {\it Conjugate contragredient}:
\[
{}^c \varsigma_{t,\alpha}^\vee=\varsigma_{-t,\alpha},\quad  \sigma_{t,\pm}^\vee\cong \sigma_{-t,\pm},\quad
  \sigma_{t,\alpha}^\vee \cong \sigma_{-t,\alpha};
\]
\item {\it Determinant formula} of $\sigma_{t,\alpha}$,
\[
\det \sigma_{t,\alpha}=\sigma_{2t,\varepsilon(\alpha+1)},  
\]
where $\varepsilon(\alpha+1)$ is the sign of $(-1)^{\alpha+1}$. In particular, $\det\sigma_{0,\alpha}={\mathds 1}$ or sgn for $\alpha$ odd or even, with $\mathds 1$
and sgn the trivial and sign characters of $\BR^\times$ respectively;
\item {\it Tensor product decomposition} of Weil group representations
\begin{align*}
 \varsigma_{t_1,\alpha_1}\otimes \varsigma_{t_2,\alpha_2}&\cong \varsigma_{t_1+t_2,\alpha_1+\alpha_2},\\
\sigma_{t_1, \alpha}\otimes \sigma_{t_2,\pm} &\cong \sigma_{t_1+t_2,\alpha},\\ 
\sigma_{t_1, \alpha_1}\otimes \sigma_{t_2, \alpha_2}&\cong \sigma_{t_1+t_2, \alpha_1+\alpha_2} \oplus \sigma_{t_1+t_2, \alpha_1-\alpha_2}.
\end{align*}
\end{itemize}

Fix the nontrivial additive character $\psi^\BR(x)=e^{2\pi ix}$ of $\BR$, so that\footnote{Our $\psi^\BC$ is a twist of the one chosen in \cite{At20} by $-1$, so that 
our $\psi^\BC_2$ appears as $\psi^\BC_{-2}$ in {\it loc. cit}. In particular
the base point for the local Langlands correspondence for even unitary groups in this paper is different from 
that of {\it loc. cit.}}
\[
\psi^\BC_2(z)=\psi^\BR(\textrm{Tr}_{\BC/\BR}(iz))=\exp(2\pi(\bar{z}-z)).
\]
For a representation $\varphi$ of $\CW_E$ where $E=\BR$ or $\BC$,  let $\varepsilon(s,\varphi,\psi^E)$ be the $\varepsilon$-factor of $\varphi$ with respect to
$\psi^E$. The central value $\varepsilon(1/2,\varphi,\psi^E)$ is the local root number of $\varphi$ with respect to $\psi^E$, and will be abbreviated as $\varepsilon(\varphi,\psi_E)$.
The relevant local root numbers enjoy the following properties:
\begin{itemize}
\item $\varepsilon(\varphi_1\oplus\varphi_2,\psi^E)=\varepsilon(\varphi_1,\psi^E)\cdot \varepsilon(\varphi_2,\psi^E)$;
\item $\varepsilon(\xi\oplus{}^c\xi^{-1},\psi^\BC_2)=1$ for any character $\xi$ of $\CW_\BC$;
\item  $\varepsilon(\varsigma_{0,\alpha},\psi^\BC_2)=
\begin{cases} 
+1, & \textrm{ if }\alpha \textrm{ is even, or}\ \textrm{ if }\alpha \textrm{ is odd and}\  \alpha>0,\\
-1, & \textrm{ if }\alpha \textrm{ is odd and}\ \alpha<0;
\end{cases}$
\item $\varepsilon(\sigma_{t,+}, \psi^\BR)=1, \quad \varepsilon(\sigma_{t,-}, \psi^\BR)=-1,\quad \varepsilon(\sigma_{t,\alpha},\psi^\BR)=i^{\alpha+1}$, $\alpha>0.$
\end{itemize}

As a consequence of these properties, we have

\begin{lem}
Assume that $E=\BR$ and $\alpha, \beta$ are positive integers with $\alpha$ even and $\beta$ odd. Then
\[
\varepsilon(\sigma_{t_1,\alpha}\otimes\sigma_{t_2,\beta},\psi^\BR)=
\begin{cases} 
+1, & \textrm{if }\alpha<\beta,\\
-1, & \textrm{if }\alpha>\beta.
\end{cases}
\]
\end{lem}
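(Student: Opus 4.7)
The plan is to directly reduce the computation to the irreducible pieces via the tensor product decomposition recalled just before the lemma, and then apply the explicit formula $\varepsilon(\sigma_{t,\gamma},\psi^\BR)=i^{\gamma+1}$ for $\gamma>0$.

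First I would decompose
\[
\sigma_{t_1,\alpha}\otimes\sigma_{t_2,\beta}\cong \sigma_{t_1+t_2,\alpha+\beta}\oplus\sigma_{t_1+t_2,\alpha-\beta},
\]
and observe that since $\alpha$ is even and $\beta$ is odd, both $\alpha+\beta$ and $\alpha-\beta$ are odd; in particular $\alpha-\beta\neq 0$, so each summand is irreducible. Using the self-duality isomorphism $\sigma_{t,\gamma}\cong \sigma_{t,-\gamma}$, I can replace the second factor by $\sigma_{t_1+t_2,|\alpha-\beta|}$ and thus express the total root number as
\[
\varepsilon(\sigma_{t_1,\alpha}\otimes\sigma_{t_2,\beta},\psi^\BR)
= i^{(\alpha+\beta)+1}\cdot i^{|\alpha-\beta|+1}.
\]

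Now I split into the two cases of the lemma. If $\alpha<\beta$, then $|\alpha-\beta|=\beta-\alpha$ and the product of exponents gives $i^{2\beta+2}=(-1)^{\beta+1}=+1$ since $\beta$ is odd. If $\alpha>\beta$, then $|\alpha-\beta|=\alpha-\beta$ and the product becomes $i^{2\alpha+2}=(-1)^{\alpha+1}=-1$ since $\alpha$ is even. This yields exactly the two cases of the lemma.

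The argument involves no real obstacle: all the inputs (tensor decomposition, self-duality, and the explicit local root numbers for irreducible two-dimensional representations of $\CW_\BR$) are explicitly listed in the bullet points preceding the lemma, and the remaining step is the elementary parity bookkeeping for powers of $i$ sketched above. The only minor point requiring care is to verify that $\alpha-\beta\neq 0$ so one does not accidentally land in the reducible case $\sigma_{t,0}=\sigma_{t,+}\oplus\sigma_{t,-}$, which is immediate from the opposite parities of $\alpha$ and $\beta$.
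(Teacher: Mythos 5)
Your proof is correct and is exactly the computation the paper intends: the lemma is stated as an immediate consequence of the bulleted properties (tensor decomposition, self-duality $\sigma_{t,\gamma}\cong\sigma_{t,-\gamma}$, multiplicativity of $\varepsilon$, and $\varepsilon(\sigma_{t,\gamma},\psi^\BR)=i^{\gamma+1}$ for $\gamma>0$), and no further proof is given in the text. Your observation that opposite parities of $\alpha$ and $\beta$ ensure $\alpha-\beta\neq 0$, so one never encounters the reducible $\sigma_{t,0}$, and the parity bookkeeping $i^{2\beta+2}=(-1)^{\beta+1}=+1$ versus $i^{2\alpha+2}=(-1)^{\alpha+1}=-1$, are precisely the steps needed.
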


\subsection{$L$-parameter and distinguished character} \label{sec-LPDC}

In this section we specify the $L$-parameters and distinguished characters of their component groups prescribed by the local Gan-Gross-Prasad conjecture for each individual family of real classical groups.
For more details of generic Vogan packets, see \cite{MW12} for the non-archimedean case and \cite{Ch23, Ch24} for the archimedean case. 

To simplify notations, denote
\begin{align}\label{0alpha}
\varsigma_\alpha=\varsigma_{0,\alpha},\quad
 \sigma_{\alpha}=\sigma_{0,\alpha},\quad
  \sigma_+=\sigma_{0,+}={\mathds 1}, \quad
  \sigma_-=\sigma_{0,-}=\sgn,
\end{align}
 where $\alpha\in \BZ$. Note that $\sigma_0=\sigma_+\oplus\sigma_-$. 

\subsubsection{Unitary groups}

An $L$-parameter in $\widetilde{\Phi}_\gen(\RU_\Fn^*)$ is conjugate self-dual hence of the form
\begin{equation}\label{upa1}
\varphi=m_1\varsigma_{\alpha_1}\oplus\cdots \oplus m_u\varsigma_{\alpha_u}\oplus \xi\oplus {}^c\xi^\vee,
\end{equation}
where $m_i$'s are positive, $\alpha_1<\cdots <\alpha_u$ are integers of the same parity,  and
$\xi$ is a representation of $W_\BC$ that has no irreducible conjugate self-dual  summands of the same type as $\varphi$, such that $\dim\varphi=\Fn$.
Note that $\varphi\in \Phi_\gen(\RU_\Fn^*)$ if $\alpha_i\equiv \Fn-1$ mod 2. We have the component group
$\CS_\varphi\cong \BZ_2^u$.

Take another $L$-parameter $\phi$ in $\widetilde{\Phi}_\gen(\RU_{\Fn'}^*)$ of a similar form
\[
\phi=m_1'\varsigma_{\beta_1}\oplus \cdots\oplus m'_v \varsigma_{\beta_v}\oplus \xi'\oplus {}^c\xi^{'\vee},
\]
where $\alpha_i$'s and $\beta_j$'s have opposite parities, so that $\varphi$ and $\phi$ have opposite types. Similarly one has the component group
$
\CS_\phi\cong \BZ_2^{v}.
$

To simplify the presentation, denote $\alpha_i$ and $\beta_j$ symbolically the corresponding basis vectors of $\CS_\varphi$ and $\CS_\phi$, respectively. 
The characters $(\chi_{\varphi,\phi}, \chi_{\phi,\varphi})\in \widehat{\CS_\varphi}\times  \wh{\CS_\phi}$ are given by
\begin{align}
\label{chi1-C} &  \chi_{\varphi,\phi}(\alpha_i)=\varepsilon(\varsigma_{\alpha_i}\otimes \phi, \psi^\BC_2)=\prod_{j: \ -\beta_j>\alpha_i}(-1)^{m'_j},\\
\label{chi2-C} &  \chi_{\phi,\varphi}(\beta_j)=\varepsilon(\varsigma_{\beta_j}\otimes\varphi, \psi^\BC_2)=\prod_{i  : \ -\alpha_i>\beta_j}(-1)^{m_i}.
\end{align}

\subsubsection{Special orthogonal groups}\label{sec:L-parameter-SO}

An equivalence class of $L$-parameters in $\Phi_\gen(\SO^*_{2n})/\sim_{\rm c}$ is of the form
\[
[\varphi]_{\rm c}=m_1\sigma_{\alpha_1}\oplus\cdots \oplus m_u \sigma_{\alpha_u} \oplus m_+\sigma_+ \oplus m_-\sigma_-\oplus \xi\oplus \xi^\vee,
\]
where $m_i$'s are positive, $0<\alpha_1<\cdots<\alpha_u$ are even, $m_++m_-$ is even, and $\xi$ is a representation of $W_\BR$ that has no irreducible orthogonal summands, such that $\det\varphi=\BC(d)$ with $d:=\disc(V)$, and $\dim\varphi=2n$. We need to consider two cases.
\begin{itemize}
\item If $m_+=m_-=0$, then we have $\CS_\varphi=\BZ_2^u$. In this case, the class $[\varphi]_{\rm c}$ contains two $L$-parameters $\{\varphi, \varphi^*\}$, and the representations $\pi_a(\varphi,\chi)$ and $\pi_a(\varphi^*,\chi)$, where $\chi\in \widehat{\CS_\varphi}$, of a pure inner form $\SO(V)$ of $\SO^*_{2n}$ are conjugate under the outer action of $\RO(V)$. See \cite{AG17a, JZ18} for more explanation about the weak local Langlands correspondence for even special orthogonal groups.
\item If $m_++m_->0$, then $[\varphi]_{\rm c}$ is a singleton. In this case, the component group $\CS_\varphi=\BZ_2^u$ (resp. $\BZ_2^{u+1}$)
if exactly one of $m_+$ and $m_-$ is nonzero (resp. both $m_+$ and $m_-$ are nonzero).
\end{itemize}
In summary of the above discussions and in view of $\sigma_0=\sigma_+\oplus\sigma_-$, we make a reformulation and equivalently represent $[\varphi]_{\rm c}$ in the form
\begin{equation}\label{evenpa}
[\varphi]_{\rm c} = m_1\sigma_{\alpha_1}\oplus\cdots\oplus m_u\sigma_{\alpha_u} \oplus \varphi_{\rm quad} \oplus \xi\oplus\xi^\vee,
\end{equation}
where $m_i$'s are positive, $0\leq\alpha_1<\cdots<\alpha_u$ are even, $\varphi_{\rm quad}=m_\epsilon\sigma_\epsilon$ for some $\epsilon\in\{+,-\}$ and $m_\epsilon$ even, and $\xi$ has no irreducible orthogonal summands,
such that $\det\varphi=\BC(d)$ with $d:=\disc(V)$, and $\dim\varphi=2n$. Then we have the component group $\CS_\varphi \cong \BZ_2^u$.

An $L$-parameter in $\Phi_\gen(\SO^*_{2n'+1})$ is of the form
\begin{equation}\label{oddpa}
\phi=m_1'\sigma_{\beta_1}\oplus\cdots \oplus m_v' \sigma_{\beta_v} \oplus\xi'\oplus\xi^{'\vee},
\end{equation}
where $m_j'$'s are positive, $0<\beta_1<\cdots<\beta_v$ are odd, and $\xi$ has no irreducible symplectic summands, such that $\dim\phi=2n'$. We have the component group $\CS_\phi\cong\BZ_2^v$.

Similarly, denote $\alpha_i$ and $\beta_j$ symbolically the basis vectors in $\CS_\varphi$ and $\CS_\phi$ respectively. Then we have the characters $(\chi_{\varphi,\phi}, \chi_{\phi,\varphi}) \in \widehat{\CS_\varphi} \times \widehat{\CS_\phi}$  given by
\begin{align}
\label{chi1-R} \chi_{\varphi,\phi}(\alpha_i)& =\varepsilon(\sigma_{\alpha_i}\otimes\phi, \psi^\BR)\cdot\det\sigma_{\alpha_i}(-1)^{\dim\phi/2}
\cdot\det\phi(-1)^{\dim\phi_{\alpha_i}/2}=\prod_{j: \ \beta_j>\alpha_i}(-1)^{m'_j}, \\
  \label{chi2-R}\chi_{\phi,\varphi}(\beta_j)& = \varepsilon(\sigma_{\beta_j}\otimes\varphi, \psi^\BR)\cdot\det\sigma_{\beta_j}(-1)^{\dim\varphi/2}
  \cdot\det\varphi(-1)^{\dim\phi_{\beta_j}/2}\\
  &= \det\varphi(-1) \cdot \prod_{i:\ \alpha_i>\beta_j}(-1)^{m_i}= \prod_{i: \ \alpha_i<\beta_j} (-1)^{m_i}. \nonumber
\end{align}
These two formulas can be derived directly from the various facts from Section \ref{sec-LRN}, and we omit details of the calculation.

\subsubsection{Symplectic-metaplectic groups}

An $L$-parameter in $\Phi_\gen(\Sp_{2n})$ is of the form
\[
\varphi=m_1\sigma_{\alpha_1}\oplus\cdots\oplus m_u \sigma_{\alpha_u}\oplus m_+\sigma_+\oplus m_-\sigma_-\oplus \xi\oplus\xi^\vee,
\]
where $m_i$'s are positive, $0<\alpha_1<\cdots<\alpha_u$ are even, $m_++m_-$ is odd, and $\xi$ has no irreducible orthogonal summands, such that
$\det\varphi=1$ and $\dim\varphi=2n+1$.
To give the distinguished characters in this case, we also need  the $(2n+2)$-dimensional orthogonal parameter
\[
\varphi_1=\varphi\oplus \sigma_+.
\]
There are three cases:
\begin{itemize}
\item If $m_+\neq 0$, $m_-=0$, then
$
\CS_\varphi=\CS_{\varphi_1}=\BZ_2^u;
$
\item If $m_+=0$, $m_-\neq0$, then
$
\CS_\varphi=\BZ_2^u\hookrightarrow \CS_{\varphi_1}=\BZ_2^{u+1}
$
is of index 2;
\item If $m_+\neq 0$, $m_-\neq 0$, then
$
\CS_\varphi=\CS_{\varphi_1}=\BZ_2^{u+1}.
$
\end{itemize}
In summary of the above discussions and in view of $\sigma_0=\sigma_+\oplus\sigma_-$ again, we make a reformulation and equivalently represent $\varphi$ in the form
\begin{equation}\label{sppa}
\varphi=m_1\sigma_{\alpha_1}\oplus\cdots\oplus m_u \sigma_{\alpha_u}\oplus \varphi_{\rm quad}\oplus \xi\oplus\xi^\vee,
\end{equation}
where $m_i$'s are positive, $0\leq\alpha_1<\cdots<\alpha_u$ are even, $\varphi_{\rm quad}= m_\epsilon\sigma_\epsilon$ with some $\epsilon\in\{+,-\}$ and  $m_\epsilon$ odd, and $\xi$ has no irreducible orthogonal summands, such that
$\det\varphi=1$ and $\dim\varphi=2n+1$. Then we have the component group $\CS_\varphi \cong \BZ_2^u$.

By the local Langlands correspondence for metaplectic groups using theta correspondence, an $L$-parameter $\phi$ in $\Phi_\gen(\Mp_{2n'})$ is of the form \eqref{oddpa}, with $\CS_\phi\cong\BZ_2^v$.

Let $\chi_{\varphi_1,\phi}\in \widehat{\CS_{\varphi_1}}$ be defined as in  \eqref{chi1-R}, and
$\chi_{\varphi,\phi}:=\chi_{\varphi_1,\phi}|_{\CS_\varphi}\in \widehat{\CS_\varphi}$ be the restriction of $\chi_{\varphi_1,\phi}$ to $\CS_\varphi$. We also let $\chi_{\phi,\varphi}:=\chi_{\phi,\varphi_1}\in \widehat{\CS_\phi}$ be defined as in \eqref{chi2-R}. Then it follows that
\begin{align}
\label{chi1-R'} \chi_{\varphi,\phi}(\alpha_i)& =\prod_{j: \ \beta_j>\alpha_i}(-1)^{m'_j}, \\
  \label{chi2-R'}\chi_{\phi,\varphi}(\beta_j)& =\prod_{i: \ \alpha_i>\beta_j}(-1)^{m_i},
\end{align}
where for \eqref{chi2-R'} we used the fact that $\det\varphi=\det\varphi_1=1$.
With the notation of \eqref{char-spmp}, it is easy to check that the twist by $-1\in \BR^\times/ \BR^{\times 2}$ in this case gives that
\begin{equation}  \label{realspmp}
(\chi^{-1}_{\varphi, \phi}, \chi_{\phi,\varphi}^{-1}) =  (\chi_{\varphi, \phi}\cdot \eta_{-1},\chi_{\phi,\varphi}\cdot \eta_{-1}).
\end{equation}
That is, the formula in \eqref{chiz} uniformly holds for $F=\BR$, including the symplectic-metaplectic case.

\subsection{Reformulation}\label{ssec-userho}
For the convenience of later sections, we rewrite the expression of the generic $L$-parameters.  
By definition, when $F=\BR$, one always has  
\[
\CZ=\BR^\times/\BR^{\times 2}=\{\pm1\}.
\]
Recall from Definition \ref{defn:pd} the descent of $L$-parameters
$\FD_\Fl^z(\varphi,\chi)$ along $z\in \CZ$. To give a uniform description of the first descent $\FD_{\Fl_0}(\varphi,\chi)=\bigcupdot_{z\in\CZ}\FD_{\Fl_0}^z(\varphi,\chi)$ for all the cases, where $\Fl_0=\Fl_0(\varphi,\chi)$ is the first occurrence index of $(\varphi,\chi)$ defined by \eqref{FO}, we write
\begin{align}\label{rhoalpha}
\rho_\alpha=
\begin{cases}
\varsigma_\alpha, \quad \alpha\in \BZ, & \textrm{if }E=\BC; \\
\sigma_\alpha, \quad \alpha\geq 0, & \textrm{if }E=\BR,
\end{cases}
\end{align}
where $\varsigma_\alpha$ and $\sigma_\alpha$ are defined as in \eqref{0alpha}.
By Section \ref{sec-LPDC},  any $\varphi\in\widetilde{\Phi}_\gen(G_n^*)$ can be written as
\begin{equation} \label{par-unif}
\varphi=\bigoplus_{i=1}^r m_i\rho_{\alpha_i}\oplus \varphi_{\rm quad} \oplus \xi \oplus {}^c\xi^\vee,
\end{equation}
where 
\begin{itemize}
    \item $\alpha_1<\cdots<\alpha_r$ are integers of the same parity (and are nonnegative if $E=\BR$), with multiplicities $m_i>0$, $i=1,\ldots,r$;
    \item $\varphi_{\rm quad}$ is of the form $m_\epsilon\sigma_\epsilon$ with $\epsilon\in\{+,-\}$, which may occur only if $E=\BR$ and $\varphi$ is orthogonal;
    \item $\xi$ has no irreducible (conjugate) self-dual summands of the same type as $\varphi$. 
\end{itemize} 
Then $\CS_\varphi\cong \BZ_2^r$, and a basis of $\CS_\varphi$ is labeled by the integers $\alpha_i$ with $i=1,\ldots, r$.

The first occurrence index $\Fl_0=\Fl_0(\varphi,\chi)$ of the enhanced $L$-parameter 
$(\varphi,\chi)$ is one of the substantial ingredients in the theory. 
When $\CS_\varphi$ is trivial, i.e. $r=0$, it is straightforward to write the first occurrence index $\Fl_0=\Fl_0(\varphi,\chi)$ of the enhanced $L$-parameter 
$(\varphi,\chi)$ as follows. For $\Fn>0$, we have that 
\[
\Fl_0=\Fl_0(\varphi,{\mathds 1}) = \begin{cases} \Fn, & \textrm{if }G_n^*\neq \SO_{2n}^*, \\
\Fn-1, & \textrm{if }G_n^* = \SO_{2n}^*,
\end{cases}
\]
where $\mathds 1$ stands for the trivial character of the trivial group $\CS_\varphi$. Moreover, for each $z=\pm1$ we have that
\[
\FD^z_{\Fl_0}(\varphi,{\mathds 1}) = \begin{cases}
\{(0, {\mathds 1})\}, & \textrm{if }G_n^* \neq \Mp_{2n}, \\
\{(\phi_+, {\mathds 1})\}, & \textrm{if }G_n^* = \Mp_{2n}.
\end{cases}
\]
Note that the triviality of $\CS_\varphi$ implies that the $F$-anisotropic kernel of $(V,q)$ has dimension $\Fd_0\leq 1$ in the notation of Section \ref{ssec-CGIF}, and also rules out the case of odd unitary groups. 

By the proof of Proposition \ref{exmp:generic}, in this case $\wf_a(\varphi,{\mathds 1},q)^\mx$ consists of all the $F$-rational regular nilpotent orbits in $\CN_F(\Fg_n)_\circ$.

\begin{defn} \label{defn:maxtabtrivial}
Assume that $\CS_\varphi$ is trivial. Define $\udl{p}(\varphi,{\mathds 1})$ to be the unique partition of $\Fn$ corresponding to the regular nilpotent orbits in $\CN_F(\Fg_n)_\circ$. More precisely, if $\Fn>0$, then
\[
\udl{p}(\varphi,{\mathds 1}) = 
\begin{cases} 
\,[\Fn], & \textrm{if }G_n^*\neq \SO_{2n}^*, \\
\,[\Fn -1,1], & \textrm{if }G_n^* = \SO_{2n}^*.
\end{cases}
\]
\end{defn}

\begin{prop}\label{prop:trivial}
Theorems \ref{thm1} and \ref{thm2}
hold for any enhanced $L$-parameters $(\varphi,\chi)$ if the component group $\CS_\varphi$ is trivial. 
\end{prop}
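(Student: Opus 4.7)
The plan is to deduce the proposition by combining the explicit descent data already recorded in Section~\ref{ssec-userho} with Proposition~\ref{exmp:generic} and basic properties of regular nilpotent orbits. The crucial observation is that triviality of $\CS_\varphi$ forces the decomposition \eqref{par-unif} of $\varphi$ to have no (conjugate) self-dual summands of the same type as $\varphi$ (i.e.\ $r=0$), so only the quadratic part $\varphi_{\mathrm{quad}}$ and the non-(conjugate-)self-dual part $\xi\oplus{}^c\xi^\vee$ survive. Under this hypothesis the unique member $\chi\in\wh{\CS_\varphi}$ is the trivial character ${\mathds 1}$, and in the Whittaker normalization of the local Langlands correspondence (Shahidi's conjecture) the representation $\pi=\pi_a(\varphi,{\mathds 1})$ is generic.

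First, I would invoke Proposition~\ref{exmp:generic}: since $\pi$ is generic,
\[
\wf_\ari(\pi)^\mx=\wf_\wm(\pi)^\mx=\wf_\tr(\pi)^\mx,
\]
and, as recorded in the paragraph preceding Definition~\ref{defn:maxtabtrivial}, this common set equals the set of all $F$-rational regular nilpotent orbits in $\CN_F(\Fg_n)_\circ$. This immediately gives Part~(1) of Theorem~\ref{thm2}. Part~(2) is vacuous in the present setting since $\CS_\varphi$ is trivial.

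Next, for Theorem~\ref{thm1} I would note that all $F$-rational regular nilpotent orbits in $\CN_F(\Fg_n)_\circ$ lie in a single $F$-stable nilpotent orbit $\CO^\st(\pi)\in\CN_F(\Fg_n)_\circ^\st$, namely the regular stable orbit. Under the parametrization of $\CN_F(\Fg_n)_\circ^\st$ by $\CP(V,q)$ of Section~\ref{ssec-RNO}, this $F$-stable orbit corresponds to precisely the partition
\[
\udl{p}(\pi)=\udl{p}(\varphi,{\mathds 1})=
\begin{cases}[\Fn],&\text{if }G_n^*\neq\SO_{2n}^*,\\ [\Fn-1,1],&\text{if }G_n^*=\SO_{2n}^*,\end{cases}
\]
of Definition~\ref{defn:maxtabtrivial}. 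Uniqueness of both the $F$-stable orbit $\CO^\st(\pi)$ and the partition $\udl{p}(\pi)$ is therefore automatic, establishing Theorem~\ref{thm1} in this case.

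Finally, for Part~(3) of Theorem~\ref{thm2}, I would invoke the classical fact that every regular $F$-rational nilpotent orbit is $F$-distinguished in the sense of Definition~\ref{defn:dist-orbit}: the reductive part $M_X^\circ$ of the centralizer of a regular nilpotent element $X$ is (essentially) the connected component of the center of $G_n$, so no proper Levi of $G_n$ meets $\CO$, which is the defining condition. The main subtlety, and hence the one step requiring care, is to verify this from the precise formulation of $F$-distinguishedness adopted in the paper; however, it is an essentially standard observation and does not depend on the descent machinery. Putting together these three ingredients yields the full statement of Proposition~\ref{prop:trivial}.
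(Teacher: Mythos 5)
Your proof is correct and follows essentially the same route the paper takes (the paper leaves the argument implicit in the paragraphs of Section~\ref{ssec-userho} immediately preceding the proposition): triviality of $\CS_\varphi$ forces $r=0$ and makes the unique member of the Vogan packet generic, whence the proof of Proposition~\ref{exmp:generic} together with the explicit computation of $\Fl_0(\varphi,\mathds 1)$ and $\FD_{\Fl_0}^z(\varphi,\mathds 1)$ identifies $\wf_\ari(\pi)^\mx$ with the set of $F$-rational regular orbits, which all lie in the regular $F$-stable orbit $\udl{p}(\varphi,\mathds 1)$ of Definition~\ref{defn:maxtabtrivial} and are $F$-distinguished (since $M_X$ is a product of $\RO_1$'s or $\RU_1$'s and hence anisotropic). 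One small point of precision: the parametrization of regular orbits in $\wf_\ari(\pi)^\mx$ by ${\rm Stab}_{\CZ}(\mathds 1)$ in the proof of Proposition~\ref{exmp:generic} gives the whole of $\CZ$ here because $\eta_z$ is trivial for all $z$ when $\CS_\varphi$ is trivial, which is what justifies "all" in Part (1) of Theorem~\ref{thm2}.
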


\section{Structure of $\wf_\ari(\pi)$: Archimedean Case}\label{sec-PCR}


We study the structures of the arithemtic wavefront sets $\wf_\ari(\pi)$ when $F=\BR$ and for any enhanced $L$-parameters $(\varphi,\chi)$ with a non-trivial the component group $\CS_\varphi$. Hence we assume in this section that the component group $\CS_\varphi$ is nontrivial, i.e. the rank 
\begin{align}\label{r>0}
\rank_{\BZ_2}\CS_\varphi=r>0.
\end{align}
When $F=\BR$, we must have that $E=\BC$ for unitary groups and $E=\BR$ for other classical groups.

\subsection{Rational structure of first descents}\label{ssec-FD}

As a first preparatory step towards the proof of Theorem \ref{thm1}, 
we write down explicitly the first descent of an enhanced $L$-parameter $(\varphi, \chi)$ where $\varphi\in \widetilde{\Phi}_\gen(G_n^*)$ and $\chi\in \widehat{\CS}_\varphi$, using the formulas of distinguished characters in Section \ref{sec-LPDC}, which is the statement of Proposition \ref{prop:fd}. Those explicit arithmetic data are substantial to the proof of Theorem \ref{thm1} and 
to that of Theorem \ref{thm2}.

From now on, we change the notation and denote the elements in $\FD^z_\Fl(\varphi,\chi)$ by $(\phi, \chi')$ for convenience, so that  by Definition \ref{defn:pd},  
\begin{equation} \label{eq:changedual}
\chi = \chi^z_{\varphi,\wh\phi}, \quad \wh{\chi'} = \chi^z_{\wh\phi,\varphi}.
\end{equation}
We remark that if  $E=\BR$ (and $\phi$ only has summands of good parity when $G_n^*=\Mp_{2n}$, cf. \eqref{edec}), then $\wh\phi =\phi$ by Proposition \ref{prop:dualdata} (and the results in Section \ref{sec-LRN}). Thus when $E=\BR$, \eqref{eq:changedual} is equivalent to that
\[
\chi = \chi^z_{\varphi, \phi}, \quad \wh{\chi'} = \chi^z_{\phi,\varphi}.
\]
Recall from Section \ref{ssec-C} that in any case we have the natural identification  $\CS_\phi\cong \CS_{\wh\phi}$, which will be used for convenience without further explanation. 

\begin{defn}[Sign Alternating Set]\label{defn:SAS} 
For $\varphi\in\widetilde{\Phi}_\gen(G_n^*)$, which is of the form 
\[
\varphi=\bigoplus_{i=1}^r m_i\rho_{\alpha_i}\oplus \varphi_{\rm quad} \oplus \xi \oplus {}^c\xi^\vee
\]
as given by \eqref{par-unif}, and $\chi\in\widehat{\CS_\varphi}$, 
the {\it sign alternating set} of $\chi$, denoted by $\sgn(\chi)$, is defined as follows:
\begin{itemize}
\item
If $G_n^*\neq \SO_{2n+1}^*$, define 
\[
\sgn(\chi)=\{1\leq i<r\ \mid\  \chi(\alpha_i)\chi(\alpha_{i+1})=-1\};
\]
\item
If $G_n^*=\SO_{2n+1}^*$, formally write $\alpha_0:=-1$ and $\chi(\alpha_0):=1$, and define
\[
\sgn(\chi)=\{0\leq i<r\ \mid\ \chi(\alpha_i)\chi(\alpha_{i+1})=-1\}.
\]
\end{itemize}
\end{defn}

With $\sgn(\chi)$ defined as above, put
\begin{align}\label{n-chi}
s_\chi:=\#\textrm{sgn}(\chi),\quad \epsilon_\chi:=\textrm{sign}(-1)^{s_\chi} \in \{+,-\}.
\end{align}
Denote the indices in $\sgn(\chi)$ by
\begin{equation}\label{ind-sgn}
i_1<i_2<\cdots < i_{s_\chi}.
\end{equation}
It is clear that these notions only depend on the orbit $\ScO_\CZ(\chi)\subset \wh{\CS_\varphi}$ of $\chi$. Finally, define $z(\chi)\in\CZ=\{\pm1\}$ by
\begin{equation}\label{zchi}
z(\chi): = \chi(\alpha_r).
\end{equation}
Define an integer $\Fl_0$, with $(-1)^{\Fl_0-1}=\epsilon$ if $E=\BR$ (cf. Section \ref{ssec-LPD}), by the following identities:
\begin{align}\label{Fl0}
s_\chi = \begin{cases}
\Fn-\Fl_0, & \textrm{if }E=\BC, \\
\lfloor(\Fn-\Fl_0)/2\rfloor, & \textrm{if }E=\BR,
\end{cases}
\end{align}
with $s_\chi$ as defined in \eqref{n-chi}. 
It is clear that the integer $\Fl_0$ as defined in \eqref{Fl0} satisfies $0<\Fl_0\leq\Fn$. 
Moreover we can prove the following, which is the archimedean case of Proposition \ref{prop:DNE}.

\begin{prop}\label{prop:DNE-A}
For any enhanced $L$-parameter $(\varphi,\chi)$ with $\varphi\in\widetilde{\Phi}_\gen(G_n^*)$ 
and $\chi\in\wh{\CS_\varphi}$, the $\Fl_0$-descent descent $\FD_{\Fl_0}(\varphi,\chi)$ is 
non-empty. 
\end{prop}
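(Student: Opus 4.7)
The plan is to prove the proposition by constructing an explicit element in $\FD^{z(\chi)}_{\Fl_0}(\varphi,\chi)$ directly from the sign alternation data of $\chi$, where $z(\chi)$ is as in \eqref{zchi}. Write $\varphi$ in the standard form \eqref{par-unif}, so that the good-parity summands $\bigoplus_{i=1}^{r} m_i\rho_{\alpha_i}$ provide a basis $\alpha_1<\cdots<\alpha_r$ of $\CS_\varphi\cong\BZ_2^r$. The character $\chi$ is determined by its values $\chi(\alpha_i)\in\{\pm1\}$, and the combinatorial data extracted in Definition~\ref{defn:SAS} records the sign-flip indices $i_1<\cdots<i_{s_\chi}$ between consecutive $\alpha_i$'s. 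Because all $\alpha_i$ share a parity with $\alpha_{i+1}\geq\alpha_i+2$ (and $\alpha_0=-1$ in the odd orthogonal case), each gap $(\alpha_{i_k},\alpha_{i_k+1})$ contains integers of the opposite parity.

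The construction then proceeds case by case. In the unitary setting ($E=\BC$), I would pick one integer $\beta'_k$ of opposite parity lying in $[-\alpha_{i_k+1},-\alpha_{i_k})$ for each $k=1,\ldots,s_\chi$, reindex them in increasing order to form $\beta_1<\cdots<\beta_{s_\chi}$, and set $\phi:=\bigoplus_{k=1}^{s_\chi}\varsigma_{\beta_k}$. This $\phi$ is a discrete conjugate self-dual $\CL_\BC$-representation of opposite type to $\varphi$ of dimension $s_\chi=\Fn-\Fl_0$. In the real setting ($E=\BR$), I would analogously pick $\beta_k\in(\alpha_{i_k},\alpha_{i_k+1})$ of opposite parity and set $\phi:=\bigoplus_{k=1}^{s_\chi}\sigma_{\beta_k}$, augmenting by a suitable quadratic summand $\sigma_\epsilon$ when the prescribed dimension, determinant, or opposite-type condition in \eqref{oddpa}, \eqref{evenpa}, or \eqref{sppa} demands it. A direct computation with the explicit product formulas \eqref{chi1-C}, \eqref{chi1-R}, or \eqref{chi1-R'} then shows that $\chi_{\varphi,\phi}$ flips sign between $\alpha_i$ and $\alpha_{i+1}$ if and only if $i\in\sgn(\chi)$, so $\chi_{\varphi,\phi}$ agrees with $\chi$ up to a global sign pinned down by $\chi(\alpha_r)=z(\chi)$. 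That global sign is exactly what the twist $\eta_{z(\chi)}$ in $\chi^{z(\chi)}_{\varphi,\phi}=\chi_{\varphi,\phi}\cdot\eta_{z(\chi)}$ absorbs, using \eqref{chiz} for the non-metaplectic cases and \eqref{realspmp} for the symplectic-metaplectic case. Setting $\chi':=\chi^{z(\chi)}_{\phi,\varphi}$ and passing to contragredients via Definition~\ref{defn:pd} then yields the desired element $(\wh\phi,\wh{\chi'})\in\FD^{z(\chi)}_{\Fl_0}(\varphi,\chi)$.

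The hard part will be the dimension bookkeeping across the four families of classical groups. The definition of $\Fl_0$ in \eqref{Fl0} packages both the even case $\Fn-\Fl_0=2s_\chi$ and the odd case $\Fn-\Fl_0=2s_\chi+1$ in the real setting into a single floor function, so the construction of $\phi$ must be adjusted by inclusion or omission of a quadratic summand $\sigma_\epsilon$ to make $\dim\phi$ match $\Fn-\Fl_0$ in every case. The odd orthogonal convention $\alpha_0=-1$, $\chi(\alpha_0):=1$, ensures that the construction still accounts for a possible sign already carried by $\chi(\alpha_1)$; and the auxiliary parameter $\varphi_1=\varphi\oplus\sigma_+$ used to define distinguished characters for metaplectic groups via \eqref{chi1-R'}--\eqref{chi2-R'} is what reconciles the symplectic-metaplectic case with the orthogonal computation. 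Once this casework is verified, the proposition follows, and the explicit form of the first descent obtained here is the precise input that will be used later, in particular in Proposition~\ref{prop:fd}, to refine the structure of $\FD_{\Fl_0}(\varphi,\chi)$.
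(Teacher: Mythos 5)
Your construction is essentially identical to the paper's: the authors also define a discrete $\phi$ of opposite type by choosing, for each index $i_j\in\sgn(\chi)$, one $\rho_{\beta_j}$ with $\beta_j$ in the gap $(\alpha_{i_j},\alpha_{i_j+1})$ of the opposite parity, adding a single quadratic summand $\sigma_{\epsilon_\chi}$ only in the $\Mp_{2n}$ case to satisfy the dimension and determinant constraints, and then checking via the explicit product formulas of Section \ref{sec-LPDC} that the alternation pattern of $\chi_{\varphi,\wh\phi}$ matches that of $\chi$, with the remaining global sign absorbed by $\eta_{z(\chi)}$ (the paper works out the $\Sp_{2n}$ case in detail and leaves the others as a similar verification). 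Your only cosmetic deviation is in the unitary case, where you carry $\phi$ directly with $\beta'_k$ in the reflected interval and pass to $\wh\phi$ at the end, whereas the paper labels the interval for the parameter already appearing in the descent set — the resulting parameter is the same after taking contragredients, so the two presentations are equivalent.
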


\begin{proof}
By Definition \ref{defn:pd}, it suffices to show that there exists $\phi\in \wt{\Phi}_\gen(H^*_{\lfloor (\Fn-\Fl_0)/2\rfloor})$ such that 
\begin{equation} \label{pd:nonempty}
\chi_{\varphi, \wh{\phi}} \in \ScO_\CZ(\chi).
\end{equation}
Take a discrete $L$-parameter 
\[
\phi=\begin{cases} 
\bigoplus^{s_\chi}_{j=1}\varsigma_{\beta_j}, & \textrm{if }E=\BC, \\
\bigoplus^{s_\chi}_{j=1}\sigma_{\beta_j}, & \textrm{if }E=\BR, \ G_n^*\neq \Mp_{2n}, \\
\bigoplus^{s_\chi}_{j=1}\sigma_{\beta_j} \oplus \sigma_{\epsilon_\chi}, & \textrm{if }G_n^* = \Mp_{2n},
\end{cases}
\]
where $\beta_j\in(\alpha_{i_j}, \alpha_{i_j+1})$ has parity opposite to $\alpha_i$, $j=1,\ldots, s_\chi$. Here if $s_\chi=0$, then the above direct sums $\bigoplus^{s_\chi}_{j=1}\varsigma_{\beta_j}$ and $\bigoplus^{s_\chi}_{j=1}\sigma_{\beta_j}$ are interpreted as zero. Using \eqref{chi1-C}, \eqref{chi2-C}, \eqref{chi1-R}, \eqref{chi2-R}, \eqref{chi1-R'} and \eqref{chi2-R'}, it is straightforward to verify that \eqref{pd:nonempty} holds. 

For instance, if $G_n^*=\Sp_{2n}$, then we can verify that $\chi=\chi_{\varphi,\phi}\cdot \eta_{z(\chi)}$, which implies that 
\[
(\phi,\chi^{-z(\chi)}_{\phi,\varphi})\in\FD_{\Fl_0}^{z(\chi)}(\varphi,\chi)\subset \FD_{\Fl_0}(\varphi,\chi). 
\]
Hence $\FD_{\Fl_0}(\varphi,\chi)$ is not empty.
In fact, by definition of $\sgn(\chi)$, following notation in Definition \ref{defn:SAS}, we have 
\[
\chi(\alpha_i)=(-1)^{i_0}\chi(\alpha_r)=(-1)^{i_0}z(\chi)
\]
for all $1\leq i\leq r$, 
where $i_0=\#\{j\in[1,s_\chi]\ \mid\ \alpha_{i_j}\geq \alpha_i\}$ and $\sgn(\chi)=\{i_1<\cdots<i_{s_\chi}\}$.
On the other hand, by \eqref{chi1-R'}, we have that 
\[
\chi_{\varphi,\phi}(\alpha_i)=(-1)^{\#\{j\in[1,s_\chi]\ \mid\ \beta_j> \alpha_i\}}.
\]
By definition of $\beta_j$, we must have that $\beta_j>\alpha_i$ if and only if $\alpha_{i_j}\geq \alpha_i$. This implies that 
\[
\{j\in[1,s_\chi]\ \mid\  \beta_j> \alpha_i\}=\{j\in [1,s_\chi]\ \mid\ \alpha_{i_j}\geq \alpha_i\}.
\]
which yields that $\chi(\alpha_i)=\chi_{\varphi,\phi}(\alpha_i)\cdot \eta_{z(\chi)}(\alpha_i)$ for all $1\leq i\leq r$. We are done.
\end{proof}

A case-by-case analysis in a similar pattern, combined with the contragredient of enhanced $L$-parameters given by Proposition \ref{prop:dualdata}, gives the following result, which shows that the first descent $\FD_{\Fl_0}(\varphi,\chi)$ consists of finitely many discrete $L$-parameters interlacing the sign alternating set.

\begin{prop}\label{prop:fd}
Assume that $\varphi\in\widetilde{\Phi}_\gen(G_n^*)$ is written in Definition \ref{defn:SAS} and $\chi\in \wh{\CS_\varphi}$. 
The integer $\Fl_0=\Fl_0(\varphi,\chi)$ as given in \eqref{Fl0} is the first occurrence index 
for the enhanced $L$-parameter $(\varphi,\chi)$. 
Moreover, the first descent $\FD_{\Fl_0}(\varphi,\chi)$ is given as follows. Let $i_1<i_2<\cdots <i_{s_\chi}$ be the indices in $\sgn(\chi)$ as in \eqref{ind-sgn}. 
\begin{enumerate}
\item \label{FD-U} If $G_n^*=\RU_\Fn^*$, then $\FD_{\Fl_0}(\varphi, \chi) =\FD_{\Fl_0}^{z(\chi)}(\varphi, \chi)$, which equals
\[
\left\{(\phi,\chi')\ \mid\ \phi=\bigoplus^{s_\chi}_{j=1} \varsigma_{\beta_j}, \, \wh{\chi'} =\chi^{z(\chi)}_{\wh{\phi},\varphi}, \, \beta_j\in (\alpha_{i_j}, \alpha_{i_j+1}) \textrm{ has  parity opposite to }\alpha_i  \right\};
\]
\item \label{FD-odd-SO} If $G_n^*=\SO_{2n+1}^*$, then $\FD_{\Fl_0}^z(\varphi,\chi)$ with $z=\pm1$ equals 
\[
\left\{(\phi,\chi^z_{\phi,\varphi})\ \mid\   \phi=\bigoplus^{s_\chi}_{j=1} \sigma_{\beta_j},\, \beta_j\in (\alpha_{i_j}, \alpha_{i_j+1}) \textrm{ is even}\right\}.
\]
\item \label{FD-even-SO}
If $G_n^*=\SO_{2n}^*$, then $\FD_{\Fl_0}(\varphi, \chi) =\FD_{\Fl_0}^{z(\chi)}(\varphi, \chi)$, which equals
\[
\left\{(\phi,\chi_{\phi,\varphi})\ \mid\   \phi=\bigoplus^{s_\chi}_{j=1} \sigma_{\beta_j},\, \beta_j\in (\alpha_{i_j}, \alpha_{i_j+1}) \textrm{ is odd}\right\}.
\]
\item \label{FD-Sp}
If $G_n^*=\Sp_{2n}$, then $\FD_{\Fl_0}(\varphi, \chi) =\FD_{\Fl_0}^{z(\chi)}(\varphi, \chi)$, which equals
\[
\left\{(\phi,\chi^{-z(\chi)}_{\phi,\varphi})\ \mid\ \phi=\bigoplus^{s_\chi}_{j=1} \sigma_{\beta_j},\, \beta_j\in (\alpha_{i_j}, \alpha_{i_j+1}) \textrm{ is odd}\right\}.
\]
\item \label{FD-Mp}
If $G_n^*=\Mp_{2n}$, then $\FD_{\Fl_0}(\varphi, \chi) =\FD_{\Fl_0}^{z(\chi)}(\varphi, \chi)$,  which equals
\[
\left\{(\phi,\chi^{-z(\chi)}_{\phi,\varphi})\ \mid\  \phi = \bigoplus^{s_\chi}_{j=1} \sigma_{\beta_j} \oplus \sigma_{\epsilon_\chi}, \, \beta_j\in (\alpha_{i_j}, \alpha_{i_j+1}) \textrm{ is even}\right\}.
\]
\end{enumerate}
\end{prop}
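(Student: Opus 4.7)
The plan is to show the two statements in parallel: (a) that $\Fl_0$ as defined in \eqref{Fl0} is the first occurrence index, and (b) that $\FD_{\Fl_0}(\varphi,\chi)$ consists exactly of the enhanced $L$-parameters listed in \eqref{FD-U}--\eqref{FD-Mp}. The starting point is Theorem \ref{thm:DFD}: any $(\phi,\chi')\in \FD_\Fl(\varphi,\chi)$ at the first occurrence index $\Fl$ must have $\phi$ discrete. Over $F=\BR$, a discrete generic $L$-parameter of $H^*_{\lfloor(\Fn-\Fl)/2\rfloor}$ is highly restricted: when $E=\BC$ it is a sum $\bigoplus_j \varsigma_{\beta_j}$ of pairwise inequivalent characters of the required parity, and when $E=\BR$ it is a sum $\bigoplus_j \sigma_{\beta_j}$ of distinct two-dimensional irreducibles (with $\beta_j>0$), possibly augmented by a $\sigma_{\pm}$ summand, and with the parity of $\beta_j$ forced to be opposite to that of the $\alpha_i$'s so that the types of $\varphi$ and $\phi$ are opposite. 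The dimension count $\dim\phi=\Fn-\Fl$ then fixes the number of summands in terms of $\Fl$.

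The second step is to translate the defining equation $\chi^z_{\varphi,\wh\phi}=\chi$ of the descent into a combinatorial condition on the $\beta_j$'s, using Proposition \ref{prop:dualdata} together with the explicit formulas \eqref{chi1-C}, \eqref{chi2-C}, \eqref{chi1-R}, \eqref{chi2-R}, \eqref{chi1-R'}, \eqref{chi2-R'} for the distinguished characters, and identity \eqref{realspmp} for the symplectic-metaplectic twist. In every case the computation has the same shape: $\chi_{\varphi,\wh\phi}(\alpha_i)=\prod_j(-1)^{[\beta_j\,\text{on the right side of}\,\alpha_i]}$, so inserting a $\beta_j$ in the open interval $(\alpha_i,\alpha_{i+1})$ is the combinatorial operation that toggles the sign between consecutive basis elements of $\CS_\varphi$. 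Matching the sign pattern of $\chi\cdot\eta_z^{-1}$ therefore forces the multiset of $\beta_j$'s to induce exactly the sign changes recorded by $\sgn(\chi)$. Since each $\beta_j$ produces at most one sign change (or one on each side in the metaplectic case, after bookkeeping with the $\sigma_{\epsilon_\chi}$ summand), one obtains the inequality
\[
\#\{\beta_j\} \;\geq\; s_\chi,
\]
which via the dimension relations between $\Fl$, $\Fn$, $\dim\phi$ and $\dim\phi_{\rm quad}$ gives $\Fl\leq \Fl_0$ with equality iff exactly one $\beta_j$ sits in each prescribed interval $(\alpha_{i_j},\alpha_{i_j+1})$ for $i_j\in\sgn(\chi)$. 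Combined with Proposition \ref{prop:DNE-A}, this shows that $\Fl_0$ is the first occurrence index.

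The third step is to identify $z\in\CZ$ and the character $\chi'$. The value $z=z(\chi)$ is pinned down, in each non-odd-orthogonal case, by comparing the contribution of the largest component $\alpha_r$: the insertion pattern described above forces $\chi_{\varphi,\wh\phi}(\alpha_r)$ to equal $1$, so $\eta_z(\alpha_r)=\chi(\alpha_r)=z(\chi)$ by \eqref{eta}, which uniquely determines $z$. In the odd orthogonal case $\eta_z$ is trivial, so both values of $z$ are allowed, matching the statement of \eqref{FD-odd-SO}. Finally, $\chi'$ is determined by $\wh{\chi'}=\chi^z_{\wh\phi,\varphi}$ and then rewritten using Proposition \ref{prop:dualdata} (together with \eqref{realspmp} for the symplectic-metaplectic case) to obtain the formulas displayed in \eqref{FD-U}--\eqref{FD-Mp}.

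The main technical obstacle I anticipate is the uniform bookkeeping of the various normalizations across the five cases: the Bessel/Fourier-Jacobi dichotomy and the resulting parity constraints on $\beta_j$, the contragredient corrections on $\chi'$ prescribed by Proposition \ref{prop:dualdata}, the $\eta_z$-twists that distinguish the symplectic and metaplectic distinguished characters \eqref{char-spmp}, and the fact that in the metaplectic case $\phi$ is an $L$-parameter of an odd special orthogonal group obtained via theta, which forces the extra $\sigma_{\epsilon_\chi}$ summand whose sign is determined by the parity $s_\chi$ of the number of alternations. Each of these has to be tracked carefully to confirm that the element $z(\chi)=\chi(\alpha_r)$ and the contragredient $\wh{\chi'}$ come out exactly as displayed. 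Once these conventions are consistent, the case-by-case verification reduces to the uniform combinatorial argument sketched above.
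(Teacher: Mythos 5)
Your proposal is correct and follows essentially the same approach as the paper: translate the constraint $\chi^z_{\varphi,\wh\phi}=\chi$ via the distinguished-character formulas into a combinatorial sign-matching condition, observe that each interval $(\alpha_{i_j},\alpha_{i_j+1})$ with $i_j\in\sgn(\chi)$ must carry at least one summand of $\phi$, deduce the dimension bound giving $\Fl\le\Fl_0$, combine with Proposition \ref{prop:DNE-A} to get equality, and read off $z$ and $\chi'$ by evaluating $\chi_{\varphi,\wh\phi}$ at $\alpha_r$ together with Proposition \ref{prop:dualdata}.

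The one place you diverge is in invoking Theorem \ref{thm:DFD} as the starting point. The paper does not use it: it considers an arbitrary $\Fl$ with $\FD_\Fl(\varphi,\chi)\neq\varnothing$ and derives directly, from the odd total multiplicity in each $\sgn(\chi)$-interval, that $\dim\phi\ge s_\chi$ (resp. $2s_\chi$, $2s_\chi+1$), with no discreteness hypothesis needed. Discreteness of the first descent then emerges as a byproduct of the equality case. Your route is logically valid and gives you the restricted enumeration of discrete generic $L$-parameters over $\BR$ up front, which helps with bookkeeping; the paper's route is a little more economical because the dimension bound holds for all descents, not only those at the first occurrence. Either way the conclusion is identical. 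Two small imprecisions to tidy up: the parenthetical ``one on each side in the metaplectic case'' is not accurate --- the $\sigma_{\epsilon_\chi}$ summand has ``$\beta=0$'' and contributes to none of the sign-toggles in \eqref{chi1-R'}, serving only to arrange $\det\phi=1$; and the phrase ``$\#\{\beta_j\}\ge s_\chi$'' implicitly assumes multiplicity one, whereas the intrinsic bound is on $\dim\phi$ via total multiplicity (the two coincide once you have invoked Theorem \ref{thm:DFD}, so no actual gap results).
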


\begin{proof}
We give the proof of \eqref{FD-U}, \eqref{FD-odd-SO} and \eqref{FD-Mp}.  The cases \eqref{FD-even-SO} and \eqref{FD-Sp} are similar to the case \eqref{FD-U}, which will be omitted. 

\quad 

\eqref{FD-U} $G_n^*=\RU_\Fn^*$. Assume that for some $\Fl>0$, the $\Fl$-th descent $\FD_\Fl(\varphi,\chi)$ is not empty. We take 
\[
(\phi, \chi')\in \FD^z_\Fl(\varphi,\chi)
\]
for some $z\in \CZ$. From \eqref{chi-a} and Definition \ref{defn:pd}, we must have that 
\begin{equation} \label{chi_z}
\chi=\chi^z_{\varphi,\wh\phi}= \chi_{\varphi,\wh\phi}\cdot \eta_z, \quad \wh{\chi'} = \chi^z_{\wh{\phi},\varphi} = \chi_{\wh{\phi},\varphi} \cdot\eta_{(-1)^{\Fl}z}.
\end{equation}
By  $\chi(\alpha_{i_j})\chi(\alpha_{i_j+1})=-1$ for all $i_j\in\sgn(\chi)$ and \eqref{chiz},
we have $\chi_{\varphi,\wh\phi}(\alpha_{i_j})\chi_{\varphi,\wh\phi}(\alpha_{i_j+1})=-1$.
Due to \eqref{chi1-C}, the sum of the multiplicities of irreducible summands $\sig_{\beta_j}$ of $\phi$ with $\beta_j\in (\alpha_{i_j},\alpha_{i_j+1})$ is odd.  
Thus, $\phi$ has at least one summand $\sig_{\beta_j}$ with odd multiplicity and $\beta_j\in (\alpha_{i_j},\alpha_{i_j+1})$  having the parity opposite to the $\alpha_i$'s. 
It follows that
\[
\dim \phi = \Fn-\Fl \geq s_\chi = \#\sgn(\chi),
\]
and the equality holds if and only if $\phi$ is of the form
\[
\phi = \bigoplus^{s_\chi}_{j=1}\varsigma_{\beta_j},
\]
where $\beta_j \in (\alpha_{i_j}, \alpha_{i_j+1})$, $j=1,\ldots, s_\chi$, has parity opposite to the $\alpha_i$'s.
In this case using \eqref{chi1-C} again we find that 
$
\chi_{\varphi,\wh\phi}(\alpha_r) =1,
$
which by \eqref{chi_z} forces that
\[
z = \chi(\alpha_r).
\]
By taking $\Fl_0 = \Fn-s_\chi$ as given in \eqref{Fl0}, we must have that 
$\Fl\leq \Fl_0$. By Proposition \ref{prop:DNE-A}, $\FD_{\Fl_0}(\varphi,\chi)$ is not empty. 
Hence $\Fl_0=\Fl_0(\varphi,\chi)$ is the first occurrence index of $(\varphi,\chi)$. Finally, 
by the same argument in the proof of Proposition \ref{prop:DNE-A}, we obtain that 
the first descent $\FD_{\Fl_0}(\varphi,\chi)$ is given as in  \eqref{FD-U} for unitary groups.

\quad

\eqref{FD-odd-SO} $G_n^*=\SO_{2n+1}^*$. Assume that for some $\Fl>0$, the $\Fl$-th descent $\FD_\Fl(\varphi,\chi)$ is not empty. Take some 
$(\phi,\chi')\in \FD^z_\Fl(\varphi,\chi)$ with $z=\pm 1$. 
By Definition \ref{defn:pd} and Proposition \ref{prop:dualdata}, we have that
\[
\chi = \chi_{\varphi,\phi},\quad \chi' = \chi_{\phi,\varphi}\cdot\eta_z.
\]
Similar to the above, by \eqref{chi2-R} for every $i_j\in \sgn(\chi)$, $\phi$ has at least one summand $\sigma_{\beta_j}$, where $\beta_j\in(\alpha_{i_j}, \alpha_{i_j+1})$ is even. If we put
\[
\phi=\bigoplus^{s_\chi}_{j=1}\sigma_{\beta_j},
\]
then using \eqref{chi2-R} we find that
$
\chi_{\varphi, \phi}=\chi.
$
It is easy to check that $\Fl\leq 2n+1-2s_\chi$. By taking $\Fl_0= 2n+1-2s_\chi$ as in \eqref{Fl0}, Proposition \ref{prop:DNE-A} shows that the 
$\Fl_0$-th descent $\FD_{\Fl_0}(\varphi,\chi)$ is not empty. Hence $\Fl_0=\Fl_0(\varphi,\chi)$ is the first occurrence index of $(\varphi,\chi)$ in this case. Finally
the first descent $\FD_{\Fl_0}(\varphi,\chi)$ is given as in  \eqref{FD-odd-SO} for odd special orthogonal groups.

\quad

\eqref{FD-Mp} $G_n^*=\Mp_{2n}$. Assume that for $\Fl>0$, the $\Fl$-th descent 
$\FD_{\Fl}(\varphi,\chi)$ is not empty. Take  $(\phi,\chi')\in \FD^z_{\Fl}(\varphi,\chi)$ for some $z\in\CZ$. By Definition 
\ref{defn:pd}, Proposition \ref{prop:dualdata} and \eqref{realspmp}, we have that
\begin{equation}\label{chiz_Mp}
\chi = \chi_{\varphi,\wh{\phi}}\cdot\eta_{z}, \quad
\chi' = \chi_{\wh{\phi},\varphi}\cdot \eta_{-z}.
\end{equation}
Similar to the above, by \eqref{chi2-R'} for every $i_j\in \sgn(\chi)$, $\phi$ has at least one summand $\sigma_{\beta_j}$, where $\beta_j\in(\alpha_{i_j}, \alpha_{i_j+1})$ is even. 
It follows that $\phi$ contains a subrepresentation 
\[
\phi' = \bigoplus^{s_\chi}_{j=1}\sigma_{\beta_j},
\]
where $\beta_j\in (\alpha_{i_j}, \alpha_{i_j+1})$ is even. 
Then $\dim\phi=2n-\Fl+1\geq 2s_\chi$.
Since $\Fl$ is even, it implies that $\dim\phi\geq 2s_\chi+1$ and $\Fl \leq 2n-2s_\chi$. 

Similarly, by taking $\Fl_0 = 2n-2s_\chi$ in \eqref{Fl0}, we have $\Fl\leq \Fl_0$. 
Proposition \ref{prop:DNE-A} implies that $\FD_{\Fl_0}(\varphi,\chi)$ is non-empty and  $\phi$ is of the form
\[
\phi = \bigoplus^{s_\chi}_{j=1}\sigma_{\beta_j} \oplus \sigma_{\epsilon_\chi},
\]
where the choice of $\epsilon_\chi$ guarantees $\det(\phi)=1$ as desired.
In addition, by $\beta_{j}< \alpha_{r}$ and \eqref{chi2-R'}, $\chi_{\varphi,\phi}(\alpha_r)=1$ and then
$z=\chi(\alpha_r)$ due to \eqref{chiz_Mp}.
Hence $\Fl_0=\Fl_0(\varphi,\chi)$ is the first occurrence index of $(\varphi,\chi)$,
and the first descent $\FD_{\Fl_0}(\varphi,\chi)$ is given as in \eqref{FD-Mp} for the metaplectic groups. 
\end{proof}

For later discussions, we make a simple but important observation. 

\begin{defn}\label{defn:ord-pre}
Let  $\varphi, \varphi'$ be two $L$-parameters of the same type such that 
\[
\rank_{\BZ_2}\CS_\varphi=\rank_{\BZ_2}\CS_{\varphi'}=r.
\]
Let $\alpha_1<\cdots<\alpha_r$ and $\alpha_1'<\cdots<\alpha_r'$ denote symbolically the natural bases of $\CS_\varphi$ and $\CS_{\varphi'}$, respectively. Define the unique {\it order-preserving} isomorphism
\begin{equation}\label{ord-pre}
\iota_{\varphi,\varphi'}:\ \CS_\varphi\cong  \CS_{\varphi'},\quad \alpha_i\mapsto \alpha_i',
\end{equation}
which gives a bijection 
\[
\wh{\CS_{\varphi'}}\to \wh{\CS_{\varphi}}, \quad \chi'\mapsto \chi'\circ\iota_{\varphi,\varphi'}.
\]
\end{defn}

\begin{prop} \label{prop:pretab}
Assume that $(\varphi,\chi,q)\in \FT_a(G_n^*)$, and  let $\Fl_0=\Fl_0(\varphi,\chi)$. There is a unique one-dimensional sesquilinear form $q^h_1$ such that $\CT_a(\varphi,\chi,q)$ contains a non-empty subset of the form
\[
(\Fl_0, q_{(\Fl_0)}^h)\star \CT_{a}(\varphi_1, \chi_1, q^{(1)})
\]
where $(\varphi_1,\chi_1)\in \FD_{\Fl_0}(\varphi,\chi)$ subject to the conditions in Definition \ref{LYT} if and only if  $q^h_{(\Fl_0)} = q^h_1$.
Moreover, $q^h_1$ is determined explicitly as follows. 
\begin{enumerate}
    \item \label{FD-odd-SO-pretab} If $G_n^*=\SO_{2n+1}^*$, then 
    \[
    \disc(q^h_1) = \disc(q)\cdot \det(\varphi_1) = \disc(q)\cdot (-1)^{s_\chi}. 
    \]
    In this case $z=-a\cdot \disc(q^h_1)$.
  \item \label{FD-pretab} If $G_n^* \neq \SO_{2n+1}^*$, then 
  \[
  [\disc(q^h_1)] = a\cdot z(\chi).
  \]
\end{enumerate}
\end{prop}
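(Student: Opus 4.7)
The plan is to combine the explicit structure of the first descent $\FD_{\Fl_0}(\varphi,\chi)$ obtained in Proposition \ref{prop:fd} with the rationality constraints given in Proposition \ref{prop:rational}. Since $\FD_{\Fl_0}(\varphi,\chi)$ is non-empty by Proposition \ref{prop:DNE-A}, and subsequent inductive steps in Definition \ref{LYT} produce non-empty contributions by the inductive nature of $\CT_a$, existence of at least one admissible $q_{(\Fl_0)}^h$ is automatic; the entire content of the proposition is thus the \emph{uniqueness} of this form together with its explicit description as an element of $\CZ$.

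For every case except $G_n^*=\SO_{2n+1}^*$, Proposition \ref{prop:fd} asserts that $\FD_{\Fl_0}(\varphi,\chi) = \FD_{\Fl_0}^{z(\chi)}(\varphi,\chi)$, so only the single value $z=z(\chi)\in\CZ$ occurs as the twist parameter in the first descent step. Proposition \ref{prop:rational}\eqref{FD-rational} then forces $[\disc(q_{(\Fl_0)}^h)] = a\cdot z(\chi)$, which pins down $q_1^h$ uniquely up to isometry, because a one-dimensional $(-1)^{\Fl_0-1}\epsilon$-Hermitian form is classified by its discriminant class in $\CZ$. This yields part \eqref{FD-pretab} of the proposition.

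The remaining case $G_n^*=\SO_{2n+1}^*$ is more delicate because Proposition \ref{prop:fd}\eqref{FD-odd-SO} shows that both values $z=\pm 1$ contribute non-trivially to $\FD_{\Fl_0}(\varphi,\chi)$, so the constraint on $z$ alone is insufficient. Here the rationality is instead controlled by Proposition \ref{prop:rational}\eqref{FD-odd-SO-rational}, which gives $\disc(q_{(\Fl_0)}^h) = \disc(q)\cdot \det(\varphi_1)$ with $z=-a\cdot\disc(q_{(\Fl_0)}^h)$. The key observation is that every $\varphi_1$ in the first descent has the form $\bigoplus_{j=1}^{s_\chi}\sigma_{\beta_j}$ with each $\beta_j$ a positive even integer. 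From the determinant formula of Weil group representations recalled in Section \ref{sec-LRN}, one has $\det\sigma_{\beta_j}=\sgn$ whenever $\beta_j$ is even, so $\det\varphi_1=\sgn^{s_\chi}$, which as an element of $\CZ=\BR^\times/\BR^{\times 2}$ equals $(-1)^{s_\chi}$ \emph{independently} of the particular $\varphi_1$ chosen from $\FD_{\Fl_0}(\varphi,\chi)$. Substituting into Proposition \ref{prop:rational}\eqref{FD-odd-SO-rational} yields both asserted formulas and the uniqueness of $q_1^h$.

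The argument is largely mechanical, assembling previously established results, so no genuine obstacle is anticipated. The only point requiring care is to interpret $\det\varphi_1$ correctly as an element of $\CZ$ via local class field theory, and to keep the sign conventions of Proposition \ref{prop:rational} straight when relating $z$, $a$, and $\disc(q_1^h)$.
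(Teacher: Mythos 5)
Your proposal is correct and follows essentially the same route as the paper's proof: uniqueness in the non-$\SO_{2n+1}^*$ cases comes from observing that only $z = z(\chi)$ contributes to $\FD_{\Fl_0}(\varphi,\chi)$ in Proposition \ref{prop:fd}, and in the $\SO_{2n+1}^*$ case from the identity $\disc(q_1^h) = \disc(q)\cdot\det(\varphi_1)$ combined with $\det(\varphi_1) = (-1)^{s_\chi}$, which the paper phrases as $(-1)^{\rank_{\BZ_2}\CS_{\varphi_1}}$ while you compute it directly from $\det\sigma_{\beta_j}=\sgn$ for even $\beta_j$—the same calculation.
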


\begin{proof} This proposition follows from Propositions \ref{prop:rational} and \ref{prop:fd}, and the explicit arithmetic data in the archimedean case. We give the proof for completeness. 

By Proposition \ref{prop:fd}, for any $(\varphi_1,\chi_1)\in \FD_{\Fl_0}(\varphi,\chi)$, we have that 
\[
\rank_{\BZ_2}\CS_{\varphi_1} = s_\chi =\#\sgn(\chi). 
\]
Moreover, if $(\varphi_1,\chi_1)$, $(\varphi_1', \chi_1')\in \FD^z_{\Fl_0}(\varphi,\chi)$ for some $z\in \CZ$, then it is easy to see that 
\begin{equation} \label{chi1-eq}
\chi_1 = \chi_1'\circ \iota_{\varphi_1,\varphi_1'}.
\end{equation}
under the order-preserving isomorphism $\iota_{\varphi_1,\varphi_1'}$ given by Definition \ref{defn:ord-pre}.

We again only explain the proof for \eqref{FD-odd-SO-pretab}. The other cases in 
\eqref{FD-pretab} are direct consequences of Propositions \ref{prop:rational} and \ref{prop:fd}, which will be omitted. 
If $G_n^*=\SO_{2n+1}^*$, then $(\varphi_1, \chi_1)\in \FD_{\Fl_0}(\varphi,\chi)$
 is an enhanced $L$-parameter of an even special orthogonal group 
 $\SO(V^{(1)}, q^{(1)})$. By the local Langlands correspondence,
 \[
 \disc(q^{(1)}) = \det(\varphi_1) = (-1)^{\rank_{\BZ_2}\CS_{\varphi_1}}=(-1)^{s_\chi},
 \]
 which does not depend on the choice of $(\varphi_1,\chi_1)$. The proposition in this case follows from that 
 \[
 \disc(q^h_1) = \disc(q)\cdot \disc(q^{(1)}). 
 \]
\end{proof}

By Proposition \ref{prop:fd} and Proposition 
\ref{prop:pretab}, we may define a {\it first descent sequence} of a triple $(\varphi,\chi, q)\in \FT_a(G_n^*)$. Take $(\varphi_0,\chi_0, q^{(0)})=(\varphi,\chi, q)$,  and define a pre-tableau $(p_1, q^1_h)$, where
\[
p_{1}:= \Fl_0(\varphi_0, \chi_0)
\]
is the first occurrence index of the $L$-parameter descent of $(\varphi_0,\chi_0)$, and $q^h_1$ is the unique one-dimensional sesquilinear form determined in Proposition \ref{prop:pretab}. Then
$\CT_{a}(\varphi_0, \chi_0, q^{(0)}) = \CT_a(\varphi,\chi, q) $ contains a non-empty subset of the form
\[
(p_1, q^h_1)\star \CT_{a}(\varphi_1,\chi_1, q^{(1)}),
\]
subject to the conditions of Definition \ref{LYT}. By Proposition \ref{prop:fd}, the enhanced $L$-parameter $(\varphi_1,\chi_1)\in \FD_{\Fl_0}(\varphi,\chi)$ is discrete. Thus we are able to continue the induction process. 

For $i\geq 1$, if $\CS_{\varphi_{i-1}}$ is nontrivial, define \[ p_i:=\Fl_0(\varphi_{i-1},\chi_{i-1}).\] 
In this case by Proposition \ref{prop:pretab} again, there exists a unique one-dimensional sesquilinear form $q^h_{i}$ such that $\CT_{a}(\varphi_{i-1},\chi_{i-1}, q^{(i-1)})$ contains a non-empty subset of the form
\[
(p_{i}, q^h_{(p_{i})})\star \CT_{a}(\varphi_{i},\chi_{i}, q^{(i)}),
\]
where $(\varphi_{i},\chi_{i})\in \FD_{p_i}(\varphi_{i-1},\chi_{i-1})$ subject to the conditions in Definition \ref{LYT} if and only if 
$q^h_{(p_{i})} =q^h_{i}$. Then we obtain the pre-tableau $(p_{i},q^h_{i})$.  We recall from Definition \ref{LYT} that
\[
q^{(i-1)} \cong (q^h_i \otimes q_{p_i})\oplus q^{(i)}.
\]

If $\CS_{\varphi_{i-1}}$ is trivial, we use Proposition \ref{prop:trivial} and the relevant 
discussions in Section \ref{ssec-userho}. 
As a discrete $L$-parameter, $(\varphi_{i-1},\chi_{i-1})$ has to be the unique enhanced $L$-parameter of $G^*=\SO_0, \RU_0, \Sp_0, \Mp_0$ or $\SO_1$. In this case if $G^*\neq \SO_1$ then we stop, otherwise we define $(p_i, q^h_i) = (1, q^{(i-1)})$.

It is clear that this inductive process ends with a finite number of steps. We assume that 
for a triple $(\varphi,\chi, q)\in \FT_a(G_n^*)$, the first descent sequence goes as follows:
\begin{align}\label{descentseq}
    \begin{matrix}
    (\varphi_0,\chi_0, q^{(0)}),&(\varphi_1,\chi_1,q^{(1)}),&\ldots,&(\varphi_{k-1},\chi_{k-1},q^{(k-1)})\\
    (p_1, q^h_1),& (p_2, q^h_2), &\ldots,& (p_k, q^h_k)
    \end{matrix}
\end{align}
such that $p_1,\ldots, p_k$ are positive. 
It is clear that the sequence $((p_1, q^h_1),\ldots, (p_k, q^h_k))$ is a pre-tableau in $\CT_a(\varphi,\chi,q)$.

\begin{prop} \label{prop:unique}
The pre-tableau $((p_1, q^h_1),\ldots, (p_k, q^h_k))\in \CT_a(\varphi,\chi,q)$ as given in \eqref{descentseq} is independent of the choice of each $(\varphi_{i}, \chi_{i})$ in $\FD_{p_{i}}(\varphi_{i-1},\chi_{i-1})$ for $i=1,2,\dots, k-1$.
\end{prop}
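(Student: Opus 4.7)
The plan is to prove the proposition by induction on $i\geq 1$, showing simultaneously that for each $i$ the triple $(\varphi_i,\chi_i,q^{(i)})$, while not unique, has the property that any two valid choices yield the same first occurrence index $p_{i+1}$ and the same one-dimensional form $q^h_{i+1}$, and moreover the two resulting orthogonal complements $q^{(i)}$ coincide up to isometry. The base case $i=0$ is trivial since $(\varphi_0,\chi_0,q^{(0)})=(\varphi,\chi,q)$ is fixed. For the inductive step, note that once $(\varphi_{i-1},\chi_{i-1},q^{(i-1)})$ is given, the integer $p_i=\Fl_0(\varphi_{i-1},\chi_{i-1})$ and the form $q^h_i$ are uniquely determined by Proposition~\ref{prop:pretab}; consequently the orthogonal complement $q^{(i)}$, determined by $q^{(i-1)}\cong (q^h_i\otimes q_{p_i})\oplus q^{(i)}$, is also unique up to isometry.

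The core of the argument is then to show that any two choices $(\varphi_i,\chi_i)$ and $(\varphi_i',\chi_i')$ in $\FD_{p_i}(\varphi_{i-1},\chi_{i-1})$ produce the same next data. By Proposition~\ref{prop:fd}, both $\varphi_i$ and $\varphi_i'$ are discrete $L$-parameters with the same number of irreducible summands (equal to $s_{\chi_{i-1}}$, plus possibly the $\sigma_{\epsilon_{\chi_{i-1}}}$ summand in the metaplectic case), where the $j$-th summand $\rho_{\beta_j}$ (respectively $\rho_{\beta_j'}$) lies in the open interval $(\alpha_{i_j},\alpha_{i_j+1})$ prescribed by $\sgn(\chi_{i-1})$. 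Inspecting the distinguished character formulas \eqref{chi1-C}, \eqref{chi2-C}, \eqref{chi1-R}, \eqref{chi2-R}, \eqref{chi1-R'}, \eqref{chi2-R'}, one sees that the value of $\chi_i$ at the basis element $\beta_j$ depends only on the cardinality of $\{k:\alpha_k\lessgtr \beta_j\}$; since $\beta_j$ and $\beta_j'$ lie in the same open interval between consecutive $\alpha_k$'s, these cardinalities coincide. Therefore, under the order-preserving isomorphism $\iota_{\varphi_i,\varphi_i'}$ of Definition~\ref{defn:ord-pre}, we have $\chi_i=\chi_i'\circ\iota_{\varphi_i,\varphi_i'}$ (extending the equality \eqref{chi1-eq} established in the proof of Proposition~\ref{prop:pretab} to every step of the descent).

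From this identification three consequences follow immediately. First, the sign alternating sets $\sgn(\chi_i)$ and $\sgn(\chi_i')$ correspond under $\iota_{\varphi_i,\varphi_i'}$ and have the same cardinality $s_{\chi_i}=s_{\chi_i'}$, hence $p_{i+1}=\Fl_0(\varphi_i,\chi_i)=\Fl_0(\varphi_i',\chi_i')$ by \eqref{Fl0}. Second, the element $z(\chi_i)=\chi_i(\alpha_r)=\chi_i'(\alpha_r')=z(\chi_i')\in\CZ$ is the same. Third, when $G_n^*=\SO_{2n+1}^*$, one has $\det(\varphi_i)=(-1)^{s_{\chi_{i-1}}}=\det(\varphi_i')$. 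Applying Proposition~\ref{prop:pretab} (in both its cases \eqref{FD-odd-SO-pretab} and \eqref{FD-pretab}) to $(\varphi_i,\chi_i,q^{(i)})$ and $(\varphi_i',\chi_i',q^{(i)})$ — with the same $q^{(i)}$ by the first paragraph — yields $q^h_{i+1}=q^{\prime\,h}_{i+1}$.

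The one step to be handled with slight care is the transition between different types of classical groups (e.g.\ $\SO_{2n+1}^*\leftrightarrow\SO_{2n}^*$ or $\Sp_{2n}\leftrightarrow\Mp_{2n}$), because Proposition~\ref{prop:fd} splits into five cases and the order-preserving identification $\iota_{\varphi_i,\varphi_i'}$ has to be applied within the correct case at each step; this is where the argument is combinatorially heaviest but presents no conceptual difficulty, since the explicit formulas from Section~\ref{sec-LPDC} and the inductive hypothesis give matching values on a natural basis. Once these case checks are complete, the induction closes and the pre-tableau $((p_1,q^h_1),\ldots,(p_k,q^h_k))$ depends only on $(\varphi,\chi,q)$, as asserted.
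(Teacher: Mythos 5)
Your proof is correct and follows essentially the same strategy as the paper. The paper factors the key inductive invariant out into Lemma~\ref{pre-tableau-eq}, which compares two descent sequences starting from triples $(\varphi,\chi,q)$ and $(\varphi',\chi',q)$ that are $\CZ$-equivalent in the relevant sense (same rank $r$, multiplicities matching modulo $2$, and $\chi=\chi'\circ\iota_{\varphi,\varphi'}$), and then proves Proposition~\ref{prop:unique} as the special case $(\varphi,\chi)=(\varphi',\chi')$. You carry out the same induction inline, and the core invariant you use — $\chi_i=\chi_i'\circ\iota_{\varphi_i,\varphi_i'}$ at every step — is exactly the content of \eqref{chi1=chi1'} in the paper's proof of that lemma. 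The one stylistic improvement you should make is to state this identification explicitly as part of your inductive hypothesis in the first paragraph; as written, your hypothesis (any two choices at step $i$ give the same $p_{i+1}$, $q^h_{i+1}$ and $q^{(i)}$) is not on its own strong enough to close the induction, because when you reach step $i+1$ the two branches will in general have $\varphi_i\ne\varphi_i'$, and you need the relation between $\chi_i$ and $\chi_i'$ under the order-preserving isomorphism to deduce that their first descents again produce identified data. You do invoke this relation in the body, so the gap is in the statement of what is being propagated rather than in the substance. Otherwise the argument — Proposition~\ref{prop:pretab} fixing $(p_i,q^h_i)$, Proposition~\ref{prop:fd} controlling the shape of each first descent, and the distinguished-character formulas showing that the value of $\chi_i$ at $\beta_j$ depends only on which open interval $(\alpha_{i_j},\alpha_{i_j+1})$ it lies in — matches the paper's reasoning step for step.
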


\begin{proof}
This follows from the following more general statement, applied to the special case that $(\varphi, \chi) = (\varphi', \chi')$.
\end{proof}

\begin{lem}\label{pre-tableau-eq}
Assume that $(\varphi,\chi, q)$, $(\varphi', \chi', q)\in \FT_a(G_n^*)$, where
\[
\varphi=\bigoplus_{i=1}^r m_i\rho_{\alpha_i}\oplus \varphi_{\rm quad} \oplus \xi \oplus {}^c\xi^\vee \quad{\rm and}\quad
\varphi'=\bigoplus_{i=1}^r m_i'\rho_{\alpha_i'}\oplus \varphi'_{\rm quad} \oplus \xi' \oplus {}^c {\xi'}^\vee
\]
are of the form \eqref{par-unif}. Assume that 
$m_i\equiv m_i' \mod 2$,  $i=1,\ldots, r$, and that
\begin{equation}\label{chi-eq}
\chi = \chi'\circ \iota_{\varphi,\varphi'}
\end{equation}
where $\iota_{\varphi,\varphi'}$ is given by \eqref{ord-pre}.
Let $((p_1, q^h_1),\ldots, (p_k, q^h_k))\in \CT_a(\varphi,\chi,q)$ be given by \eqref{descentseq}. Similarly choose a first descent sequence 
\begin{align*}
    \begin{matrix}
    (\varphi_0',\chi'_0, q^{(0)'}),&(\varphi'_1,\chi'_1, {q^{(1)'}}),&\dots,&(\varphi'_{k'-1},\chi'_{k'-1}, {q^{(k'-1)'}})\\
    (p'_1, {q_1^{h'}}),& (p'_2, {q^{h'}_2}), &\dots,& (p'_{k'}, {q^{h'}_{k'}})
    \end{matrix}
\end{align*}
for $(\varphi', \chi', q)$. Then 
\[
((p_1, q^h_1),\ldots, (p_k, q^h_k)) = ((p'_1, {q_1^{h'}}),\cdots, (p'_{k'}, {q^{h'}_{k'}})). 
\]
\end{lem}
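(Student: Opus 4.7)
The plan is to proceed by induction on $\Fn$. The base case $\Fn\leq 1$ is immediate from Definition \ref{LYT}(1), since $\CT_a(\varphi,\chi,q)$ is then a singleton depending only on $q$. For the inductive step, we first match the initial entries $(p_1,q^h_1)$ and $(p_1',q^{h'}_1)$ of the two pre-tableaux. Since $\chi=\chi'\circ\iota_{\varphi,\varphi'}$ and the sign alternating set is defined purely in terms of consecutive products of values of the character on the ordered basis of the component group (Definition \ref{defn:SAS}), one obtains $\sgn(\chi)=\sgn(\chi')$ as subsets of indices, together with $s_\chi=s_{\chi'}$, $\epsilon_\chi=\epsilon_{\chi'}$, and $z(\chi)=z(\chi')$. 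Combined with Proposition \ref{prop:fd} and \eqref{Fl0}, this forces $p_1=\Fl_0(\varphi,\chi)=\Fl_0(\varphi',\chi')=p_1'$. Proposition \ref{prop:pretab} then yields $q^h_1=q^{h'}_1$ in every case (the relevant inputs being $\disc(q)$ together with $s_\chi$ in the $\SO_{2n+1}^*$ case, and $a\cdot z(\chi)$ otherwise), and Witt cancellation applied to $q\cong (q^h_1\otimes q_{p_1})\oplus q^{(1)}$ gives $q^{(1)}\cong q^{(1)'}$.

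The next step is to reduce to the inductive hypothesis at the smaller data $(\varphi_1,\chi_1,q^{(1)})$ and $(\varphi_1',\chi_1',q^{(1)'})$ occurring in the respective first descent sequences. By Proposition \ref{prop:fd}, every element of $\FD_{p_1}(\varphi,\chi)$, and likewise of $\FD_{p_1}(\varphi',\chi')$, is a discrete $L$-parameter all of whose irreducible summands have multiplicity $1$, together with a common additional summand $\sigma_{\epsilon_\chi}$ in the metaplectic case. Hence the multiplicity parity condition of the lemma is automatic for $\varphi_1$ and $\varphi_1'$. The remaining condition to verify is the character compatibility $\chi_1=\chi_1'\circ\iota_{\varphi_1,\varphi_1'}$, and this is the main obstacle.

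To attack the character compatibility, the plan is to carry out a case-by-case analysis following the five cases of Proposition \ref{prop:fd}, using the explicit formulas \eqref{chi1-C}--\eqref{chi2-R'} for the distinguished characters $\chi_{\varphi,\phi}$ and $\chi_{\phi,\varphi}$, together with Proposition \ref{prop:dualdata} to pass between $\chi_1$ and $\wh{\chi_1}$, and with the formula \eqref{eta} for the characters $\eta_z$. For a descent parameter $\varphi_1=\bigoplus_j\rho_{\beta_j}$ (augmented by $\sigma_{\epsilon_\chi}$ in the metaplectic case), the value $\chi_1(\beta_j)$ reduces to a product of the form $\prod_{i\in I_j}(-1)^{m_i}$ over an index set $I_j\subset\{1,\dots,r\}$ determined solely by the combinatorial position $i_j\in\sgn(\chi)$, multiplied by a global factor depending only on $z(\chi)$, $p_1$, $\epsilon_\chi$, and $\Fn$. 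Under the order-preserving isomorphism $\iota_{\varphi_1,\varphi_1'}$ sending $\beta_j\mapsto\beta_j'$, the same combinatorial index set $I_j$ governs $\chi_1'(\beta_j')$, and the global factors are identical by the matching established in the first paragraph; since $m_i\equiv m_i'\mod 2$ by hypothesis, the two products agree. Once $\chi_1=\chi_1'\circ\iota_{\varphi_1,\varphi_1'}$ is verified, the induction hypothesis applies to $(\varphi_1,\chi_1,q^{(1)})$ and $(\varphi_1',\chi_1',q^{(1)'})$, yielding the equality of the remaining pre-tableau entries and completing the inductive step.
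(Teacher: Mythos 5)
Your proof is correct and follows essentially the same route as the paper's: induction on $\Fn$, matching $(p_1, q^h_1)$ via Propositions \ref{prop:fd} and \ref{prop:pretab} from the observation that $\sgn(\chi)=\sgn(\chi')$, $s_\chi=s_{\chi'}$, $z(\chi)=z(\chi')$, then verifying $\chi_1=\chi_1'\circ\iota_{\varphi_1,\varphi_1'}$ from the explicit distinguished-character formulas, and finally applying the induction hypothesis to the descended triples. The only cosmetic differences are your tighter base case $\Fn\leq 1$ (the paper conservatively uses $\Fn\leq 3$ for $E=\BR$) and the paper's explicit termination remark when $s_\chi=0$, which your framework handles implicitly.
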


\begin{proof}
We use induction on $\Fn$. The lemma can be verified directly when $\Fn= 1$ and $E=\BC$, or $\Fn\leq 3$ and $E=\BR$.

The assumption \eqref{chi-eq} implies that 
\begin{equation} \label{rank-z-eq}
\rank_{\BZ_2}\CS_{\varphi_1} = s_\chi = s_{\chi'} = \rank_{\BZ_2}\CS_{\varphi_1'}, \quad z(\chi)=z(\chi').
\end{equation}
Then $(p_1, q^h_1)=(p_1',q^{h'}_1)$ by Propositions \ref{prop:fd} and \ref{prop:pretab}.
In any case, we have that   
\begin{equation}
    \label{z-eq}
    (\varphi_1,\chi_1)\in \FD^z_{p_1}(\varphi,\chi) \quad \textrm{and} \quad (\varphi_1',\chi_1')\in \FD^z_{p_1}(\varphi',\chi')
\end{equation}
for some $z\in \CZ$. By the formulas for distinguished characters in Section \ref{sec-LPDC} and Proposition \ref{prop:fd}, it holds that
\begin{equation} \label{chi1=chi1'}
\chi_1 = \chi_1'\circ \iota_{\varphi_1,\varphi_1'}. 
\end{equation}
Note that $(\varphi,\chi, q)$ corresponds to a generic representation if and only if $\rank_{\BZ_2}\CS_{\varphi_1}= s_\chi = 0$, and  the same is true for $(\varphi',\chi',q)$. If $s_\chi = s_{\chi'}=0$, then the induction is done.

Assume that $s_\chi = s_{\chi'}>0$. By \eqref{chi1=chi1'}, we can apply the induction hypothesis for the  triples
\[
(\varphi_1, \chi_1, q^{(1)}), \ (\varphi_1', \chi_1', q^{(1)})\in \FT_{a}(H_{\lfloor(\Fn-p_1)/2\rfloor}^*)
\]
to conclude that
\[
((p_2, q^h_2),\dots, (p_k, q^h_k)) = ((p'_2, {q^{h'}_2}),\dots, (p'_{k'}, {q^{h'}_{k'}})),
\]
which finishes the induction.
\end{proof}

Proposition \ref{prop:unique} leads to the following definition. 

\begin{defn}[Pre-tableau $\Fs_a(\varphi,\chi,q)$ and Ordered Partition $\udl{p}(\varphi,\chi)$]  \label{defn:maxtab}
Define the pre-tableau $\Fs_a(\varphi,\chi,q)$  and the ordered partition $\udl{p}(\varphi,\chi)$
associated with the triple $(\varphi,\chi,q)\in \FT_{a}(G_n^*)$ as 
\[
\Fs_a(\varphi,\chi,q) = ((p_1, q^h_1),\ldots, (p_k, q^h_k))\in \CT_a(\varphi,\chi,q)
\]
and
\[
\underline{p}(\varphi,\chi)=[p_1,p_2,\dots,p_k],
\]
where the first descent sequence $(p_1,q^h_1), \ldots, (p_k, q^h_k)$ is given in \eqref{descentseq}.
\end{defn}

\begin{prop}\label{prp:partition}
With the ordered partition $\underline{p}(\varphi, \chi)$ of $\Fn$ and the pre-tableau $\Fs_a(\varphi,\chi,q)$ as defined in Definition \ref{defn:maxtab}, the following hold. 
\begin{enumerate}
\item \label{prp:partition1} The ordered partition $\underline{p}(\varphi, \chi)$ of $\Fn$ is decreasing with $p_1\geq\cdots\geq p_k$, hence is an $L$-descent partition in $\CP_a(\varphi,\chi,q)$.
\item  \label{prp:partition2} The pre-tableau $\Fs_a(\varphi,\chi,q)$ belongs to the set of $L$-descent tableaux $\CL_a(\varphi,\chi,q)$, and defines an $F$-rational nilpotent orbit 
$\CO_a(\varphi,\chi,q)\in \wf_a(\varphi,\chi,q)$.  
\end{enumerate}
\end{prop}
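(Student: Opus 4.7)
The plan is to proceed by induction on the length $k$ of the first descent sequence \eqref{descentseq}, which reduces Part (1) to the single inequality $p_1 \geq p_2$. Indeed, by Theorem \ref{thm:DFD} each $\varphi_i$ for $i \geq 1$ is discrete, so applying the induction hypothesis to the triple $(\varphi_1,\chi_1,q^{(1)})$ yields the non-increasing chain $p_2 \geq p_3 \geq \cdots \geq p_k$. The base case $k \leq 1$, in which $\CS_{\varphi_0}$ is trivial, is handled by Proposition \ref{prop:trivial} and Definition \ref{defn:maxtabtrivial}. Hence it suffices to verify $p_1 \geq p_2$, which I would do case by case using the explicit first descent in Proposition \ref{prop:fd}.

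For the unitary case, I would write $\varphi = \bigoplus_{i=1}^r m_i \varsigma_{\alpha_i}$ and $\sgn(\chi) = \{i_1 < \cdots < i_{s_\chi}\}$, so that $p_1 = \Fn - s_\chi$ and the first descent $\varphi_1 = \bigoplus_{j=1}^{s_\chi} \varsigma_{\beta_j}$ is discrete of dimension $s_\chi$, giving $p_2 = s_\chi - s_{\chi_1}$. Tracing through \eqref{chi1-C}--\eqref{chi2-C} and the contragredient formula in Proposition \ref{prop:dualdata}, the character $\chi_1$ on the order-preserving basis $(-\beta_{s_\chi}, \ldots, -\beta_1)$ switches sign between $-\beta_{j+1}$ and $-\beta_j$ precisely when the partial sum $T_j := \sum_{i=i_j+1}^{i_{j+1}} m_i$ is odd. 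Since each even $T_j$ is at least $2$ and each odd $T_j$ at least $1$, while the containment $\sgn(\chi) \subset \{1,\ldots,r-1\}$ yields
\[
\sum_{j=1}^{s_\chi - 1} T_j \;=\; \sum_{i=i_1+1}^{i_{s_\chi}} m_i \;\leq\; \Fn - m_1 - m_r \;\leq\; \Fn - 2,
\]
we obtain $2(s_\chi - 1 - s_{\chi_1}) + s_{\chi_1} \leq \Fn - 2$, equivalently $s_{\chi_1} \geq 2 s_\chi - \Fn$, which rearranges to $p_2 \leq p_1$.

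The remaining four cases are the main obstacle: they follow an analogous combinatorial pattern but require careful bookkeeping of dimensions. For even special orthogonal groups each $\sigma_{\beta_j}$ contributes dimension $2$, so the target inequality becomes $s_{\chi_1} \geq 2 s_\chi - n$; this is delivered by the same bound $\sum T_j \leq n - 2$ derived from $\Fn = 2\sum_i m_i + m_\epsilon + 2\dim\xi$ and the positivity of $m_1$ and $m_u$. The symplectic-metaplectic cases use \eqref{chi1-R'}--\eqref{chi2-R'} together with the correction \eqref{realspmp}, and the extra quadratic summand $\sigma_{\epsilon_\chi}$ appearing in the metaplectic descent (which contributes dimension $1$ but no new sign alternation) is absorbed by the parity constraint $(-1)^{\Fl}=\epsilon$. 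The odd special orthogonal case admits the boundary index $i_1 = 0$, which improves the upper bound for $\sum T_j$ slightly but is compensated by the formula $p_1 = 2n+1 - 2s_\chi$; in each subcase one checks the bound on $\sum T_j$ is tight enough to force $p_2 \leq p_1$.

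Part (2) is then immediate from Part (1). By construction in \eqref{descentseq}, $\Fs_a(\varphi,\chi,q) \in \CT_a(\varphi,\chi,q)$, and being a decreasing pre-tableau by Part (1) it lies in the subset $\CL_a(\varphi,\chi,q)$ of $L$-descent pre-tableaux. The canonical construction in \eqref{LY} promotes it to an admissible sesquilinear Young tableau belonging to $\CY_a(\varphi,\chi,q)$, which via the bijection of Proposition \ref{prop:GZ} corresponds to a unique $F$-rational nilpotent orbit $\CO_a(\varphi,\chi,q) \in \wf_a(\varphi,\chi,q)$, as desired.
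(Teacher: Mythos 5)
Your proof takes essentially the same route as the paper's: reduce by induction along the first descent sequence to the single inequality $p_1\geq p_2$, then verify it case by case using the explicit first descent of Proposition \ref{prop:fd}; your $T_j$ coincide with the paper's $l_j$ of \eqref{eq:l_j}, and your unitary computation reproduces \eqref{eq:p1-p2-u}. The one place where the sketch needs care is the even special orthogonal case: because the descent lands in an \emph{odd} special orthogonal group, the boundary index $j=0$ enters $\sgn(\chi_1)$ (via $\alpha_0=-1$ in Definition \ref{defn:SAS}), so $s_{\chi_1}$ counts odd $l_j$ for $0\leq j<s_\chi$ rather than $0<j<s_\chi$, and the target inequality is $s_{\chi_1}\geq 2s_\chi+1-n$, not $s_{\chi_1}\geq 2s_\chi-n$ as you wrote; compare \eqref{eq:p1-p2-SO-even}. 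This is a correctable slip, but as written the four real cases are sketches and do not yet constitute a verified argument.
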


\begin{proof}
By definition, Part \eqref{prp:partition2} follows from Part \eqref{prp:partition1}. The proof of Part \eqref{prp:partition1} goes as follows. 

From  Definition \ref{defn:maxtab},  the ordered partition can be written as 
\[
\underline{p}(\varphi,\chi)=[p_1,p_2,\dots,p_k],
\]
where $p_1,\ldots, p_k$ are as in the construction of $\Fs_a(\varphi,\chi,q)$.
Clearly it suffices to show that $p_1\geq p_2$, since $\underline{p}(\varphi,\chi)$ is defined inductively through the consecutive descents of enhanced $L$-parameters. 
Recall from \eqref{par-unif} that $m_i$, $1\leq i\leq r$, denotes the multiplicity of $\rho_{\alpha_i}$ in the 
$L$-parameter $\varphi$. 
From \eqref{ind-sgn}, the indices in $\textrm{sgn}(\chi)$ satisfy the following inequalities:
$i_1<\cdots<i_{s_\chi}.$ 
For $j=0,\ldots, s_\chi$, put 
\begin{equation} \label{eq:l_j}
l_j=m_{i_j+1}+\cdots+m_{i_{j+1}},
\end{equation}
where $i_0:=0$ and $i_{s_\chi+1}:=r$. Then $l_j>0$ if $j>0$. 

For  any enhanced $L$-parameter $(\varphi_1,\chi_1)\in \FD_{\Fl_0}(\varphi, \chi)$, where $\Fl_0=\Fl_0(\varphi,\chi)$ is the first occurrence index of $(\varphi,\chi)$ defined by \eqref{FO}, denote $\beta_1,\ldots, \beta_{s_\chi}$ the basis vectors of $\CS_{\varphi_1}$ such that \[\alpha_{i_j}<\beta_j<\alpha_{i_j+1}.\]
Then we can use the formulas in Section \ref{sec-LPDC} to compute 
$
s_{\chi_1} =\#\sgn(\chi_1)
$
in various cases. 

\quad

    {\bf (i) $G_n^* =  \SO_{2n}^*$:} In this case, we have that  $l_0>0$, and
    $
    s_{\chi_1} =\#\{0\leq j<s_\chi\ \mid \  l_j\textrm{ is odd}\}.
    $
    By Proposition \ref{prop:fd}, we obtain that 
    \[
    s_\chi  = \frac{\Fn-p_1-1}{2},\quad n_1 = \frac{\Fn-p_1-p_2}{2},
    \]
    which imply the following inequalities:
    \begin{equation}\label{eq:p1-p2-SO-even}
\frac{p_1-p_2}{2} = \frac{\Fn}{2}+s_{\chi_1}-2s_\chi -1 \geq \sum^{s_\chi}_{j=0}l_j+s_{\chi_1}-2s_\chi-1\geq 0.
\end{equation}

{\bf (ii) $G_n^* = \SO_{2n+1}^*$:} In this case, we have that 
$
s_{\chi_1} =  \{0<j<s_\chi\ \mid \ l_j\textrm{ is odd}\}.
$
By Proposition \ref{prop:fd}, we obtain that 
 \[
    s_\chi = \frac{\Fn-p_1}{2},\quad s_{\chi_1} = \frac{\Fn-p_1-p_2-1}{2},
    \]
    which yield the following inequalities:
    \begin{equation}\label{eq:p1-p2-SO-odd}
    \frac{p_1-p_2}{2}  = \frac{\Fn+1}{2}+s_{\chi_1}-2s_\chi  \geq \sum^{s_\chi}_{j=0}l_j+1+s_{\chi_1}-2s_\chi \geq l_{s_\chi}-1\geq0.
    \end{equation}

{\bf (iii) $G_n^* =\Sp_{2n}$ or $\Mp_{2n}$:} In those cases, we have that  $l_0>0$, and 
\[
s_{\chi_1}=  \#\{0< j<s_\chi \ \mid \  l_j\textrm{ is odd}\}.
\]
By Proposition \ref{prop:fd}, we obtain that 
    \[
    s_\chi = \frac{\Fn-p_1}{2},\quad s_{\chi_1} = \frac{\Fn-p_1-p_2}{2},
    \]
    which imply the following inequalities:
    \begin{equation} \label{eq:p1-p2-sp-mp}
    \frac{p_1-p_2}{2} =   \frac{\Fn}{2}+s_{\chi_1}-2s_\chi \geq \sum^{s_\chi}_{j=0}l_j +s_{\chi_1}-2s_\chi\geq l_{s_\chi}-1\geq 0.
    \end{equation}
    
   {\bf (iv)  $G_n^*=\RU_\Fn^*$:}  In this case, we have that  $l_0>0$, and
$
s_{\chi_1} =\#\{0< j<s_\chi\ \mid \ l_j\textrm{ is odd}\}.
$
By Proposition \ref{prop:fd}, we get that 
 \[
    s_\chi = \Fn-p_1, \quad s_{\chi_1}=s_\chi- p_2,
    \]
    which yield the following inequalities:
\begin{equation} \label{eq:p1-p2-u}
p_1-p_2=\Fn+s_{\chi_1}-2s_\chi\geq \sum^{s_\chi}_{j=0} l_j + s_{\chi_1}-2s_\chi\geq l_0+l_{s_\chi}-2\geq 0.
\end{equation}

Thus we finish the proof that $p_1\geq p_2$ in all cases.
\end{proof}







\subsection{Proof of Theorem \ref{thm1}}\label{ssec-PC}

In order to prove Theorem \ref{thm1}, it remains to show that the partition $\underline{p}(\varphi, \chi)$ in Definition \ref{defn:maxtabtrivial} and Definition \ref{defn:maxtab} corresponds to the unique stable nilpotent orbit that meets all the $F$-rational maximal nilpotent orbits in the arithmetic wavefront set $\wf_\ari(\pi)=\wf_a(\varphi,\chi, q)$ with $\pi=\pi_a(\varphi,\chi)$.

Denote by $\FP(\Fn)$ the set of ordered partitions of $\Fn$, which are not required to be decreasing.
We extend the usual partition order to $\FP(\Fn)$ as follows:
for $\underline{p}=[p_1,\ldots,p_k]$ and $\underline{p}'=[p_1',\ldots,p_{k'}']$ in $\FP(\Fn)$,
define that $\underline{p}'\leq \underline{p}$ if and only if
\begin{equation} \label{part-order}
\sum^{i}_{j=1}p_j'\leq \sum^{i}_{j=1}p_j
\end{equation}
for all $i=1,\ldots, \max\{k,k'\}$,
where $p_j$, $j>k$ and $p_j'$, $j>k'$ are interpreted as zero.

For any triple $(\varphi,\chi,q)\in \FT_a(G_n^*)$, 
the set of $L$-descent partitions 
$\CP_a(\varphi,\chi,q)$ associated to the set of $L$-descent pre-tableaux $\CL_a(\varphi,\chi,q)$ 
is as defined in Section \ref{sec-AWFS}. Let $\FP_a(\varphi,\chi,q)$ be 
the set of ordered partitions associated to the set of pre-tableaux $\CT_a(\varphi,\chi,q)$. It is clear that 
$\CP_a(\varphi,\chi,q)$ is contained in $\FP_a(\varphi,\chi,q)$.
By Proposition \ref{prp:partition}, we have that $\udl{p}(\varphi,\chi)\in \CP_a(\varphi,\chi,q)$. 
Thus Theorem \ref{thm1} follows from the following stronger result.

\begin{thm}[Maximality of Partition]\label{thm:main}
The partition $\underline{p}(\varphi,\chi)$  in Definition \ref{defn:maxtabtrivial} and Definition \ref{defn:maxtab} is the unique maximal element in
$\FP_a(\varphi,\chi,q)$.
\end{thm}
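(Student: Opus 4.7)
The plan is to prove the theorem by induction on $\Fn = \dim \varphi$. The base case (when $\CS_\varphi$ is trivial, so $\Fn \leq 1$ up to components with trivial rationality) is handled by Proposition \ref{prop:trivial}, which explicitly identifies $\udl{p}(\varphi,\chi)$ with the regular partition. For the inductive step, given any ordered partition $\udl{\ell} = [\ell_1,\ldots,\ell_{k'}] \in \FP_a(\varphi,\chi,q)$, there exists by definition some $z \in \CZ$ and $(\varphi_1', \chi_1') \in \FD^{z}_{\ell_1}(\varphi,\chi)$ together with a triple $(\wh{\varphi_1'},\wh{\chi_1'},q^{(1)'})$ such that $[\ell_2,\ldots,\ell_{k'}] \in \FP_a(\wh{\varphi_1'},\wh{\chi_1'},q^{(1)'})$. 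I would then break into two cases based on whether $\ell_1 = p_1$ or $\ell_1 < p_1$, noting that $\ell_1 \leq p_1$ automatically since $p_1 = \Fl_0(\varphi,\chi)$ is the first occurrence index. The uniqueness of the maximum follows directly from Proposition \ref{prop:unique} and Lemma \ref{pre-tableau-eq}, which show that the greedy choice yields a well-defined decreasing sequence independent of the discrete $L$-parameter chosen in each first descent.

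In the case $\ell_1 = p_1$, by Proposition \ref{prop:fd} there is a standard identification of the sign alternating sets for any pair of first descent $L$-parameters, and by Lemma \ref{pre-tableau-eq} the inductive invariants $(p_2,\ldots,p_k)$ constructed from any member of $\FD_{p_1}(\varphi,\chi)$ agree with the partition $\udl{p}(\varphi_1',\chi_1')$. Since $[\ell_2,\ldots,\ell_{k'}] \in \FP_a(\wh{\varphi_1'},\wh{\chi_1'},q^{(1)'})$, the induction hypothesis yields $\sum_{j=2}^{i} \ell_j \leq \sum_{j=2}^{i} p_j$ for all $i$, and combining with $\ell_1 = p_1$ gives the required partial-sum domination $\sum_{j=1}^{i} \ell_j \leq \sum_{j=1}^{i} p_j$.

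The difficult case is $\ell_1 < p_1$, where the strict deficit $p_1 - \ell_1$ must be recovered from later parts of $\udl{\ell}$, and a naive induction on the tail fails because $\udl{\ell}$ and $\udl{p}$ arise from descents of different $L$-parameters. The technical input here is the forthcoming Lemma \ref{lem8.5}, which (in light of Proposition \ref{prop:fd}) compares the possible first descents of $(\varphi,\chi)$ at indices $\ell_1 < p_1$ with the maximal descent at $p_1$ through a combinatorial \emph{collapse} procedure on ordered partitions. Concretely, one expects Lemma \ref{lem8.5} to assert that for any $(\varphi_1',\chi_1') \in \FD_{\ell_1}(\varphi,\chi)$ with $\ell_1 < p_1$, every ordered partition $[\ell_2,\ldots,\ell_{k'}] \in \FP_a(\wh{\varphi_1'},\wh{\chi_1'},q^{(1)'})$ is dominated, as a partition of $\Fn - \ell_1$, by the partition $[p_1 - \ell_1, p_2, p_3, \ldots, p_k]$ (or its suitable re-ordering/collapse to a decreasing sequence). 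Granting this, the partial sums of $\udl{\ell}$ are then bounded termwise by those of $[\ell_1, p_1 - \ell_1, p_2, \ldots, p_k]$, whose $i$-th partial sum is at most that of $[p_1, p_2, \ldots, p_k]$, completing the induction.

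The main obstacle, as the excerpt already flags, is the proof of Lemma \ref{lem8.5} itself. The difficulty is that while Proposition \ref{prop:fd} gives explicit, clean control over the \emph{first} descent at the optimal index $p_1$ (the $\beta_j$'s strictly interleave the sign alternating indices $\alpha_{i_j}$), the descents at smaller indices $\ell_1 < p_1$ are far less rigid: the $L$-parameter $\varphi_1'$ may acquire non-discrete summands, additional multiplicities of the $\rho_{\alpha_i}$'s, or summands $\rho_{\beta}$ with $\beta$ coinciding with some $\alpha_i$, and the component group $\CS_{\wh{\varphi_1'}}$ can be significantly larger than $\CS_{\varphi_1}$. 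One then has to show that all of this extra flexibility in the tail cannot exceed the deficit $p_1 - \ell_1$, and the bookkeeping must be uniform across the different classical group types as reflected in the inequalities \eqref{eq:p1-p2-SO-even}--\eqref{eq:p1-p2-u}. My expectation is that the right framework is to track the sign alternating structure of $\chi_1'$ in terms of that of $\chi$ together with the collapse operation on signed partitions, and this is precisely the combinatorial content of Section \ref{sec-CO} to be invoked.
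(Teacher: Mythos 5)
Your overall plan — induction on $\Fn$, with a case split on whether $\ell_1 = p_1$ or $\ell_1 < p_1$, and reduction of the hard case to a technical lemma about comparison of descent tails — captures the correct \emph{shape} of the problem, and your Case $\ell_1 = p_1$ is complete: invoking Proposition \ref{prop:unique} and Lemma \ref{pre-tableau-eq} to identify $[p_2,\ldots,p_k]$ with $\udl{p}(\wh{\varphi_1'},\wh{\chi_1'})$, and then applying the induction hypothesis, is exactly right. However, the structure differs from the paper's: the paper does not induct on $\Fn$ at the level of Theorem \ref{thm:main}, nor does it split on $\ell_1 = p_1$ versus $\ell_1 < p_1$. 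Instead it observes that the domination $\sum_{j\leq i} p_j' \leq \sum_{j\leq i} p_j$ is equivalent to $\dim \varphi_i' \geq \dim\varphi_i$ for all $i$, and Lemma \ref{lem8.5} delivers this uniformly: at every stage $i$ of an arbitrary descent sequence, $\varphi_i'$ contains a good-type summand $\varphi_i''$ with $\rank_{\BZ_2}\CS_{\varphi_i''} = \rank_{\BZ_2}\CS_{\varphi_i}$ and $\chi_i''\circ\iota_{\varphi_i,\varphi_i''} \in \ScO_\CZ(\chi_i)$, from which the dimension bound follows since $\varphi_i$ is discrete with multiplicity-one summands. All of the induction and combinatorics is thereby pushed into the proof of Lemma \ref{lem8.5} in Section \ref{sec-CO}.

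The genuine gap in your proposal is the guessed content of Lemma \ref{lem8.5}. You propose that for $(\varphi_1',\chi_1') \in \FD_{\ell_1}(\varphi,\chi)$ with $\ell_1 < p_1$, every $[\ell_2,\ldots,\ell_{k'}] \in \FP_a(\wh{\varphi_1'},\wh{\chi_1'},q^{(1)'})$ is dominated by $[p_1-\ell_1, p_2,\ldots,p_k]$ (or its sort). This is both not what Lemma \ref{lem8.5} says and too strong to be true: the first partial-sum inequality of your guessed domination reads $\ell_2 \leq p_1 - \ell_1$, i.e.\ $\ell_1 + \ell_2 \leq p_1$, whereas the theorem only needs the weaker $\ell_1+\ell_2 \leq p_1 + p_2$. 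The slack $p_2$ is essential. Concretely, when $\ell_1$ is only slightly smaller than $p_1$, the first descent index of $\wh{\varphi_1'}$ (which is roughly $\dim\wh{\varphi_1'} - s_{\wh{\chi_1'}}$, and $\dim\wh{\varphi_1'}$ grows with $p_1-\ell_1$) can and does exceed $p_1-\ell_1$; the correct bound one can extract from the actual Lemma \ref{lem8.5} is $\ell_2 \leq p_2 + (p_1-\ell_1)$, not $\ell_2 \leq p_1-\ell_1$. So your induction would stall in Case 2 even in principle. The fix is exactly the parameter-level formulation that the paper uses: the notion of \emph{collapse} in Section \ref{sec-CO} is a collapse of \emph{enhanced $L$-parameters} (Definition \ref{defn:CLP}), not of ordered partitions, and the invariant that descends along the chain is the $\CZ$-equivalence class of a discrete sub-parameter, not a direct partition inequality. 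Your instinct that the deficit $p_1-\ell_1$ must be "recovered later" and that sign-alternating structure and a collapse procedure are the right tools is sound, but the object to be tracked is finer than a partition.
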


Since $\CP_a(\varphi,\chi,q)$ is contained in $\FP_a(\varphi,\chi,q)$ and $\underline{p}(\varphi,\chi)$ belongs to $\CP_a(\varphi,\chi,q)$,
Theorem~\ref{thm:main} implies that the partition $\underline{p}(\varphi,\chi)$ is the unique maximal element in  $\CP_a(\varphi,\chi,q)$.
Hence Theorem \ref{thm1} is finally proved. 





When the component group $\CS_\varphi$ is trivial, Theorem \ref{thm:main} is given by Proposition \ref{prop:trivial}. It remains to prove Theorem \ref{thm:main} when $\CS_\varphi$ is nontrivial. In this case, we have that 
 \[
 \udl{p}(\varphi,\chi) = [p_1,\ldots, p_k]
 \]
 is given as in Definition \ref{defn:maxtab}.

 Assume that  $\underline{p}'=(p_1',\ldots,p_{k'}')\in \FP_a(\varphi,\chi,q)$ is associated to a pre-tableau
 \begin{equation} \label{pretableau-s'}
 \Fs' = ((p_1', {q^{h'}_1}),\ldots,
 (p'_{k'}, {q^{h'}_{k'}}))\in \CT_a(\varphi,\chi, q).
 \end{equation}
There exists a {\it descent sequence} of triples  $(\varphi_i',\chi_i', {q^{(i)'}})$, $i=0,\ldots, k'-1$, where $(\varphi_0',\chi_0', {q^{(0)'}})=(\varphi,\chi, q)$ and for $i=1,\ldots, k'-1$ it holds that
 	\begin{itemize}
 	\item $(\varphi_i', \chi_i')\in \FD_{p_i'}(\varphi_{i-1}', \chi_{i-1}')$,
 	\item
 	$(p'_i, {q^{h'}_i})\star \CT_{a}(\varphi_{i-1}', \chi_{i-1}', {q^{(i-1)'}})\subset \CT_{a}(\varphi_i', \chi_i', {q^{(i)'}}).
 	$
 	\end{itemize}
Let $(\varphi_i, \chi_i, q^{(i)})$, $i=0,\ldots, k-1$ be a first descent sequence of triples as in \eqref{descentseq}. Then in order to show Theorem \ref{thm:main}, it is sufficient to show that $k'\geq k$ and 
\begin{equation} \label{ineq}
   \dim \varphi_i' \geq \dim\varphi_i,\quad i=1,\ldots, k-1,
\end{equation}
because we have
\[
\dim {q^{(i)'}}=\Fn-\sum_{j=1}^{i}p'_j\quad \textrm{and}
\quad \dim q^{(i)}=\Fn-\sum_{j=1}^{i}p_j.
\]
Recall the enhanced $L$-parameters  $(\varphi_i,\chi_i)$ in the inductive definition 
of $\Fs_a(\varphi,\chi,q)$. We will prove the following statement (Lemma \ref{lem8.5}) in the next section, which implies \eqref{ineq} and thereby proves  Theorem \ref{thm:main}.

\begin{lem}\label{lem8.5}
Let $\Fs' \in \CT_a(\varphi,\chi,q)$ and $(\varphi_i', \chi_i', {q^{(i)'}})$ be a descent sequence of triples as above. 
Then for each $1\leq i\leq k-1$ there exists a summand $\varphi_i''$ of $\varphi_i'$ such that
\begin{itemize}
\item
$\varphi_i''$ and $\varphi_i'$ are of the same type, and $\mathrm{rank}_{\BZ_2}\CS_{\varphi_i''} =\mathrm{rank}_{\BZ_2}\CS_{\varphi_i}$;
\item
With $\chi_i '' :=\chi_i' \vert_{\CS_{\varphi_i''}}$ and $\iota_{\varphi_i, \varphi_i''}$ given by \eqref{ord-pre}, one has
\[
\chi_i'' \circ \iota_{\varphi_i, \varphi_i ''} \in \ScO_\CZ( \chi_i).	
\]
\end{itemize}
\end{lem}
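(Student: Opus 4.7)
The plan is to prove Lemma \ref{lem8.5} by induction on $i$, using the explicit description of first descents in Proposition \ref{prop:fd}, the distinguished character formulas of Section \ref{sec-LPDC}, and the collapse machinery to be developed in Section \ref{sec-CO}. The inductive hypothesis at step $i$ will package both the existence of the summand $\varphi_i''$ and the character compatibility $\chi_i'' \circ \iota_{\varphi_i,\varphi_i''} \in \ScO_\CZ(\chi_i)$, the latter being essential for carrying the induction forward.

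For the base case $i=1$, the key observation is that $p_1 = \Fl_0(\varphi,\chi)$ is the first occurrence index, so $p_1'\leq p_1$ and hence $\dim\varphi_1' \geq \dim\varphi_1$. Re-examining the argument in the proof of Proposition \ref{prop:fd}, one sees that for any descent $(\varphi_1',\chi_1') \in \FD_{p_1'}(\varphi,\chi)$ and any sign-changing index $i_j \in \sgn(\chi)$, the parameter $\varphi_1'$ must contain at least one irreducible summand $\rho_{\beta_j'}$ with $\beta_j' \in (\alpha_{i_j},\alpha_{i_j+1})$ of parity opposite to the $\alpha_i$'s: this is forced by the distinguished character formulas \eqref{chi1-C}, \eqref{chi2-R}, \eqref{chi2-R'}, because $\chi(\alpha_{i_j})\chi(\alpha_{i_j+1}) = -1$ must be reproduced by the corresponding product after accounting for any $\eta_z$ twist. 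Selecting one such summand for each $i_j\in \sgn(\chi)$, together with $\varphi_{\mathrm{quad}}$ in the metaplectic case, yields a summand $\varphi_1''$ of $\varphi_1'$ of the same type with $\rank_{\BZ_2}\CS_{\varphi_1''} = s_\chi = \rank_{\BZ_2}\CS_{\varphi_1}$. A direct computation of $\chi_1'' = \chi_1'|_{\CS_{\varphi_1''}}$ via the same formulas then gives $\chi_1''\circ \iota_{\varphi_1,\varphi_1''} \in \ScO_\CZ(\chi_1)$, both sides being determined by the same sign-alternation pattern in $\chi$.

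For the inductive step, suppose $\varphi_i''$ has been constructed with the required properties, and consider $(\varphi_{i+1}',\chi_{i+1}') \in \FD_{p_{i+1}'}(\varphi_i',\chi_i')$. Since the distinguished character formulas split multiplicatively over direct summands of $\varphi_i'$, restricting to $\varphi_i''$ produces a descent of $(\varphi_i'',\chi_i'')$ whose next level is a summand of $\varphi_{i+1}'$. By the inductive hypothesis we have $\chi_i''\circ\iota_{\varphi_i,\varphi_i''} \in \ScO_\CZ(\chi_i)$, so Proposition \ref{prop:fd} yields $\Fl_0(\varphi_i'',\chi_i'') = \Fl_0(\varphi_i,\chi_i) = p_{i+1}$. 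Applying the base-case argument to this first descent of $(\varphi_i'',\chi_i'')$ produces, inside the further descent, a sub-summand $\varphi_{i+1}''$ of $\varphi_{i+1}'$ with $\rank_{\BZ_2}\CS_{\varphi_{i+1}''} = \rank_{\BZ_2}\CS_{\varphi_{i+1}}$ and with $\chi_{i+1}''\circ \iota_{\varphi_{i+1},\varphi_{i+1}''} \in \ScO_\CZ(\chi_{i+1})$, completing the induction.

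The main obstacle lies in making the restriction-to-summand step precise, in particular in tracking the $\CZ$-twist compatibility imposed by formulas such as \eqref{chi-a}, \eqref{spmpchi} and \eqref{realspmp} and by the relation \eqref{mp-sp} in the symplectic-metaplectic case, where the twist $\varphi \leftrightarrow \varphi(z')$ must be propagated across the induction. This is precisely the role of the collapse operation in Section \ref{sec-CO}: it provides the combinatorial framework to control how the ordered partitions associated to descent sequences behave under the passage to sub-parameters, and to verify that the sign-alternation pattern of $\chi_i'$ restricts correctly to $\CS_{\varphi_i''}$ modulo the appropriate $\CZ$-twist. Once the collapse bookkeeping is in place, the argument outlined above applies uniformly across all types of $G_n^*$ and finishes the proof of Lemma \ref{lem8.5}, thereby completing the proof of Theorem \ref{thm:main} and hence Theorem \ref{thm1}.
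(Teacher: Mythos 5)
Your overall plan — induction on the depth of the descent sequence, with the collapse machinery controlling the bookkeeping — is the same as the paper's, and your base case is essentially correct: it matches Proposition \ref{prop:C-fd}, where the parameter $\overline{\varphi_1''}$ is built by selecting one irreducible summand $n_j\rho_{\beta_j}\subset\psi_{i_j}$ of odd multiplicity for each $i_j\in\sgn(\chi)$ and verified, via Lemmas \ref{C-parity} and \ref{keylem}, to be a collapse of $\varphi_1'$ that is $\CZ$-equivalent to a first descent of $(\varphi,\chi)$. What you call the "main obstacle" is, however, not a matter of bookkeeping that the collapse formalism disposes of automatically; it is the actual content of the lemma.

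The specific gap is the claim in your inductive step that "restricting to $\varphi_i''$ produces a descent of $(\varphi_i'',\chi_i'')$ whose next level is a summand of $\varphi_{i+1}'$." A descent $(\varphi_{i+1}',\chi_{i+1}')\in\FD_{p_{i+1}'}(\varphi_i',\chi_i')$ is not an equation that you can simply restrict along a summand $\varphi_i''\subset\varphi_i'$: the defining relation $\chi_i'=\chi^z_{\varphi_i',\wh{\varphi_{i+1}'}}$, evaluated on $\CS_{\varphi_i''}$, still involves all of $\varphi_{i+1}'$ on the right, and there is no a priori choice of sub-parameter $\varphi_{i+1}''\subset\varphi_{i+1}'$ for which it becomes a descent relation of $(\varphi_i'',\chi_i'')$. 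The paper isolates exactly this issue as Lemma \ref{lem:DC} (Descent-Collapse), and the proof genuinely requires a case-by-case analysis over the five types of collapse operation in Definition \ref{defn:CLP}. For instance, in the type \eqref{C-even} case with $j = i_j\in\sgn(\chi')$ and $i_j-1$ also in $\sgn(\chi')$, one must apply a \emph{further} collapse removing $n_{j-1}\rho_{\beta_{j-1}}\oplus n_j\rho_{\beta_j}$, and the verification that the resulting pair is still $\CZ$-equivalent to a first descent relies on Lemma \ref{keylem}\,\eqref{keylem-2} to compute $\chi_1'(\beta_{j-1})\chi_1'(\beta_j)=(-1)^{m_i'}=1$; the type \eqref{C-R} case forces one to branch off into the degenerate alternative where $G_{n'}'^*$ is special orthogonal and $\udl{p}(\varphi,\chi)$ is regular (cf. \eqref{DC-generic}), which your proposal does not mention and which must be handled for the induction to close. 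In short: you have identified the strategy, but deferring to "collapse bookkeeping" papers over the central nonformal step, namely the case analysis proving Lemma \ref{lem:DC}, which is where the actual mathematics of Section \ref{sec-CO} lives.
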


\subsection{$F$-distinguished orbits}\label{ssec-conjSOS}

We are going to define and study $F$-distinguished nilpotent orbits (cf. \cite{N98, Hb12}) and prove Theorem 
\ref{cor:wf-max-unique-archimedean}. 

\begin{defn}[Distinguished $F$-rational nilpotent orbit] \label{defn:dist-orbit}
An $F$-rational nilpotent orbit $\CO\in \CN_F(\Fg_n)_\circ$ is called $F$-distinguished if it does not meet any $F$-rational proper Levi subalgebra of $\Fg_n$.
\end{defn}

Let us point out that an $F$-rational nilpotent orbit $\CO$ is $F$-distinguished does not imply that the partition associated to $\CO$ is distinguished in the sense of \cite{CM93}. 


\begin{prop} \label{prp:dist}
The $F$-rational nilpotent orbit $\CO_a(\varphi,\chi,q)\in \wf_a(\varphi,\chi,q)$ in Proposition~\ref{prp:partition} is $F$-distinguished.
\end{prop}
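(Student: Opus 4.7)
The plan is to translate the $F$-distinguished condition into a concrete statement about the pre-tableau $\Fs_a(\varphi,\chi,q) = ((p_1, q^h_1), \ldots, (p_k, q^h_k))$ from Definition~\ref{defn:maxtab}. Since every proper $F$-rational Levi subalgebra of $\Fg_n$ is conjugate to one of the form $\prod_j \Fg\Fl(W_j) \times \Fg(V',q')$, the orbit $\CO_a(\varphi,\chi,q)$ meets such a Levi if and only if the admissible sesquilinear Young tableau attached to it by Proposition~\ref{prop:GZ} admits a splitting $V^h_{(p)} \cong H \oplus V''$ for some part $p$ of $\udl{p}(\varphi,\chi)$, where $H$ is a two-dimensional hyperbolic sesquilinear summand. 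Since $F = \BR$ and each $q^h_i$ is one-dimensional over $E \in \{\BR, \BC\}$, this is equivalent to: for every part $p$ of $\udl{p}(\varphi,\chi)$, all forms $q^h_i$ with $p_i = p$ lie in the same class in $F^\times/\BN E^\times$.

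I would then prove the strengthened claim by induction on the length $k$: whenever $p_i = p_j$, the forms $q^h_i$ and $q^h_j$ are isometric. The base cases $k \leq 1$ are immediate. For the inductive step, the tail $((p_2, q^h_2), \ldots, (p_k, q^h_k))$ coincides with $\Fs_a(\varphi_1, \chi_1, q^{(1)})$, so the inductive hypothesis handles all pairs $2 \leq i < j \leq k$, and the only remaining comparison is $q^h_1$ versus $q^h_2$ in the critical case $p_1 = p_2$. Applying Proposition~\ref{prop:pretab}, outside the case $G_n^* = \SO_{2n+1}^*$ one has $[\disc(q^h_1)] = a \cdot z(\chi)$ and $[\disc(q^h_2)] = a \cdot z(\chi_1)$, so the required isometry $q^h_1 \cong q^h_2$ reduces to the identity $z(\chi) = z(\chi_1)$. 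The $\SO_{2n+1}^*$ case is parallel, involving $\det(\varphi_1)$ and $\det(\varphi_2)$ in place of $z(\chi)$ and $z(\chi_1)$.

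To establish these identities, I would exploit the very strong constraints forced by $p_1 = p_2$. A direct inspection of the inequality chains \eqref{eq:p1-p2-SO-even}--\eqref{eq:p1-p2-u} shows that $p_1 = p_2$ forces every intermediate bound in those chains to be saturated simultaneously, which pins down $\xi$, $\varphi_{\rm quad}$, the multiplicities $m_i$, and the positions $i_j \in \sgn(\chi)$ very tightly. Under these constraints, the value of $\chi_1$ on the top basis vector $\beta_{s_\chi}$ of $\CS_{\varphi_1}$ can be computed explicitly from the descent recipes in Proposition~\ref{prop:fd} together with the defining formulas for the distinguished characters in Section~\ref{sec-LPDC}, and is shown to equal $\chi(\alpha_r) = z(\chi)$ in every case.

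The main obstacle will be the metaplectic and unitary settings, where $\eta_z$ from \eqref{eta} carries nontrivial contributions from local $\varepsilon$-factors, and where the asymmetric twist $(\chi \cdot \eta_z, \chi' \cdot \eta_{(-1)^\Fl z})$ of \eqref{chi-a} together with the $-z(\chi)$-twist appearing in Proposition~\ref{prop:fd}\eqref{FD-Mp} must be reconciled carefully. Handling these twist factors consistently is the technical heart of the verification; once balanced case by case, the identity $z(\chi_1) = z(\chi)$, and hence the $F$-distinguishedness of $\CO_a(\varphi,\chi,q)$, follows.
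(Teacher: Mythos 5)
Your plan is essentially the same as the paper's: you reduce $F$-distinguishedness to the statement that $q^h_i \cong q^h_j$ whenever $p_i = p_j$, observe by induction on $k$ that it suffices to compare $q^h_1$ with $q^h_2$ in the critical case $p_1 = p_2$, extract the implied saturation of the inequality chains \eqref{eq:p1-p2-SO-even}--\eqref{eq:p1-p2-u}, and then read off the discriminants from Proposition~\ref{prop:pretab} and the first-descent recipes of Proposition~\ref{prop:fd}. This is exactly the structure of the paper's proof. There is one inaccuracy worth flagging: your claim that outside the case $G_n^*=\SO_{2n+1}^*$ one has both $[\disc(q^h_1)]=a\cdot z(\chi)$ and $[\disc(q^h_2)]=a\cdot z(\chi_1)$ is correct for the $\Sp$/$\Mp$ and unitary towers, which do not alternate parity, but fails for $G_n^*=\SO_{2n}^*$: there the first descent lands in an \emph{odd} special orthogonal group, so $q^h_2$ is governed by Part (1) of Proposition~\ref{prop:pretab}, i.e.\ $\disc(q^h_2)=\disc(q^{(1)})\cdot\det(\varphi_2)$, not by the $z(\chi_1)$-formula, and the reduction ``$q^h_1\cong q^h_2 \Leftrightarrow z(\chi)=z(\chi_1)$'' no longer makes sense. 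The paper accordingly treats $\SO_{2n}^*$ as its own case, deducing $\disc(q)=-\det(\varphi_2)=-\disc(q^{(2)})$ from the saturated identity $\tfrac12(\dim\varphi+\dim\varphi_2)=2s_\chi+1$ and then comparing $\disc(q^h_1)=-\disc(q)\disc(q^{(1)})$ against $\disc(q^h_2)=\disc(q^{(1)})\disc(q^{(2)})$. You would discover this once you carry out the promised case-by-case check; it is a slip in bookkeeping the alternating orthogonal tower, not a flaw in the overall strategy.
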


\begin{proof}
It is known that an $F$-rational nilpotent orbit $\CO\in \CN_F(\Fg_n)_\circ$ is distinguished if and only if the Levi component of the stabilizer of an element of $\CO$ in $G_n$ is anisotropic. 
Let $q^h_i$, $i=1,\ldots, k$ be the one-dimensional sesquilinear forms in Definition \ref{defn:maxtab}.
It is sufficient to show that if $p_1=p_2$, then $q_1^h \cong q_2^h$, i.e. $[\disc(q^h_1)] = [\disc(q^h_2)]\in\CZ$. 

We prove this case-by-case below, and we follow the notations in Definition \ref{defn:maxtab} and the proof of Proposition \ref{prp:partition}.

\quad

{\bf (i) $G_n^*=\SO^*_{2n}$:}  
If $p_1=p_2$, by \eqref{eq:p1-p2-SO-even}, we have that 
\[
\frac{\dim \varphi +\dim \varphi_2}{2}=2s_\chi+1,
\]
which implies that 
$
\disc(q)=\det(\varphi)=-\det(\varphi_2)=-\disc(q^{(2)}).
$
Since 
\[
\disc(q_1^h)=-\disc(q)\disc(q^{(1)}),\quad  \disc(q_2^h)=\disc(q^{(1)})\disc(q^{(2)}),
\]
we obtain that $\disc(q_1^h)=\disc(q_2^h)$.

\quad

{\bf (ii) $G_n^*=\SO^*_{2n+1}$:} 
if $p_1=p_2$, by \eqref{eq:p1-p2-SO-odd}, we have that $l_{s_\chi}=1$. By Proposition \ref{prop:fd}, if 
$(\varphi_1,\chi_1)\in \FD^z_{p_1}(\varphi,\chi)$, then  
\[
z(\chi_1) = \chi_1(\beta_{s_\chi})= z\cdot \chi_{\varphi_1,\varphi}(\beta_{s_\chi})=-z.
\]
By Definition \ref{LYT}, we have that 
\[
a\cdot z= - \disc(q_1^h),\quad a\cdot z(\chi_1) = \disc(q_2^h),
\]
which implies that  $\disc(q_1^h)=\disc(q_2^h)$.

\quad

{\bf (iii) $G_n^*=\Sp_{2n}$ or $\Mp_{2n}$:} 
If $p_1=p_2$, by \eqref{eq:p1-p2-sp-mp}, we have that $l_{s_\chi}=1$. By Proposition \ref{prop:fd}, we obtain that $(\varphi_1,\chi_1)\in\FD^{z(\chi)}_{p_1}(\varphi,\chi)$. Hence 
we have that 
\[
z(\chi_1) = \chi_1(\beta_{s_\chi}) = -z(\chi)\cdot \chi_{\varphi_1,\varphi}(\beta_{s_\chi}) = z(\chi).
\]
By Definition \ref{LYT}, we have that 
\[
\disc(q^h_1) = a\cdot z(\chi) = a\cdot z(\chi_1) = \disc(q^h_2).
\]

\quad

{\bf (iv) $G_n^*=\RU_\Fn^*$:} 
If $p_1=p_2$, by \eqref{eq:p1-p2-u} we have that $l_{s_\chi}=1$. Assume that 
\[
(\varphi_1,\chi_1)  \in \FD^{z(\chi)}_{p_1}(\varphi,\chi)
\]
as in Proposition \ref{prop:fd}. From \eqref{chi-a} and \eqref{chi_z},  $l_{s_\chi}=1$ implies that
\[
\wh{\chi_1}(-\beta_{s_\chi}) = (-1)^{p_1} \cdot z(\chi)\cdot \chi_{\wh{\varphi_1},\varphi}(-\beta_{s_\chi}) = z(\chi)\cdot(-1)^{\Fn-p_1-1},
\]
which gives that
\[
z(\chi_1) =\chi_1(\beta_{s_\chi}) = (-1)^{\Fn-p_1-1}\cdot \wh{\chi_1}(-\beta_{s_\chi}) =  z(\chi).
\]
By Definition \ref{LYT}, we have that
\[
[\disc(q^h_1)] = a\cdot z(\chi) = a\cdot z(\chi_1) = [\disc(q^h_2)].
\]
\end{proof}

By Theorem \ref{thm:main}, we obtain Theorem \ref{thm2}. 

\begin{thm}[$F$-Stable Structure of $\wf_a(\varphi,\chi,q)^\mx$] \label{cor:wf-max-unique-archimedean}
For any triple $(\varphi,\chi,q)\in \FT_a(G_n^*)$,
the following hold.
\begin{enumerate}
    \item \label{wfmax1}
    If $\CS_\varphi$ is trivial, then $\wf_a(\varphi,\chi,q)^\mx$ consists of all the $F$-rational regular nilpotent orbits in $\CN_F(\Fg_n)_\circ$.
    \item \label{wfmax2}
    If $\CS_\varphi$ is nontrivial, then
    \[
    \wf_a(\varphi,\chi,q)^\mx = \{\CO_a(\varphi,\chi,q)\},
    \]
    where $\CO_a(\varphi,\chi,q)$ is the unique $F$-rational nilpotent orbit  corresponding to the $L$-descent pre-tableau  
    $\Fs_a(\varphi,\chi,q)$ as in Definition \ref{defn:maxtab} and Proposition \ref{prp:partition}.
    \item \label{wfmax3}
    Every $F$-rational nilpotent orbit in $\wf_a(\varphi,\chi,q)^\mx$  is $F$-distinguished.
\end{enumerate}
\end{thm}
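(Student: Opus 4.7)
\bigskip

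The plan is to prove the three parts in order, leveraging the machinery already established. For Part (1), I will simply invoke Proposition~\ref{prop:trivial} together with Definition~\ref{defn:maxtabtrivial}: when $\CS_\varphi$ is trivial, the pre-tableau construction terminates immediately into the regular partition, and the argument in the proof of Proposition~\ref{exmp:generic} identifies $\wf_a(\varphi,\chi,q)^\mx$ with the collection of $F$-rational regular nilpotent orbits.

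For Part (2), the strategy is to combine Theorem~\ref{thm:main} (uniqueness of the maximal partition in $\FP_a(\varphi,\chi,q)$, hence a fortiori in its subset $\CP_a(\varphi,\chi,q)$) with the rigidity of the pre-tableau construction along the first descent. Specifically, suppose $\CO \in \wf_a(\varphi,\chi,q)^\mx$ corresponds via Proposition~\ref{prop:GZ} to an admissible sesquilinear Young tableau built from some $L$-descent pre-tableau
\[
\Fs' = ((p_1', q^{h'}_1),\ldots,(p_{k'}', q^{h'}_{k'})) \in \CL_a(\varphi,\chi,q).
\]
Maximality under the $F$-stable topological order translates, via the partition-orbit dictionary, to the condition that the associated decreasing partition $\underline{p}'$ is maximal in $\CP_a(\varphi,\chi,q)$. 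By Theorem~\ref{thm:main} this forces $\underline{p}' = \underline{p}(\varphi,\chi)$, and in particular $p_1' = p_1 = \Fl_0(\varphi,\chi)$ (otherwise the partial-sum comparison in \eqref{part-order} would be violated at the first step, contradicting maximality). Now Proposition~\ref{prop:pretab} pins down $q^{h'}_1 = q^h_1$ uniquely, and the resulting descent $(\varphi_1',\chi_1')$ satisfies $\chi_1' \circ \iota_{\varphi_1,\varphi_1'} \in \ScO_\CZ(\chi_1)$ by the argument of Lemma~\ref{pre-tableau-eq}. Iterating this step-by-step along the descent sequence shows $\Fs' = \Fs_a(\varphi,\chi,q)$, so the corresponding sesquilinear Young tableau—and hence the $F$-rational orbit—coincides with $\CO_a(\varphi,\chi,q)$.

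For Part (3), the two cases split cleanly. If $\CS_\varphi$ is trivial, the regular $F$-rational nilpotent orbits in $\CN_F(\Fg_n)_\circ$ are $F$-distinguished by a standard structural fact: the centralizer in $G_n$ of a regular nilpotent element is unipotent, so its Levi factor is trivial and in particular anisotropic. If $\CS_\varphi$ is nontrivial, Part (2) reduces the statement to the single orbit $\CO_a(\varphi,\chi,q)$, which is $F$-distinguished by Proposition~\ref{prp:dist}.

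The main obstacle will be the rigidity step in Part (2)—namely, verifying that once $p_1' = p_1$ is forced by maximality, the subsequent data $(q^{h'}_i, \varphi_i', \chi_i')$ really are determined up to the indeterminacies allowed by Proposition~\ref{prop:unique} and Lemma~\ref{pre-tableau-eq}, so that inductively the entire pre-tableau $\Fs'$ collapses onto $\Fs_a(\varphi,\chi,q)$. The case analysis across the five families of classical groups in Proposition~\ref{prop:fd} is needed to confirm that the recursion closes, but no essentially new combinatorial input beyond Theorem~\ref{thm:main} and Propositions~\ref{prop:unique}--\ref{prp:dist} should be required.
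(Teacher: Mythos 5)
Your proposal is correct and takes essentially the same route as the paper, which disposes of this theorem in two sentences by citing Proposition~\ref{exmp:generic}, Definition~\ref{defn:maxtab}, Theorem~\ref{thm:main}, and Proposition~\ref{prp:dist}. What you do is unpack the "rigidity step" that the paper leaves implicit for Part~(2): maximality of the partition (Theorem~\ref{thm:main}) forces $p_1'=\Fl_0(\varphi,\chi)$, then Proposition~\ref{prop:pretab} fixes $q^{h'}_1$, and the induction closes via the $\CZ$-equivalence of first descents. One small inaccuracy of citation: the $\CZ$-equivalence $\chi_1'\circ\iota_{\varphi_1,\varphi_1'}\in\ScO_\CZ(\chi_1)$ does not come from Lemma~\ref{pre-tableau-eq} (which \emph{assumes} $\CZ$-equivalence as a hypothesis) but from the explicit description of the first descent in Proposition~\ref{prop:fd} together with equation~\eqref{chi1-eq} in the proof of Proposition~\ref{prop:pretab}; Lemma~\ref{pre-tableau-eq} is then invoked afterwards, as you do, to conclude that the tails of the two first-descent sequences produce the same pre-tableau. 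With that citation repaired, the inductive argument is complete and matches the machinery the paper developed precisely for this purpose.
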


\begin{proof} Part \eqref{wfmax1} and Part \eqref{wfmax2} are consequences of Proposition \ref{exmp:generic}, the construction of $\Fs_a(\varphi,\chi,q)$ in Definition \ref{defn:maxtab},
and Theorem \ref{thm:main}. Part \eqref{wfmax3} follows from Proposition \ref{prp:dist} and the fact that regular nilpotent orbits are $F$-distinguished. 
\end{proof}



\section{Collapse Operations and Proof of Lemma \ref{lem8.5}}\label{sec-CO}

This section is devoted to the proof of Lemma \ref{lem8.5}. To this end, we develop a notion of collapse of enhanced $L$-parameters, and study its interaction with the descent of 
enhanced $L$-parameters as developed in Section \ref{ssec-LPD} in general and in Section 
\ref{ssec-FD} in particular. 

We set up some general notation for later use. For a generic $L$-parameter $\varphi$  of the form \eqref{par-unif}, which is written as 
\[
\varphi=\bigoplus_{i=1}^r m_i\rho_{\alpha_i}\oplus \varphi_{\rm quad} \oplus \xi \oplus {}^c\xi^\vee
\]
with $m_i>0$ for all $i=1,2,\dots,r$, we define its {\it total multiplicity} as 
\begin{equation} \label{total-mult}
\mult\!(\varphi) := \sum^r_{i=1}m_i.
\end{equation}
For $-\infty\leq \alpha<\beta\leq\infty$, put
\begin{equation} \label{interval}
\varphi(\alpha,\beta) := \bigoplus_{\alpha< \alpha_i < \beta} m_i \rho_{\alpha_i},
\end{equation}
so that 
\begin{equation}\label{eq:mult-interval}
\mult\!(\varphi(\alpha,\beta))= \sum_{\alpha< \alpha_i < \beta} m_i.
\end{equation}

\subsection{Definitions and results}

As a preparation, we first generalize the setting of Lemma \ref{pre-tableau-eq} as follows.   

\begin{defn}[$\CZ$-equivalent enhanced $L$-parameters] \label{defn:ZELP}
Assume that $(\varphi,\chi)$ and $(\varphi'', \chi'')$ are respectively generic enhanced $L$-parameters of $G_n^* $ and $G_{n''}^*$ of the same type, and that
\[
\varphi=\bigoplus_{i=1}^r m_i\rho_{\alpha_i}\oplus \varphi_{\rm quad} \oplus \xi \oplus {}^c\xi^\vee \quad{\rm and}\quad
\varphi''=\bigoplus_{i=1}^{r''} m_i''\rho_{\alpha_i''}\oplus \varphi''_{\rm quad} \oplus \xi'' \oplus {}^c {\xi''}^\vee
\]
are of the form \eqref{par-unif}. Then  $(\varphi,\chi)$ and $(\varphi'',\chi'')$ are called  {\rm $\CZ$-equivalent} if $r=r''$, $m_i\equiv m_i''\mod 2$, $i=1,\ldots, r$, and 
\[
\chi'' \circ \iota_{\varphi, \varphi''}\in \ScO_\CZ(\chi),
\]
with $\iota_{\varphi,\varphi''}$ given by \eqref{ord-pre}.
\end{defn}

Note that the $\CZ$-equivalence is clearly an equivalence relation on enhanced $L$-parameters. In particular, we have the following immediate  consequence of Proposition \ref{prop:fd}.

\begin{lem} \label{lem:ZFD}
If $(\varphi,\chi)$ and $(\varphi'',\chi'')$ are $\CZ$-equivalent, then the enhanced $L$-parameters occurring in the first descent of $(\varphi,\chi)$ and in that of $(\varphi'',\chi'')$, 
respectively, are all $\CZ$-equivalent. 
\end{lem}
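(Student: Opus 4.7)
The plan is to verify the lemma directly from the explicit description of the first descent provided by Proposition \ref{prop:fd}. Suppose $(\varphi,\chi)$ and $(\varphi'',\chi'')$ are $\CZ$-equivalent, so that $r=r''$, $m_i \equiv m_i'' \pmod{2}$ for $i=1,\ldots,r$, and there exists $z_0\in\CZ$ with $\chi''\circ\iota_{\varphi,\varphi''} = \chi\cdot\eta_{z_0}$. As a first step I would record that the sign-alternating set $\sgn(\chi)$, its cardinality $s_\chi$, the indices $i_1<\cdots<i_{s_\chi}$, and (in the metaplectic case) the parity $\epsilon_\chi$ are invariants of the orbit $\ScO_\CZ(\chi)$, as noted after \eqref{n-chi}. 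Transporting along $\iota_{\varphi,\varphi''}$, these invariants agree for $\chi$ and $\chi''$; in particular every pair $(\varphi_1,\chi_1)\in \FD_{\Fl_0(\varphi,\chi)}(\varphi,\chi)$ and $(\varphi_1'',\chi_1'')\in\FD_{\Fl_0(\varphi'',\chi'')}(\varphi'',\chi'')$ satisfies $\rank_{\BZ_2}\CS_{\varphi_1} = s_\chi = \rank_{\BZ_2}\CS_{\varphi_1''}$, and all summands of $\varphi_1$ and $\varphi_1''$ have multiplicity one (with a matched $\sigma_{\epsilon_\chi}$ adjoined in the metaplectic case). The rank and multiplicity-parity conditions of Definition \ref{defn:ZELP} are therefore automatic.

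It then remains to check the character condition $\chi_1''\circ\iota_{\varphi_1,\varphi_1''}\in\ScO_\CZ(\chi_1)$. Writing $\varphi_1 = \bigoplus_{j=1}^{s_\chi}\rho_{\beta_j}$ and $\varphi_1''=\bigoplus_{j=1}^{s_\chi}\rho_{\beta_j''}$ as in Proposition \ref{prop:fd}, with $\beta_j\in(\alpha_{i_j},\alpha_{i_j+1})$ and $\beta_j''\in(\alpha_{i_j}'',\alpha_{i_j+1}'')$ of the prescribed parity, I would compute $\chi_1(\beta_j)$ and $\chi_1''(\beta_j'')$ by combining the formulas \eqref{chi1-C}--\eqref{chi2-R'} with the recipe of Proposition \ref{prop:fd}. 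The key observation is that in every case these distinguished character values reduce to products of signs $(-1)^{m_i}$ over index sets determined purely by the relative ordering of $\alpha_i$ and $\beta_j$ (independent of the actual integer values), times an overall $\eta_z$-factor whose exponent is fixed by $z(\chi)$ (and by the auxiliary choice $z\in\CZ$ in the odd-orthogonal case). Because $m_i\equiv m_i''\pmod{2}$ and the relative orderings of $(\alpha_i,\beta_j)$ versus $(\alpha_i'',\beta_j'')$ match under $\iota$, the parity-based products for $\chi_1$ and $\chi_1''$ coincide, so $\chi_1$ and $\chi_1''\circ\iota_{\varphi_1,\varphi_1''}$ differ only by a uniform $\eta_{z_1}$-twist built from $z_0$, $z(\chi)$ and $z(\chi'')$. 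This places $\chi_1''\circ\iota_{\varphi_1,\varphi_1''}$ in $\ScO_\CZ(\chi_1)$ and finishes the verification.

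The main obstacle is the bookkeeping across the five separate cases (unitary, odd and even special orthogonal, symplectic, metaplectic). Each has slightly different conventions for the twisting data $z(\chi)$, for the auxiliary sign $-z(\chi)$ in the symplectic-metaplectic pair, for the freedom in choosing $z\in\CZ$ in the odd-orthogonal case, and for the contragredient formulas of Proposition \ref{prop:dualdata}. A convenient organizational device would be to reduce to two elementary types of $\CZ$-equivalence: the \emph{pure twist} $(\varphi'',\chi'')=(\varphi,\chi\cdot\eta_{z_0})$, which can be handled using Proposition \ref{inv-O} (or directly from Proposition \ref{prop:fd}, noting that such a twist only modifies $z(\chi)$ but not $\sgn(\chi)$ or the admissible intervals for $\beta_j$), and the \emph{pure multiplicity shift} where $\alpha_i''=\alpha_i$ but $m_i''=m_i+2k_i$, for which the distinguished character formulas of Section \ref{sec-LPDC} are manifestly insensitive to the addition of even summands. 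A general $\CZ$-equivalence decomposes as a composition of these two elementary types, and the lemma follows by combining the two reductions.
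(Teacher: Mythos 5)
Your proof is correct and takes essentially the approach the paper intends: the paper itself gives no argument beyond declaring Lemma~\ref{lem:ZFD} an ``immediate consequence of Proposition~\ref{prop:fd},'' and your first two paragraphs supply exactly the verification that makes this immediate (the $\CZ$-invariance of $\sgn(\chi)$ and $z(\chi)$, the automatic rank and multiplicity-one structure of the first-descent parameter, and the fact that the distinguished-character formulas in Section~\ref{sec-LPDC} depend on the $m_i$ only through parity and on the $\alpha_i,\beta_j$ only through relative ordering once one passes through the contragredient of Definition~\ref{defn:pd} in the unitary case). One minor imprecision in your closing paragraph: a general $\CZ$-equivalence is not a composition of a pure twist with a pure multiplicity shift, since the exponents $\alpha_i$ may differ (with matching relative order) and the summands $\xi$, $\varphi_{\rm quad}$ outside the good-parity part may change arbitrarily; but this does not affect your argument, because the ordering-and-parity observation in your second paragraph already absorbs those variations directly, without needing the decomposition.
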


Let $\varphi'\in\wt{\Phi}_\gen(G_{n'}'^*)$ and $(\varphi', \chi')$ be an enhanced $L$-parameter, where 
\begin{equation} \label{varphi'}
\varphi'=\bigoplus_{i=1}^{r'} m_i'\rho_{\alpha_i'}\oplus \varphi'_{\rm quad} \oplus \xi' \oplus {}^c {\xi'}^\vee
\end{equation} 
is of the form \eqref{par-unif}.
For a summand $\varphi''\subset \varphi'$, define the restriction of $\chi'$ to $\varphi''$ to be
\[
\chi' \vert_{\varphi''}:=\chi'|_{\CS_{\varphi''}}.
\]
We say that $\chi'$ is {\it constant} on $\varphi''$ if it is constant valued on the natural basis vectors of $\CS_{\varphi''}$, that is,
\[
\chi'\vert_{\varphi''} =  \eta_{z} \in \CS_{\varphi''},\quad \textrm{for some }z\in\CZ.
\]

We introduce the notion of collapse operations and  collapse of enhanced $L$-parameters as follows, which is the main combinatorial ingredient in our proof.

\begin{defn}[Collapse of  enhanced $L$-parameters] \label{defn:CLP}
Let $(\varphi',\chi')$ be an enhanced $L$-parameter as above. An enhanced $L$-parameter $(\varphi'', \chi'')$ is called a {\rm collapse} of $(\varphi',\chi')$ if 
it can be obtained from $(\varphi',\chi')$ by an iteration of the following {\rm collapse operations}. 
\begin{enumerate}
	\item \label{C-even} An operation
	\[
	(\varphi',\chi')\mapsto (\varphi'\ominus m'_j\rho_{\alpha'_j},\chi'_{\varphi'\ominus m'_j\rho_{\alpha'_j}})
	\]
	for some $j$ with $m'_j$ even, where 
	\[
	\varphi'\ominus m'_j\rho_{\alpha'_j}:=
	\bigoplus_{i=1, i\neq j}^{r'} m_i'\rho_{\alpha_i'}\oplus \varphi'_{\rm quad} \oplus \xi' \oplus {}^c {\xi'}^\vee.
	\]
	\item \label{C-odd} An operation
	\[
	(\varphi',\chi')\mapsto 
	(\varphi'\ominus (m'_j\rho_{\alpha'_j}\oplus m'_{j+1}\rho_{\alpha'_{J+1}}),
	\chi'_{\varphi'\ominus (m'_j\rho_{\alpha'_j}\oplus m'_{j+1}\rho_{\alpha'_{j+1}})})
	\]
	for some $j$ with $\chi'(\alpha'_j)=\chi'(\alpha'_{j+1})$ and $m'_j+m'_{j+1}$ even.
		\item \label{C-R} An operation to remove $m'_{r'} \rho_{\alpha'_{r'}}$ for $r'>1$ $($and twist  $\varphi'_{\rm quad}$ by $\sigma_-^{m'_{r'}}$ if $G_{n'}'^*=\Sp_{2n'}$$)$, and restrict $\chi'$ to $\varphi'\ominus m'_{r'}\rho_{\alpha'_{r'}}$.
	\item \label{C-L} An operation to remove $m'_1\rho_{\alpha'_1}$ for $m'_1$ odd, $0\not\in \sgn(\chi')$ and $G_{n'}'^*\neq \SO_{2n'}^*$ $($and twist  $\varphi'_{\rm quad}$ by $\sigma_-^{m'_{1}}$ if $G_{n'}'^*=\Sp_{2n'}$$)$, and restrict $\chi'$ to $\varphi'\ominus m'_{1}\rho_{\alpha'_{1}}$. 
	\item \label{C-bp} An operation to remove $ \zeta\oplus{}^c\zeta^\vee$ for a summand $\zeta\subset\xi'$, or remove an even dimensional summand of $\varphi'_{\rm quad}$, and restrict $\chi'$ accordingly. 
	\end{enumerate}
\end{defn}

For example if we indicate $\chi'$ by  $(\chi'(\alpha_1'),\ldots, \chi'(\alpha_{r'}'))$, then 
$
(\sigma_2\oplus \sigma_4\oplus 3\sigma_6\oplus \sigma_-, (+,-,+))
$
gives an enhanced $L$-parameter of $\Sp_{10}$, and the collapse operation of type (3) gives $(\sigma_2\oplus \sigma_4\oplus\sigma_+, (+,-))$. 

\begin{rmk} We make the following comments about Definition \ref{defn:CLP}.
\begin{itemize}
\item In \eqref{C-R} and \eqref{C-L} above, we have used the fact that a twist of $\varphi'_{\rm quad}$ does not change $\CS_{\varphi'}$. 
\item The condition $0\not\in\sgn(\chi')$ in \eqref{C-L} only matters for $G_{n'}'^*=\SO^*_{2n'+1}$,  in which case it is equivalent to that $\chi'(\alpha'_1)=1$ $($see Definition \ref{defn:SAS}$)$. 
\item It is clear that 
 if $(\varphi'',\chi'')$ is a collapse of $(\varphi',\chi')$, then 
 \begin{equation} \label{C-chi}
 \chi''=\chi'|_{\varphi''}\quad and \quad 
 \#\sgn(\chi'') \leq \#\sgn(\chi').
 \end{equation}
 \end{itemize}
 \end{rmk}

Now assume that 
\[
(\varphi,\chi,q)\in \FT_a(G_n^*), \quad (\varphi', \chi', q')\in \FT_{a'}(G_{n'}'^*),
\]
 where $G_n^*$ and $G_{n'}'^*$ are of the same type, and that  $(\varphi',\chi')$ has a collapse $(\varphi'',\chi'')$ which is $\CZ$-equivalent to $(\varphi,\chi)$.  Note that $G_n^*$ and $G_{n'}'^*$ may not lie in the same Witt tower in the case of unitary groups and orthogonal groups. We say that the partition $\udl{p}(\varphi,\chi)$ in Definition \ref{defn:maxtab} is regular if it 
 corresponds to the $F$-stable regular nilpotent orbit in $\CN_F(\Fg_n^*)_\circ^\st$. 
 
 \begin{lem} \label{lem:C-generic}
 Under the above assumption, if $\udl{p}(\varphi',\chi')$ is regular, then $\udl{p}(\varphi,\chi)$ is regular.
 \end{lem}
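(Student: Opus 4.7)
The plan is to reduce the lemma to controlling a single numerical invariant, $s_\chi$ from \eqref{n-chi}, under both $\CZ$-equivalence and collapse. The principal intermediate claim is the characterization: \emph{$\udl{p}(\varphi,\chi)$ equals the regular partition of $\Fn$ if and only if $s_\chi=0$}. When $\CS_\varphi$ is trivial this is Definition \ref{defn:maxtabtrivial}. When $\CS_\varphi$ is nontrivial, the implication $s_\chi=0 \Rightarrow $ regular follows from Proposition \ref{prop:fd}: the vanishing $s_\chi=0$ forces the first occurrence index to be maximal---$\Fl_0=\Fn$ for $G_n^*\in\{\RU_\Fn^*,\SO_{2n+1}^*,\Sp_{2n},\Mp_{2n}\}$, and $\Fl_0=\Fn-1$ for $G_n^*=\SO_{2n}^*$---and inspection of the first descent shows that it terminates on a group with trivial component group (one of $\RU_0$, $\SO_0^*$, $\Sp_0$, $\Mp_0$ or $\SO_1^*$), so the inductive process produces the regular partition $[\Fn]$ or $[\Fn-1,1]$. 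The converse follows from the inequalities \eqref{eq:p1-p2-SO-even}--\eqref{eq:p1-p2-u} proved in Proposition \ref{prp:partition}, which force $p_1$ to be strictly smaller than the regular value whenever some $l_j$ from \eqref{eq:l_j} is positive, equivalently whenever $s_\chi>0$.

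Next, I would observe that $s_\chi$ is invariant on $\CZ$-equivalence classes. Inspecting \eqref{eta} case by case, one sees that $\eta_z(\alpha_i)$ is independent of $i$: the $\rho_{\alpha_i}$ share a common dimension and the $\alpha_i$ share a common parity, so the factors $\det\rho_{\alpha_i}(z)$ or $\omega_{E/F}(z)^{\dim\rho_{\alpha_i}}$ in \eqref{eta} take a common value. Hence twisting $\chi$ by $\eta_z$ multiplies every $\chi(\alpha_i)$ by the same sign and leaves $\sgn(\chi)$ unchanged. Combined with the order-preserving identification $\iota_{\varphi,\varphi''}$ of Definition \ref{defn:ord-pre}, the $\CZ$-equivalence of $(\varphi,\chi)$ and $(\varphi'',\chi'')$ gives $s_\chi=s_{\chi''}$. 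It therefore suffices to show that $s_{\chi''}=0$ whenever $(\varphi'',\chi'')$ is a collapse of $(\varphi',\chi')$ with $s_{\chi'}=0$.

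By induction on the number of collapse operations, I may assume $(\varphi'',\chi'')$ is obtained from $(\varphi',\chi')$ by a single operation from Definition \ref{defn:CLP}. The hypothesis $s_{\chi'}=0$ says exactly that $\chi'$ takes a constant value on the natural basis $\{\alpha_1',\ldots,\alpha_{r'}'\}$ of $\CS_{\varphi'}$, with the normalization $\chi'(\alpha_1')=1$ when $G_{n'}'^*=\SO_{2n'+1}^*$ (forced by the convention $\chi'(\alpha_0')=1$). Each of the five operations then manifestly preserves this property: operations \eqref{C-even} and \eqref{C-R} delete a single basis vector from a constant sequence; operation \eqref{C-odd} deletes two consecutive basis vectors at which $\chi'$ already agrees, which is automatic when $\chi'$ is constant; operation \eqref{C-L} applies precisely under the condition $0\notin\sgn(\chi')$, which holds since $\sgn(\chi')=\varnothing$, and leaves the residual character still normalized by $1$ at the new first basis vector; operation \eqref{C-bp} removes only non-good-parity summands and hence affects neither $\CS_{\varphi'}$ nor $\chi'$. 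The determinant-correcting twists of $\varphi'_{\rm quad}$ in \eqref{C-R} and \eqref{C-L} touch neither $\CS_{\varphi'}$ nor the evaluations of $\chi'$ on the basis $\{\alpha_i'\}$. Therefore $s_{\chi''}=0$, which completes the induction and the proof.

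The main obstacle is the converse direction in the characterization, which demands the careful dimension accounting using the bounds on the $l_j$ from the proof of Proposition \ref{prp:partition}; once that characterization is in place, the remaining steps reduce to direct bookkeeping in each of the five collapse types.
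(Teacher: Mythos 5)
Your proof is correct and follows the same approach the paper intends: the paper's own proof is the one-line citation ``This follows from Definition \ref{defn:ZELP} and \eqref{C-chi}'', which implicitly uses exactly the three facts you spell out --- regularity of $\udl{p}(\varphi,\chi)$ is equivalent to $s_\chi=0$, $s_\chi$ depends only on the $\CZ$-orbit of $\chi$ (already noted in the text after \eqref{n-chi}), and collapse operations do not increase $\#\sgn(\chi')$ (which is \eqref{C-chi}). Your contribution is simply filling in the details the paper omits. One small remark on efficiency: for the direction ``$\udl{p}(\varphi,\chi)$ regular $\Rightarrow s_\chi=0$'' you invoke the chain of inequalities \eqref{eq:p1-p2-SO-even}--\eqref{eq:p1-p2-u}, but this is indirect; since $p_1=\Fl_0(\varphi,\chi)$ and $s_\chi$ determines $\Fl_0$ via \eqref{Fl0}, the estimate $s_\chi>0$ immediately forces $p_1\leq\Fn-2$ (when $E=\BR$) or $p_1<\Fn$ (when $E=\BC$), so $\udl{p}$ cannot be $[\Fn]$ nor $[\Fn-1,1]$. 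The rest of your argument --- that $\eta_z$ is constant on the natural basis in every case, and the case-by-case check that each of the five collapse operations takes a constant $\chi'$ to a constant $\chi''$ --- is accurate and is a useful unpacking of why \eqref{C-chi} holds.
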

 
 \begin{proof}
This follows from Definition \ref{defn:ZELP} and \eqref{C-chi}.
 \end{proof}

 From now on, we keep the running hypothesis \eqref{r>0} that 
 $ r = \rank_{\BZ_2}\CS_\varphi>0.$ 
We will prove the following main technical result.

\begin{lem}[Descent-Collapse] \label{lem:DC} 
Assume that $(\varphi_1,\chi_1)\in \FD_{p_1}(\varphi,\chi)$ with $p_1:=\Fl_0(\varphi,\chi)$, and that $(\varphi_1', \chi_1')\in \FD_{p_1'}(\varphi',\chi')$ for some $0<p_1'\leq \Fl_0(\varphi',\chi')$. Then  at least one of the following holds:
\begin{enumerate}
\item \label{lem:DC1} $(\varphi_1',\chi_1')$ has a collapse $(\varphi_1'', \chi_1'')$ which is $\CZ$-equivalent to $(\varphi_1, \chi_1)$;
\item \label{lem:DC2}  $G_{n'}'^*$ is special orthogonal and $\udl{p}(\varphi,\chi)$ is regular, in which case $\rank_{\BZ_2} \CS_{\varphi_1} =0$.
\end{enumerate}
\end{lem}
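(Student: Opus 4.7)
My plan is to prove this by induction on the length of a collapse sequence from $(\varphi',\chi')$ to the given collapse $(\varphi'',\chi'')$, reducing the argument at each step to a single collapse operation. The base case is when $(\varphi',\chi')$ itself is $\CZ$-equivalent to $(\varphi,\chi)$: for $p_1' = p_1 = \Fl_0(\varphi,\chi)$, Lemma \ref{lem:ZFD} together with Proposition \ref{prop:fd} shows that $(\varphi_1',\chi_1')$ is itself $\CZ$-equivalent to $(\varphi_1,\chi_1)$, so no further collapse is needed. For $p_1' < p_1$, the explicit form of $\FD_{p_1'}(\varphi',\chi')$ derived from Proposition \ref{prop:fd} shows that $\varphi_1'$ decomposes as a first-descent-like piece $\psi_1$ together with an extra non-good-parity summand of the form $\zeta \oplus {}^c\zeta^\vee$ (plus possibly an even-dimensional quadratic block), of total dimension $\Fl_0(\varphi',\chi') - p_1'$; the excess can be removed by collapse operations of type \eqref{C-bp} to produce a collapse $\CZ$-equivalent to $(\varphi_1,\chi_1)$.

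For the inductive step, let $(\varphi''',\chi''')$ be obtained from $(\varphi',\chi')$ by a single collapse operation, and put $p_1''' := \Fl_0(\varphi''',\chi''')$. The induction hypothesis applied to $(\varphi''',\chi''')$ and any element of $\FD_{p_1'''}(\varphi''',\chi''')$ produces (barring the exception) a collapse $\CZ$-equivalent to $(\varphi_1,\chi_1)$. Thus the crux is to show that for each of the five types of collapse operation in Definition \ref{defn:CLP}, some $(\varphi_1''',\chi_1''') \in \FD_{p_1'''}(\varphi''',\chi''')$ is realised as a collapse of $(\varphi_1',\chi_1')$. Tracking the effect of each operation on $\sgn(\chi')$ via the explicit description in Proposition \ref{prop:fd}, the operations \eqref{C-even}, \eqref{C-odd} and \eqref{C-bp} preserve or simplify the block structure of the descent in a predictable way, and the parallel collapse on $(\varphi_1',\chi_1')$ can be assembled block-by-block.

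The main obstacle is the analysis of the boundary operations \eqref{C-R} and \eqref{C-L}, which excise the outermost summand $m'_{r'}\rho_{\alpha'_{r'}}$ or $m'_1\rho_{\alpha'_1}$ of $\varphi'$. On the descent side, the corresponding outermost summand $\rho_{\beta_{s_{\chi'}}}$ or $\rho_{\beta_1}$ of $\psi_1$ may or may not be present, depending on whether the boundary index lies in $\sgn(\chi')$. When the indices align, the descent-collapse cascades cleanly; when they do not, the dimensional slack between $p_1'$ and $\Fl_0(\varphi',\chi')$ must be redistributed, and this is where the exceptional alternative \eqref{lem:DC2} can emerge. A careful case analysis distinguishing the type of $G_{n'}'^*$ shows that the only genuine obstruction arises for special orthogonal groups, and it traces back to the asymmetry $\Fl_0 = \Fn$ versus $\Fn - 1$ in the first occurrence index when $\CS_\varphi$ becomes trivial. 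In that residual case $\rank_{\BZ_2}\CS_{\varphi_1} = 0$, and invoking Proposition \ref{prop:trivial} together with Lemma \ref{lem:C-generic} forces $\udl{p}(\varphi,\chi)$ to be regular, matching exactly the alternative \eqref{lem:DC2} and closing the argument.
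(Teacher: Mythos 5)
Your induction scheme---peel off a single collapse operation at a time and then run a case analysis over the five operation types---matches the paper's overall architecture, and you correctly flag the boundary operations \eqref{C-R}, \eqref{C-L} and the special-orthogonal exception as the source of alternative \eqref{lem:DC2}. However, there is a genuine gap in your base case that propagates through the argument.

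For $p_1' < p_1 = \Fl_0(\varphi',\chi')$, you claim the excess of $\varphi_1'$ over a first-descent piece consists only of a non-good-parity summand $\zeta\oplus {}^c\zeta^\vee$ and an even quadratic block, removable by operations of type \eqref{C-bp}. This is false, and it is not what Proposition \ref{prop:fd} says: that proposition describes only the first descent $\FD_{\Fl_0}$, not $\FD_{p_1'}$ for $p_1'<\Fl_0$. For a general descent the correct structure is the interval decomposition \eqref{varphi1'-decomp}, where $\varphi_1'$ contains good-parity isotypic pieces $\psi_i = \varphi_1'(\alpha_i',\alpha_{i+1}')$ in every gap between the $\alpha_i'$; Lemma \ref{C-parity} shows $\mult(\psi_i)$ is merely \emph{even} (not zero) for $i\notin\sgn(\chi')$, and the side blocks $\psi_{i_j}^\pm$ and the boundary pieces $\psi_0,\psi_{r'}$ are similarly non-trivial in general. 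Collapsing these away requires operations \eqref{C-even}, \eqref{C-odd}, \eqref{C-R} and \eqref{C-L}, not just \eqref{C-bp} --- indeed the only place the exceptional alternative can arise is precisely in the boundary removal \eqref{C-R}, which requires $r'>1$. Without establishing that every descent $(\varphi_1',\chi_1')$ can be collapsed to a fixed representative $(\overline{\varphi_1''},\overline{\chi_1''})$ that is $\CZ$-equivalent to the first descent (this is exactly Proposition \ref{prop:C-fd}), the ``block-by-block'' assembly you invoke in the inductive step has no anchor: you would be asked to match a collapse of $(\varphi_1',\chi_1')$ against $(\varphi_1,\chi_1)$ while both are floating, and the case analysis in \ref{ssec-C1}--\ref{ssec-C5} (tracking how each collapse operation interacts with $\sgn(\chi')$ and with the $n_j\rho_{\beta_j}$ summands of the intermediate collapse) is the non-trivial content that is missing from your sketch.
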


We picture the Case \eqref{lem:DC1} in the statement of Lemma \ref{lem:DC} as follows:
\begin{equation} \label{C-pic}
\xymatrix{
(\varphi',\chi') \ar[r]^C \ar[d]_D & (\varphi'', \chi'') \ar[r]^\CZ & (\varphi, \chi) \ar[d]^{FD} \\
(\varphi_1', \chi_1') \ar@{-->}[r]^C & (\varphi_1'', \chi_1'') \ar[r]^\CZ & (\varphi_1, \chi_1) 
}
\end{equation}
where the symbols $C$, $D$, $FD$ and $\CZ$ stands for collapse, descent, first descent and $\CZ$-equivalence, respectively. Although we use the same symbol $C$ for both collapses 
$C\colon (\varphi',\chi') \mapsto (\varphi'', \chi'')$ and 
$C\colon (\varphi_1', \chi_1') \mapsto (\varphi_1'', \chi_1'')$, they could be different collapses. 

Assuming Lemma \ref{lem:DC}, an easy  induction gives the following stronger result, which implies Lemma \ref{lem8.5} in the special case that $(\varphi',\chi', q')=(\varphi,\chi, q)\in \FT_a(G_n^*)$.

\begin{prop}
Let $(\varphi_i, \chi_i, q^{(i)})$, $i=0,\ldots, k-1$ be the first descent sequence of $(\varphi,\chi, q)$ given as in \eqref{descentseq}. Let 
$(\varphi_i',\chi_i', {q^{(i)'}})$, $i=0,\ldots, k'-1$, be any descent sequence of $(\varphi',\chi', q')$.  Then for each $1\leq i\leq k-1$, $\varphi_i'$ has a summand $\varphi_i''$ of the same type, such that $(\varphi_i'',\chi_i'')$ with $\chi_i'':=\chi'\vert_{\varphi_i''}$ is $\CZ$-equivalent
to $(\varphi_i, \chi_i)$. 
\end{prop}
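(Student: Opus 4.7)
The plan is to proceed by induction on $i$, with Lemma \ref{lem:DC} serving as the sole engine of the inductive step. I will in fact prove the slightly stronger statement: for every $0\leq i\leq k-1$ there is a collapse $(\varphi_i'',\chi_i'')$ of $(\varphi_i',\chi_i')$ that is $\CZ$-equivalent to $(\varphi_i,\chi_i)$. The conclusion of the proposition then follows, since by Definition \ref{defn:CLP} every collapse of $(\varphi_i',\chi_i')$ consists of a summand of $\varphi_i'$ of the same type together with the restriction of $\chi_i'$. The base case $i=0$ is immediate from the standing hypothesis on $(\varphi',\chi',q')$: one sets $(\varphi_0'',\chi_0''):=(\varphi'',\chi'')$, which is the postulated collapse of $(\varphi_0',\chi_0')=(\varphi',\chi')$ and is $\CZ$-equivalent to $(\varphi_0,\chi_0)=(\varphi,\chi)$.

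For the inductive step, suppose the strengthened claim holds at stage $i$. By the construction in \eqref{descentseq} we have $(\varphi_{i+1},\chi_{i+1})\in\FD_{p_{i+1}}(\varphi_i,\chi_i)$ with $p_{i+1}=\Fl_0(\varphi_i,\chi_i)$, while by the defining property of the descent sequence on the primed side $(\varphi_{i+1}',\chi_{i+1}')\in\FD_{p_{i+1}'}(\varphi_i',\chi_i')$, necessarily with $0<p_{i+1}'\leq\Fl_0(\varphi_i',\chi_i')$ because $\FD_{\Fl}$ is empty beyond the first occurrence index. Together with the inductive hypothesis these verify the premises of Lemma \ref{lem:DC} at stage $i$, yielding one of two outcomes. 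In case (1) we obtain a collapse $(\varphi_{i+1}'',\chi_{i+1}'')$ of $(\varphi_{i+1}',\chi_{i+1}')$ that is $\CZ$-equivalent to $(\varphi_{i+1},\chi_{i+1})$, directly advancing the induction.

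In case (2) the ambient quasi-split group for $\varphi_i'$ is special orthogonal, $\udl{p}(\varphi_i,\chi_i)$ is regular, and $\rank_{\BZ_2}\CS_{\varphi_{i+1}}=0$. In this degenerate branch the character $\chi_{i+1}$ is trivial, so $\CZ$-equivalence to $(\varphi_{i+1},\chi_{i+1})$ reduces to matching the $L$-parameter type and producing a summand with trivial component group on the primed side. Using the explicit description of $\FD_{p_{i+1}'}(\varphi_i',\chi_i')$ given by Proposition \ref{prop:fd}, one extracts such a summand of $\varphi_{i+1}'$ by iterating the appropriate collapse operations of Definition \ref{defn:CLP} to strip all good-parity pieces that would contribute to the component group, which closes the inductive step.

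The essential content and the principal obstacle is Lemma \ref{lem:DC} itself, whose proof requires a delicate case-by-case analysis interleaving the five collapse operations of Definition \ref{defn:CLP} with the explicit description of first descents recorded in Proposition \ref{prop:fd}, and splitting according to classical type and Bessel versus Fourier--Jacobi case. Once Lemma \ref{lem:DC} is in hand the induction above is purely formal and uses no arithmetic input beyond the definitions and the $\CZ$-equivariance of first descent built into Lemma \ref{lem:ZFD}.
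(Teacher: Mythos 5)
Your induction is exactly the "easy induction" the paper invokes without spelling out, and the structure is sound: the strengthened invariant (a \emph{collapse} $\CZ$-equivalent to $(\varphi_i,\chi_i)$, rather than merely a summand) is precisely what is needed so that the standing hypothesis of Lemma~\ref{lem:DC} is restored at stage $i+1$, and the base case $i=0$ is indeed the blanket assumption preceding the proposition. Case (1) of Lemma~\ref{lem:DC} advances the induction cleanly.

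The only place where your exposition is slightly off is the handling of case (2). First, you appeal to Proposition~\ref{prop:fd} to describe $\FD_{p_{i+1}'}(\varphi_i',\chi_i')$, but that proposition only describes the \emph{first} descent $\FD_{\Fl_0}$, whereas $p_{i+1}'$ may be strictly smaller than $\Fl_0(\varphi_i',\chi_i')$, so it does not apply verbatim. Second, "iterating collapse operations to strip all good-parity pieces" is not in general achievable (e.g.\ operation (3) of Definition~\ref{defn:CLP} requires $r'>1$, so one cannot strip the last isotypic piece that way), and it is also unnecessary. The efficient way to close case (2) is to observe that $\rank_{\BZ_2}\CS_{\varphi_{i+1}}=0$ together with the discreteness of archimedean first descents forces $\varphi_{i+1}$ to be (essentially) the zero parameter, so the first descent sequence on the unprimed side terminates at stage $i+1$; thus $k-1\leq i+1$, the indices $j\leq i$ are already covered by the induction, and for the remaining index $j=i+1$ (if it exists) the $\CZ$-equivalence condition is vacuous once the component group is trivial, so the trivial summand $\varphi_{i+1}''=0$ of $\varphi_{i+1}'$ suffices. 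Note this last step only produces a \emph{summand}, which is all the proposition asserts; the "collapse" strengthening is needed only to continue the induction, and in case (2) there is nothing further to continue.
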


Thus it remains to prove Lemma \ref{lem:DC}, which will be given in the rest of this section. 

\subsection{Descent versus collapse} \label{ssec-DVC}

Assume that $\varphi'$ is of the form \eqref{varphi'}. Set 
\[
\alpha_0' := \begin{cases} -\infty, & \textrm{if }E=\BC, \\
-1, & \textrm{if }E=\BR,
\end{cases} \quad \textrm{and}\quad   \alpha_{r'+1}' :=\infty.
\]
For a descent $(\varphi_1', \chi_1')$ of $(\varphi', \chi')$, we decompose  $\varphi_1'$ as
\begin{equation} \label{varphi1'-decomp}
\varphi_1' = \bigoplus^{r'}_{i=0} \psi_i \oplus \varphi'_{\rm 1, quad} \oplus \xi_1' \oplus {}^c\xi_1'^\vee,
\end{equation}
where $\xi_1'$ has no irreducible (conjugate) self-dual summands of the same type as $\varphi_1'$, and
\begin{equation} \label{psi-i}
\psi_i:= \varphi_1'(\alpha'_i, \alpha'_{i+1}),
\end{equation}
defined following \eqref{interval}.
Recall from Definition \ref{defn:pd} that 
\[
\chi_{\varphi', \wh{\varphi_1'}}\in \ScO_\CZ(\chi') \quad \textrm{and} \quad  \chi_{\wh{\varphi_1'}, \varphi'} \in \ScO_\CZ(\chi_1').
\]  
As before, label the indices in $\sgn(\chi')$ as $i_1,\ldots, i_{s_{\chi'}}$, where $s_{\chi'} =\#\sgn(\chi')$, such that 
\[
0\leq i_1 <\cdots < i_{s_{\chi'}} <r'.
\]
The formulas of distinguished characters for $\chi_{\varphi', \wh{\varphi_1'}}$ in Section \ref{sec-ADA} give us the following observation. 

\begin{lem} \label{C-parity} Let the definitions be as above. 
\begin{enumerate}
\item For each $i=1,\ldots, r'-1$, ${\rm mult}(\psi_i)$ is odd if $i\in \sgn(\chi')$, and is even otherwise. 
\item If $i_1 = 0\in \sgn(\chi')$ $($so that $G_{n'}'^* = \SO_{2n'+1}^*$$)$, then ${\rm mult}(\psi_0)$ is odd. 
\end{enumerate}
\end{lem}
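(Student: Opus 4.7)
\medskip

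\noindent\textbf{Proof proposal for Lemma \ref{C-parity}.}
The plan is to reduce both statements to a direct computation of the distinguished characters attached to the descent $(\varphi_1',\chi_1')$, using the explicit formulas from Section \ref{sec-LPDC} and the rationality bookkeeping in Definition \ref{defn:pd}. Throughout, write $\varphi_1' = \bigoplus_j m_j'' \rho_{\beta_j}$ and set, for the moment,
\[
N_i := \mult\!\bigl(\varphi_1'(\alpha'_i,\infty)\bigr) = \sum_{\beta_j > \alpha'_i} m_j''
\]
(with the obvious variant $\beta_j < -\alpha_i'$ used in the unitary case). The decomposition \eqref{varphi1'-decomp} together with \eqref{psi-i} gives
\[
\mult(\psi_i) \;=\; N_i - N_{i+1}
\]
after discarding the summands coming from $\xi_1'\oplus{}^c\xi_1'^\vee$ and $\varphi_{1,\rm quad}'$, which are irrelevant to the parity computation because they either contribute in conjugate-self-dual pairs or are supported away from the indexing intervals.

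The first observation is that although the descent relation only says $\chi_{\varphi',\wh{\varphi_1'}}\in\ScO_\CZ(\chi')$, i.e.\ $\chi' = \chi_{\varphi',\wh{\varphi_1'}}\cdot \eta_z$ for some $z\in\CZ$, the twist $\eta_z$ drops out of the product $\chi'(\alpha_i')\chi'(\alpha_{i+1}')$: this is clear from \eqref{eta} case by case, since $\eta_z$ takes a value depending only on the type of $\rho_{\alpha_i'}$, and consecutive basis vectors $\alpha_i',\alpha_{i+1}'$ have summands of the same parity and same dimension contribution to $\eta_z$. Combined with the formulas \eqref{chi1-C}, \eqref{chi1-R} (for $G_{n'}^{\prime *}=\SO_{2n}^*$ descending to odd SO), its analogue obtained from \eqref{chi2-R} by switching roles (for $G_{n'}^{\prime *}=\SO^*_{2n'+1}$ descending to even SO), \eqref{chi1-R'} (symplectic--metaplectic), and the unitary formula \eqref{chi1-C}, in each case one obtains
\[
\chi_{\varphi',\wh{\varphi_1'}}(\alpha_i')\,\chi_{\varphi',\wh{\varphi_1'}}(\alpha_{i+1}') \;=\; (-1)^{N_i + N_{i+1}} \;=\; (-1)^{\mult(\psi_i)}.
\]
Hence $\chi'(\alpha_i')\chi'(\alpha_{i+1}') = (-1)^{\mult(\psi_i)}$, and by Definition \ref{defn:SAS} the left-hand side equals $-1$ exactly when $i\in\sgn(\chi')$. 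This proves Part (1).

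For Part (2), we are in the odd SO case, where $\CZ$ acts trivially on $\wh{\CS_{\varphi'}}$ (see \eqref{eta}), so $\chi' = \chi_{\varphi',\varphi_1'}$ exactly. Under the convention $\alpha_0' = -1$, $\chi'(\alpha_0')=1$ of Definition \ref{defn:SAS}, the condition $0\in\sgn(\chi')$ is equivalent to $\chi'(\alpha_1')=-1$. The explicit formula for $\chi_{\varphi',\varphi_1'}(\alpha_1')$ gives $(-1)^{N_1'}$, where $N_1' = \mult(\varphi_1'(-\infty,\alpha_1')) = \mult(\psi_0)$ (all summands of the even-SO descent $\varphi_1'$ lie at positive even indices, hence in $(\alpha_0',\alpha_1')$ when they are below $\alpha_1'$). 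So $\mult(\psi_0)$ is odd, as claimed.

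The only mildly delicate point in the plan is verifying that $\eta_z(\alpha_i')\eta_z(\alpha_{i+1}')=1$ uniformly: it is immediate in the orthogonal, unitary, and metaplectic cases from \eqref{eta}, and in the symplectic case follows because $\det\sigma_{\alpha_i'} = \sgn$ for all even $\alpha_i'>0$, so $\eta_z(\alpha_i')$ is independent of $i$. With this in hand, the statement is a mechanical consequence of the formulas in Section \ref{sec-LPDC}; no further machinery is required.
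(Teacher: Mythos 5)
Your proposal is correct and takes essentially the approach the paper intends: the paper offers no written proof, stating only that the claim ``follows from the formulas of distinguished characters for $\chi_{\varphi',\wh{\varphi_1'}}$ in Section \ref{sec-ADA}'', and you have simply carried out that mechanical verification, including the two non-obvious bookkeeping points (the cancellation of the $\eta_z$ twist in consecutive products, and the contragredient $\wh{\varphi_1'}$ in the unitary case). One small imprecision worth noting: the individual identity $\chi_{\varphi',\wh{\varphi_1'}}(\alpha_i')=(-1)^{N_i}$ with $N_i=\mult(\varphi_1'(\alpha_i',\infty))$ is not literally what \eqref{chi2-R} gives in the case $G_{n'}'^*=\SO_{2n'+1}^*$ (there the product runs over summands \emph{below} $\alpha_i'$, or equivalently differs from $(-1)^{N_i}$ by the constant $\det\varphi_1'(-1)$), but since that constant cancels in the pairwise product, the resulting identity $\chi'(\alpha_i')\chi'(\alpha_{i+1}')=(-1)^{\mult(\psi_i)}$ and the treatment of Part~(2) are both correct.
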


Recall that the total multiplicity $\mult\!(\psi_i)$ is defined as in \eqref{total-mult}.
By Lemma \ref{C-parity}, $\mult\!(\psi_{i_j})$ is odd for each $i_j\in \sgn(\chi')$. We choose an isotypic summand 
\begin{equation} \label{psi-i-summand}
 n_j \rho_{\beta_j} \subset \psi_{i_j}
\end{equation}
with $n_j$ odd, such that the total multiplicities of 
\[
\psi_{i_j}^-:=\psi_{i_j}(-\infty, \beta_j)\quad \textrm{and} \quad \psi_{i_j}^+:=\psi_{i_j}(\beta_j, \infty)
\]
are both even. Clearly, such an isotypic summand always exists. With above choices,  define a tempered $L$-parameter
\begin{equation} \label{C-varphi}
\overline{\varphi_1''} := \bigoplus^{s_{\chi'}}_{j=1}n_j \rho_{\beta_j} \oplus  (\overline{\varphi''_1})_{\rm quad},\quad \overline{\chi_1''}:=\chi_1' \vert_{\overline{\varphi_1''}},
\end{equation} 
where 
\[
(\overline{\varphi''_1})_{\rm quad} =\begin{cases} \sigma_-^{s_{\chi'}}, & \textrm{if }G_{n'}'^*=\Mp_{2n'}, \\
0, & \textrm{otherwise}.
\end{cases}
\]
We further define a discrete $L$-parameter
\[
\overline{\phi_1} : = \bigoplus^{s_{\chi'}}_{j=1}  \rho_{\beta_j} \oplus (\overline{\varphi''_1})_{\rm quad}  \subset \overline{\varphi_1''}.
\]
Note that $\CS_{\overline{\varphi_1''}}\cong \CS_{\overline{\phi_1}}$, and by abuse of notation we also write 
\[
\overline{\chi_1''} = \chi_1'\vert_{\overline{\phi_1}}.
\]
Since $n_j$ is odd, $j=1,\ldots, s_{\chi'}$, it is clear that $(\overline{\varphi_1''},\overline{\chi_1''})$ is $\CZ$-equivalent to $(\overline{\phi_1}, \overline{\chi_1''})$.

By the formulas of distinguished characters for $\chi_{\wh{\varphi_1'}, \varphi'}$, we have following  result.

\begin{lem} \label{keylem}
Let the definitions be as above. 
\begin{enumerate}
\item \label{keylem-1} For each $i=0,\ldots, r'$,  $\chi_1'$ is constant on $\psi_i$.

\item  \label{keylem-2} For $j=1,\ldots, s_{\chi}-1$,
\[
\chi_1' (\beta_j)\chi_1' (\beta_{j+1}) = (-1)^{l'_j}.
\] 
\item \label{keylem-3} If $G_{n'}'^* =\SO_{2n'}^*$, then 
\[
\chi_1'(\beta_1) = (-1)^{l'_0}.
\]
\end{enumerate}
Here for $j=1, \ldots, s_{\chi'}-1$, we put  $($cf. \eqref{eq:l_j}$)$ that $l'_j: = m'_{i_j+1}  + \cdots + m'_{i_{j+1}}$, 
with $i_0:=0$. 
\end{lem}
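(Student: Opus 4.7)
The plan is to compute $\chi_1'(\beta)$ for various choices of $\beta$ by unwinding the defining relation $\chi_{\wh{\varphi_1'}, \varphi'} \in \ScO_\CZ(\chi_1')$ and invoking the explicit formulas for the distinguished character $\chi_{\wh{\varphi_1'}, \varphi'}$ from Section \ref{sec-LPDC}. Since the three parts of the lemma are all statements about the values of $\chi_1'$ on basis vectors of $\CS_{\varphi_1'}$, the unified strategy is: fix a basis vector $\rho_\beta$ of $\CS_{\varphi_1'}$ (i.e.\ some irreducible summand with odd multiplicity in $\varphi_1'$), evaluate the appropriate formula among \eqref{chi2-C}, \eqref{chi2-R}, \eqref{chi2-R'}, and track how the resulting product of $(-1)^{m'_i}$ factors depends on the location of $\beta$ relative to the $\alpha'_i$. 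Finally, because $\chi_{\wh{\varphi_1'},\varphi'}$ and $\chi_1'$ differ only by a character $\eta_z$ of $\CZ$ (which is the same on all basis vectors of a fixed type), ratios like $\chi_1'(\beta_j)\chi_1'(\beta_{j+1})$ and locally constant behavior of $\chi_1'$ are unaffected by this twist.

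For part \eqref{keylem-1}, I would fix $i\in\{0,\dots,r'\}$ and consider two basis vectors $\rho_{\beta}, \rho_{\beta^\ast}$ of $\CS_{\psi_i}$ with $\beta, \beta^\ast \in (\alpha'_i, \alpha'_{i+1})$. Plugging into the relevant formula (e.g.\ $\chi_{\wh{\varphi_1'},\varphi'}(\beta) = \prod_{i':\, \alpha'_{i'} < \beta}(-1)^{m'_{i'}}$ in the orthogonal/metaplectic cases, and the analogous formula in the unitary case), the index set $\{i' : \alpha'_{i'} \text{ lies on a fixed side of } \beta\}$ is identical for $\beta$ and $\beta^\ast$ because no $\alpha'_{i'}$ lies between them. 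Hence $\chi_{\wh{\varphi_1'},\varphi'}(\beta) = \chi_{\wh{\varphi_1'},\varphi'}(\beta^\ast)$, and after the $\eta_z$ twist (which is the same constant on both basis vectors of the same type) we get $\chi_1'(\beta) = \chi_1'(\beta^\ast)$. This is the main content of the lemma; the calculation is short but requires careful bookkeeping of the conjugate self-dual versus symmetric/symplectic conventions.

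For part \eqref{keylem-2}, I would form the ratio $\chi_1'(\beta_j)\chi_1'(\beta_{j+1}) = \chi_{\wh{\varphi_1'},\varphi'}(\beta_j)\chi_{\wh{\varphi_1'},\varphi'}(\beta_{j+1})$, in which the $\eta_z$ twist cancels because $\rho_{\beta_j}$ and $\rho_{\beta_{j+1}}$ are of the same type. In the product formula, all $\alpha'_{i'}$ that do \emph{not} lie between $\beta_j$ and $\beta_{j+1}$ contribute identical factors to both characters and thus cancel; what remains is $\prod (-1)^{m'_{i'}}$ over exactly those $i'$ with $\beta_j < \alpha'_{i'} < \beta_{j+1}$, which by the definition of the sign alternating set and the interval containments $\beta_j\in(\alpha'_{i_j},\alpha'_{i_j+1})$ and $\beta_{j+1}\in(\alpha'_{i_{j+1}},\alpha'_{i_{j+1}+1})$ is precisely $i_j+1 \leq i' \leq i_{j+1}$, giving $(-1)^{l'_j}$.

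For part \eqref{keylem-3}, specializing to $G_{n'}{}^{\prime *}=\SO^*_{2n'}$ the formula \eqref{chi2-R} reads $\chi_{\wh{\varphi_1'},\varphi'}(\beta_1) = \prod_{i': \alpha'_{i'}<\beta_1}(-1)^{m'_{i'}}$; since $\beta_1\in(\alpha'_{i_1},\alpha'_{i_1+1})$, this product equals $(-1)^{m'_1+\cdots+m'_{i_1}} = (-1)^{l'_0}$. The only remaining point is that in the even special orthogonal case the twist $\eta_z$ is trivial on $\CS_{\varphi_1'}$, so $\chi_1'(\beta_1)$ coincides with $\chi_{\wh{\varphi_1'},\varphi'}(\beta_1)$; this is exactly the triviality of $\eta_z$ on odd orthogonal $L$-parameters noted in the proof of Theorem \ref{thm:DFD} and Proposition \ref{inv-prop}. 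The main (mild) obstacle will be to handle uniformly the parity/sign-of-$z$ conventions across the four families (unitary, odd SO, even SO, Sp-Mp), particularly reconciling the formulas \eqref{chi1-C}--\eqref{chi2-R'} with the $\CZ$-orbit twist governed by \eqref{realspmp} and \eqref{chi-a}; once this bookkeeping is in place, the three assertions follow by direct substitution.
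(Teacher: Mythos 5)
Your proof is correct and is essentially the only possible one: the paper states Lemma~\ref{keylem} without proof, appealing directly to the explicit formulas \eqref{chi1-C}--\eqref{chi2-R'}, and what you have written out is precisely the substitution the paper leaves to the reader. The three ingredients you isolate are the right ones: the product formulas for $\chi_{\wh{\varphi_1'},\varphi'}$ depend only on which side of $\beta$ each $\alpha'_i$ lies, so the value is locally constant on the open intervals $(\alpha'_i,\alpha'_{i+1})$; the $\eta_z$ twist is constant on basis vectors (one checks from \eqref{eta} that $\eta_a(\alpha_i)$ is independent of $i$ for each of the real families, being $\omega_{E/F}(a)$, $(-1,a)_F$, or $1$, and the contragredient twist from Proposition~\ref{prop:dualdata} is likewise a constant $\eta_{-1}$ or trivial), so it cancels in the ratios of part \eqref{keylem-2} and does not affect constancy in part \eqref{keylem-1}; and for part \eqref{keylem-3} the twist vanishes identically because $\varphi_1'$ is an odd special orthogonal parameter, on whose component group $\eta_z$ is trivial by \eqref{eta}. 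One small caveat you implicitly handle: in the unitary case the identification $\CS_{\varphi_1'}\cong\CS_{\wh{\varphi_1'}}$ sends $\beta\mapsto-\beta$, so the index sets in \eqref{chi2-C} reverse, but since your argument only uses that these sets change monotonically as $\beta$ crosses the $\alpha'_i$ (not the orientation), the conclusion is unchanged. Your proof is a faithful expansion of the paper's intended argument.
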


Using the above lemmas, we can prove the following.

\begin{prop} \label{prop:C-fd} Let the definitions be as above. 
\begin{enumerate}
\item  At least one of the following holds:
\begin{itemize}
\item 
$(\overline{\varphi_1''}, \overline{\chi_1''})$ is a collapse of $(\varphi_1', \chi_1')$;
\item
 $G_{n'}'^*$ is special orthogonal and $\udl{p}(\varphi',\chi')$ is regular. 
\end{itemize}
\item $(\overline{\phi_1}, \overline{\chi_1''})$ is $\CZ$-equivalent to the  first descents of $(\varphi',\chi')$.
\end{enumerate}
\end{prop}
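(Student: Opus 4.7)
\textbf{Proof proposal for Proposition~\ref{prop:C-fd}.}

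The plan is to prove the two parts in sequence. For Part~(2), the point is that by construction we have chosen $\overline{\phi_1}$ so that it interleaves the $\alpha_i'$ in precisely the combinatorial pattern dictated by $\sgn(\chi')$: for each $i_j \in \sgn(\chi')$ we have $\beta_j \in (\alpha'_{i_j}, \alpha'_{i_j+1})$ of parity opposite to the $\alpha'_i$, plus a quadratic summand only in the metaplectic case. Comparing with Proposition~\ref{prop:fd}, this is exactly the shape of a first descent of $(\varphi',\chi')$. It then remains to identify the character. Using Lemma~\ref{keylem}\eqref{keylem-2} and \eqref{keylem-3}, the values $\overline{\chi_1''}(\beta_j) = \chi_1'(\beta_j)$ satisfy the alternating/sign pattern forced by the formulas \eqref{chi1-C}, \eqref{chi1-R}, \eqref{chi1-R'} for $\chi_{\wh\phi,\varphi'}$ up to $\CZ$-twist, which (together with the equality of component group ranks since $n_j$ is odd) establishes $\CZ$-equivalence.

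For Part~(1), the strategy is to prescribe an explicit sequence of collapse operations from Definition~\ref{defn:CLP} reducing $(\varphi_1',\chi_1')$ to $(\overline{\varphi_1''}, \overline{\chi_1''})$. First apply operation~\eqref{C-bp} to remove $\xi_1' \oplus {}^c\xi_1'^\vee$ and any unwanted even-dimensional summand of $\varphi'_{1,\mathrm{quad}}$. Next, Lemma~\ref{C-parity} together with Lemma~\ref{keylem}\eqref{keylem-1} say that each intermediate block $\psi_i$ (for $0<i<r'$, $i \notin \sgn(\chi')$) has even total multiplicity and is constant-valued for $\chi_1'$; inside such a block, remove the even-multiplicity isotypic summands via operation~\eqref{C-even}, leaving an even number of odd-multiplicity summands that are now consecutive and sum to even total, which can then be eliminated pairwise via operation~\eqref{C-odd}. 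Inside each $\psi_{i_j}$, the same bookkeeping works on $\psi_{i_j}^-$ and $\psi_{i_j}^+$ separately (both even-dimensional by construction), so only the chosen summand $n_j\rho_{\beta_j}$ survives.

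The remaining task is to dispose of the boundary blocks $\psi_0$ and $\psi_{r'}$. The rightmost block $\psi_{r'}$ is peeled off by iterated application of operation~\eqref{C-R}, which is legitimate as long as some good-parity summand with strictly smaller $\alpha$ still survives; since $s_{\chi'}\geq 1$ and we have already retained $\rho_{\beta_1},\ldots,\rho_{\beta_{s_{\chi'}}}$, this holds throughout. The leftmost block $\psi_0$ is handled analogously by operation~\eqref{C-L}, with two caveats: operation~\eqref{C-L} requires $0 \notin \sgn$ of the \emph{current} character and excludes $G_{n'}'^* = \SO_{2n'}^*$. In the $\SO_{2n'+1}^*$ case, the convention $\alpha'_0 = -1$ means Lemma~\ref{keylem}\eqref{keylem-1} still gives constancy of $\chi_1'$ on $\psi_0$, and if $0 \in \sgn(\chi')$ then the designated summand $n_1\rho_{\beta_1}$ already sits in $\psi_0$ so the leftover has even total multiplicity and can be handled by internal operations \eqref{C-even} and \eqref{C-odd} instead of \eqref{C-L}. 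In the $\SO_{2n'}^*$ case, operation~\eqref{C-L} is unavailable, so we must remove $\psi_0$ internally; this is possible exactly when $\mult\!(\psi_0)$ is even (Lemma~\ref{C-parity}, together with Lemma~\ref{keylem}\eqref{keylem-3} which fixes $\chi_1'(\beta_1)$ in terms of $l'_0$ and thereby the parity of $\mult(\psi_0)$), which is automatic except in configurations forcing $\udl{p}(\varphi',\chi')$ to be regular.

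\emph{Main obstacle.} The delicate point is the last paragraph: the $\SO_{2n'}^*$ boundary case, where the absence of operation~\eqref{C-L} creates a genuine obstruction. The hard part will be showing that whenever this obstruction cannot be bypassed by the internal pairing trick (either because $\mult(\psi_0)$ is odd and no summand there is marked to survive, or because the descent chain of $(\varphi',\chi')$ has already collapsed to nothing), the only remaining possibility is that the descent $(\varphi_1',\chi_1')$ itself sits at the top of a regular first-descent tower for $(\varphi',\chi')$ — equivalently, that the ordered partition $\udl{p}(\varphi',\chi')$ computed via Definition~\ref{defn:maxtab} is regular. This identification is the combinatorial core and will require tracing through the parities enforced by Lemmas~\ref{C-parity} and~\ref{keylem} against the explicit formulas \eqref{eq:p1-p2-SO-even}, \eqref{eq:p1-p2-SO-odd}.
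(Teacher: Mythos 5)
Your overall outline matches the paper's proof of Part (1): work block by block through the decomposition $\varphi_1' = \bigoplus_i \psi_i \oplus \varphi'_{1,\mathrm{quad}} \oplus \xi_1' \oplus {}^c\xi_1'^\vee$ and strip each piece via collapse operations, using Lemmas~\ref{C-parity} and~\ref{keylem}; Part (2) is, as you say, a direct comparison of $\overline{\phi_1}$ with Proposition~\ref{prop:fd} and Lemma~\ref{keylem}. However, your treatment of the boundary block $\psi_0$ — and hence your whole ``Main obstacle'' paragraph — rests on a misreading of the group restriction in collapse operation~\eqref{C-L}.

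Definition~\ref{defn:CLP} is stated for a single enhanced $L$-parameter ``$(\varphi',\chi')$ as above''; the restriction in operation~\eqref{C-L}, namely ``$G_{n'}'^* \neq \SO_{2n'}^*$'', therefore refers to the group of the parameter \emph{being collapsed}. In Proposition~\ref{prop:C-fd} the parameter being collapsed is $(\varphi_1',\chi_1')$, whose group is of the type \emph{opposite} to that of $\varphi'$. So when $\varphi'$ is for $\SO_{2n'}^*$, $\varphi_1'$ is an odd special orthogonal parameter and operation~\eqref{C-L} \emph{is} available — there is no obstruction at $\psi_0$ in that case. Conversely, when $\varphi'$ is for $\SO_{2n'+1}^*$, $\varphi_1'$ is even orthogonal, operation~\eqref{C-L} is \emph{not} available, and one must dispose of $\psi_0$ by operations~\eqref{C-even} and~\eqref{C-odd}. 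This is exactly the dichotomy stated in the paper's proof, and it is the reverse of what you wrote.

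As a consequence, the genuine exceptional case in Part~(1) does not live where you put it. The paper locates it at the removal of $\psi_{r'}$ via operation~\eqref{C-R}, which requires $r' > 1$ at every step and so cannot strip $\psi_{r'}$ entirely when the intended target retains no good-parity summand. You handled $\psi_{r'}$ by invoking ``$s_{\chi'} \geq 1$'' as an unargued assumption, which precisely excludes the exceptional case rather than explaining it; the case $s_{\chi'}=0$ with $G_{n'}'^*$ special orthogonal (equivalently, $\udl{p}(\varphi',\chi')$ regular) is the second branch of the dichotomy. Your concluding paragraph, which tries to relate $\mult(\psi_0)$ to regularity of $\udl{p}(\varphi',\chi')$ via Lemma~\ref{keylem}\eqref{keylem-3}, is aimed at the wrong block and the wrong group case.
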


In other words, either  $G_{n'}'^*$ is special orthogonal and $\udl{p}(\varphi',\chi')$ is regular, or every descent $(\varphi_1', \chi_1')$ of $(\varphi',\chi')$ has a collapse which is $\CZ$-equivalent to the first descents of $(\varphi',\chi')$, in which case  we have a picture
\[
\xymatrix{
(\varphi', \chi') \ar[d]_D  \ar[rr]^{FD} & & \FD_{\Fl_0'}(\varphi',\chi') \ar[d]^\CZ \\
(\varphi_1', \chi_1') \ar[r]^C &  (\overline{\varphi_1''},\overline{\chi_1''}) \ar[r]^\CZ & (\overline{\phi_1}, \overline{\chi_1''})
}
\]
where $\Fl_0':=\Fl_0(\varphi_1',\chi_1')$.

\begin{proof}
The $L$-parameter $\overline{\varphi_1''}$ can be obtained from $\varphi_1'$  by the following operations.

\begin{itemize}
\item Remove $\psi_i$ with $0<i<r'$, $i\not\in \sgn(\chi')$. By Lemma \ref{C-parity} and Lemma \ref{keylem} \eqref{keylem-1}, $\mult\!(\psi_i)$ is even, and $\chi_1'$ is constant on $\psi_i$. Hence this is a composition of collapse  operations of type \eqref{C-even} and \eqref{C-odd}.

\item Remove $\psi_{i_j}^+$ and $\psi_{i_j}^-$, with $i_j \in \sgn(\chi')$.  This is similar as above, noting that $\mult\!(\psi_{i_j}^\pm)$ is even and $\chi_1'$ is constant on $\psi_{i_j}^\pm$. 

\item Remove  $\psi_{r'}$ (and twist $\varphi'_{\rm 1, quad}$ accordingly if $G_{n'}'^*=\Mp_{2n'}$). This is a composition of collapse operations of type \eqref{C-R}, with possible exceptions only when $G_{n'}'^*$ is special orthogonal and $\udl{p}(\varphi',\chi')$ is regular.

\item Remove $\psi_0$  if $0\not\in \sgn(\chi')$ (and twist $\varphi'_{\rm 1, quad}$ accordingly if $G_{n'}'^*=\Mp_{2n'}$). If $G_{n'}'^*\neq \SO_{2n'+1}^*$, then this is a composition of collapse operations of type  \eqref{C-L}; otherwise, it is a composition of collapse operations of type \eqref{C-even} and \eqref{C-odd}.

\item Remove $\xi_1' \oplus {}^c\xi_1'^\vee$, and a maximal even dimensional summand of $\varphi'_{\rm 1, quad}$ (or its twist). This is a composition of  collapse operations of type \eqref{C-bp}.
\end{itemize} 
This proves the first assertion. The second assertion follows from Proposition \ref{prop:fd} and Lemma \ref{keylem}. 
\end{proof}

\subsection{Proof of Lemma \ref{lem:DC}} 

To prove Lemma \ref{lem:DC}, we need to accomplish one of the following:
\begin{itemize} 
\item Complete the picture \eqref{C-pic};
\item Reduce to the case that $\udl{p}(\varphi,\chi)$ is regular.
\end{itemize}
To this end, it suffices to assume that $(\varphi'', \chi'')$ is a collapse of $(\varphi', \chi')$ by a single collapse operation. 

If  $G_{n'}'^*$ is special orthogonal and $\udl{p}(\varphi',\chi')$ is regular, then $\udl{p}(\varphi,\chi)$ is regular by Lemma \ref{lem:C-generic}, in which case we are done. Hence by Proposition \ref{prop:C-fd} we can assume that 
$(\overline{\varphi_1''}, \overline{\chi_1''})$ given by \eqref{C-varphi} is a collapse  of $(\varphi_1',\chi_1')$.  To complete \eqref{C-pic} we only need to further construct a collapse $(\varphi_1'', \chi_1'')$ of $(\overline{\varphi_1''}, \overline{\chi_1''})$ that is $\CZ$-equivalent to the first descent $(\varphi_1,\chi_1)$ of $(\varphi,\chi)$. 
That is, we need to complete the following picture
\[
\xymatrix{
(\varphi',\chi') \ar[rr]^C  \ar[d]_D & &  (\varphi'', \chi'') \ar[r]^\CZ &  (\varphi, \chi) \ar[d]^{FD}  \\
(\varphi_1', \chi') \ar[r]^C & (\overline{\varphi_1''}, \overline{\chi_1''}) \ar@{-->}[r]^C & (\varphi_1'', \chi_1'') \ar[r]^\CZ &  (\varphi_1,\chi_1)
}
\]
This amounts to the construction of a collapse $(\varphi_1'',\chi_1'')$ of 
$(\overline{\varphi_1''}, \overline{\chi_1''})$ which is $\CZ$-equivalent to the first descents 
$\FD_{p_1}(\varphi'',\chi'')$, where $p_1=\Fl_0(\varphi,\chi) =\Fl_0(\varphi'', \chi'')$. In that case it will finish the proof of Lemma \ref{lem:DC} in view of Lemma \ref{lem:ZFD}; otherwise we have to reduce to the case that
 $G_{n'}'^*$ is special orthogonal and $\udl{p}(\varphi,\chi)$ is regular. 
We can picture this as follows
\[
\xymatrix{
 & &  (\varphi'', \chi'')\ar[d]_{FD} \ar[r]^\CZ &  (\varphi, \chi) \ar[d]^{FD}  \\
 (\overline{\varphi_1''}, \overline{\chi_1''}) \ar@{-->}[r]^C & (\varphi_1'', \chi_1'') \ar[r]^\CZ &  \FD_{p_1}(\varphi'',\chi'') \ar[r]^\CZ &  (\varphi_1,\chi_1)
}
\]
We discuss the different types of collapse operations in Definition \ref{defn:CLP} separately. 

\subsubsection{} \label{ssec-C1} $\varphi'' = \varphi' \ominus m'_i\rho_{\alpha_i'}$ with $m_i'$ even.

\

By the hypothesis \eqref{r>0} we have $r=r'-1>0$, hence $r'>1$. It follows that the case $i=r'$ is a collapse operation of type \eqref{C-R}, which will be discussed in \ref{ssec-C3}. We now assume that $i<r'$. 

If $i\not\in \sgn(\chi')$, then by Proposition \ref{prop:fd}, the first descents of $(\varphi',\chi')$ and $(\varphi'', \chi'')$ are $\CZ$-equivalent, thus $(\varphi_1'',\chi_1'') := (\overline{\varphi_1''}, \overline{\chi_1''})$ is $\CZ$-equivalent to $\FD_{p_1}(\varphi'',\chi'') $ by Proposition \ref{prop:C-fd}.  

If $i=i_j \in\sgn(\chi')$, then there are two cases:
\begin{itemize}
\item $i-1\not\in \sgn(\chi')$. In this case, again we can take $(\varphi_1'',\chi_1'') := (\overline{\varphi_1''}, \overline{\chi_1''})$;
\item  $i-1 = i_{j-1} \in\sgn(\chi')$. Define
\[
\varphi_1'' :=\overline{\varphi_1''} \ominus n_{j-1}\rho_{\beta_{j-1}}\ominus n_j \rho_{\beta_j},\quad \chi_1'' := \overline{\chi_1''}\vert_{\varphi_1''}.
\]
By Lemma \ref{keylem} \eqref{keylem-2}, we have 
\[
\chi_1'(\beta_{j-1}) \chi_1'(\beta_j) = (-1)^{m_i'} =1.
\]
Thus $(\varphi_1'',\chi_1'')$ is a collapse of $(\overline{\varphi_1''}, \overline{\chi_1''})$, noting that the $n_j$'s are odd. Using Lemma \ref{keylem} again,  it is easy to verify that $(\varphi_1'',\chi_1'')$ is $\CZ$-equivalent to $\FD_{p_1}(\varphi'',\chi'')$. 
\end{itemize}

\subsubsection{} \label{ssec-C2}  $\varphi''=\varphi' \ominus m'_i\rho_{\alpha_i}\ominus m'_{i+1}\rho_{\alpha_{i+1}}$ with $\chi'(\alpha'_i)=\chi'(\alpha'_{i+1})$ and $m'_i+m'_{i+1}$ even. 

\

By the hypothesis \eqref{r>0} we have $r'>2$. Hence the case $i+1=r'$ is a composition of two collapse operations of type \eqref{C-R}. We now assume that $i+1<r'$. The construction is similar to \ref{ssec-C2}. We first note that $i\not\in \sgn(\chi')$.  

If $i+1\not\in\sgn(\chi')$, then we can take $(\varphi_1'',\chi_1'') := (\overline{\varphi_1''}, \overline{\chi_1''})$. 

If $i+1 = i_j \in \sgn(\chi')$, then there are two cases:

\begin{itemize}
\item $i-1\not\in \sgn(\chi')$. In this case  we can take $(\varphi_1'',\chi_1'') := (\overline{\varphi_1''}, \overline{\chi_1''})$;
\item  $i-1 = i_{j-1} \in\sgn(\chi')$. Define
\[
\varphi_1'':=\overline{\varphi_1''} \ominus n_{j-1}\rho_{\beta_{j-1}}\ominus n_j \rho_{\beta_j},\quad \chi_1'' := \overline{\chi_1''}\vert_{\varphi_1''}.
\]
By Lemma \ref{keylem} \eqref{keylem-2}, we have 
\[
\chi_1'(\beta_{j-1}) \chi_1'(\beta_j) = (-1)^{m_i'+m_{i+1}'} =1.
\]
Thus $(\varphi_1'',\chi_1'')$ is a collapse of $(\overline{\varphi_1''}, \overline{\chi_1''})$, and $(\varphi_1'',\chi_1'')$ is $\CZ$-equivalent to $\FD_{p_1}(\varphi'',\chi'')$. 
\end{itemize}

\subsubsection{} \label{ssec-C3}  $\varphi''=\varphi'\ominus m'_{r'}\rho_{\alpha'_{r'}}$ with $r'>1$ (and $\varphi'_{\rm quad}$ is twisted by $\sigma_-^{m_r'}$ if $G_{n'}'^*=\Sp_{2n'}$).

\

If $r'-1\not\in\sgn(\chi')$, then we can take $(\varphi_1'',\chi_1'') := (\overline{\varphi_1''}, \overline{\chi_1''})$. 

If $r' -1 = i_{s_{\chi'}} \in \sgn(\chi')$, define
\[
\varphi_1'':=\begin{cases}  \overline{\varphi_1''} \ominus  n_{s_{\chi'}}\rho_{\beta_{s_{\chi'}}}, & \textrm{if }G_{n'}'^*\neq \Mp_{2n'}, \\
 \left(\overline{\varphi_1''} \ominus  n_{s_{\chi'}}\rho_{\beta_{s_{\chi'}}}\right)\otimes \sigma_-, & \textrm{if }G_{n'}'^* =  \Mp_{2n'},
 \end{cases}
\quad  \quad \chi_1'' := \overline{\chi_1''}\vert_{\varphi_1''}.
\] 
Then $(\varphi_1'',\chi_1'')$ is a collapse of $(\overline{\varphi_1''}, \overline{\chi_1''})$ by a type \eqref{C-R} collapse operation and is $\CZ$-equivalent to $\FD_{p_1}(\varphi_1'',\chi'')$, unless 
\begin{equation} \label{DC-generic}
s_{\chi'}=1\quad \textrm{and}\quad G_{n'}'^*\textrm{ is special orthogonal}.
\end{equation}
However, in the latter case \eqref{DC-generic} it is easy to see that $\udl{p}(\varphi'',\chi'')$ hence $\udl{p}(\varphi,\chi)$ is regular.

\subsubsection{} \label{ssec-C4} $\varphi''=\varphi'\ominus m'_{1}\rho_{\alpha'_1}$ with $m_1'$ odd, $0\not\in\sgn(\chi')$, and $G_{n'}'^*\neq \SO_{2n'}^*$ (and $\varphi'_{\rm quad}$ is twisted by $\sigma_-^{m_1'}$ if $G_{n'}'^*=\Sp_{2n'}$).

\

If $1\not\in\sgn(\chi')$, then we can take $(\varphi_1'',\chi_1'') := (\overline{\varphi_1''}, \overline{\chi_1''})$.

If $1\in\sgn(\chi')$, then there are two cases:

\begin{itemize}
\item $G_{n'}'^* = \SO_{2n'+1}^*$. By assumption we have $\chi'(\alpha_1')=1$ and $\chi'(\alpha_2')=-1$. By Proposition \ref{prop:fd}, the first descents of $(\varphi',\chi')$ and $(\varphi'', \chi'')$ are $\CZ$-equivalent, and we can take $(\varphi_1'',\chi_1'') := (\overline{\varphi_1''}, \overline{\chi_1''})$.
\item $G_{n'}'^*$ is not special orthogonal. Define  
\[
\varphi_1'':=
\begin{cases} 
\overline{\varphi_1''} \ominus  n_{1}\rho_{\beta_1}, & \textrm{if }G_{n'}'^*\neq \Mp_{2n'}, \\
 \left(\overline{\varphi_1''} \ominus  n_{1}\rho_{\beta_1}\right)\otimes \sigma_-, & \textrm{if }G_{n'}'^* =  \Mp_{2n'},
 \end{cases}
\quad  \quad \chi_1'' := \overline{\chi_1''}\vert_{\varphi_1''}.
\] 
Then $(\varphi_1'', \chi_1'')$ is a collapse of $(\overline{\varphi_1''}, \overline{\chi_1''})$ by a collapse operation of type \eqref{C-L}, and is $\CZ$-equivalent to $\FD_{p_1}(\varphi'',\chi'')$.
\end{itemize}

\subsubsection{}  \label{ssec-C5} $\varphi''$ is obtained from $\varphi'$ by a collapse operation of type \eqref{C-bp}. In this case $\CS_{\varphi''}\cong \CS_{\varphi'}$, and  $(\varphi_1'',\chi_1''):=(\overline{\varphi_1''}, \overline{\chi_1''})$ is $\CZ$-equivalent to $\FD_{p_1}(\varphi'',\chi'')$. 

\quad

This proves Lemma \ref{lem:DC}, hence finishes the proof of Lemma \ref{lem8.5}.

\section{$F$-rational Maximality of $\CO_a(\varphi,\chi,q)$}\label{sec-FRM}


In this section we continue to work with the archimedean case so that $F=\BR$. In Theorem \ref{cor:wf-max-unique-archimedean}, we show that 
for any enhanced $L$-parameter $(\varphi,\chi)$ with $\varphi$ generic and the component 
group $\CS_\varphi$ non-trivial, and for a given $a\in\CZ$, the unique $F$-rational nilpotent orbit $\CO_a(\varphi,\chi,q)$ associated with 
the $L$-descent pre-tableau $\Fs_a(\varphi,\chi,q)$ is the maximal member in the 
wavefront set $\wf_a(\varphi,\chi,q)$ under the $F$-stable topological order over 
$\CN_F(\Fg_n)_\circ$. 
In this section (Theorem \ref{thm:maximal-rational-order}), we will show that $\CO_a(\varphi,\chi,q)$ is the maximal member 
in the 
wavefront set $\wf_a(\varphi,\chi,q)$ under the $F$-rational topological order over 
$\CN_F(\Fg_n)_\circ$, which is equivalent to that 
the associated $L$-descent pre-tableau $\Fs_a(\varphi,\chi,q)$ defined in Definition \ref{defn:maxtab}  is maximal in the partial order as introduced in \cite{D82}.

We recall the results for the closures of nilpotent orbits of real classical groups from \cite{D81,D82}, which give the $F$-rational topological order in Definition \ref{defn:order}.
In \cite[Theorem 5]{D82}, Djokovi\'c gave an equivalent partial order on their corresponding signed Young diagrams (see Chapter 9 in \cite{CM93}). 
Note that by Proposition \ref{prop:Lp} the $L$-descent partitions in $\CP_a(\varphi,\chi,q)$ are of {\it good parity}.
That is, all parts of such partitions are even  (resp. odd) for $G^*_n=\Sp_{2n}$ or $G^*_{n}=\Mp_{2n}$ (resp. $G^*_{n}=\SO_{\Fn}$), and there is no constraint for $G^*_n=\RU_\Fn$.
Hence, for the purpose of this section we only recall Djokovi\'c's partial order on the nilpotent orbits whose corresponding partitions are of good parity. 
For the partial order over arbitrary nilpotent orbits, one may refer to \cite{D81,D82}.

\begin{defn}[Ordered Signed Young Diagram]\label{defn:OSYD}
An {\it ordered} signed Young diagram is a collection of boxes arranged in left-aligned rows, where each box is filled with a plus or minus sign, such that the signs are alternating across each row.
\end{defn}

Note that in contrast to the usual definition of {\it signed Young diagram} (see \cite{CM93}), we {\it do not require} that the number of boxes in each row is decreasing from top to bottom in an ordered signed Young diagram. We may identify  pre-tableaux with ordered signed Young diagrams in the obvious way. More precisely, a pre-tableau 
 \[
 \Fs= ((p_1, q^h_1), \ldots, (p_k, q^h_k))
 \]
 assigns an ordered signed Young diagram whose $i$-th row has length $p_i$ and ends with the sign of 
 $[\disc(q^h_i)]\in \CZ=\{\pm1\}$, $i=1,\ldots, k$. In the rest of this section we are going to use the same $\Fs$ to denote the ordered signed Young diagram associated with the pre-tableau $\Fs$. 
 
 For ordered signed Young diagrams $\Fs$ and $\Fs'$, 
define a partial order $\Fs'\leq_F \Fs$ by
\begin{equation}\label{ineq:rational-order}
\sum_{i=1}^{j}c^+_i(\Fs')\geq \sum_{i=1}^{j}c^+_i(\Fs)\quad \text{ and }
\quad \sum_{i=1}^{j}c^-_i(\Fs')\geq \sum_{i=1}^{j}c^-_i(\Fs) \qquad  \text{ for all }  j, 
\end{equation}
where $c^{\pm}_i(\Fs)$ is the number of $\pm$-boxes in the $i$-th column of $\Fs$.
This extends the partial order introduced in \cite{D82}\footnote{\cite{D81, D82} use the terminology 
``chromosome", which is equivalent to the more commonly used notion of signed Young diagram (cf. \cite{CM93}).} for signed Young diagrams. 

In order to prove Theorem \ref{thm:maximal-rational-order} below, we further introduce a partial order  $\Fs' \leq_{p} \Fs$ for ordered signed Young diagrams and a positive integer $p$ by 
\[
\Fs'\leq_{p} \Fs \text{ if \eqref{ineq:rational-order} holds for all $j\leq p$.}
\]
Then $\Fs'\leq_F \Fs$ is equivalent to that $\Fs'\leq_{p}\Fs$ for all $p$. 
If $\Fs$ and $\Fs'$ are signed Young diagrams corresponding to decreasing partitions $p_1\geq p_2\geq \cdots$ and $p'_1\geq p'_2\geq \cdots$  of $\Fn$ respectively, with  $p_1\geq p'_1$, then $\Fs'\leq_F \Fs$ is equivalent to that $\Fs'\leq_{p'_1} \Fs$.

Assume that $\Fs, \Fs' \in \CL_a(\varphi,\chi,q)$, which by definition are pre-tableaux corresponding to decreasing partitions of good parity in the sense above Definition \ref{defn:OSYD}. Let $\CO_{\Fs}$ and $\CO_{\Fs'}$ be the $F$-rational nilpotent orbits in $\CN_F(\Fg_n)_\circ$ corresponding to the signed Young diagrams $\Fs$ and $\Fs'$ respectively.
Then by \cite[Theorem 5]{D82}, we have that
\[
\CO_{\Fs'} \leq_F \CO_{\Fs} \quad \textrm{if and only if} \quad \Fs' \leq_F \Fs. 
\]

\begin{thm}[$F$-Rational Structure of $\wf_a(\varphi,\chi,q)^\mx$]\label{thm:maximal-rational-order}
Assume that $\CS_\varphi$ is nontrivial. For any $\Fs'\in \CL_a(\varphi,\chi,q)$, we have that 
\[\Fs'\leq_F \Fs_a(\varphi,\chi,q).
\]
In particular, the $F$-rational nilpotent orbit $\CO_a(\varphi,\chi,q)$ corresponding to $\Fs_a(\varphi,\chi,q)$ is the unique maximal member in $\wf_a(\varphi,\chi,q)$ under the $F$-rational topological order $\leq_F$.
\end{thm}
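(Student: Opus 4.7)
The plan is to prove the theorem by induction on $\Fn = \dim V$, combining Theorem \ref{thm:main} (which gives the maximality at the level of unsigned partitions) with the $F$-distinguished structure of $\Fs_a(\varphi,\chi,q)$ supplied by Proposition \ref{prp:dist}. The base cases are handled by Proposition \ref{prop:trivial} and Theorem \ref{cor:wf-max-unique-archimedean} when $\CS_\varphi$ is trivial, together with a direct check for the small values of $\Fn$ that arise as base for the induction. For the inductive step, fix $\Fs' = ((p_1',{q^{h'}_1}),\dots,(p_{k'}',{q^{h'}_{k'}}))\in \CL_a(\varphi,\chi,q)$ with associated descent sequence $(\varphi_i',\chi_i',q^{(i)'})$, and write $\Fs_a(\varphi,\chi,q) = ((p_1, q^h_1),\dots,(p_k, q^h_k))$ with first descent sequence $(\varphi_i,\chi_i,q^{(i)})$ as in Definition \ref{defn:maxtab}. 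By Theorem \ref{thm:main}, $p_1'\leq p_1$, and it suffices to verify the inequalities \eqref{ineq:rational-order} for columns $j\leq p_1'$, so I would split on whether $p_1'=p_1$ or $p_1'<p_1$.

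\textbf{Case $p_1'=p_1$.} Since $p_1=\Fl_0(\varphi,\chi)$ is the first occurrence index, $(\varphi_1',\chi_1')\in \FD_{p_1}(\varphi,\chi)$, and Proposition \ref{prop:pretab} forces $q^{h'}_1 \cong q^h_1$, so the first rows of $\Fs'$ and $\Fs_a$ are identical as signed strips. Using Proposition \ref{prop:unique} to take $(\varphi_1,\chi_1) = (\varphi_1',\chi_1')$ (both living in $\FD_{p_1}(\varphi,\chi)$), Lemma \ref{pre-tableau-eq} identifies $\Fs_a(\varphi_1,\chi_1,q^{(1)})$ with $\Fs_a$ minus its first row, and $\Fs'$ minus its first row lies in $\CL_a(\varphi_1,\chi_1,q^{(1)})$. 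The induction hypothesis yields $\Fs'_{\text{rest}}\leq_F \Fs_{a,\text{rest}}$, and since the first row adds the same signed strip to both diagrams, the column inequalities \eqref{ineq:rational-order} transfer verbatim to $\Fs'\leq_F \Fs_a$.

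\textbf{Case $p_1'<p_1$.} This is the main obstacle. Here I would exploit the $F$-distinguished block structure of $\Fs_a$: by Proposition \ref{prp:dist} (and its iterated application via the inductive construction in Definition \ref{defn:maxtab}), the rows of $\Fs_a$ with a common length $P$ all carry isometric one-dimensional forms, so they produce identical signed strips, giving $\Fs_a$ a block-uniform column pattern. For each $j\leq p_1'<p_1$, I would count explicitly $\sum_{i\leq j}c_i^{\pm}(\Fs_a)$ in terms of the block multiplicities and signs, and bound it from above using only dimensional data (namely $\dim q^h_{(P)}\otimes q_P$ contributions to the $\epsilon$-Hermitian space $(V,q)$). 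On the $\Fs'$ side, each row of $\Fs'$ of length $\geq j$ contributes $j$ boxes whose alternating signs sum to either $0$ or $\pm 1$ times a one-dimensional form, so $\sum_{i\leq j}c_i^{\pm}(\Fs')$ is controlled by the $\epsilon$-Hermitian decomposition associated with $\Fs'$. Because both $\Fs_a$ and $\Fs'$ are admissible for the same space $(V,q)$, the total signature and discriminant contributions up to column $j$ must coincide modulo the parts coming from rows of length $<j$; the inequality $\sum_{i\leq j} c_i^{\pm}(\Fs')\geq \sum_{i\leq j}c_i^{\pm}(\Fs_a)$ then follows from the block-concentration of signs in $\Fs_a$, which is sharp on the columns reached by its longest row but dilutes for $\Fs'$ whose rows are shorter.

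The main obstacle, as indicated, is the column bookkeeping in the $p_1'<p_1$ case, since one must track both signs simultaneously while respecting the rationality constraints dictated by Proposition \ref{prop:pretab} case-by-case. This is exactly why the paper separates the treatment into $E=\BR$ and $E=\BC$: the unitary case ($E=\BC$) has greater flexibility in row parities but simpler per-row sign pattern, whereas the orthogonal and symplectic/metaplectic cases ($E=\BR$) force all rows to have a common parity but link the start and end signs of each row, which makes the block-uniformity of $\Fs_a$ more delicate. After verifying the column inequalities in each of these two subcases, the equivalence of $\leq_F$ with Djokovi\'c's partial order on signed Young diagrams (\cite{D82}) yields $\CO_{\Fs'}\leq_F \CO_a(\varphi,\chi,q)$, and hence $\CO_a(\varphi,\chi,q)$ is the unique maximal $F$-rational nilpotent orbit in $\wf_a(\varphi,\chi,q)$ under $\leq_F$.
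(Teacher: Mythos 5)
Your reduction to the case $j\leq p_1'$ and your treatment of the case $p_1'=p_1$ (using Proposition \ref{prop:pretab} to identify the first rows and Lemma \ref{pre-tableau-eq} to peel them off) are sound, and you correctly sense that $p_1'<p_1$ is where the difficulty lies. But the argument you give for that case has a genuine gap: it does not actually prove the column inequalities \eqref{ineq:rational-order}, and the mechanism you invoke cannot work. Admissibility of $\Fs'$ and $\Fs_a(\varphi,\chi,q)$ for the same $\epsilon$-Hermitian space $(V,q)$ constrains only the \emph{total} numbers of $+$- and $-$-boxes; it says nothing about how those boxes distribute over columns $1,\ldots,j$ for $j<p_1'$, which is exactly what \eqref{ineq:rational-order} requires. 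The block-uniformity of $\Fs_a(\varphi,\chi,q)$ supplied by Proposition \ref{prp:dist} tells you the rows of equal length in $\Fs_a$ have matching signed strips, but it places no constraint whatsoever on the signed strips of $\Fs'$, and one can easily write down two admissible signed Young diagrams for the same $(V,q)$ (even with the same unsigned partition) that are $\leq_F$-incomparable. Whatever forces $\Fs'\leq_F\Fs_a$ must come from the fact that $\Fs'$ arises from a \emph{descent sequence} in $\CL_a(\varphi,\chi,q)$, and your $p_1'<p_1$ argument never uses this.

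The paper's proof handles this by a very different mechanism. Rather than fixing $\Fs_a$ and trying to control $\Fs'$ directly, it attaches to each descent $(\varphi_1',\chi_1')\in\FD_{p_1'}(\varphi,\chi)$ two auxiliary enhanced parameters: $\varphi_1^\natural$, of the same dimension as $\varphi_1'$ but with all the ``noise'' of $\varphi_1'$ replaced by non-self-dual padding, and $\phi_1$ (resp.\ $\phi_1^R$ for $E=\BC$), a collapse of $\varphi_1^\natural$ stripping the padding. Lemmas \ref{lem:natural}/\ref{lem:natural-U} and \ref{lem:phi1}/\ref{lem:phi1-U} show that a first descent sequence of $(\varphi_1',\chi_1')$ is simultaneously a descent sequence of $(\varphi_1^\natural,\chi_1^\natural)$, which lets the induction hypothesis be applied \emph{twice}: once to $\varphi_1'$ and once to $\varphi_1^\natural$ (giving \eqref{eq:s'-1}). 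The remaining comparison \eqref{eq:s'-4} between $\Fs'' = (p_1'',q^{h'}_1)\star\Fs_a(\phi_1,\theta_1,q^{(1)''})$ and $\Fs_a(\varphi,\chi,q)$ is then established by explicit sign bookkeeping (Lemma \ref{first-step} for $E=\BR$, Lemmas \ref{lem:p1''-U1} and \ref{lem:p1''-U2} for $E=\BC$), which track precisely how the $q^h_i$'s transform under the passage from $\varphi_1$ to $\phi_1$ — and this tracking depends essentially on the descent recipe (e.g.\ the factors $(-1)^m$, $(-1)^{m_L}$, $(-1)^{m_R}$ coming from the multiplicities of the stripped summands), not on admissibility alone. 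To salvage your approach you would need, at minimum, some version of this interpolation-plus-sign-tracking; the direct counting you sketch cannot see it.
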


\begin{rmk}
Theorem \ref{thm:maximal-rational-order} implies Theorem \ref{thm1} rather than the stronger
Theorem \ref{thm:main}. 
More precisely, Theorem \ref{thm:main} asserts that for any $\Fs'\in \CT_a(\varphi,\chi,q)$ which corresponds to an ordered partition $\udl{p}'$, we have that 
\begin{equation}\label{rmk-partition-order}
\udl{p}'\leq \udl{p}(\varphi,\chi)
\end{equation}
under the partial order \eqref{part-order}. Since the length of the second row of $\Fs'$ can be even larger than that of the first row of $\Fs_a(\varphi,\chi,q)$,  in general $\Fs'$ is an ordered signed Young diagram and the above partial order \eqref{ineq:rational-order} does not imply \eqref{rmk-partition-order}.
\end{rmk}

We will prove Theorem \ref{thm:maximal-rational-order} for $E=\BR$ and $E=\BC$ separately. We first set up some general notations before giving the case-by-case proof. By Theorem 
\ref{inv-pi}, without loss of generality  we may fix $a=+1$ for the local Langlands correspondence in Definition \ref{LYT}. Assume that
\[
\varphi=\bigoplus_{i=1}^r m_i\rho_{\alpha_i}\oplus \varphi_{\rm quad} \oplus \xi \oplus {}^c\xi^\vee  
\]
is of the form \eqref{par-unif}.
For convenience, we put
\begin{equation} \label{LP-maxmin}
\max\varphi : = \alpha_r,\quad \min \varphi : = \alpha_1.
\end{equation}

Following \eqref{descentseq}, assume that
\[
 \Fs_a(\varphi, \chi, q) = ((p_1, q^h_1), \ldots,  (p_k, q^h_k) )\in \CL_a(\varphi,\chi,q)
\]
is associated to a first descent sequence  $(\varphi_i, \chi_i, q^{(i)})$, $i=0,\ldots, k-1$ of $(\varphi,\chi,q) =: (\varphi_0, \chi_0, q^{(0)})$, so that
\[
(p_{i}, q^h_{(p_{i})})\star \CT_{a}(\varphi_{i},\chi_{i}, q^{(i)}) \subset \CT_a(\varphi_{i-1}, \chi_{i-1}, q^{(i-1)}), \quad i=1,\ldots, k-1.
\]
We assume that 
\[
 (\varphi_i, \chi_i) \in \FD_{p_i}^{z_i}(\varphi_{i-1}, \chi_{i-1}), \quad z_i\in \CZ, \ i=1,\ldots, k-1,
\]
subject to the conditions introduced in Definition \ref{LYT}.
Similarly,  following \eqref{pretableau-s'} assume that 
\[
\Fs' = ((p_1', {q^{h'}_1}), \ldots,  (p'_{k'}, {q^{h'}_{k'}}))\in \CL_a(\varphi,\chi,q)
\]
 is associated to a descent sequence $(\varphi'_i, \chi'_i, {q^{(i)'}})$, $i=0,\ldots, k'-1$ of $(\varphi, \chi, q) =:  (\varphi_0', \chi_0', {q^{(0)'}})$, so that
 \[
 (p'_{i}, q^{h'}_{(p'_{i})})\star \CT_{a}(\varphi'_{i},\chi'_{i}, q^{(i)'})\subset \CT_{a}(\varphi'_{i-1},\chi'_{i-1}, q^{(i-1)'}),\quad i=1,\ldots, k'-1.
 \]
 We also assume that
 \begin{equation}\label{aizi'}
 (\varphi_i', \chi_i') \in \FD_{p_i'}^{z_i'}(\varphi'_{i-1}, \chi'_{i-1}), 
 \quad z'_i\in \CZ, \ i=1,\ldots, k'-1,
\end{equation}
subject to the conditions in Definition \ref{LYT}.
 Note that $p_1\geq\cdots\geq p_k$ and $p_1'\geq\cdots\geq p_k'$ are decreasing. We will prove  Theorem \ref{thm:maximal-rational-order}, that is,
 \[
 \Fs'\leq_F \Fs_a(\varphi,\chi,q),
 \]
 by induction on $\dim\varphi$.

We recall some constructions from Section \ref{ssec-DVC}, applied to the situation that $\varphi'=\varphi$. Similar to \eqref{varphi1'-decomp}, decompose $\varphi_1'$ as 
\begin{equation}  \label{varphi1'-decomp-2}
\varphi_1' = \bigoplus^r_{i=0}  \psi_i \oplus \varphi'_{\rm 1, quad} \oplus \xi_1'\oplus {}^c\xi_1'^\vee,
\end{equation} 
where 
\[
\psi_i = \varphi_1'(\alpha_i, \alpha_{i+1}),\quad i=0,\ldots, r,
\]
with 
\[
\alpha_0=
\begin{cases}
-1, & \textrm{if }E=\BR, \\
-\infty, & \textrm{if }E=\BC,
\end{cases} \quad 
\textrm{and}\quad  \alpha_{r+1}=\infty.
\]
Similar to \eqref{psi-i-summand}, for each $i_j \in \sgn(\chi)$, $j=1,\ldots, s_\chi$, choose an isotypic summand 
\[
n_j \rho_{\beta_j} \subset \psi_{i_j}
\]
with $n_j$ odd such that the total multiplicities of  $\psi_{i_j}^- = \psi_{i_j}(-\infty, \beta_j)$ and $\psi_{i_j}^+ = \psi_{i_j}(\beta_j, \infty)$ are both even. 
By Propositions \ref{prop:fd} and \ref{prop:unique}, we may take the first descent $(\varphi_1,\chi_1)$  of $(\varphi,\chi)$ with
\begin{equation} \label{varphi1}
\varphi_1  =\bigoplus^{s_\chi}_{j=1}\rho_{\beta_j} \oplus \varphi_{\rm 1, quad},
\end{equation}
where
\[
\varphi_{\rm 1, quad} =
\begin{cases}  \sigma_-^{s_\chi}, & \textrm{if }G_n^* = \Mp_{2n}, \\
0, & \textrm{otherwise.} 
\end{cases}
\]

\subsection{The case $E=\BR$} In this subsection we prove Theorem \ref{thm:maximal-rational-order} when $E=\BR$, that is, for special orthogonal groups, 
symplectic groups and metaplectic groups. 

Define an enhanced tempered $L$-parameter $(\varphi_1^\natural, \chi_1^\natural)$
\begin{equation} \label{LP-natural}
\varphi_1^\natural := \varphi_1 \oplus \psi_r \oplus \xi_1\oplus \xi_1^\vee, \quad \chi_1^\natural:=\chi'_1\vert_{\varphi_1^\natural},
\end{equation} 
where $\xi_1$ is an arbitrary direct sum of non-self-dual unitary characters of $\CW_\BR$ satisfying that 
\[
\dim \varphi_1' = \dim \varphi_1^\natural,
\]
and $\chi'_1\vert_{\varphi_1^\natural}$ is defined via the obvious embedding $\CS_{\varphi_1^\natural} \subset \CS_{\varphi_1'}$. Clearly such $\xi_1$ always exists. 

Recall from \eqref{aizi'} that $(\varphi_1', \chi_1') \in \FD_{p_1'}^{z_1'}(\varphi,\chi)$.  We have the following result.

\begin{lem} \label{lem:natural}
Let the definitions be as above. Then  the following hold.
\begin{enumerate}
\item
$
(\varphi_1^\natural, \chi_1^\natural)  \in \FD^{z_1'}_{p_1'}(\varphi, \chi). 
$
\item If $G_n$ is special orthogonal, then $(\varphi_1^\natural, \chi_1^\natural, {q^{(1)}}')\in \FT_a(\SO_{\Fn-p_1'}^*)$.  That is, $\pi_a(\varphi_1',\chi_1')$ and
$\pi_a(\varphi_1^\natural, \chi_1^\natural)$ are representations of the same pure inner form $\SO({q^{(1)'}})$. 

\item A first descent of $(\varphi_1',\chi_1')$ is also a descent of $(\varphi_1^\natural, \chi_1^\natural)$.
\end{enumerate}
\end{lem}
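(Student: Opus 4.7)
\textbf{Proof plan for Lemma \ref{lem:natural}.} Part (1) is a direct check of the distinguished character condition. Since $\xi_1 \oplus \xi_1^\vee$ and, more generally, any summand of the form $\sigma \oplus \sigma^\vee$ with $\sigma$ not self-dual contributes trivially to the distinguished character (as computed in the proof of Theorem \ref{thm:DFD}), the verification reduces to comparing $\chi_{\varphi, \varphi_1 \oplus \psi_r}$ with $\chi_{\varphi, \bigoplus_i \psi_i \oplus \varphi'_{1,\mathrm{quad}}}$ on each basis element $\alpha_i \in \CS_\varphi$. Via the formulas of Section \ref{sec-LPDC}, each value is a sign determined by the multiplicities of summands above $\alpha_i$ modulo $2$. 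By Lemma \ref{C-parity}, $\mathrm{mult}(\psi_j)$ is odd precisely when $j \in \sgn(\chi)$, and the contribution of such a $\psi_{i_j}$ matches that of the single summand $\rho_{\beta_j}\subset\varphi_1$ in the relevant position (cf.\ the decomposition \eqref{varphi1'-decomp-2} and the construction \eqref{varphi1}), while blocks with $j \notin \sgn(\chi)$ contribute trivially on both sides. The contributions of $\psi_r$ appear identically, yielding $\chi_{\varphi, \varphi_1^\natural} = \chi_{\varphi, \varphi_1'}$, and the second identity $\wh{\chi_1^\natural} = \chi^{z_1'}_{\varphi_1^\natural, \varphi}$ follows by restriction.

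Part (2) is a discriminant check. When $G_n^* = \SO_{2n}^*$, the descent target $\SO_{\Fn - p_1'}$ is odd and the claim is automatic. When $G_n^* = \SO_{2n+1}^*$, the target is even and the claim reduces to $\det \varphi_1^\natural = \det \varphi_1'$. Since $\det(\xi_1 \oplus \xi_1^\vee) = 1$, this is equivalent to $\det \varphi_1 = \prod_{i < r} \det \psi_i \cdot \det \varphi'_{1, \mathrm{quad}}$. A direct evaluation via the determinant formula of Section \ref{sec-LRN}, combined with the parity of $\mathrm{mult}(\psi_j)$ supplied by Lemma \ref{C-parity} and the explicit shape of $\varphi_1$ in \eqref{varphi1}, exhibits both sides as the same sign character of $\BR^\times$.

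For Part (3), let $(\varphi_2', \chi_2') \in \FD_{p_2'}^{z_2'}(\varphi_1', \chi_1')$ be a first descent. Since $\dim \varphi_1^\natural = \dim \varphi_1'$ (and Part (2) ensures $\det \varphi_1^\natural = \det \varphi_1'$ when this matters), the distinguished character formulas of Section \ref{sec-LPDC} give, for any $\rho_\beta$ that is a basis element of both $\CS_{\varphi_1^\natural}$ and $\CS_{\varphi_1'}$, the identity
\[
\chi_{\varphi_1', \varphi_2'}(\rho_\beta) = \chi_{\varphi_1^\natural, \varphi_2'}(\rho_\beta) \cdot \det\varphi_2'(-1)^{(\dim(\varphi_1')_\beta - \dim(\varphi_1^\natural)_\beta)/2}.
\]
If $\rho_\beta \subset \psi_r$ the correction factor is trivial since $(\varphi_1^\natural)_\beta = (\varphi_1')_\beta$; if $\rho_\beta = \rho_{\beta_j}$ for some $j$, the exponent is $(n_j - 1)\dim\rho_{\beta_j}/2$, which is even because $n_j$ is odd by the choice \eqref{psi-i-summand}. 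Hence $\chi_{\varphi_1', \varphi_2'}$ and $\chi_{\varphi_1^\natural, \varphi_2'}$ agree on $\CS_{\varphi_1^\natural}$, so setting $\chi_2'' := \chi_2'$ and $z_2'' := z_2'$ places $(\varphi_2', \chi_2'')$ in $\FD_{p_2'}^{z_2''}(\varphi_1^\natural, \chi_1^\natural)$.

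The main obstacle is Part (3): the clean form of the argument above masks a modest amount of case analysis, since the determinant factor in the distinguished character formula takes slightly different shapes for $\SO_{2n}^*$, $\SO_{2n+1}^*$, and the symplectic-metaplectic pair, and the choice of $z_2''$ has to accommodate the twist \eqref{realspmp} in the symplectic-metaplectic case. In every case, however, the underlying input is the same: Lemma \ref{C-parity} fixes the parity of $\mathrm{mult}(\psi_j)$, and the oddness of $n_j$ kills the isotypic correction, so the verification goes through uniformly.
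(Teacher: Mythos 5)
Your treatment of Parts (1) and (2) follows the paper's own route: Part (1) is exactly the computation the paper carries out, comparing $\chi_{\varphi,\varphi_1'}(\alpha_i)$ with $\chi_{\varphi,\varphi_1^\natural}(\alpha_i)$ via the parity information of Lemma \ref{C-parity}, and Part (2) reduces to $\det\varphi_1^\natural=\det\varphi_1'$ (equivalently $\mult\varphi_1^\natural\equiv\mult\varphi_1'\bmod 2$) just as in the paper.

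Part (3), however, contains a genuine gap. To place $(\varphi_2',\chi_2')$ in $\FD_{p_2'}^{z_2'}(\varphi_1^\natural,\chi_1^\natural)$, Definition \ref{defn:pd} imposes \emph{two} conditions: (a) $\chi_1^\natural = \chi^{z_2'}_{\varphi_1^\natural,\varphi_2'}$, and (b) $\wh{\chi_2'}=\chi^{z_2'}_{\varphi_2',\varphi_1^\natural}$. Your argument verifies only (a), i.e.\ the restriction identity $\chi_{\varphi_1^\natural,\varphi_2'}=\chi_{\varphi_1',\varphi_2'}\vert_{\varphi_1^\natural}$, which the paper dismisses as ``obvious'' --- and indeed it is immediate, because the value $\chi_{\varphi_1',\varphi_2'}(\rho_\beta)$ given by \eqref{char-O}/\eqref{chi1-R} depends only on the irreducible $\rho_\beta$ and on the full $\varphi_2'$, not on the multiplicity of $\rho_\beta$ in $\varphi_1'$. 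This also means the ``correction factor'' $\det\varphi_2'(-1)^{(\dim(\varphi_1')_\beta-\dim(\varphi_1^\natural)_\beta)/2}$ you introduce is not present in the formula at all: there is nothing to cancel. Both the formula and its triviality are artifacts of reading $\dim\varphi_i$ in \eqref{char-O} as the dimension of the isotypic component rather than of the single irreducible constituent $\varphi_i$.

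What you are missing is the substantive half, namely that $\chi_{\varphi_2',\varphi_1'}$ and $\chi_{\varphi_2',\varphi_1^\natural}$ agree on $\CS_{\varphi_2'}$. By the formulas \eqref{chi2-R}, \eqref{chi2-R'} this character \emph{does} depend on the multiplicities of $\varphi_1'$, so passing from $\varphi_1'$ to $\varphi_1^\natural$ genuinely changes the expression. The paper handles this by re-decomposing the first descent $\phi=\varphi_2'$ into intervals cut out by the nonzero blocks $\psi_{i'_1},\ldots,\psi_{i'_{r'}}$ and then running the same parity bookkeeping as in Part (1): the removed $\psi_k$ with $k\notin\sgn(\chi)$ have even total multiplicity, the removed halves $\psi_{i_j}^\pm$ have even total multiplicity, and the reduction from $n_j$ to $1$ changes multiplicities by $n_j-1$, which is even. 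You need this computation; without it the claim that $(\varphi_2',\chi_2')$ is a descent of $(\varphi_1^\natural,\chi_1^\natural)$ is not established.
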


\begin{proof}
(1) It suffices to show that 
\begin{equation} \label{natural-eq1}
\chi_{\varphi, \varphi_1^\natural} = \chi_{\varphi, \varphi_1'}
\end{equation} 
and
\begin{equation} \label{natural-eq2}
\chi_{\varphi_1^\natural, \varphi} = \chi_{\varphi_1', \varphi}\vert_{\varphi_1^\natural}.
\end{equation}

If $G_n^*\neq \SO_{2n+1}$, then by \eqref{chi1-R}, \eqref{chi1-R'} and \eqref{chi2-R'}, for $i=1,\ldots, r$, 
\[
\chi_{\varphi, \varphi_1'}(\alpha_i) = \prod^r_{k =i} (-1)^{\mult\!(\psi_k)} =  (-1)^{\#\{j\in [1,s_\chi] \ \mid \ i_j> i\}+\mult\!(\psi_r)} = \chi_{\varphi, \varphi_1^\natural}(\alpha_i),
\]
because  $\mult\!(\psi_k)$, $k=1,\ldots, r-1$, is even if $ k\not\in\sgn(\chi)$ and is odd otherwise.  Hence \eqref{natural-eq1} holds when $G_n^*\neq \SO^*_{2n+1}$. It can similarly proved using 
\eqref{chi2-R} when $G_n^*=\SO_{2n+1}^*$, which will be omitted. In view of the above mentioned formulas in Section \ref{sec-ADA}, \eqref{natural-eq2} is obvious. 

(2) By (1) and Proposition \ref{prop:rational} (2), the assertion is clear if $G_n^*=\SO^*_{2n}$. Assume that $G_n=\SO^*_{2n+1}$. By (1)
and Proposition \ref{prop:rational} (1),  we only need to show that $\det(\varphi'_1)= \det(\varphi_1^\natural)$, or equivalently that
\[
\mult\!(\varphi'_1) = \mult\!(\varphi_1^\natural).
\]
This follows from the fact that each $\mult\!(\psi_i)$, $i=0,\ldots, r-1$, is even if $i\not\in \sgn(\chi)$ and is odd otherwise. 

(3) The proof is similar to that of (1). Let $(\phi, \theta)$ be a first descent 
of $(\varphi_1', \chi_1')$.  It suffices to show that 
\begin{equation}\label{natural-eq3}
\chi_{\phi, \varphi_1^\natural}  = \chi_{\phi, \varphi_1'}
\end{equation}
and 
\begin{equation}\label{natural-eq4}
\chi_{\varphi_1^\natural, \phi} = \chi_{\varphi_1', \phi}\vert_{\varphi_1^\natural}.
\end{equation} 
The equation \eqref{natural-eq4} is again obvious.  
 Let
$\psi_{i_1'}, \ldots, \psi_{i'_{r'}}$ be all the nonzero summands among 
$\psi_0,\ldots, \psi_r$, where $0\leq i_1' < \cdots < i'_{r'} \leq r$.  Note that $\chi_1'$ is constant on each $\psi_i$  (see Section \ref{sec-CO}).  
Then by Proposition \ref{prop:fd}, we may 
decompose $\phi$ as 
\[
\phi= \bigoplus^{r'}_{k=0} \phi(\gamma'_k, \beta'_{k+1}) \oplus \phi_{\rm quad},
\]
where $\gamma'_0 := -1$, $\beta'_k := \min \psi_{i'_k}$,  $\gamma'_k := \max\psi_{i'_k}$, $k=1,\ldots, r'$ (cf. \eqref{LP-maxmin}), and $\gamma'_{r'+1}:=\infty$. 
Thus the proof of  \eqref{natural-eq3} is similar to that of \eqref{natural-eq1}.
\end{proof}

For convenience, we also introduce a tempered enhanced $L$-parameter 
$(\phi_1, \theta_1)$ of a simpler form by defining 
\begin{equation} \label{eq:phi1}
\phi_1 = \varphi_1 \oplus m \rho_{\beta_{s_\chi+1}},
\end{equation} 
where $\beta_{s_{\chi}+1}$ is chosen such that $\beta_{s_\chi+1} > \alpha_r$, $\beta_{s_\chi+1} \equiv \alpha_r+1\mod 2$, and $m :=\mult\!(\psi_r)$.
Recall that $\chi_1'$ is constant on $\psi_r$. The character $\theta_1 \in \wh{\CS_{\phi_1}}$ is defined 
such that $\theta_1\vert_{\varphi_1} = \chi'_1\vert_{\varphi_1}$,  and $\theta_1(\beta_{s_\chi+1})$ equals the constant value of $\chi'_1$ on the natural basis 
vectors of $\CS_{\psi_r}$ if $\psi_r\neq 0$. Put 
\begin{equation} \label{p_1''}
p_1'' := \Fn-\dim \phi_1 = p_1' +   \dim\varphi_1^\natural - \dim \phi_1\geq  p_1'.
\end{equation} 
We have the following result.

\begin{lem} \label{lem:phi1} Let the definitions be as above. Then the following hold.
\begin{enumerate}
\item $(\phi_1, \theta_1) \in \FD^{z_1'}_{p_1''}(\varphi,\chi)$.

\item If $G_n$ is special orthogonal, then $(\phi_1, \theta_1, {q^{(1)''}})\in \FT_a(\SO^*_{\Fn-p_1''})$, where ${q^{(1)''}}$ and ${q^{(1)'}}$ belong to the same Witt tower of orthogonal spaces.

\item $(\varphi_1^\natural, \chi_1^\natural)$ and $(\phi_1,\theta_1)$ have the same first descents. That is,
\[
\FD_{p_2^\natural}(\varphi_1^\natural, \chi_1^\natural) = \FD_{p_2''}(\phi_1, \theta_1),
\]
where 
\[
p_2^\natural := \Fl_0(\varphi_1^\natural, \chi_1^\natural) \geq p_2'' := \Fl_0(\phi_1, \theta_1).
\]
\end{enumerate}
\end{lem}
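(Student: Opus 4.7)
The plan is to verify the three assertions in sequence, using the explicit formulas for distinguished characters in Section \ref{sec-LPDC} and the description of first descents in Proposition \ref{prop:fd}. Throughout, I will exploit the parallel structure between $\varphi_1^\natural$ and $\phi_1$: both contain $\varphi_1$ as a common summand, and the ``additional'' piece lies entirely above $\alpha_r$.

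For Part (1), I would compute $\chi_{\varphi,\phi_1}$ and $\chi_{\phi_1,\varphi}$ directly from \eqref{chi1-R}--\eqref{chi2-R'}. Since $\phi_1=\varphi_1\oplus m\rho_{\beta_{s_\chi+1}}$ with $\beta_{s_\chi+1}>\alpha_r\geq\alpha_i$ for all $1\leq i\leq r$ and $m=\mult\!(\psi_r)$, the contribution of the summand $m\rho_{\beta_{s_\chi+1}}$ to $\chi_{\varphi,\phi_1}(\alpha_i)$ is the same sign $(-1)^m$ at every basis vector $\alpha_i$. The same is true for the aggregate contribution of $\psi_r$ to $\chi_{\varphi,\varphi_1^\natural}(\alpha_i)$, since every irreducible summand of $\psi_r$ also lies above $\alpha_r$ and $\mult\!(\psi_r)=m$. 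Combined with Lemma~\ref{lem:natural}(1), which says $(\varphi_1^\natural,\chi_1^\natural)\in \FD^{z_1'}_{p_1'}(\varphi,\chi)$, and with the obvious matching $\chi_{\phi_1,\varphi}=\theta_1\cdot\eta_{z_1'}^{\pm 1}$, this yields $(\phi_1,\theta_1)\in\FD^{z_1'}_{p_1''}(\varphi,\chi)$.

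For Part (2), which applies only when $G_n$ is special orthogonal, I would compare $\disc(q^{(1)''})$ and $\disc(q^{(1)'})$ using Proposition~\ref{prop:rational}. For $G_n^*=\SO_{2n+1}^*$, Proposition \ref{prop:rational}\eqref{FD-odd-SO-rational} gives $\disc(q^{(1)''})=\disc(q)\cdot\det(\phi_1)$ and $\disc(q^{(1)'})=\disc(q)\cdot\det(\varphi_1')$. Using Lemma~\ref{C-parity} and Lemma~\ref{lem:natural}(2), which says $\det(\varphi_1^\natural)=\det(\varphi_1')$, it suffices to check $\det(\phi_1)=\det(\varphi_1^\natural)$ modulo the obvious non-self-dual piece $\xi_1\oplus\xi_1^\vee$; this reduces to the equality of $\mult\!(\psi_r)$ and $\mult\!(m\rho_{\beta_{s_\chi+1}})=m$, which holds by construction. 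The case $G_n^*=\SO_{2n}^*$ is analogous, using Proposition \ref{prop:rational}\eqref{FD-rational} and the equality $z_1'=z_1^\natural$ from Part~(1). Hence $q^{(1)''}$ and $q^{(1)'}$ lie in the same Witt tower.

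Part (3) is the main obstacle. The strategy is to apply Proposition \ref{prop:fd} separately to $(\varphi_1^\natural,\chi_1^\natural)$ and $(\phi_1,\theta_1)$, and match the first descent sets pointwise. The key input is Lemma~\ref{keylem}\eqref{keylem-1}, which says $\chi_1'$ is constant on $\psi_r$; combined with the prescription $\theta_1(\beta_{s_\chi+1})=$ this constant, it follows that $\sgn(\chi_1^\natural)$ and $\sgn(\theta_1)$ have the same cardinality, say $s$, and that the internal sign-alternation pattern inherited from $\varphi_1$ agrees. Consequently any first descent consists of $s$ tempered summands $\rho_{\gamma_j}$ with $\gamma_j$ constrained to open intervals between consecutive basis vectors where signs alternate. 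For $1\leq j<s$ these intervals are identical on both sides; the last interval is $(\beta_{s_\chi},\alpha'_{r,1})$ for $\varphi_1^\natural$ (where $\alpha'_{r,1}$ is the smallest eigenvalue of $\psi_r$) versus $(\beta_{s_\chi},\beta_{s_\chi+1})$ for $\phi_1$. Since the value $\beta_{s_\chi+1}$ in the definition \eqref{eq:phi1} is only constrained by $\beta_{s_\chi+1}>\alpha_r$ and a parity condition shared by $\alpha'_{r,1}$, we may specialize $\beta_{s_\chi+1}:=\alpha'_{r,1}$ without loss of generality; the intervals then coincide. The matching of the corresponding distinguished characters $\chi_{\phi,\varphi_1^\natural}$ and $\chi_{\phi,\phi_1}$ for any such first descent $\phi$ follows from the same type of constant-restriction argument used in the proof of Lemma~\ref{lem:natural}(3), since $\chi_1'|_{\psi_r}$ and $\theta_1(\beta_{s_\chi+1})$ coincide. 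This yields the claimed equality $\FD_{p_2^\natural}(\varphi_1^\natural,\chi_1^\natural)=\FD_{p_2''}(\phi_1,\theta_1)$, and the inequality $p_2^\natural\geq p_2''$ is automatic from $\dim\varphi_1^\natural\leq\dim\phi_1$ (coming from the removal of $\xi_1\oplus{}^c\xi_1^\vee$).
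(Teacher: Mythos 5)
Your proposal follows essentially the same route as the paper's (very terse) proof, which merely says that Parts (1) and (2) are similar to Lemma \ref{lem:natural} and that Part (3) follows from Proposition \ref{prop:fd}; you have spelled out the underlying calculations. A few points are worth flagging.

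For Part (3), you correctly noticed a subtlety that the paper leaves implicit. The last sign-alternating interval for $(\varphi_1^\natural,\chi_1^\natural)$ is $(\beta_{s_\chi},\min\psi_r)$, whereas for $(\phi_1,\theta_1)$ it is $(\beta_{s_\chi},\beta_{s_\chi+1})$, and these coincide only under the specialization $\beta_{s_\chi+1}:=\min\psi_r$ (which is permitted, since $\min\psi_r>\alpha_r$ and has the required parity). Without that choice the two first-descent sets literally differ. Since the only downstream use of this equality is to compute the pre-tableau $\Fs_a$, which by Proposition \ref{prop:unique} and Lemma \ref{pre-tableau-eq} is insensitive to the particular eigenvalues chosen in the first-descent intervals, the application is unaffected; but as a claimed set equality, the lemma does require the specific choice you make, so you are right to specialize.

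Two smaller remarks. In your last sentence the inequality should be $\dim\varphi_1^\natural\geq\dim\phi_1$ (not $\leq$): indeed $\dim\varphi_1^\natural-\dim\phi_1=2\dim\xi_1\geq 0$, and since the first descents have equal dimension, $p_2^\natural-p_2''=\dim\varphi_1^\natural-\dim\phi_1\geq 0$ as required. For Part (2), matching the discriminants of the one-dimensional forms $q_1^{h'}$ and $q_1^{h''}$ via Proposition \ref{prop:rational} (through $\det\phi_1=\det\varphi_1'$, which reduces to a $\mult$ parity count) is the right first step, but over $\BR$ a discriminant match alone does not place two orthogonal spaces in the same Witt tower. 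The cleanest way to conclude is to observe that $q^{(1)'}$ and $q^{(1)''}$ are orthogonal complements inside the same fixed $(V,q)$ of $q_{(p_1')}=q_1^{h'}\otimes q_{p_1'}$ and $q_{(p_1'')}=q_1^{h''}\otimes q_{p_1''}$ respectively, and these differ by hyperbolic planes because $q_1^{h'}\cong q_1^{h''}$ and $q_{p_1'}$, $q_{p_1''}$ lie in the same Witt tower. Your argument implicitly uses this structure, but spelling it out would close the gap.
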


\begin{proof}
(1) and (2) are similar to Lemma \ref{lem:natural}. (3) follows easily from Proposition \ref{prop:fd}.
\end{proof}

If $\CS_{\varphi_1}$ is trivial, then  $\CO_a(\varphi, \chi, q)$ is regular because  $\varphi_1$ is discrete by Proposition \ref{prop:fd}.  In this case 
it is straightforward to verify that
\[
\Fs'\leq_F  \Fs_a(\varphi, \chi, q).
\]

In the rest of this subsection, assume that $\CS_{\varphi_1}$ is nontrivial. Then $\CS_{\varphi_1^\natural}$ and $\CS_{\phi_1}$ are nontrivial as well. Take a first descent sequence
\begin{equation}\label{phi-fd}
    \begin{matrix}
    (\phi_1,\theta_1, {q^{(1)''}}),&(\phi_2,\theta_2, {q^{(2)''}}),&\dots,&(\phi_{k''-1},\theta_{k''-1}, {q^{(k''-1)''}})\\
    (p_2'', {q^{h''}_2}),& (p_3'', {q^{h''}_3}), &\dots,& (p_{k''}'', {q^{h''}_{k''}})
    \end{matrix}
\end{equation}
of $(\phi_1,\theta_1, {q^{(1)''}})$, so that
\[
\Fs_a(\phi_1,\theta_1, {q^{(1)''}}) = ((p_2'', {q^{h''}_2}), \ldots, (p_{k''}'', {q^{h''}_{k''}})).
\]
Assume that
\[
(\phi_i, \theta_i) \in \FD_{p_i''}^{z_i''}(\phi_{i-1}, \theta_{i-1}), \quad z_i''\in\CZ, \ i=2,\ldots, k''-1.
\]

By Lemma \ref{lem:natural}, a first descent sequence of $(\varphi_1', \chi_1', q^{(1)'})$ is also a descent sequence of 
$(\varphi_1^\natural, \chi_1^\natural, q^{(1)'})$.  Applying the induction hypothesis for $\varphi_1'$ and 
$\varphi_1^\natural$ respectively,  we obtain that
\begin{equation}\label{eq:s'-1}
\begin{aligned}
\Fs'  & \leq_F (p_1', {q^{h'}_1})\star \Fs_a(\varphi_1',\chi_1', {q^{(1)'}}) \\
 & \leq_F (p_1', {q^{h'}_1}) \star \Fs_a(\varphi_1^\natural, \chi_1^\natural, {q^{(1)'}}) 
\end{aligned}
\end{equation} 
By Lemma \ref{lem:phi1}, we have that
\[
\Fs_a(\varphi_1^\natural, \chi_1^\natural, {q^{(1)'}})  = ((p_2^\natural, q^{h''}_2), (p_3'', {q^{h''}_3}), \ldots, (p_{k''}'', {q^{h''}_{k''}})).
\]
Note that $p_1'+ p_2^\natural = p_1''+ p_2''$ and $p_1'\leq p_1''$. It follows that
\begin{equation} \label{eq:s'-2}
(p_1', {q^{h'}_1}) \star \Fs_a(\varphi_1^\natural, \chi_1^\natural, {q^{(1)'}})  \leq_F (p_1'', q^{h'}_1) \star \Fs_a(\phi_1,\theta_1, {q^{(1)''}}).
\end{equation} 
Recall that $\Fs'\leq_F \Fs_a(\varphi,\chi,q)$ is equivalent to that
\begin{equation} \label{eq:s'-3}
\Fs'\leq_{p_1'} \Fs_a(\varphi, \chi, q).
\end{equation} 
We will prove the following technical result, which implies \eqref{eq:s'-3} by \eqref{p_1''}, \eqref{eq:s'-1} and \eqref{eq:s'-2}, and therefore finishes the proof
of Theorem \ref{thm:maximal-rational-order} when $E=\BR$.

\begin{prop} \label{prop:p1''}
Let the definitions be as above. Then 
\begin{equation} \label{eq:s'-4}
\Fs'':=(p_1'', q^{h'}_1) \star \Fs_a(\phi_1,\theta_1, {q^{(1)''}}) \leq_{p_1''} \Fs_a(\varphi,\chi,q). 
\end{equation}
\end{prop}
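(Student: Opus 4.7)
The plan is to prove Proposition \ref{prop:p1''} by induction on $\dim\varphi$, reducing the column-sum inequalities for $\Fs''$ versus $\Fs_a(\varphi,\chi,q)$ to an application of the induction hypothesis for a smaller descent. Throughout I write the signed Young diagrams in terms of the column statistics $c_i^{\pm}(\cdot)$ introduced in \eqref{ineq:rational-order}, and I exploit the fact that $p_1'' \leq p_1$ by Lemma \ref{lem:phi1}(1), so that the requirement $\Fs''\leq_{p_1''}\Fs_a(\varphi,\chi,q)$ only concerns columns $j$ with $j\leq p_1''$.

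The first step is to use Lemma \ref{lem:phi1}(3) to identify $\Fs_a(\phi_1,\theta_1,q^{(1)''})$ with $(p_2^\natural, q^{h''}_2)\star \Fs_a(\varphi_2^\natural,\chi_2^\natural,q^{(2)''})$ for some $(\varphi_2^\natural,\chi_2^\natural) \in \FD_{p_2^\natural}(\varphi_1^\natural,\chi_1^\natural) = \FD_{p_2''}(\phi_1,\theta_1)$, after absorbing the discrepancy $\dim\varphi_1^\natural-\dim\phi_1$ between the first-row lengths. Then Lemma \ref{pre-tableau-eq} together with the $\CZ$-equivalence from Lemma \ref{lem:natural}(1) and Proposition \ref{prop:fd} lets me compare $\Fs_a(\varphi_2^\natural,\chi_2^\natural,q^{(2)''})$ with $\Fs_a(\varphi_2,\chi_2,q^{(2)})$, the tail of $\Fs_a(\varphi,\chi,q)$ after its first row. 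The upshot is that, on the level of column counts, the tail of $\Fs''$ coincides with the tail of $\Fs_a(\varphi,\chi,q)$ on all but a bounded number of columns near the right edge of the first row, where the summand $\psi_r$ (respectively the added $m\rho_{\beta_{s_\chi+1}}$) contributes.

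For each $1\leq j\leq p_1''$ the required inequality then splits into a first-row contribution and a tail contribution. For the first rows, of lengths $p_1''$ in $\Fs''$ and $p_1\geq p_1''$ in $\Fs_a(\varphi,\chi,q)$, the alternating sign pattern combined with the discriminant identification $[\disc(q^{h'}_1)] = a\cdot z_1'$ from Proposition \ref{prop:pretab} ensures that the discrepancy in any initial column sum is at most a single box. For the tail, the induction hypothesis applied to $(\varphi_1,\chi_1,q^{(1)})$ controls $\Fs_a(\varphi_2^\natural,\chi_2^\natural,q^{(2)''})$ against $\Fs_a(\varphi_2,\chi_2,q^{(2)})$ up through column $p_2\leq p_1$, and the decreasing property of first-descent sequences from Proposition \ref{prp:partition} propagates this control to column $p_1''$. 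The $F$-distinguished property from Proposition \ref{prp:dist}, which forces $q^h_1\cong q^h_2$ whenever $p_1=p_2$, glues the first-row and tail comparisons together so that no single column accumulates the worst-case slack simultaneously from both contributions.

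The main obstacle will be the sign-bookkeeping in the rightmost columns, those with $p_1''-1\leq j\leq p_1''$, where the partial sums are tight and the parities of $q^{h'}_1$, $q^h_1$, and $q^h_2$ must be matched consistently. This requires a case-by-case calculation separating the symplectic, metaplectic, and special orthogonal cases, using the explicit formulas for $z_i$, $z_i'$ and $z_i''$ from Section \ref{sec-LPDC} and Proposition \ref{prop:rational}; the archimedean-specific identity \eqref{realspmp} in the symplectic-metaplectic case, and the discriminant parity $\disc(q^h_1)=\disc(q)\cdot(-1)^{s_\chi}$ from Proposition \ref{prop:pretab}\eqref{FD-odd-SO-pretab} in the odd special orthogonal case, should give the compatibility needed to close the induction.
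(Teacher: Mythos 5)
There is a genuine gap here, and it centers on the absence of the paper's key technical ingredient. The paper does not prove Proposition~\ref{prop:p1''} by induction on $\dim\varphi$; instead it establishes a separate, explicit comparison lemma (Lemma~\ref{first-step}) that directly relates the first two rows of $\Fs''$ to the first two rows of $\Fs_a(\varphi,\chi,q)$, namely $(p_1'', q^{h''}_1) = (p_1-2m, (-1)^m q^h_1)$ and either $(p_2'', q^{h''}_2) = (p_2+2m, (-1)^m q^h_2)$ (when $q^h_1 = -q^h_2$) or $(p_2'', q^{h''}_2) = (p_2+2(m-1), (-1)^{m-1} q^h_2)$ (when $q^h_1 = q^h_2$). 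The crucial structural fact that Lemma~\ref{first-step} supplies, and which your proposal never surfaces, is that in the case $q^h_1 = q^h_2$ the second first-descent $\phi_2$ can be chosen to be $\varphi_2\oplus\rho_\gamma$, so that the $(\phi_i,\theta_i)$ chain stands in the same relationship to the $(\varphi_i,\chi_i)$ chain at every step, allowing the lemma to be applied repeatedly until the signs first differ. This yields the closed-form expressions \eqref{eq:Fs'-Fs1}--\eqref{eq:Fs'-Fs2} for the entire pre-tableau $\Fs''$, from which the column inequality $\leq_{p_1''}$ is a direct check.

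Your proposed "induction on $\dim\varphi$" does not directly apply here: the pair $(\phi_1,\theta_1)$ is not obtained from $(\varphi_1,\chi_1)$ by a descent --- both are descents of $(\varphi,\chi)$ --- so there is no smaller triple on which to invoke the inductive hypothesis for the comparison between $\Fs_a(\phi_1,\theta_1,q^{(1)''})$ and $\Fs_a(\varphi_1,\chi_1,q^{(1)})$. (The induction on $\dim\varphi$ does appear in the proof of Theorem~\ref{thm:maximal-rational-order}, but only in establishing the reduction chain \eqref{eq:s'-1}; Proposition~\ref{prop:p1''} itself is the explicit, non-inductive endgame.) Likewise, the invocation of the $F$-distinguished property from Proposition~\ref{prp:dist} as a "gluing" device is not a working mechanism: what actually controls the alignment of signs column by column is the case-by-case computation in Lemma~\ref{first-step}, which determines exactly whether $\theta_1(\beta_{s_\chi}) = -\theta_1(\beta_{s_\chi+1})$ in terms of whether $q^h_1 = q^h_2$, and hence whether the second row absorbs $2m$ or $2(m-1)$ extra boxes. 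Without this lemma the proposal has no way to pin down where the lost $2m$ boxes of the first row reappear in $\Fs''$, which is precisely what is needed to verify \eqref{eq:s'-4}.
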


The rest of this subsection is devoted to the proof of Proposition \ref{prop:p1''}. Recall that in this section we have fixed $a=+1$ in Definition \ref{LYT}, and we have that 
\begin{equation} \label{eq:phi-varphi}
\begin{aligned}
& (\varphi_1, \chi_1)\in \FD^{z_1}_{p_1}(\varphi, \chi), \quad (\varphi_2, \chi_2)\in \FD^{z_2}_{p_2}(\varphi_1, \chi_2), \\
& (\phi_1, \theta_1) \in \FD^{z_1'}_{p_1''}(\varphi,\chi), \quad (\phi_2, \theta_2) \in \FD^{z_2''}_{p_2''}(\phi_1, \theta_1).
\end{aligned}
\end{equation}
To unify the notation, we put
\[
z_1'':=z_1', \quad q^{h''}_1: = q^{h'}_1,
\]
so that
\[
\Fs'' = ((p_1'', q^{h''}_1), (p_2'', q^{h''}_2), \ldots, (p_{k''}'', q^{h''}_{k''})).
\]
The key step of the proof of Proposition \ref{prop:p1''} is to compare 
the first descents of $(\varphi_1, \chi_1)$ and $(\phi_1, \theta_1)$, which is given as follows.

\begin{lem} \label{first-step}
Assume that $m\neq 0$ in \eqref{eq:phi1}. Then the following hold.
\begin{enumerate}
\item If $q^h_1 = - q^h_2$, then $(\phi_2, \theta_2)$ is a first descent of $(\varphi_1, \chi_1)$, and it holds that
\begin{itemize}
\item $(p_1'', q^{h''}_1)= (p_1-2m, (-1)^m q^h_1 )$, 
\item $(p_2'', q^{h''}_2) =(p_2+2m, (-1)^m q^h_2)$.
\end{itemize}
\item If $q^h_1 =  q^h_2$, then $\phi_2$ can be taken to be $\varphi_2\oplus \rho_{\gamma}$, where $\gamma \in (\beta_{s_\chi}, \beta_{s_\chi+1})$, $\gamma\equiv \alpha_1\mod 2$, and it holds that
\begin{itemize}
\item $(p_1'', q^{h''}_1)= (p_1-2m, (-1)^m q^h_1 )$, 
\item $(p_2'', q^{h''}_2) =(p_2+2(m-1), (-1)^{m-1} q^h_2)$, 
\item $\chi_1 = \chi_{\varphi_1, \phi_2}\cdot \eta_{ - z_2}$, $\wh{\theta_2} = \chi_{\phi_2, \varphi_1}\cdot \eta_{ - z_2}$.
\end{itemize}
\end{enumerate}
\end{lem}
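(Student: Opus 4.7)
The plan is to apply Proposition \ref{prop:fd} to $(\phi_1,\theta_1)$ and then compare its first descent with that of $(\varphi_1,\chi_1)$, tracking discriminants via Proposition \ref{prop:pretab}. First, I would translate the hypothesis $q^h_1 = \pm q^h_2$ into a sign condition on characters. By Proposition \ref{prop:pretab}, $q^h_i$ is determined by $z_i$ (uniformly across types, treating the odd orthogonal case via $\det(\varphi_1)=(-1)^{s_\chi}$), and by Proposition \ref{prop:fd} we have $z_1=\chi(\alpha_r)$ and $z_2=\chi_1(\beta_{s_\chi})$. On the other hand, $\theta_1(\beta_{s_\chi+1})$ is by definition the constant value of $\chi'_1$ on $\psi_r$; using the formulas of Section \ref{sec-LPDC} together with Lemma \ref{keylem}, this constant can be expressed in terms of $\chi(\alpha_r)$ and $m$, so the ratio $\chi_1(\beta_{s_\chi})/\theta_1(\beta_{s_\chi+1})$ is precisely governed by the sign of $q^h_1/q^h_2$.

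Next, I would compute $\sgn(\theta_1)$. Since $\theta_1|_{\varphi_1}$ agrees with $\chi_1$ up to an element of $\ScO_\CZ$ (by Lemma \ref{keylem} and $\CZ$-equivalence), the restriction of $\sgn(\theta_1)$ to the first $s_\chi$ basis positions agrees with $\sgn(\chi_1)$. The only new adjacent pair is $(\beta_{s_\chi},\beta_{s_\chi+1})$, and it contributes to $\sgn(\theta_1)$ in exactly one of the two cases. By Proposition \ref{prop:fd}, the dimension of $\phi_2$ (in the appropriate sense of \eqref{Fl0}) equals $\#\sgn(\theta_1)$. Dimension bookkeeping then shows that in Case 1 the new pair does not contribute, so $\phi_2$ has the same dimension as a first descent of $(\varphi_1,\chi_1)$ and can be chosen to coincide with one; while in Case 2 the pair contributes, so $\phi_2$ carries one extra summand $\rho_\gamma$ with $\gamma\in(\beta_{s_\chi},\beta_{s_\chi+1})$ of parity opposite to the $\alpha_i$'s, and I can take $\phi_2=\varphi_2\oplus\rho_\gamma$.

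Finally, I would verify the explicit formulas for $(p_1'',q^{h''}_1)$ and $(p_2'',q^{h''}_2)$. Since $\dim\phi_1-\dim\varphi_1 = 2m$ in the real setting, one has $p_1''=p_1-2m$ in both cases; then $p_2''$ follows from $p_1''+p_2''=p_1+p_2$ in Case 1 and from $p_1''+p_2''=p_1+p_2-2$ in Case 2, where the deficit $-2$ accounts for the dimension of $\rho_\gamma$. The factor $(-1)^m$ (respectively $(-1)^{m-1}$) in $q^{h''}_i$ comes from tracking $z_i\mapsto z''_i$ through Proposition \ref{prop:pretab} under the shift $\varphi\mapsto\phi_1$, the parity of $m$ governing whether the inserted isotypic summand $m\rho_{\beta_{s_\chi+1}}$ flips the relevant discriminant. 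The main obstacle will be verifying the character identities $\chi_1=\chi_{\varphi_1,\phi_2}\cdot\eta_{-z_2}$ and $\wh{\theta_2}=\chi_{\phi_2,\varphi_1}\cdot\eta_{-z_2}$ in Case 2; these require a delicate case-by-case computation with the distinguished character formulas of Section \ref{sec-LPDC}, carefully isolating the extra sign contributed by the new summand $\rho_\gamma$ relative to the ``expected'' first descent $\varphi_2$ of $(\varphi_1,\chi_1)$. Once these identities are established, both assertions of the lemma follow directly.
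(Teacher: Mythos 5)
Your proposal follows essentially the same route as the paper's proof: translate the hypothesis $q^h_1 = \pm q^h_2$ into a sign condition comparing $\theta_1(\beta_{s_\chi})$ with $\theta_1(\beta_{s_\chi+1})$ via the discriminant data, determine whether $s_\chi$ enters $\sgn(\theta_1)$, bookkeep dimensions to obtain $(p_1'',p_2'')$, track $z_i\mapsto z_i''$ through the insertion of the isotypic summand $m\rho_{\beta_{s_\chi+1}}$ to obtain the $(-1)^m$ factors, and then verify the character identities of Case (2) explicitly. The paper does all of this, but entirely case-by-case for $\SO_{2n+1}^*$, $\SO_{2n}^*$, and $\Sp_{2n}/\Mp_{2n}$, working directly from \eqref{eq:basic} and the formulas of Section~\ref{sec-LPDC} rather than filtering through Lemma~\ref{keylem}; that is a presentational rather than conceptual difference.

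One slip to correct: in Case (2) the extra summand $\rho_\gamma\subset\phi_2$ has $\gamma\equiv\alpha_1\bmod 2$, i.e.\ the \emph{same} parity as the $\alpha_i$'s, not opposite. Since $\phi_2$ is a descent of $\phi_1$ (hence of the same type as $\varphi$), its indices sit on the $\alpha$ side of the parity dichotomy; this is consistent with $\gamma\in(\beta_{s_\chi},\beta_{s_\chi+1})$ because the $\beta_j$'s have the opposite parity. Also, in Case (1) the claim to be proved is that the already-chosen $(\phi_2,\theta_2)$ \emph{is} a first descent of $(\varphi_1,\chi_1)$, not merely that one may choose $\phi_2$ to coincide with one; this follows because $\sgn(\theta_1)=\sgn(\chi_1)$ makes the defining intervals of the two first descents identical (by Proposition~\ref{prop:fd}), but it should be argued rather than inferred from dimension count alone.
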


\begin{proof}
We  prove the lemma case by case, and our computation below uses Propositions  \ref{prop:rational}, \ref{prop:fd} and \ref{prop:pretab}, and the formulas for distinguished characters in Section \ref{sec-ADA}. Recall from 
\eqref{eq:phi1} and \eqref{eq:phi-varphi} that  
\begin{equation}  \label{eq:basic}
\begin{cases}
  \phi_1 = \varphi_1\oplus m \rho_{\beta_{s_\chi+1}} \ \textrm{with } \beta_{s_\chi+1}> \alpha_r, \\
  \chi = \chi_{\varphi, \varphi_1}\cdot \eta_{z_1} = \chi_{\varphi, \phi_1}\cdot \eta_{z''_1}, \\ 
   \wh{\chi_1} = \chi_{\varphi_1, \varphi}\cdot \eta_{z_1} , \quad  \chi_1 = \chi_{\varphi_1, \varphi_2}\cdot \eta_{z_2}, \\
  \wh{\theta_1} = \chi_{\phi_1, \varphi}\cdot \eta_{z''_1},  \quad  \theta_1 = \chi_{\phi_1, \phi_2} \cdot \eta_{z''_2}, \\
 \wh{\theta_2} = \chi_{\phi_2, \phi_1} \cdot \eta_{z''_2}.
\end{cases}
\end{equation}

{\bf Case 1. $G_n^*=\SO_{2n+1}^*$.} Then 
\begin{equation} \label{odd-so-q1''}
\begin{aligned}
& \disc(q^h_1) = -z_1=\disc(q)\cdot \disc(q^{(1)}) = \disc(q)\cdot (-1)^{s_\chi}, \\
& \disc(q^{h''}_1) = -z''_1= \disc(q) \cdot \disc(q^{(1)''}) = \disc(q) \cdot (-1)^{s_\chi+m} = (-1)^m \disc(q^h_1).
\end{aligned}
\end{equation}
This  implies that $z''_1 = (-1)^m z_1$, which together with \eqref{eq:basic} gives that
\[
\theta_1(\beta_{s_\chi}) = (-1)^m \chi_1(\beta_{s_\chi}) = (-1)^m z(\chi_1) = (-1)^m z_2, \quad
\theta_1(\beta_{s_\chi+1}) = z_1'' = (-1)^mz_1.
\]
On the other hand, we have  that
\[
\disc(q^h_2) = z_2, \quad   \disc(q^{h''}_2)  = z_2'' =  z(\theta_1) = \theta(\beta_{s_\chi+1}) = (-1)^mz_1.
\]
It follows that $s_\chi\in \sgn(\theta_1)$, i.e. $\theta_1(\beta_{s_\chi}) = - \theta_1(\beta_{s_\chi+1})$, if and only if 
\begin{equation} \label{odd-so-q1q2}
\disc(q^h_1) = -z_1 = z_2 = \disc(q^h_2),
\end{equation}
which is also equivalent to that 
\begin{equation} \label{odd-so-q2''}
\disc(q^{h''}_2) = (-1)^m z_1 = (-1)^{m-1} z_2 = (-1)^{m-1} \disc(q^h_2). 
\end{equation} 

In view of \eqref{odd-so-q1''}, \eqref{odd-so-q1q2} and \eqref{odd-so-q2''}, it remains to prove the last two equalities in (2). In this case we may take $\phi_2 = \varphi_2\oplus \rho_\gamma$, $\gamma\in (\beta_{s_\chi}, \beta_{s_\chi+1})$, which gives that 
\[
\chi_{\varphi_1, \phi_2}\cdot \eta_{-z_2} = \chi_{\varphi_1, \varphi_2} \cdot  \eta_{z_2} = \chi_1.
\]
Since $\beta_{s_\chi+1}\geq \alpha_r$ and the $\CZ$-action on $\CS_{\phi_2}$ is trivial, we obtain that
\[
 \chi_{\phi_2, \varphi_1} = \chi_{\phi_2, \phi_1} = \theta_2.
\]

\

{\bf Case 2. $G_n^*=\SO_{2n}^*$.}  Then 
\begin{equation} \label{even-so-q1''}
\begin{aligned}
&\disc(q^h_1) = z_1 = z(\chi) = \chi(\alpha_r), \\
&  \disc(q^{h''}_1) = z_1''= (-1)^m \chi(\alpha_r) = (-1)^m \disc(q^h_1).
\end{aligned}
\end{equation}
Using \eqref{eq:basic}, we find that
\[
\begin{aligned}
& \theta_1(\beta_{s_\chi+1}) = \disc(q) = -\disc(q^{(1)})\cdot \disc(q^h_1), \\
& \theta_1(\beta_{s_\chi}) =\chi_1(\beta_{s_\chi}) = \disc(q^{(2)}) = \disc(q^{(1)})\cdot \disc(q^h_2).
\end{aligned}
\]
It follows that $s_\chi\in \sgn(\theta_1)$, i.e. $\theta_1(\beta_{s_\chi}) = - \theta_1(\beta_{s_\chi+1})$, if and only if 
\begin{equation} \label{even-so-q1q2}
z_1 = \disc(q^h_1) = \disc(q^h_2) = -z_2.
\end{equation} 
From this, we obtain that
\[
\det(\varphi_2)\cdot \det(\phi_2) = 
\begin{cases} 
1, & \textrm{if }q^h_1= - q^h_2, \\
-1, & \textrm{if }q^h_1 = q^h_2.
\end{cases}
\]
On the other hand,
\[
\disc(q) = - \disc(q^h_1)\cdot \disc(q^h_2) \cdot \det(\varphi_2) =  - \disc(q^{h''}_1)\cdot \disc(q^{h''}_2) \cdot \det(\phi_2),
\]
which implies that 
\begin{equation} \label{even-so-q2''}
\disc(q^{h''}_2)\cdot\disc(q^h_2) = (-1)^{m}\det(\varphi_2)\cdot\det(\phi_2) = 
\begin{cases}  
(-1)^m, & \textrm{if }q^h_1= - q^h_2, \\
(-1)^{m-1}, & \textrm{if }q^h_1 = q^h_2.
\end{cases}
\end{equation} 

In view of \eqref{even-so-q1''}, \eqref{even-so-q1q2} and \eqref{even-so-q2''}, it remains to prove the last two equalities in (2). In this case again we take $\phi_2 = \varphi_2\oplus \rho_\gamma$, $\gamma\in (\beta_{s_\chi}, \beta_{s_\chi+1})$. Since the 
$\CZ$-action on $\CS_{\varphi_1}$ is trivial, we have that
\[
 \chi_{\varphi_1, \phi_2}  = \chi_{\varphi_1, \varphi_2} = \chi_1.
\]
In this case we have
\[
z_2'' = - \disc(q^{h''}_2) =- (-1)^{m-1} \disc(q^h_2) = (-1)^{m-1} z_2,
\]
which implies that 
\[
\chi_{\phi_2, \varphi_1}\cdot \eta_{ - z_2} = \chi_{\phi_2, \phi_1} \cdot \eta_{(-1)^{m-1} z_2} =  \chi_{\phi_2, \phi_1} \cdot \eta_{z_2''} = \wh{\theta_2}.
\]

\

{\bf Case 3. $G_n^*=\Sp_{2n}$ or $\Mp_{2n}$.} In this case we have
\[
z_i = \disc(q^h_i),\quad z_i''=\disc(q^{h''}_i),\quad i=1, 2.
\]
By \eqref{eq:basic}, we find that
\[
\chi(\alpha_r) = z(\chi) =z_1\quad \textrm{and} \quad \chi(\alpha_r) = (-1)^m z_1'',
\]
which imply that
\begin{equation} \label{sp-q1''}
\disc(q^{h''}_1) = z_1'' = (-1)^m z_1 = (-1)^m \disc(q^h_1).
\end{equation} 
We further compute that
\[
\theta_1(\beta_{s_\chi}) =  z_1''\cdot  z_1\cdot \chi_1(\beta_{s_\chi})  =(-1)^{m} \cdot z(\chi_1) = (-1)^{m} z_2.
\]
On the other hand, 
\[
z_2'' = z(\theta_1) = \theta_1(\beta_{s_{\chi+1}}) = - z_1'' = (-1)^{m-1} z_1.
\]
It follows that $s_\chi\in \sgn(\theta_1)$, i.e. $\theta_1(\beta_{s_\chi}) =- \theta_1(\beta_{s_\chi+1})$, if and only if 
\begin{equation} \label{sp-q1q2}
\disc(q^h_1) = z_1 = z_2 = \disc(q^h_2),
\end{equation}
which is also equivalent to that
\begin{equation} \label{sp-q2''}
\disc(q^{h''}_2) = z_2'' = (-1)^{m-1} z_1 = (-1)^{m-1}z_2 =(-1)^{m-1} \disc(q^h_2).
\end{equation} 

In view of \eqref{sp-q1''}, \eqref{sp-q1q2} and \eqref{sp-q2''}, it remains to prove the last two equalities in (2). In this case take $\phi_2 =\varphi_2 \oplus \rho_\gamma$, $\gamma\in (\beta_{s_\chi}, \beta_{s_\chi+1})$. 
Then we compute that 
\[
 \chi_{\varphi_1, \phi_2}\cdot \eta_{ - z_2} = \chi_{\varphi_1, \varphi_2} \cdot \eta_{z_2} = \chi_1
\]
and that 
\[
 \chi_{\phi_2, \varphi_1}\cdot \eta_{ - z_2} = \chi_{\phi_2, \phi_1} \cdot \eta_{(-1)^{m-1} z_2} =  \chi_{\phi_2, \phi_1} \cdot \eta_{z_2''} = \wh{\theta_2}. 
\]

This finishes the proof of the lemma for all the cases.
\end{proof}

\begin{proof}(of Proposition \ref{prop:p1''})
By \eqref{eq:phi1},
\[
p_1'' = \Fn -\dim \phi_1 = \Fn - \dim \varphi_1 - 2m =p_1 - 2m.
\]
If $m=0$ in \eqref{eq:phi1}, then equality holds in \eqref{eq:s'-4}. Assume that $m\neq 0$. 
Let $k_0$ be the maximal index $i$ such that $q^h_1=q^h_2=\cdots=q^h_i$. The properties of $(\phi_2, \theta_2)$ in Lemma \ref{first-step} enable us to apply this lemma repeatedly, which thereby gives the following results.
\begin{itemize}
\item If $k_0>1$, then 
\[
k''=\begin{cases} k, & \textrm{if }k_0<k, \\
k+1, & \textrm{if }k_0=k,
\end{cases}
\]
and for $i=1,\ldots, k''$,
\begin{equation}\label{eq:Fs'-Fs1}
(p''_i,q^{h''}_i)=\begin{cases}
	(p_1- 2m,(-1)^{m}q^h_1), &\text{ if }i=1,\\
	(p_2+ 2(m-1),(-1)^{m-1}q^h_2), &\text{ if }i=2,\\
	(p_i, q^h_i), &\text{ if } 2<i\leq k_0,\\
	(p_{k_0+1}+2, -q^h_{k_0+1}), &\text{ if }i=k_0+1,\\
	(p_i, q^h_i), &\text{ if }i>k_0+1,
\end{cases}
\end{equation}
where 
$(p_{k+1}, q^h_{k+1}):=(0, -q^h_{k})$ if $k_0=k$.

\item
If $k_0=1$, then $k''=k$ and for $i=1,\ldots, k$, 
\begin{equation} \label{eq:Fs'-Fs2}
(p''_i,q^{h''}_i)=\begin{cases}
	(p_1-2m,(-1)^{m}q^h_1), &\text{ if }i=1,\\
	(p_2+2m, (-1)^{m}q^h_2), &\text{ if }i=2,\\
	(p_i,q^h_i), &\text{ if } i>2.
\end{cases}
\end{equation} 
\end{itemize}
Applying \eqref{eq:Fs'-Fs1} and \eqref{eq:Fs'-Fs2}, a straightforward computation verifies \eqref{eq:s'-4}. \end{proof}

\subsection{The case $E=\BC$}

In this subsection we prove Theorem \ref{thm:maximal-rational-order} when $E=\BC$, that is, for unitary groups. The strategy is similar to the case that $E=\BR$, with more involved details due to
the mixture of Bessel models and Fourier-Jacobi models. 

Recall that $\varphi \in \wt{\Phi}_\gen(\RU_\Fn^*)$ is of the form 
\eqref{par-unif}, and $\chi\in \wh{\CS_\varphi}$.
 It will be convenient to describe the descent  $\FD^z_\Fl(\varphi,\chi)$ more explicitly as follows, by unfolding the relevant definitions. 
 
\begin{lem}\label{lem:Upd}
Let $\phi\in \wt{\Phi}_\gen(\RU_{\Fn-\Fl}^*)$ and $\chi'\in \wh{\CS_\phi}$, where
\[
\phi = \bigoplus^{r'}_{j=1} m_j' \rho_{\beta_j} \oplus \xi'\oplus {}^c\xi'^\vee
\]
is of the form similar to \eqref{par-unif}. Assume that $\varphi$ and $\phi$ are of opposite types. Then $(\phi,\chi')\in \FD^z_\Fl(\varphi,\chi)$ if and only if 
\begin{align*}
 \chi(\alpha_i) &  = z\cdot \prod_{j:\ \beta_j > \alpha_i}(-1)^{m_j'},\quad i=1,\ldots, r, \\
 \chi'(\beta_j) & = (-z) \cdot \prod_{i:\ \alpha_i>  \beta_j} (-1)^{m_i} \\
& = (-1)^{\Fn-1}z \cdot \prod_{i:\  \alpha_i< \beta_j}(-1)^{m_i}, \quad j=1,\ldots, r'.
\end{align*}
\end{lem}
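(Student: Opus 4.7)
The plan is to unfold Definition \ref{defn:pd} in the unitary setting ($E=\BC$) and translate the two resulting equations into the explicit distinguished-character formulas \eqref{chi1-C} and \eqref{chi2-C}, with the contragredient converted via Proposition \ref{prop:dualdata}. First, I would note that $(\phi,\chi')\in\FD^z_\Fl(\varphi,\chi)$ is, by Definition \ref{defn:pd}, equivalent to the pair of opposite-type conditions
\[
\chi=\chi_{\varphi,\wh\phi}\cdot\eta_z,\qquad \wh{\chi'}=\chi_{\wh\phi,\varphi}\cdot\eta_{(-1)^\Fl z},
\]
under the canonical identification $\CS_\phi\cong\CS_{\wh\phi}$. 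Since every $\rho_{\alpha_i}=\varsigma_{\alpha_i}$ is one-dimensional, $\eta_z$ acts by $z$ on each natural basis vector, so each twist contributes a uniform scalar.

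Next I would compute $\chi_{\varphi,\wh\phi}(\alpha_i)$: writing $\wh\phi=\bigoplus_j m'_j\varsigma_{-\beta_j}\oplus {}^c\xi'^\vee\oplus\xi'$ and applying \eqref{chi1-C}, the non-self-dual part $\xi'\oplus{}^c\xi'^\vee$ contributes trivially (as in the proof of Theorem \ref{thm:DFD}, case $E=F(\delta)$), and the remainder yields
\[
\chi_{\varphi,\wh\phi}(\alpha_i)=\prod_{j:\ -(-\beta_j)>\alpha_i}(-1)^{m'_j}=\prod_{j:\ \beta_j>\alpha_i}(-1)^{m'_j}.
\]
Multiplying by $\eta_z(\alpha_i)=z$ gives the first formula in the lemma. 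For the second, applying \eqref{chi2-C} to $\wh\phi$ yields
\[
\chi_{\wh\phi,\varphi}(-\beta_j)=\prod_{i:\ -\alpha_i>-\beta_j}(-1)^{m_i}=\prod_{i:\ \alpha_i<\beta_j}(-1)^{m_i}.
\]
By the unitary case of Proposition \ref{prop:dualdata}, transferring $\wh{\chi'}$ back to $\chi'$ introduces a factor $(-1)^{\dim\phi-1}=(-1)^{\Fn-\Fl-1}$ on each basis vector, and combining with $\eta_{(-1)^\Fl z}(-\beta_j)=(-1)^\Fl z$ collapses the sign to $(-1)^{\Fn-1}z$, giving
\[
\chi'(\beta_j)=(-1)^{\Fn-1}z\cdot\prod_{i:\ \alpha_i<\beta_j}(-1)^{m_i},
\]
which is one of the two expressions.

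Finally, to reconcile the two displayed forms of $\chi'(\beta_j)$ in the statement, I would use the parity identity $\sum_i m_i\equiv\Fn\pmod 2$, which follows because $\xi\oplus{}^c\xi^\vee$ has even dimension; this converts $\prod_{i:\alpha_i<\beta_j}(-1)^{m_i}$ into $(-1)^\Fn\prod_{i:\alpha_i>\beta_j}(-1)^{m_i}$, and $(-1)^{\Fn-1}\cdot(-1)^\Fn=-1$ produces the sign $-z$ in the alternative expression. No real obstacle is anticipated: the entire argument is a bookkeeping exercise that unpacks Definition \ref{defn:pd}. The only subtlety is the careful tracking of the contragredient via Proposition \ref{prop:dualdata}, where the parity of $\dim\phi$ intervenes, and of the identification of natural bases of $\CS_\phi$ with those of $\CS_{\wh\phi}$ under $\beta_j\leftrightarrow -\beta_j$.
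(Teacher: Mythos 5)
Your proof is correct and follows essentially the same route as the paper's: both unpack Definition \ref{defn:pd} via \eqref{chi-a}, convert $\wh{\chi'}$ to $\chi'$ using the unitary case of Proposition \ref{prop:dualdata}, evaluate $\chi_{\varphi,\wh\phi}$ and $\chi_{\wh\phi,\varphi}$ from \eqref{chi1-C}--\eqref{chi2-C} via the substitution $\beta_j\mapsto-\beta_j$, and finish with the parity identity $\sum_i m_i\equiv\Fn\pmod 2$. Your added detail on why $\xi'\oplus{}^c\xi'^\vee$ contributes trivially to the root numbers and on the bookkeeping of $\eta_{(-1)^\Fl z}\cdot\eta_{(-1)^{\Fn-\Fl-1}}=\eta_{(-1)^{\Fn-1}z}$ is a harmless (and correct) expansion of what the paper compresses into a single sentence.
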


\begin{proof}
By Proposition \ref{prop:dualdata}, \eqref{chi-a} and Definition \ref{defn:pd}, $(\phi, \chi')\in \FD^z_{\Fl}(\varphi,\chi)$ if and only if 
\[
\chi = \chi_{\varphi, \wh{\phi}}\cdot \eta_z, \quad \chi' = \chi_{\wh{\phi}, \varphi} \cdot \eta_{(-1)^\Fl z} \cdot \eta_{(-1)^{\Fn-\Fl-1}} =  \chi_{\wh{\phi}, \varphi} \cdot \eta_{(-1)^{\Fn-1}z},
\]
where we have used the natural identification $\CS_\phi\cong\CS_{\wh\phi}$.
The lemma follows from \eqref{chi1-C} and \eqref{chi2-C}, noting that $\sum^r_{i=1}m_i\equiv \Fn\mod 2$.
\end{proof}

Recall from \eqref{aizi'} that $(\varphi_1', \chi_1')\in \FD^{z_1'}_{p_1}(\varphi,\chi)$, with $\varphi_1'$ decomposed as \eqref{varphi1'-decomp-2}, and we have the first descent 
$(\varphi_1,\chi_1)\in \FD^{z_1}_{p_1}(\varphi,\chi)$, where $\varphi_1$ is given by \eqref{varphi1}. Similar to \eqref{LP-natural}, define an enhanced tempered $L$-parameter
\[
\varphi_1^\natural := \psi_0\oplus \varphi_1 \oplus \psi_r \oplus \xi_1\oplus {}^c\xi_1^\vee, \quad \chi_1^\natural :=\chi_1'\vert_{\varphi_1^\natural},
\]
where $\xi_1$ is an arbitrary direct sum of unitary characters that are not conjugate self-dual,  such that $\dim \varphi_1' = \dim \varphi_1^\natural$. 
Note that $\dim \psi_0=\mult\!(\psi_0)$ is not necessarily even, in contrast to the case $E=\BR$. The following result can be proved using Lemma \ref{lem:Upd}, in the similar way to that of Lemma \ref{lem:natural}, which will be omitted. 

\begin{lem} \label{lem:natural-U}
Let the definitions be as above. Then the following hold.
\begin{enumerate}
\item $(\varphi_1^\natural, \chi_1^\natural)\in \FD^{z_1'}_{p_1'}(\varphi,\chi)$. 
\item A first descent of $(\varphi_1',\chi_1')$ is also a descent of $(\varphi_1^\natural, \chi_1^\natural)$. 
\end{enumerate}
\end{lem}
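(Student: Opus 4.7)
The plan is to imitate the proof of Lemma \ref{lem:natural} in the unitary setting, using Lemma \ref{lem:Upd} as the explicit criterion for the descent relation. The essential input is the same combinatorial fact as in the real case: on the decomposition \eqref{varphi1'-decomp-2}, the block $\psi_{i'}$ with $0<i'<r$ has $\mult(\psi_{i'})$ even if $i'\notin\sgn(\chi)$ and odd if $i' = i_j\in\sgn(\chi)$, by Lemma \ref{C-parity}. Since $\varphi_1^\natural$ is obtained from $\varphi_1'$ by excising, for each $0<i'<r$, the entire middle block $\psi_{i'}$ when $i'\notin\sgn(\chi)$, and by replacing $\psi_{i_j}$ by a single summand $\rho_{\beta_j}$ when $i_j\in\sgn(\chi)$, the total multiplicity of excised summands above or below any $\alpha_i$ is even.

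For Part (1), by Lemma \ref{lem:Upd} applied to $(\varphi_1', \chi_1')\in \FD^{z_1'}_{p_1'}(\varphi,\chi)$, the identity
\[
\chi(\alpha_i) = z_1' \cdot \prod_{j':\, \beta_{j'} > \alpha_i} (-1)^{m'_{j'}}
\]
holds, where the product runs over the irreducible summands of $\varphi_1'$ of the form $m'_{j'}\rho_{\beta_{j'}}$. The corresponding product for $\varphi_1^\natural$ differs from this one by $\prod_{i'}(-1)^{\mult(\psi_{i'})-\mult(\psi_{i'}\cap\varphi_1^\natural)}$, where $i'$ ranges over the excised middle indices; by the observation above each factor is $+1$, so Lemma \ref{lem:Upd}'s first identity is also satisfied by $(\varphi_1^\natural, \chi_1^\natural)$ with the same value $z_1'$. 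The second identity of Lemma \ref{lem:Upd} is automatic from $\chi_1^\natural = \chi_1'|_{\varphi_1^\natural}$, because for any basis vector $\gamma$ of $\CS_{\varphi_1^\natural}\subset \CS_{\varphi_1'}$ the multiplicities of $\varphi$-summands $m_i\rho_{\alpha_i}$ with $\alpha_i<\gamma$ (or $>\gamma$) are unchanged. Since $\dim \varphi_1^\natural = \dim\varphi_1'$ by construction, the descent index $p_1'$ and the sign $(-1)^{\Fn-1}$ are the same, completing Part (1).

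For Part (2), let $(\phi,\theta)$ be a first descent of $(\varphi_1',\chi_1')$. By Proposition \ref{prop:fd}(\ref{FD-U}), $\phi=\bigoplus_{k=1}^{s_{\chi_1'}} \varsigma_{\gamma_k}$ with the $\gamma_k$'s placed in intervals determined by the sign-alternating positions of $\chi_1'$ on the natural basis of $\CS_{\varphi_1'}$. By Lemma \ref{keylem}(\ref{keylem-1}), $\chi_1'$ is constant on each block $\psi_{i'}$, so each sign-change of $\chi_1'$ straddles two adjacent blocks $\psi_{i'}, \psi_{i''}$. Since $\varphi_1^\natural$ retains $\psi_0$, $\psi_r$ and at least one basis vector $\rho_{\beta_j}$ of each middle block $\psi_{i_j}$ with $i_j\in \sgn(\chi)$, the sign-change pattern of $\chi_1^\natural$ on the natural basis of $\CS_{\varphi_1^\natural}$ is a coarsening of the one on $\CS_{\varphi_1'}$, and the intervals housing the $\gamma_k$'s remain valid. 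To finish I would again invoke Lemma \ref{lem:Upd} to verify that $(\phi,\theta)\in \FD^{z}_{\Fl}(\varphi_1^\natural, \chi_1^\natural)$ for some $z\in\CZ$ and $\Fl = \dim\varphi_1^\natural - \dim\phi$; this reduces to the same multiplicity-parity identity as in Part (1), now applied between the descent $\phi$ and the two parameters $\varphi_1'$ and $\varphi_1^\natural$.

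The main obstacle is bookkeeping of the $z$-parameter and the twisting factor $(-1)^{\Fn-1}$ in Lemma \ref{lem:Upd}. A secondary subtlety is that, unlike the real case, $\mult(\psi_0)$ is not constrained to any fixed parity for unitary groups, so one must allow $\psi_0$ to contribute arbitrary multiplicities; however, because $\psi_0$ is retained wholesale in $\varphi_1^\natural$ its contribution cancels between the two products and the argument is unaffected. The non-self-dual summands $\xi_1$ and $\xi_1'\oplus{}^c\xi_1'^\vee$ do not affect any descent-distinguished character, so their replacement is harmless and only used to match dimensions.
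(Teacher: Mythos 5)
Your proof is correct and follows exactly the route the paper signals (use Lemma \ref{lem:Upd} by analogy with Lemma \ref{lem:natural}): you verify the two criteria of Lemma \ref{lem:Upd} for $(\varphi_1^\natural,\chi_1^\natural)$ with the same $z_1'$, the first by the multiplicity-parity observation from Lemma \ref{C-parity} on the middle blocks $\psi_{i'}$, and the second automatically from the inclusion $\CS_{\varphi_1^\natural}\subset\CS_{\varphi_1'}$; and Part (2) reduces to the same parity bookkeeping applied to the products over $\varphi_1'$ versus $\varphi_1^\natural$. Two small phrasings are slightly loose (for the first identity $\psi_0$ never contributes at all since $\min\psi_0<\alpha_1$, so there is nothing to ``cancel''; and in Part (2) the remark about ``the intervals housing the $\gamma_k$'s remaining valid'' is a detour — the invocation of Lemma \ref{lem:Upd} that follows is what actually closes the argument), but neither affects correctness.
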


Similar to \eqref{eq:phi1}, define 
\begin{equation}  \label{eq:phi1-U}
\phi_1 =m_L\rho_{\beta_0}\oplus  \varphi_1\oplus m_R\rho_{\beta_{s_\chi+1}},
\end{equation}
where $\beta_0$ and $\beta_{s_\chi+1}$ are chosen such that $\beta_0 < \alpha_1 \leq\alpha_r < \beta_{s_\chi+1}$, $\beta_0\equiv \beta_{s_\chi+1}\equiv \alpha_1+1\mod 2$,  $m_L :=\dim \psi_0$ and $m_R := \dim \psi_r$.  Define 
\[
\theta_1\in \wh{\CS_{\varphi_1}}
\]
 such that $\theta_1\vert_{\varphi_1} = \chi_1'\vert_{\varphi_1}$, and $\theta_1(\beta_0)$ (resp. $\theta_1(\beta_{s_\chi+1})$) equals the constant value of 
$\chi_1'$ on the natural basis vectors of $\CS_{\psi_0}$ (resp. $\CS_{\psi_r}$) if $m_L\neq 0$ (resp. $m_R\neq 0$). Put 
\[
p_1'' := \Fn -\dim \phi_1 = p_1' + \dim\varphi_1^\natural - \dim \phi_1.
\]
Then $p_1''\geq p_1'$, and $p_1'' \equiv p_1' \mod 2$.  The following result is similar to Lemma \ref{lem:natural-U}.

\begin{lem} \label{lem:phi1-U}
Let the definitions be as above. Then the following hold.
\begin{enumerate}
\item $(\phi_1, \theta_1)\in \FD^{z_1'}_{p_1''}(\varphi,\chi)$. 

\item $(\varphi_1^\natural, \chi_1^\natural)$ and $(\phi_1, \theta_1)$ have the same first descents. 
\end{enumerate}
\end{lem}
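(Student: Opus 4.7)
The plan is to prove both parts of Lemma \ref{lem:phi1-U} in a manner parallel to Lemma \ref{lem:natural-U} and to Lemma \ref{lem:phi1} in the real case, with the key tool being Lemma \ref{lem:Upd}, which converts the descent membership into explicit character identities, combined with the constancy of $\chi_1'$ on each block $\psi_i$ given by Lemma \ref{keylem} \eqref{keylem-1}.

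For Part (1), since $(\varphi_1^\natural,\chi_1^\natural)\in\FD^{z_1'}_{p_1'}(\varphi,\chi)$ by Lemma \ref{lem:natural-U}(1), it suffices to compare $(\phi_1,\theta_1)$ with $(\varphi_1^\natural,\chi_1^\natural)$ and to verify via Lemma \ref{lem:Upd} that the same character identities continue to hold after the replacement. The non-conjugate-self-dual summand $\xi_1\oplus {}^c\xi_1^\vee$ of $\varphi_1^\natural$ does not enter the products in Lemma \ref{lem:Upd} and can therefore be discarded. The block $\psi_0$ (resp.\ $\psi_r$) is replaced by $m_L\rho_{\beta_0}$ (resp.\ $m_R\rho_{\beta_{s_\chi+1}}$) of the same total multiplicity and lying strictly to the left of $\alpha_1$ (resp.\ strictly to the right of $\alpha_r$), so that for each $\alpha_i$ the factor $(-1)^{\mult\!(\psi_0)}$ or $(-1)^{\mult\!(\psi_r)}$ contributed to $\chi(\alpha_i)=z_1'\cdot\prod_{j:\beta_j>\alpha_i}(-1)^{m_j'}$ is unchanged. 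The identity for $\theta_1(\beta_j)$ with $1\leq j\leq s_\chi$ is inherited from that of $\chi_1^\natural(\beta_j)=\chi_1'(\beta_j)$, since both sides are computed from the $\varphi$-data alone by Lemma \ref{lem:Upd}. For the new boundary basis vectors $\beta_0$ and $\beta_{s_\chi+1}$ of $\CS_{\phi_1}$, a direct calculation using Lemma \ref{lem:Upd} shows that the prescribed values
\[
\theta_1(\beta_0)=(-z_1')\prod_{i:\alpha_i>\beta_0}(-1)^{m_i},\qquad \theta_1(\beta_{s_\chi+1})=(-z_1')\prod_{i:\alpha_i>\beta_{s_\chi+1}}(-1)^{m_i}
\]
coincide with the common values of $\chi_1'$ on $\CS_{\psi_0}$ and $\CS_{\psi_r}$ respectively, which by construction are exactly $\theta_1(\beta_0)$ and $\theta_1(\beta_{s_\chi+1})$ by Lemma \ref{keylem} \eqref{keylem-1}.

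For Part (2), I would apply Proposition \ref{prop:fd}\eqref{FD-U} to both enhanced $L$-parameters. The first descent is determined entirely by the sign-alternating set $\sgn(\cdot)$ and by the value $z(\cdot)$. Since $\chi_1^\natural$ is constant on $\CS_{\psi_0}$ and on $\CS_{\psi_r}$ by Lemma \ref{keylem} \eqref{keylem-1}, and since by construction $\theta_1(\beta_0)$ and $\theta_1(\beta_{s_\chi+1})$ equal these two respective constants, the sign change pattern of $\theta_1$ on the ordered basis $\beta_0<\beta_1<\cdots<\beta_{s_\chi+1}$ of $\CS_{\phi_1}$ matches the sign change pattern of $\chi_1^\natural$ on the basis of $\CS_{\varphi_1^\natural}$ once the constant runs on $\psi_0$ and $\psi_r$ are collapsed. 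Consequently $s_{\theta_1}=s_{\chi_1^\natural}$, the sign-alternating sets align canonically, and the corresponding values $z(\theta_1)=z(\chi_1^\natural)$ coincide. Proposition \ref{prop:fd}\eqref{FD-U} then produces the same first-descent parameters in both cases.

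The principal difficulty is the careful parity bookkeeping in Part (1) at the boundary summands $\psi_0$ and $\psi_r$. In contrast to the real case (Lemma \ref{lem:phi1}), here neither $\mult\!(\psi_0)$ nor $\mult\!(\psi_r)$ need be even, so these blocks cannot be simply removed; instead they must be rechanneled into the single isotypic components $m_L\rho_{\beta_0}$ and $m_R\rho_{\beta_{s_\chi+1}}$ sitting outside the range of the $\alpha_i$'s, and the identification of the character values at $\beta_0$ and $\beta_{s_\chi+1}$ with the respective constant values of $\chi_1'$ on $\psi_0$ and $\psi_r$ must be carried out with the correct conjugate-versus-dual sign conventions that appear in Lemma \ref{lem:Upd}. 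Once this bookkeeping is in place, both parts reduce to direct verifications.
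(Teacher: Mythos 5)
Your proof takes essentially the same approach as the paper, which itself only says the result is ``similar to Lemma \ref{lem:natural-U}'' (and, for the real analogue Lemma \ref{lem:phi1}, that ``(1) and (2) are similar to Lemma \ref{lem:natural}; (3) follows easily from Proposition \ref{prop:fd}''). Your expansion via Lemma \ref{lem:Upd} and the constancy statement Lemma \ref{keylem}\eqref{keylem-1} is the correct way to fill this in, and the bookkeeping you do for Part (1) is sound: since every parameter of $\psi_0$ and $\beta_0$ lie strictly below $\alpha_1$, and every parameter of $\psi_r$ and $\beta_{s_\chi+1}$ lie strictly above $\alpha_r$, the product $\prod_{i:\,\alpha_i>\gamma}(-1)^{m_i}$ from Lemma \ref{lem:Upd} takes the same value for $\gamma=\beta_0$ as for any basis vector of $\CS_{\psi_0}$ (and similarly on the right), so the required identity for $\theta_1$ reduces to the known identity for $\chi_1'$.

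One small caution on Part (2). Proposition \ref{prop:fd}\eqref{FD-U} describes the first descent as the set of parameters $\phi=\bigoplus_j\varsigma_{\gamma_j}$ with $\gamma_j$ lying in the \emph{open interval} between consecutive basis vectors of the ambient component group at which $\chi$ flips sign. For $\chi_1^\natural$ the relevant boundary interval is $(\gamma_{\max,0},\beta_1)$ where $\gamma_{\max,0}$ is the largest parameter in $\psi_0$, whereas for $\theta_1$ it is $(\beta_0,\beta_1)$, and these open intervals need not coincide unless $\beta_0$ is chosen equal to $\gamma_{\max,0}$ (with the analogous comment on the right boundary); so ``same first descents'' is not quite a set-theoretic equality in general. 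However, this imprecision is already present in the paper's own statement of Lemma \ref{lem:phi1}(3), which likewise leaves $\beta_{s_\chi+1}$ unspecified. What is actually used in the subsequent argument (the reduction to Propositions \ref{prop:p1''-U1}--\ref{prop:p1''-U2}) is only that the two first descents produce identical continuations of the pre-tableau from the second row onward, and your sign-pattern matching together with Lemma \ref{pre-tableau-eq} does establish that. It would be worth stating this intended reading explicitly, but it is not a defect of your approach relative to the paper's.
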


Similar to the case that $E=\BR$, we can assume that $\CS_{\varphi_1}$ is nontrivial, and take a first descent sequence of $(\phi_1, \theta_1, q^{(1)''})$ of the form \eqref{phi-fd}, where $q^{(1)''}$ and $q^{(1)'}$ belong to the same Witt tower of (skew)-Hermitian forms.  By Lemmas \ref{lem:natural-U}, \ref{lem:phi1-U} and the arguments used when $E=\BR$, the proof of Theorem \ref{thm:maximal-rational-order} when $E=\BC$ is  reduced to proving the following  result.

\begin{prop} \label{prop:p1''-U}
Let the definitions be as above. Then 
\[
\Fs'':=(p_1'', q^{h'}_1) \star \Fs_a(\phi_1,\theta_1, {q^{(1)''}}) \leq_{p_1''} \Fs_a(\varphi,\chi,q). 
\]
\end{prop}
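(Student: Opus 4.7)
The plan is to mimic the strategy used to prove Proposition \ref{prop:p1''} for $E=\BR$, replacing the key technical input Lemma \ref{first-step} by its appropriate unitary analogue. Since $\dim\phi_1 = \dim\varphi_1 + m_L + m_R$, we have $p_1'' = p_1 - (m_L+m_R)$, so a total of $m_L+m_R$ boxes must be ``redistributed'' from the first row of $\Fs_a(\varphi,\chi,q)$ into later rows in order to produce $\Fs''$. If $m_L = m_R = 0$, then $\phi_1 = \varphi_1$ and equality holds in the statement; so I assume $m_L + m_R > 0$.

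First, I would prove a unitary analogue of Lemma \ref{first-step}: compute the first descent $(\phi_2,\theta_2)$ of $(\phi_1,\theta_1)$ using Lemma \ref{lem:Upd} and the explicit form \eqref{eq:phi1-U} of $\phi_1$, and compare it to the first descent $(\varphi_2,\chi_2)$ of $(\varphi_1,\chi_1)$. The outcome, in parallel to the real case, will be: depending on whether the sign $[\disc(q^h_1)]$ at the $\beta_0$-end (resp.\ $\beta_{s_\chi+1}$-end) matches $[\disc(q^h_2)]$, the first descent $(\phi_2,\theta_2)$ is either a first descent of $(\varphi_1,\chi_1)$ up to $\CZ$-equivalence, or has the form $\varphi_2 \oplus \rho_\gamma$ for a well-chosen auxiliary character $\rho_\gamma$ inserted at one of the two endpoints. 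Simultaneously, $(p_1'',q^{h''}_1)$ and $(p_2'',q^{h''}_2)$ will be expressed explicitly in terms of $(p_1,q^h_1)$, $(p_2,q^h_2)$ and the parities $(-1)^{m_L}$, $(-1)^{m_R}$.

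Next, I would iterate this analogue of Lemma \ref{first-step}. Following the organization used for the real case, let $k_L$ (resp.\ $k_R$) be the maximal index $i$ such that $q^h_1 = q^h_2 = \cdots = q^h_i$ on the $\beta_0$-side (resp.\ $\beta_{s_\chi+1}$-side). The iteration then produces explicit formulas for $(p_i'', q^{h''}_i)$, $i \geq 2$, parallel to \eqref{eq:Fs'-Fs1}--\eqref{eq:Fs'-Fs2}, in which a total of $m_L$ boxes and $m_R$ boxes are distributed downward into later rows, each such shift potentially accompanied by a sign flip depending on whether the corresponding run $k_L$ or $k_R$ is exhausted or not. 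With these explicit formulas for $\Fs_a(\phi_1,\theta_1,q^{(1)''})$, the inequality $\Fs''\leq_{p_1''}\Fs_a(\varphi,\chi,q)$ reduces to a routine column-by-column count on the first $p_1''$ columns: boxes are only moved downward from a long first row into shorter later rows, so each partial column sum $\sum_{i\le j} c^\pm_i(\Fs'')$ weakly increases from $\sum_{i\le j} c^\pm_i(\Fs_a(\varphi,\chi,q))$, which is exactly the content of \eqref{ineq:rational-order} on this range.

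The main obstacle will be the sign bookkeeping in Step~1. In the real case, only one side ($\psi_r$) contributed an extra summand, and the case analysis in Lemma \ref{first-step} split into three group types, each with two subcases ($q^h_1=\pm q^h_2$). In the unitary setting, the two summands $m_L\rho_{\beta_0}$ and $m_R\rho_{\beta_{s_\chi+1}}$ interact through the distinguished-character formulas of Lemma \ref{lem:Upd}, and one must simultaneously track the parities $(-1)^{m_L}$, $(-1)^{m_R}$ and the product $(-1)^{m_L+m_R}$, along with the ambient sign $(-1)^{\Fn-1}$ that distinguishes Bessel from Fourier--Jacobi in Lemma \ref{lem:Upd}. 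A clean organization — separating the contribution at each end via the recipes in Sections \ref{sec:GGP-U-B} and \ref{sec:GGP-U-FJ}, and exploiting the sign-alternating property of $\sgn(\theta_1)$ inherited from \eqref{C-chi} — will be essential to keep the case analysis manageable.
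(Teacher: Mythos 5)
Your high-level plan is reasonable, but there is a concrete gap in the way you propose to organize the iteration, and it is precisely the gap that the paper's actual proof is designed to avoid. You propose to track two runs $k_L$ and $k_R$, both defined by ``the maximal index $i$ such that $q^h_1=\cdots=q^h_i$''. However, the two endpoints of $\phi_1$ are \emph{not} governed by the same sign invariant. The $\beta_{s_\chi+1}$-side cascade is controlled by equality of the $q^h_i$ (equivalently of the $z_i=[\disc(q^h_i)]$), as in the real symplectic/metaplectic case, while the $\beta_0$-side cascade is controlled by equality of $\varepsilon_i=(-1)^{p_i-1}z_i$, the sign of the \emph{first} box of row $i$ rather than the last. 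Since $p_1,p_2,\ldots$ need not all have the same parity for unitary groups, these two invariants genuinely diverge, and a single run condition ``$q^h_1=\cdots=q^h_i$'' cannot detect both cascades. The formula in Lemma \ref{lem:Upd}, with its $(-1)^{\Fn-1}$ twist and the distinction between the conditions $\alpha_i>\beta_j$ and $\alpha_i<\beta_j$, is exactly where this asymmetry originates, and your sketch does not account for it.

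The paper's actual route sidesteps the combined bookkeeping altogether: it introduces the intermediate pre-tableau $\Fs^R$ via the one-sided collapse $\phi_1^R=\varphi_1\oplus m_R\rho_{\beta_{s_\chi+1}}$ (dropping $m_L\rho_{\beta_0}$), and factors the desired inequality into $\Fs''\leq_{p_1''}\Fs^R$ and $\Fs^R\leq_{p_1^R}\Fs_a(\varphi,\chi,q)$. Each piece then mirrors the real case with only one extra endpoint summand, and crucially the first piece records the observation $\varepsilon_i^R=\varepsilon_i$, so that the $L$-side cascade (run in $\varepsilon$) is unaffected by having already performed the $R$-side cascade (run in $q^h$). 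Your combined iteration could in principle reach the same endpoint, but you would have to discover and prove this commutation of the two cascades and carry a four-way case analysis (signs at both ends independently), whereas the factoring reduces each half to the two-case analysis you already know from Lemma \ref{first-step}. As written, your proposal is not yet a proof, because the definition of $k_L$ is wrong and the independence of the two cascades is unaddressed.
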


Recall that we have 
\eqref{eq:phi-varphi}.  As before,  put
$z_1'':=z_1'$,  $q^{h''}_1: = q^{h'}_1$,
so that
\[
\Fs'' = ((p_1'', q^{h''}_1), (p_2'', q^{h''}_2), \ldots, (p_{k''}'', q^{h''}_{k''})).
\]
To prove Proposition \ref{prop:p1''-U}, again we need to compare the consecutive first descents of $(\phi_1,\theta_1)$ and $(\varphi_1,\chi_1)$.  Instead of doing this directly, it is more manageable to take an intermediate pre-tableau as follows.

Define a collapse $(\phi_1^R, \theta_1^R)$ of $(\phi_1, \theta_1)$ as follows:
\[
\phi_1^R:=\varphi_1\oplus  m_R \rho_{\beta_{s_\chi+1}}, \quad \theta_1^R:= \theta_1\vert_{\phi_1^R}.
\]
Put $p_1^R := \Fn- \dim \phi_1^R = p_1''+ m_L\geq p_1''$. By Lemma \ref{lem:Upd}, it is clear that $(\phi_1^R, \theta_1^R)\in \FD^{z_1''}_{p_1^R}(\varphi, \chi)$. Take a first descent sequence 
\[
    \begin{matrix}
    (\phi_1^R,\theta_1^R, {q^{(1)^R}}),&(\phi_2^R,\theta_2^R, {q^{(2)^R}}),&\dots,&(\phi_{k^R-1}^R,\theta_{k^R-1}^R, {q^{(k^R-1)^R}})\\
    (p_2^R, {q^{h^R}_2}),& (p_3^R, {q^{h^R}_3}), &\dots,& (p_{k^R}^R, {q^{h^R}_{k^R}})
    \end{matrix}
\]
of $(\phi_1^R,\theta_1^R, {q^{(1)^R}})$, so that
\[
\Fs_a(\phi_1^R,\theta_1^R, {q^{(1)^R}}) = ((p_2^R, {q^{h^R}_2}), \ldots, (p_{k^R}^R, {q^{h^R}_{k^R}})).
\]
We have that
\[
(\phi_i^R, \theta_i^R) \in \FD_{p_i^R}^{z_i^R}(\phi_{i-1}^R, \theta_{i-1}^R), \quad z_i^R=[\disc(q^{h^R}_i)]\in\CZ, \ i=2,\ldots, k^R-1,
\]
subject to the conditions of Definition \ref{LYT}.
We put $z_1^R := z_1''$,  $q^{h^R}_1:= q^{h''}_1$  and define  
\[
\Fs^R:=(p_1^R, q_1^{h^R})\star \Fs_a(\phi_1^R,\theta_1^R, {q^{(1)^R}}) = ((p_1^R, {q^{h^R}_1}), \ldots, (p_{k^R}^R, {q^{h^R}_{k^R}})). 
\]
Then Proposition \ref{prop:p1''-U} is a consequence of the following two results. 

\begin{prop} \label{prop:p1''-U1}
It holds that $\Fs^R \leq_{p_1^R} \Fs_a(\varphi,\chi,q)$. 
\end{prop}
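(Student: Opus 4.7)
The plan is to adapt the strategy used in the proof of Proposition \ref{prop:p1''} to the unitary setting. Specifically, I would establish a unitary analog of Lemma \ref{first-step} that relates the first two entries of the first descent pre-tableaux of $(\phi_1^R,\theta_1^R,q^{(1)''})$ and $(\varphi_1,\chi_1,q^{(1)})$, and then iterate to obtain the inequality $\Fs^R \leq_{p_1^R} \Fs_a(\varphi,\chi,q)$ column-by-column.

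The first step is to pin down the basic identities. Using Lemma \ref{lem:Upd} together with \eqref{eq:phi1-U}, one verifies that $(\phi_1^R,\theta_1^R)\in \FD^{z_1''}_{p_1^R}(\varphi,\chi)$ with $p_1^R = p_1 - m_R$, where $\theta_1^R$ extends $\chi_1$ to $\CS_{\phi_1^R}\cong \CS_{\varphi_1}\oplus \BZ_2$ by assigning the value $\theta_1(\beta_{s_\chi+1})$ on the extra basis vector. The sign alternating set $\sgn(\theta_1^R)$ then equals $\sgn(\chi_1)$ together with the new top index precisely when $\chi_1(\beta_{s_\chi})\neq \theta_1(\beta_{s_\chi+1})$, a condition controlled by the relation between $q^h_1$ and $q^h_2$ via Proposition \ref{prop:pretab} \eqref{FD-pretab} and the formula $[\disc(q_1^{h^R})] = a\cdot z_1''$ from Definition \ref{LYT}.

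Next, in analogy with Lemma \ref{first-step}, I would establish a single-step comparison: the first descent $(\phi_2^R,\theta_2^R)$ of $(\phi_1^R,\theta_1^R)$ can be taken to be either a twist of $(\varphi_2,\chi_2)$ when $q^h_1 = -q^h_2$, or $(\varphi_2\oplus \rho_\gamma,\text{twist of }\chi_2)$ for a suitable $\gamma\in(\beta_{s_\chi},\beta_{s_\chi+1})$ when $q^h_1 = q^h_2$. This gives explicit formulas of the shape $p_1^{R,(1)} = p_1 - 1$ and $p_2^{R,(1)}\in\{p_2, p_2+1\}$, together with the corresponding one-dimensional forms. Unlike the $E=\BR$ case where the descent index $\Fl$ has fixed parity and a unit of $m$ adjusts both $p_1$ and $p_2$ by $2$, in the unitary case each step shifts by $1$ because both Bessel and Fourier-Jacobi descents are allowed. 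Iterating this lemma $m_R$ times produces explicit formulas for $(p_i^R, q_i^{h^R})$, $i\geq 1$, analogous to \eqref{eq:Fs'-Fs1}--\eqref{eq:Fs'-Fs2} but with $2m$ replaced by $m_R$; plugging these into the definition of $\leq_{p_1^R}$ then yields the desired inequality by a direct column-by-column check.

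The main obstacle will be the delicate bookkeeping of the discriminants $[\disc(q_i^{h^R})]\in\CZ$ through the iteration. Because in the unitary setting the descent index $\Fl$ alternates parity and the sign $(-1)^\Fl$ in \eqref{chi-a} feeds back into the definition of $\FD^z_\Fl$, one must track very carefully how $z_i^R$ and $\theta_i^R(\beta_{s_\chi+1})$ evolve at each step, and in particular how successive equalities $q_i^{h^R} = \pm q_{i+1}^{h^R}$ propagate through the iterated descent. This is an intricate but mechanical computation that parallels the case analysis in the proof of Lemma \ref{first-step}; once these explicit formulas are in hand, the verification of $\Fs^R\leq_{p_1^R}\Fs_a(\varphi,\chi,q)$ follows from a direct inspection of the column sums $\sum_{i=1}^{j}c^\pm_i(\cdot)$ for each $j\leq p_1^R$.
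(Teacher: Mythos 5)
Your overall strategy — establish a unitary analog of Lemma \ref{first-step} comparing the first-descent data of $(\phi_1^R,\theta_1^R)$ and $(\varphi_1,\chi_1)$, and then iterate to verify the column-sum inequalities — is the same as the paper's, and you correctly pin down the preliminary identities ($p_1^R = p_1 - m_R$, the membership $(\phi_1^R,\theta_1^R)\in\FD^{z_1''}_{p_1^R}(\varphi,\chi)$ via Lemma \ref{lem:Upd}, the two-case dichotomy governed by $q^h_1 = \pm q^h_2$, and the choice $\phi_2^R = \varphi_2\oplus\rho_\gamma$ in the equal-sign case). The paper's Lemma \ref{lem:p1''-U1} is exactly the analog you describe.

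Where your plan goes astray is in the phrase ``iterating this lemma $m_R$ times'' together with the formulas $p_1^{R,(1)} = p_1-1$. The paper's lemma delivers the full $m_R$-shift in a \emph{single} application (the dimension count $p_1^R = p_1 - m_R$ and then $p_2^R \in \{p_2 + m_R,\ p_2 + m_R - 1\}$ are read off at once); the object that is applied ``repeatedly'' in the paper is not the $m_R$-increment but the \emph{row} iteration, which propagates a carry of $1$ from row $2$ through rows $3,\ldots,k_0$ to land at row $k_0+1$, with $k_0$ the maximal index such that $q^h_1=\cdots=q^h_{k_0}$. Your sketch does not address this carry propagation, which is precisely the content of the formulas \eqref{eq:Fs'-Fs1-R}--\eqref{eq:Fs'-Fs2-R} and is where the combinatorial work lies. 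If instead you take your $m_R$-step iteration literally — passing through the intermediate parameters $\phi_1^{(j)} = \varphi_1\oplus j\rho_{\beta_{s_\chi+1}}$ and comparing $\phi_1^{(j)}$ with $\phi_1^{(j-1)}$ — you encounter a genuine obstruction: the multiplicity of $\rho_{\beta_{s_\chi+1}}$ in $\phi_1^{(j)}$ alternates in parity with $j$, so $\rho_{\beta_{s_\chi+1}}$ alternately does and does not contribute a basis vector to the component group, the sign-alternating set changes shape at each step, and no single uniform comparison lemma applies across consecutive steps. The ``direct column-by-column check'' you defer to at the end would then also need the explicit carry formulas, which your iteration scheme does not produce.

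In short: the approach is correct in outline, but the $m_R$-increment iteration should be replaced by the one-shot $m_R$-shift lemma, and the genuine iteration — over rows $2,\ldots,k_0+1$ to track the carry — is missing from your proposal and is what actually requires the case analysis on $\varepsilon_i$ and $\disc(q^{h^R}_i)$ that you rightly flag as the delicate bookkeeping.
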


\begin{prop} \label{prop:p1''-U2}
It holds that $\Fs''  \leq_{p_1''} \Fs^R$. 
\end{prop}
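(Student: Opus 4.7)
The plan is to prove Proposition \ref{prop:p1''-U2} by a strategy strictly parallel to that used for Proposition \ref{prop:p1''} in the case $E=\BR$, namely by establishing a unitary analog of Lemma \ref{first-step} which compares the consecutive first descents of $(\phi_1,\theta_1,q^{(1)''})$ and $(\phi_1^R,\theta_1^R,q^{(1)^R})$. Recall that the two triples differ precisely by the leftmost summand $m_L \rho_{\beta_0}$ of $\phi_1$, with $\beta_0<\alpha_1$, $\beta_0\equiv\alpha_1+1\bmod 2$, and $p_1^R - p_1'' = m_L$. First, if $m_L=0$, then $\phi_1^R=\phi_1$, $\theta_1^R=\theta_1$, and $q^{(1)^R}=q^{(1)''}$, so $\Fs''=\Fs^R$ and the inequality holds trivially. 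Hence I may assume $m_L\neq 0$ throughout.

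Under this assumption, the plan is to establish the following unitary left-end analog of Lemma \ref{first-step}: comparing $(\phi_2^R,\theta_2^R)\in \FD^{z_2^R}_{p_2^R}(\phi_1^R,\theta_1^R)$ with the choice $(\phi_2,\theta_2)\in \FD^{z_2''}_{p_2''}(\phi_1,\theta_1)$ dictated by Proposition \ref{prop:unique}, one should find the dichotomy (i) if $q^{h''}_1 = -q^{h''}_2$ (in the appropriate sense of the class $[\disc(q^h)]\in\CZ$), then $(\phi_2^R,\theta_2^R)$ realizes a descent of $(\phi_1,\theta_1)$ with $(p_1'',q^{h''}_1)=(p_1^R - 2m_L,\,(-1)^{m_L} q^{h^R}_1)$ and $(p_2'',q^{h''}_2)=(p_2^R + 2m_L,\,(-1)^{m_L} q^{h^R}_2)$; (ii) if $q^{h''}_1 = q^{h''}_2$, then $\phi_2^R$ can be augmented by a single $\rho_\gamma$ with $\beta_0<\gamma<\beta_1$ and $\gamma$ of the opposite parity, giving $(p_2'',q^{h''}_2)=(p_2^R + 2(m_L-1),\,(-1)^{m_L-1} q^{h^R}_2)$. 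The proof of this lemma is a direct case-by-case computation using Lemma \ref{lem:Upd}, Proposition \ref{prop:fd}\eqref{FD-U}, and Proposition \ref{prop:pretab}\eqref{FD-pretab}, tracking the discriminants $[\disc(q^h_i)]\in\CZ$ and the values $\theta_1(\beta_0),\theta_1(\beta_1),\theta_1(\beta_{s_\chi+1})$ through the character formulas \eqref{chi1-C}--\eqref{chi2-C}. Once this lemma is in place, I would iterate it exactly as in the proof of Proposition \ref{prop:p1''}: letting $k_0$ be the maximal index $i$ with $q^{h^R}_1=\cdots=q^{h^R}_i$, one obtains explicit formulas for $(p_i'',q^{h''}_i)$ in terms of $(p_i^R,q^{h^R}_i)$ fully analogous to \eqref{eq:Fs'-Fs1}--\eqref{eq:Fs'-Fs2}, from which the inequality $\Fs''\leq_{p_1''}\Fs^R$ in the sense of \eqref{ineq:rational-order} is verified by direct bookkeeping of $c_i^\pm$ column counts.

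The main obstacle I expect is the correct bookkeeping of the $\CZ$-class $[\disc(q^h_{(\Fl)})]$ through iterated left-end descents. Unlike the $E=\BR$ case where each classical group type imposes a parity constraint on the $\beta_j$'s (odd in the orthogonal/metaplectic cases, even in the symplectic/odd-orthogonal cases), the unitary case allows $\beta_j\in\BZ$ of either parity, which forces the dichotomy in the auxiliary lemma to be decided by the sign discriminant alone rather than by a parity argument. Moreover, the recipe in Proposition \ref{prop:pretab}\eqref{FD-pretab} links $[\disc(q^h_{(\Fl)})]$ to the descent parameter $a\cdot z$, so one must keep careful track of how $z_2''$ and $z_2^R$ are determined by $\theta_1(\beta_0)$, $\theta_1(\beta_1)$ (in the $\phi_1$-side) versus $\theta_1^R(\beta_1)$ (in the $\phi_1^R$-side). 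These values differ precisely by the contribution of $m_L\rho_{\beta_0}$ through Lemma \ref{lem:Upd}, which is where the alternating $(-1)^{m_L}$ signs in the dichotomy originate. Once this discriminant bookkeeping is handled uniformly, the remaining argument is the same combinatorial column count as in Proposition \ref{prop:p1''}, and Proposition \ref{prop:p1''-U2} follows.
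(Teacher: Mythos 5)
Your high-level plan is correct and matches the paper: establish a left-end analog of Lemma~\ref{first-step} (resp.\ Lemma~\ref{lem:p1''-U1}) comparing the consecutive first descents of $(\phi_1,\theta_1)$ and $(\phi_1^R,\theta_1^R)$, then iterate to get the explicit row data, then verify the column-count inequality. But the formulas you propose for the lemma are incorrect in two ways. First, you write $p_1'' = p_1^R - 2m_L$ and $p_2'' = p_2^R + 2m_L$ (or $+2(m_L-1)$), but in the unitary case $\rho_\beta = \varsigma_\beta$ is \emph{one}-dimensional, so $\dim\phi_1 - \dim\phi_1^R = m_L$, and the correct formulas are $p_1'' = p_1^R - m_L$ and $p_2'' = p_2^R + m_L$ or $p_2^R + m_L - 1$. (You even state $p_1^R - p_1'' = m_L$ in your opening paragraph, which contradicts your lemma.) Copying the factor of $2$ from the real case (where $\dim\sigma_\alpha = 2$) is the source of this error.

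Second, and more fundamentally, your dichotomy condition ``$q^{h''}_1 = \pm q^{h''}_2$'' is not the right one, and the $(-1)^{m_L}$ sign flip you place on the discriminant class $q^h$ is in the wrong variable. The character computation $\theta_1(\beta_0)\cdot\theta_1(\beta_1) = -\varepsilon_1\varepsilon_2$ via Lemma~\ref{lem:Upd} naturally produces the quantities $\varepsilon_i := (-1)^{p_i-1}[\disc(q^h_i)]$, which record the sign of the \emph{first} box of each row rather than the last (the discriminant). The paper's proof hinges on introducing precisely this change of variables; the dichotomy is then $\varepsilon_1 = \pm\varepsilon_2$, and the sign flip under removal of the left tail is $\varepsilon_i'' = (-1)^{m_L}\varepsilon_i^R$, while the discriminant class actually does \emph{not} flip (one has $z_1'' = z_1^R$ by construction). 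In the real case $\varepsilon_1 = \varepsilon_2$ and $q^h_1 = q^h_2$ are equivalent because the $p_i$ all share one parity, so the distinction is invisible; in the unitary case the $p_i$ have arbitrary parities, and tracking $q^h$ directly (as you propose in your ``sign discriminant alone'' resolution) will not let you close the computation. You correctly identify the parity issue as the obstacle, but your proposed fix is the wrong one; the $\varepsilon$-bookkeeping is the missing idea.
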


The rest of this section is devoted to the proof of Propositions \ref{prop:p1''-U1} and \ref{prop:p1''-U2}. 
For  unitary groups, it is more convenient to introduce the following elements of $\CZ$,
\[
\begin{cases}
 \varepsilon_i := (-1)^{p_i-1}\cdot [\disc(q^h_i)] = (-1)^{p_i-1} z_i, & i=1,\ldots, k,\\
\varepsilon_i^R := (-1)^{p_i^R-1}\cdot [\disc(q^{h^R}_i)] = (-1)^{p_i^R-1} z_i^R, & i=1,\ldots, k^R,\\
\varepsilon_i'' := (-1)^{p_i''-1}\cdot [\disc(q^{h''}_i)] = (-1)^{p_i''-1} z_i'', & i=1,\ldots, k''.
\end{cases}
\]
That is, $\varepsilon_i$, $\varepsilon_i^R$ and $\varepsilon_i''$ represent the signs in the first boxes  of the $i$-th row of the ordered signed Young diagrams $\Fs_a(\varphi,\chi,q)$, $\Fs^R$ and $\Fs''$, respectively. Then we can  work with the data
$(p_i, \varepsilon_i)$, $(p_i^R, \varepsilon_i^R)$ and $(p_i'', \varepsilon_i'')$ instead of the pre-tableaux $(p_i, q^h_i)$, $(p_i^R, q^{h^R}_i)$ and $(p_i'', q^{h''}_i)$.

\subsubsection{Proof of Proposition \ref{prop:p1''-U1}}

\begin{lem} \label{lem:p1''-U1}
Assume that $m_R\neq 0$ in \eqref{eq:phi1-U}. Then the following hold.
\begin{enumerate}
\item If $q^h_1 = - q^h_2$, then $(\phi_2^R, \theta_2^R)$ is a first descent of $(\varphi_1, \chi_1)$, and it holds that
\begin{itemize}
\item $(p_1^R, \varepsilon_1^R)= (p_1-m_R, \varepsilon_1)$, 
\item $(p_2^R,  \varepsilon_2^R ) =(p_2+m_R,  \varepsilon_2)$.
\end{itemize}
\item If $q^h_1 =  q^h_2$, then $\phi_2^R$ can be taken to be $\varphi_2\oplus \rho_{\gamma}$, where $\gamma \in (\beta_{s_\chi}, \beta_{s_\chi+1})$, $\gamma \equiv \alpha_1\mod 2$, and it holds that
\begin{itemize}
\item $(p_1^R,  \varepsilon_1^R)= (p_1-m_R,  \varepsilon_1 )$, 
\item $(p_2^R, \varepsilon_2^R ) =(p_2+ m_R-1,  \varepsilon_2)$, 
\item $\chi_1 = \chi_{\varphi_1, \wh{\phi_2^R}}^{-z_2}$, $\wh{\theta_2^R} = \chi_{\wh{\phi_2^R}, \varphi_1}^{-z_2}$.
\end{itemize}
\end{enumerate}
\end{lem}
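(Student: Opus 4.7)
The plan is to follow the pattern of the proof of Lemma \ref{first-step}, adapting it to the unitary case where there is no separate case analysis on $G_n^*$ but where both Bessel and Fourier–Jacobi descents mix according to the parity of $\Fl$. The essential computational tool is the explicit characterization of $\FD^z_\Fl(\varphi,\chi)$ provided by Lemma \ref{lem:Upd}, together with the description of the first occurrence index in terms of the sign alternating set $\sgn(\theta_1^R)$ from Proposition \ref{prop:fd}(1), and the rationality identification $[\disc(q^h_i)] = z_i$ from Proposition \ref{prop:pretab}(2).

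First I would unfold the construction of $(\phi_1^R,\theta_1^R)$. Since $\phi_1^R = \varphi_1 \oplus m_R\rho_{\beta_{s_\chi+1}}$ attaches a single extra basis element on the right of $\CS_{\varphi_1}$, and since $\theta_1^R|_{\varphi_1}$ lies in the $\CZ$-orbit of $\chi_1$ (consequence of Lemmas \ref{lem:natural-U} and \ref{lem:phi1-U}, together with Proposition \ref{prop:unique} after aligning order-preserving identifications), the only new datum is the value $\theta_1^R(\beta_{s_\chi+1})$. Computing $p_1^R$ directly from $\dim\phi_1^R = \dim\varphi_1 + m_R$ gives $p_1^R = p_1 - m_R$, which is the first claimed identity in both cases. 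The sign $\varepsilon_1^R=\varepsilon_1$ then follows from the defining relation $z_1^R = (-1)^{p_1^R-1}\varepsilon_1^R$ combined with the equality $z_1^R = z_1''= z_1'$ and the parities $p_1^R\equiv p_1\mod 2$ (forced by the fact that descents along the same Witt tower preserve the dimension modulo $2$).

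Next I would analyze $\sgn(\theta_1^R)$. By construction the indices in $\sgn(\theta_1^R)$ that lie in $\{1,\ldots,s_\chi-1\}$ coincide with those of $\sgn(\chi_1)$, so the new index $s_\chi$ belongs to $\sgn(\theta_1^R)$ if and only if $\theta_1^R(\beta_{s_\chi})\neq \theta_1^R(\beta_{s_\chi+1})$. Using Definition \ref{LYT} to express $[\disc(q^h_1)]$ and $[\disc(q^h_2)]$ in terms of $z_1,z_1^R,z_2$ and Lemma \ref{lem:Upd} to express $\theta_1^R(\beta_{s_\chi+1})$ and $\chi_1(\beta_{s_\chi})$ in terms of $\chi$-values and multiplicities, I would show that this dichotomy is exactly the dichotomy $q^h_1=-q^h_2$ versus $q^h_1=q^h_2$. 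In Case (1), the rank of $\CS_{\phi_2^R}$ equals that of $\CS_{\varphi_2}$ so Proposition \ref{prop:fd} forces $\phi_2^R = \varphi_2$, and the relations $p_2^R = p_2+m_R$ and $\varepsilon_2^R=\varepsilon_2$ are then immediate; the character identifications required for $(\phi_2^R,\theta_2^R)$ to actually be a first descent of $(\varphi_1,\chi_1)$ are obtained by matching the two instances of Lemma \ref{lem:Upd} applied to the descents along $(\varphi_1,\chi_1)\to(\varphi_2,\chi_2)$ and $(\phi_1^R,\theta_1^R)\to(\phi_2^R,\theta_2^R)$. In Case (2), the rank grows by one, so Proposition \ref{prop:fd} allows us to take $\phi_2^R = \varphi_2\oplus\rho_\gamma$ for some interlacing $\gamma\in(\beta_{s_\chi},\beta_{s_\chi+1})$ of parity opposite to the $\beta_j$; the sign shift $\varepsilon_2^R = \varepsilon_2$ together with $p_2^R=p_2+m_R-1$ follows from the $(-1)^{\Fn-1}$ factor in the second formula of Lemma \ref{lem:Upd} applied to the enlarged parameter, and the two twisted character identities $\chi_1=\chi^{-z_2}_{\varphi_1,\wh{\phi_2^R}}$, $\wh{\theta_2^R}=\chi^{-z_2}_{\wh{\phi_2^R},\varphi_1}$ are obtained directly from Lemma \ref{lem:Upd}.

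The main obstacle will be the careful bookkeeping of signs across Cases (1) and (2): one has to simultaneously track the parities of $p_i,p_i^R,\Fn,\dim\phi_1^R$, the $\CZ$-factors $z_i,z_i^R,z_i''$, the rationality translation $z_i=(-1)^{p_i-1}\varepsilon_i$ (which is the reason the $\varepsilon_i$ normalization is preferable to $q^h_i$ in the unitary case), and the extra sign $(-1)^{\Fn-1}$ appearing in Lemma \ref{lem:Upd} due to the contragredient built into Definition \ref{defn:pd}. A second subtlety is that $\chi_1$ in the statement is not literally equal to $\theta_1^R|_{\varphi_1}$ but lies in its $\CZ$-orbit, so one must fix representatives in $\ScO_\CZ(\cdot)$ compatibly with the requirement that $(\phi_2^R,\theta_2^R)$ be recognized as a descent of $(\varphi_1,\chi_1)$ and not merely of an equivalent enhanced $L$-parameter. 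Once these consistency checks are in place, the rest of the argument reduces to the same sign-interlacing computation that drives the archimedean Case 3 of Lemma \ref{first-step}.
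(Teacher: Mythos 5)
Your overall strategy matches the paper's: compute $\theta_1^R(\beta_{s_\chi})$ and $\theta_1^R(\beta_{s_\chi+1})$ via Lemma \ref{lem:Upd}, translate the dichotomy on whether $s_\chi \in \sgn(\theta_1^R)$ into the dichotomy $q^h_1 = \pm q^h_2$, and then read off $(p_i^R, \varepsilon_i^R)$ and the twisted character identities. This is the same route the paper follows (explicitly paralleling the symplectic-metaplectic case in Lemma \ref{first-step}).

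However, there is a genuine gap in your justification of $\varepsilon_1^R = \varepsilon_1$. You propose to deduce it from the defining relation $z_1^R = (-1)^{p_1^R-1}\varepsilon_1^R$ together with $z_1^R = z_1'' = z_1'$ and an asserted parity $p_1^R \equiv p_1 \bmod 2$, the last supposedly ``forced by the fact that descents along the same Witt tower preserve the dimension modulo $2$.'' Neither half of this works. First, $p_1^R = p_1 - m_R$ where $m_R = \dim\psi_r$, and $m_R$ can be odd: Lemma \ref{C-parity} constrains only the interior multiplicities $\mult(\psi_i)$ for $0<i<r$, not $\mult(\psi_r)$, and in the unitary setting Bessel and Fourier--Jacobi descents mix freely, so the dimension parity is genuinely not preserved between $\varphi_1$ and $\phi_1^R$. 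Second, even granting the parity claim you would still need $z_1' = z_1$, which you never address and which is also false in general: applying the first formula of Lemma \ref{lem:Upd} at $\alpha_r$ shows $z_1' = (-1)^{m_R}z_1$, because exactly the $\psi_r$-summands of $\varphi_1'$ lie above $\alpha_r$ and contribute $(-1)^{m_R}$ to $\chi_{\varphi,\wh{\varphi_1'}}(\alpha_r)$. What actually makes the sign come out is the cancellation
\[
\varepsilon_1^R = (-1)^{p_1^R-1}z_1^R = (-1)^{p_1-m_R-1}\cdot (-1)^{m_R}z_1 = (-1)^{p_1-1}z_1 = \varepsilon_1,
\]
i.e.\ the factor $(-1)^{m_R}$ relating $z_1^R$ to $z_1$ is compensated by the shift $p_1 \mapsto p_1 - m_R$. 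The paper makes this explicit by first computing $\theta_1^R(\beta_{s_\chi+1}) = z_2^R = -z_1^R = (-1)^{m_R-1}z_1$ from Lemma \ref{lem:Upd}; your version substitutes two unjustified (and, when $m_R$ is odd, false) equalities whose errors happen to compensate, which does not constitute a proof of the step.
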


\begin{proof}
Similar to the symplectic-metaplectic case in the proof of Lemma \ref{first-step}, we have that
\[
\begin{aligned}
& \theta_1(\beta_{s_\chi})  = (-1)^{m_R} z_2, \\
& \theta_1(\beta_{s_\chi+1})  = z_2^R = -z_1^R =(-1)^{m_R-1}z_1.
\end{aligned}
\]
It follows that $\theta_1(\beta_{s_\chi}) = - \theta_1(\beta_{s_\chi+1})$ if and only if $z_1= z_2$, i.e. $q^h_1 = q^h_2$, which is also equivalent to that $z_2^R = (-1)^{m_R-1} z_2$. In any case, it is easy to see that we always have 
$\varepsilon_i^R= \varepsilon_i$, $i=1,2$.

It remains to prove the last  two equalities in (2). In this case take $\phi_2^R = \phi_2\oplus \rho_\gamma$, $\gamma\in (\beta_{s_\chi}, \beta_{s_\chi+1})$. Using the proof of Lemma \ref{lem:Upd}, we compute that
\[
\chi_{\varphi_1, \wh{\phi_2^R}}^{-z_2} = \chi_{\varphi_1, \wh{\phi_2}}^{z_2} = \chi_1
\]
and that
\[
\chi_{\wh{\phi_2^R}, \varphi_1}^{-z_2} =  \chi_{\wh{\phi_2^R}, \phi_1^R}^{-z_2} \cdot \eta_{(-1)^{m_R}} = \chi_{\wh{\phi_2^R}, \phi_1^R}^{z_2^R} = \wh{\theta_2^R}.
\]
The lemma is proved.  
\end{proof}
 
Now we prove Proposition \ref{prop:p1''-U1}. If $m_R=0$, then equality holds in this proposition. Assume that $m_R\neq 0$. Let $k_0$ be the maximal index $i$ such that $q^h_1=q^h_2=\cdots=q^h_i$. Applying Lemma \ref{lem:p1''-U1} repeatedly, we obtain the following results.
\begin{itemize}
\item If $k_0>1$, then 
\[
k^R =\begin{cases} k, & \textrm{if }k_0 <k, \\
k+1, & \textrm{if }k_0 =k, 
\end{cases}
\]
and for $i=1,\ldots, k^R$, 
\begin{equation}\label{eq:Fs'-Fs1-R}
(p_i^R, \varepsilon_i^R)=\begin{cases}
	(p_1- m_R, \varepsilon_1), &\text{ if }i=1,\\
	(p_2+ m_R-1, \varepsilon_2), &\text{ if }i=2,\\
	(p_i, \varepsilon_i), &\text{ if } 2<i\leq k_0,\\
	(p_{k_0+1}+1,  \varepsilon_{k_0+1}), &\text{ if }i=k_0+1,\\
	(p_i, \varepsilon_i), &\text{ if }i>k_0+1,
\end{cases}
\end{equation}
where $(p_{k+1}, \varepsilon_{k+1}):=(0, [\disc(q^h_k)])$ if $k_0=k$.

\item
If $k_0=1$, then $k_R=k$ and for $i=1,\ldots, k$, 
\begin{equation} \label{eq:Fs'-Fs2-R}
(p_i^R, \varepsilon_i^R)=\begin{cases}
	(p_1-m_R, \varepsilon_i), &\text{ if }i=1,\\
	(p_2+m_R, \varepsilon_i ), &\text{ if }i=2,\\
	(p_i, \varepsilon_i), &\text{ if } i>2.
\end{cases}
\end{equation} 
\end{itemize}
Applying \eqref{eq:Fs'-Fs1-R} and \eqref{eq:Fs'-Fs2-R}, a straightforward computation verifies the inequality in Proposition \ref{prop:p1''-U1}.  

\subsubsection{Proof of Proposition \ref{prop:p1''-U2}}

Following the above proof, we can write 
\[
\varepsilon^R_i = \varepsilon_i, \quad  i=1,\ldots, k^R.
\] 

\begin{lem} \label{lem:p1''-U2}
Assume that $m_L\neq 0$ in \eqref{eq:phi1-U}. Then the following hold.
\begin{enumerate}
\item If $\varepsilon_1 = - \varepsilon_2$, then $(\phi_2, \theta_2)$ is a first descent of $(\phi_1^R, \theta_1^R)$, and it holds that
\begin{itemize}
\item $(p_1'', \varepsilon_1'')= (p_1 -m_L , (-1)^{m_L} \varepsilon_1 )$, 
\item $(p_2'',  \varepsilon_2'' ) =(p_2+m_L, (-1)^{m_L} \varepsilon_2)$.
\end{itemize}
\item If $\varepsilon_1 =  \varepsilon_2$, then $\phi_2$ can be taken to be $\rho_\gamma\oplus \phi_2^R$, where $\gamma \in (\beta_0,  \beta_1)$, $\gamma\equiv \alpha_1\mod 2$, and it holds that
\begin{itemize}
\item $(p_1'', \varepsilon_1'')= (p_1 -m_L , (-1)^{m_L} \varepsilon_1 )$,
\item $(p_2'',  \varepsilon_2'' ) =(p_2+m_L, (-1)^{m_L-1} \varepsilon_2)$, 
\item $\theta_1^R = \chi_{\phi_1^R, \wh{\phi_2}}^{z_2^R}$, $\wh{\theta_2} = \chi_{\wh{\phi_2}, \phi_1^R}^{ z_2^R}$.
\end{itemize}
\end{enumerate}
\end{lem}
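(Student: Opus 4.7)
The proof will parallel those of Lemma \ref{first-step} and Lemma \ref{lem:p1''-U1}, adapted to track the effect of the \emph{left-most} addition $m_L\rho_{\beta_0}$ (as opposed to the right-most addition $m_R\rho_{\beta_{s_\chi+1}}$ handled in Lemma \ref{lem:p1''-U1}). The computations rest on Lemma \ref{lem:Upd}, Propositions \ref{prop:rational}, \ref{prop:fd}, \ref{prop:pretab}, and the formulas for distinguished characters of unitary groups from Section \ref{sec-LPDC}. Throughout, one uses the identification $\theta_1\vert_{\phi_1^R}=\theta_1^R$.

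First, I would compute $\theta_1(\beta_0)$. Since $\beta_0<\alpha_1$, Lemma \ref{lem:Upd} applied to $(\phi_1,\theta_1)\in\FD^{z_1''}_{p_1''}(\varphi,\chi)$ gives
\[
\theta_1(\beta_0)=(-1)^{\Fn-1}z_1''\cdot\prod_{i:\ \alpha_i<\beta_0}(-1)^{m_i}=(-1)^{\Fn-1}z_1''.
\]
Since $q^{h''}_1=q^{h^R}_1$ by construction, Proposition \ref{prop:pretab} yields $z_1''=z_1^R$; combined with $p_1''=p_1^R-m_L$ this produces $\varepsilon_1''=(-1)^{m_L}\varepsilon_1^R=(-1)^{m_L}\varepsilon_1$, establishing the first bullet in both cases.

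Next, I would determine when $0\in\sgn(\theta_1)$ by comparing $\theta_1(\beta_0)$ to $\theta_1(\beta_1)=\theta_1^R(\beta_1)$. Express the latter via Lemma \ref{lem:Upd} applied to $(\phi_1^R,\theta_1^R)$, and express $\theta_2^R(\beta_1)$ (if $\beta_1$ lies in $\CS_{\phi_2^R}$'s descent tree) to bring $z_2^R$ into play. A sign tracking using $z_2^R=[\disc(q^{h^R}_2)]$ and $\varepsilon_i^R=(-1)^{p_i^R-1}z_i^R$ shows that $\theta_1(\beta_0)\theta_1(\beta_1)=-1$ if and only if $\varepsilon_1^R=\varepsilon_2^R$; this dichotomy is exactly the split into Cases (1) and (2) of the lemma.

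In Case (1), the alternation pattern of $\theta_1$ over its basis coincides with that of $\theta_1^R$, so $s_{\theta_1}=s_{\theta_1^R}$. Proposition \ref{prop:fd} then permits the choice $(\phi_2,\theta_2)=(\phi_2^R,\theta_2^R)$, which implies that $(\phi_2,\theta_2)$ is itself a first descent of $(\phi_1^R,\theta_1^R)$. A dimension count gives $p_2''=\dim\phi_1-\dim\phi_2=p_2^R+m_L$, and Proposition \ref{prop:pretab} (together with the shift by $m_L$ in the parity of $p_2''$ versus $p_2^R$) produces $\varepsilon_2''=(-1)^{m_L}\varepsilon_2$. In Case (2), the introduction of the left alternation forces $s_{\theta_1}=s_{\theta_1^R}+1$, and Proposition \ref{prop:fd} allows us to take $\phi_2=\rho_\gamma\oplus\phi_2^R$ with $\gamma\in(\beta_0,\beta_1)$, $\gamma\equiv\alpha_1\pmod 2$, yielding the stated values of $(p_2'',\varepsilon_2'')$ via the same dimension-and-sign book-keeping. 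The two character identities $\theta_1^R=\chi_{\phi_1^R,\wh{\phi_2}}^{z_2^R}$ and $\wh{\theta_2}=\chi_{\wh{\phi_2},\phi_1^R}^{z_2^R}$ are then verified using Lemma \ref{lem:Upd}: the factor $\prod_{i: \beta_i'>\beta_j}(-1)^{m_i'}$ over basis vectors of $\CS_{\phi_2}$ is unaffected by inserting the single $\rho_\gamma$ whenever $\gamma<\beta_j$, which holds for all $j\geq 1$, while the contribution of $\gamma$ itself is absorbed into the twist $\eta_{z_2^R}$.

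The principal obstacle will be the delicate tracking of signs arising from the $(-1)^{\Fn-1}$ factor intrinsic to Lemma \ref{lem:Upd} for conjugate self-dual parameters, together with the alternations between $p_i^R$ and $p_i''$ of opposite parities, so as to confirm that the correct dichotomy is governed by $\varepsilon_1^R$ versus $\varepsilon_2^R$ rather than by a priori more natural choices such as $z_1^R$ versus $z_2^R$. Once this is handled carefully, the remaining steps are direct analogs of the computations already performed in the proofs of Lemma \ref{first-step} and Lemma \ref{lem:p1''-U1}.
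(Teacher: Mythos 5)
Your proposal follows the paper's proof essentially step for step: compute $\theta_1(\beta_0) = (-1)^{\Fn-1}z_1''$ via Lemma \ref{lem:Upd}, use $z_1''=z_1^R$ and the $m_L$ shift in $p_1$ to get the first bullet, detect the left alternation by comparing $\theta_1(\beta_0)$ against $\theta_1(\beta_1)$ to obtain the dichotomy $\varepsilon_1 = \pm\varepsilon_2$, then do the dimension-and-sign bookkeeping in each case, and verify the two distinguished-character identities via Lemma \ref{lem:Upd}. One caution: your last sentence garbles which factor controls which identity—the paper's second equality $\chi^{z_2^R}_{\wh{\phi_2},\phi_1^R}=\chi^{z_2''}_{\wh{\phi_2},\phi_1}$ is about replacing $\phi_1^R$ by $\phi_1$ (inserting $m_L\rho_{\beta_0}$), not about the $\rho_\gamma$ summand of $\phi_2$, so the phrase ``absorbed into the twist $\eta_{z_2^R}$'' needs to be replaced by a genuine root-number computation showing $\varepsilon(\varsigma_{\beta_j'}\otimes m_L\varsigma_{\beta_0},\psi^{\BC}_2)$ matches the discrepancy $\eta_{(-1)^{p_2''}z_2''}\cdot\eta_{(-1)^{p_2^R}z_2^R}^{-1}$ in the twists.
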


\begin{proof} 
Using Lemma \ref{lem:Upd}, we find that
\[
\begin{aligned}
\theta_1(\beta_0) & = (-1)^{\Fn-1}z_1'', \\
\theta_1(\beta_1) & = z_1''\cdot z_1^R \cdot \theta_1^R(\beta_1) = z_1''\cdot z_1^R \cdot  z_2^R \cdot (-1)^{s_{\theta_1^R}} = z_1''\cdot z_1^R \cdot z_2^R \cdot (-1)^{\Fn- p_1^R - p_2^R} =  - \theta_1(\beta_0) \cdot \varepsilon_1 \cdot \varepsilon_2.
\end{aligned}
\]
It follows that 
\begin{equation} \label{varepsilon-cond}
\theta_1(\beta_0) = - \theta_1(\beta_1) \quad \textrm{if and only if }\quad \varepsilon_1= \varepsilon_2.
\end{equation}  
Recall that by definition $z_1''= z_1^R=(-1)^{p_1^R-1}\varepsilon_1$, and by Lemma \ref{lem:Upd} it is clear that 
$
z_2'' = z_2^R = (-1)^{p_2^R-1}\varepsilon_2.
$
We have that $p_1'' = p_1^R -m_L$, and it follows from  \eqref{varepsilon-cond} that
\[
p_2'' = \begin{cases} p_2^R+m_L, & \textrm{if }\varepsilon_1= - \varepsilon_2, \\
p_2^R+m_L-1, & \textrm{if }\varepsilon_1 = \varepsilon_2.
\end{cases} 
\]
Thus $\varepsilon_1'' =(-1)^{p_1''-1}z_1''$ and $\varepsilon_2'' = (-1)^{p_2''-1}z_2''$ are computed as in the lemma. 

It remains to prove the last two equalities in (2). In this case take $\phi_2 = \rho_\gamma\oplus \phi_2^R$, $\gamma\in (\beta_0, \beta_1)$. Using the proof of Lemma \ref{lem:Upd}, we compute that
\[
 \chi_{\phi_1^R, \wh{\phi_2}}^{ z_2^R} =  \chi_{\phi_1^R, \wh{\phi_2^R}}^{ z_2^R} = \theta_1^R \quad \textrm{and} \quad \chi_{\wh{\phi_2}, \phi_1^R}^{ z_2^R} = \chi_{\wh{\phi_2}, \phi_1}^{ z_2''} =\wh{\theta_2}.
\]
The lemma is proved. 
\end{proof}

Finally, we prove Proposition \ref{prop:p1''-U2}. If $m_L=0$, then equality holds in this proposition. Assume that $m_L\neq 0$. Let $k_0'$ be the maximal index $i$ such that $\varepsilon_1=\varepsilon_2=\cdots=\varepsilon_i$. Applying Lemma \ref{lem:p1''-U2} repeatedly, we obtain the following results.
\begin{itemize}
\item If $k_0'>1$, then 
\[
k'' =\begin{cases} k^R, & \textrm{if }k_0' <k^R, \\
k^R+1, & \textrm{if }k_0' =k^R, 
\end{cases}
\]
and for $i=1,\ldots, k''$, 
\begin{equation}\label{eq:Fs'-Fs1-R2}
(p_i'', \varepsilon_i'')=\begin{cases}
	(p_1^R- m_L, (-1)^{m_L}\varepsilon_1), &\text{ if }i=1,\\
	(p_2^R+ m_L-1, (-1)^{m_L-1}\varepsilon_2), &\text{ if }i=2,\\
	(p_i^R, \varepsilon_i), &\text{ if } 2<i\leq k_0',\\
	(p_{k_0'+1}^R+1,  -\varepsilon_{k'_0+1}), &\text{ if }i=k_0'+1,\\
	(p_i^R, \varepsilon_i), &\text{ if }i>k_0'+1,
\end{cases}
\end{equation}
where $(p_{k^R+1}, \varepsilon_{k^R+1}):=(0,  -\varepsilon_{k^R})$ if $k_0'=k^R$.

\item
If $k_0'=1$, then $k''=k^R$ and for $i=1,\ldots, k''$, 
\begin{equation} \label{eq:Fs'-Fs2-R2}
(p_i'', \varepsilon_i'')=\begin{cases}
	(p_1^R-m_L,  (-1)^{m_L}\varepsilon_i), &\text{ if }i=1,\\
	(p_2^R+m_L, (-1)^{m_L}\varepsilon_i ), &\text{ if }i=2,\\
	(p_i^R, \varepsilon_i), &\text{ if } i>2.
\end{cases}
\end{equation} 
\end{itemize}
Applying \eqref{eq:Fs'-Fs1-R2} and \eqref{eq:Fs'-Fs2-R2}, a straightforward computation verifies the inequality in Proposition \ref{prop:p1''-U2}.


\appendix

\section{Proof of Proposition \ref{prop:LDPT}}\label{App-A}


We are going to prove Proposition \ref{prop:LDPT}, which claims that 
for any triple $(\varphi,\chi,q)\in \FT_a(G_n^*)$, the set of $L$-descent pre-tableaux $\CL_a(\varphi,\chi,q)$ is not empty, when $F$ is non-archimedean. The archimedean case 
follows easily from the explicit result for the $L$-parameter descent in Section \ref{sec-PCR}, in particular from Proposition \ref{prop:fd}. 

By \cite[Proposition 1.6]{JZ18} (which similarly holds for the Bessel case of unitary groups) and Definition \ref{defn:pd},   $\FD_{1}(\varphi,\chi)$ is non-empty when $G_n^*$ is orthogonal or unitary, therefore by induction $\CL_a(\varphi,\chi,q)$ contains some pre-tableau $\Fs_{\udl{\Fl}}$ with $\udl{\Fl}=[1^\Fn]$.

It remains to prove the symplectic-metaplectic case. The strategy of the proof is to reduce to the orthogonal case. To this end, we first prove a lemma. 

\begin{lem}\label{claim}
For any $(\varphi,\chi,q)\in \FT_a(\SO_\Fn^*)$ with $\Fn\geq 5$, it holds that
\begin{equation}
\FD_3(\varphi,\chi)\neq \varnothing.
\end{equation}
\end{lem}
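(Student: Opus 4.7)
The plan is to combine the non-emptiness of $\FD_1(\varphi,\chi)$ (guaranteed by \cite[Prop.~1.6]{JZ18}) with a direct manipulation of the descent parameter, exploiting the dimensional slack $\Fn\geq 5$. First, I would apply \cite[Prop.~1.6]{JZ18} to obtain some $(\phi_1,\chi_1)\in \FD_1^{z_1}(\varphi,\chi)$ with $z_1\in\CZ$, where $\phi_1\in \widetilde{\Phi}_\gen(\SO_{\Fn-1}^*)$ is of type opposite to $\varphi$. A short dimension count shows $\dim\phi_1\geq 4$ in every case ($\Fn$ even or odd), which gives just enough room to manipulate $\phi_1$ in the next step.

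Next, I would show that $\phi_1$ can be chosen (or modified, after possibly shifting $\chi$ within its $\CZ$-orbit) to contain a direct summand of the form $\rho\oplus\rho^\vee$, where $\rho$ is a non-self-dual unitary character of $\CW_F$ of the parity dictated by the type of $\phi_1$. The key point is that swapping a $2$-dimensional self-dual sub-parameter $\zeta_0\subset\phi_1$ with $\det\zeta_0=1$ for $\rho\oplus\rho^\vee$ preserves the dimension, type, and determinant of $\phi_1$, and alters the distinguished character \eqref{char-O} only by a predictable $\eta_z$-twist. Granting such a $\phi_1$, I would set
\[
\phi := \phi_1\ominus (\rho\oplus\rho^\vee),
\]
which is a generic $L$-parameter of $\SO_{\Fn-3}^*$ of type opposite to $\varphi$ and of the correct dimension for $\FD_3$. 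A direct computation from \eqref{char-O}, using the standard identity $\eps(\varphi_i\otimes\rho)\cdot\eps(\varphi_i\otimes\rho^\vee)=1$ for tempered self-dual $\varphi_i$ and unitary $\rho$, together with tracking the change of the factor $(\det\varphi_i)(-1)^{\dim\phi/2}$ caused by the drop of dimension by $2$, then yields $\chi_{\varphi,\phi}=\chi_{\varphi,\phi_1}\cdot\eta_{-1}$. Hence $(\phi,\chi')\in\FD_3^{-z_1}(\varphi,\chi)$ for a suitable $\chi'\in\wh{\CS_\phi}$, which proves the lemma.

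The main obstacle is the modification step, namely ensuring that $\phi_1$ can always be brought into a form containing a direct summand $\rho\oplus\rho^\vee$ while remaining in $\FD_1(\varphi,\chi\cdot\eta_z)$ for some $z\in\CZ$. This requires a case analysis on the decomposition of $\phi_1$: when $\phi_1$ already contains a non-self-dual $2$-dimensional sub-parameter the construction is immediate, but when $\phi_1$ is built entirely out of self-dual irreducible components, one must use the bound $\dim\phi_1\geq 4$ to locate a $2$-dimensional self-dual sub-parameter with trivial determinant---for instance a summand of the form $2\epsilon_0$ for a quadratic character $\epsilon_0$ of $F^\times$, or a pair $\epsilon_1\oplus\epsilon_2$ with $\epsilon_1\epsilon_2=\mathds{1}$, or a $2$-dimensional irreducible self-dual piece with trivial determinant---that can legally be swapped out. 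Careful bookkeeping of the resulting sign changes and of the $\CZ$-orbit of $\chi$ is then required to verify that the output $\phi$ sits in $\FD_3(\varphi,\chi)$ rather than some other coset.
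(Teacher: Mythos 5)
Your approach is genuinely different from the paper's, and unfortunately it has a gap that appears to make the argument circular. The paper reduces to discrete $L$-parameters $\varphi$ and then uses a \emph{global} argument: it embeds the discrete series $\pi = \pi_a(\varphi,\chi)$ as a local component $\Pi_{\nu_0}$ of a cuspidal automorphic representation $\Pi$ of $\SO_\Fn(\FV,\BA)$, takes the Fourier expansion of a cusp form along an abelian unipotent radical $N_\Fn$, and uses the vanishing of the constant term (cuspidality) together with induction on the Witt index to produce a non-zero Bessel--Fourier coefficient for the partition $[3,1^{\Fn-3}]$; the non-vanishing of $\FD_3(\varphi,\chi)$ then follows from the stability of local descent ([JZ18, Proposition 3.3]). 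Your strategy is purely local and tries to manufacture a member of $\FD_3$ algebraically from a member of $\FD_1$.

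The gap is in the ``can be chosen or modified'' step. Observe first that, by the very computation you invoke (and which appears in the proof of Theorem~\ref{thm:DFD}), adding or removing a summand $\rho\oplus\rho^\vee$ from $\phi$ does not change $\chi_{\varphi,\phi}|_{\CS_\varphi}$ (contrary to your claim of an $\eta_{-1}$ twist --- that extra $\eta_{-1}$ shows up when removing a $2\epsilon_0$ summand, not a $\rho\oplus\rho^\vee$ summand). Thus $\FD_3(\varphi,\chi)\neq\varnothing$ is \emph{equivalent} to the existence of some $\phi_1$ in $\FD_1(\varphi,\chi)$ admitting a $\rho\oplus\rho^\vee$ summand (or a repeated self-dual summand), i.e.\ a \emph{non-discrete} $\phi_1$. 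But Theorem~\ref{thm:DFD} says precisely that if $\Fl_0(\varphi,\chi)=1$ then every $\phi_1\in\FD_1(\varphi,\chi)$ is discrete, so no such $\phi_1$ exists. Thus finding a non-discrete $\phi_1$ in $\FD_1$ is logically the same as knowing $\Fl_0\geq 3$, which is what the lemma asserts. Your ``swap'' device does not escape this: replacing an irreducible $2$-dimensional self-dual $\zeta_0\subset\phi_1$ (with $\det\zeta_0=1$) by $\rho\oplus\rho^\vee$ changes $\chi_{\varphi,\phi_1}(\alpha_i)$ by the factor $\varepsilon\bigl(\varphi_i\otimes(\rho\oplus\rho^\vee)\bigr)/\varepsilon(\varphi_i\otimes\zeta_0)$. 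The numerator equals $(\det\varphi_i)(-1)$ by [GGP12, Prop.~5.1], but the denominator $\varepsilon(\varphi_i\otimes\zeta_0)$ is an honest local root number of a symplectic tensor product with a dihedral representation, and there is no reason for the ratio to equal $(\det\varphi_i,z)_F$ for a \emph{common} $z\in\CZ$ as $i$ ranges over $\RI_\gp$. The only swap you can control is $2\epsilon_0\leftrightarrow\rho\oplus\rho^\vee$ (since $\varepsilon(\varphi_i\otimes 2\epsilon_0)=\varepsilon(\varphi_i\otimes\epsilon_0)^2=1$), but a discrete $\phi_1$ has no $2\epsilon_0$ summand. In short, the local algebra alone does not seem to rule out $\Fl_0(\varphi,\chi)=1$ for $\Fn\geq 5$; the paper imports that information from the global theory of cusp forms.
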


\begin{proof}
We may decompose $\varphi$ as in \eqref{edec} and define its discrete part $\varphi_\square=\bigoplus_{i\in \RI_{\gp}}\varphi_i$ (cf. \cite[(4-16)]{JZ18}).
By Theorem 4.6 in \cite{JZ18}, we have $\Fl_{0}(\varphi,\chi)\geq \Fl_{0}(\varphi_\square,\chi)$. 
Thus it suffices to show that $\FD_3(\varphi,\chi)\neq \varnothing$ for discrete $L$-parameters $\varphi$.

Fix a choice of Whittaker datum and assume that $\pi$ is the discrete series representation of $\SO_\Fn(V)$ corresponding to $(\varphi,\chi)$ under the local Langlands correspondence.
Note that the Witt index $\Witt(V)\geq 1$ for any quadratic space defined over a non-archimedean field $F$ when $\Fn\geq 5$.

Although a purely local proof is possible, we think that a global argument may be more straightforward. Let $k$ be a number field with $\BA$ the ring of adeles of $k$. Fix a  
finite local place $\nu_0$ of $k$ such that $k_{\nu_0}=F$. One can easily find 
an $\Fn$-dimensional non-degenerate quadratic space $\FV$ over $k$ such that such that 
$\FV_{\nu_0}=V$ and $\Witt(\FV)=\Witt(V)\geq 1$. 
Since $\pi$ is an irreducible discrete series representation of $\SO_\Fn(V,F)$, by means of 
a simple trace formula $\SO_\Fn(\FV,\BA)$, 
one can embed $\pi$ as the local component at $\nu_0$ of an irreducible cuspidal automorphic representation $\Pi=\bigotimes_\nu\Pi_\nu$ of $\SO_{\Fn}(\FV,\BA)$, i.e. $\Pi_{\nu_0}\cong\pi$. 

Let $P_\Fn=M_\Fn N_\Fn$ be a (standard) parabolic subgroup defined over $k$ of $\SO_\Fn(\FV)$ whose Levi subgroup $M_\Fn$ is isomorphic to $\GL_1\times \SO_{\Fn-2}$.
In this case, the unipotent radical $N_\Fn$ is abelian and can be identified with 
a non-degenerate quadratic subspace of $\FV$, whose vector space is isomorphic to $k^{\Fn-2}$ and whose quadratic form is the restriction of the quadratic form $q_{\FV}$ on $\FV$.
For any cuspidal automorphic form $f\in\Pi$, we have a Fourier expansion of the compact abelian group $N_\Fn(k)\bks N_\Fn(\BA)$. More precisely, by taking 
a nontrivial additive character $\psi_\BA$ of $k\bks \BA$, we have the Fourier expansion along $[N_\Fn]:=N_\Fn(k)\bks N_\Fn(\BA)$:
\begin{align}\label{eq:Fourier-expansion-1}
f(g)
=&\int_{[N_\Fn]}f(ng)\ud n\nonumber\\
&\qquad +\sum_{v\ne 0\in N_\Fn(k); q_{\FV}(v,v)=0}
\int_{[N_\Fn]}f(ng)\psi_\BA(q_{\FV}(v,n))\ud n\\
&\qquad\qquad  +\sum_{v \in N_\Fn(k); q_{\FV}(v,v)\ne0}
\int_{[N_\Fn]}f(ng)\psi_\BA(q_{\FV}(v,n))\ud n.\nonumber
\end{align}
By the cuspidality of $f$, the constant term 
\[
\int_{[N_\Fn]}f(ng)\ud n=0.
\]
Hence we obtain that 
\begin{align}\label{eq:Fourier-expansion-2}
f(g)
=&\sum_{v\ne 0\in N_\Fn(k); q_{\FV}(v,v)=0}
\int_{[N_\Fn]}f(ng)\psi_\BA(q_{\FV}(v,n))\ud n\\
&\qquad  +\sum_{v\in N_\Fn(k); q_{\FV}(v,v)\ne0}
\int_{[N_\Fn]}f(ng)\psi_\BA(q_{\FV}(v,n))\ud n.\nonumber
\end{align}

Assume that there exists an anisotropic vector $v_*\in N_\Fn(k)$ such that the Bessel-Fourier coefficient 
\[
\int_{[N_\Fn]}f(ng)\psi_\BA(q_{\FV}(v_*,n))\ud n
\]
of $f$, from the second summation in \eqref{eq:Fourier-expansion-2}, is nonzero
(we refer to \cite{JZ20} for more details on the Bessel-Fourier coefficients of automorphic forms).
Then the local component $\pi=\Pi_{\nu_0}$ has a nonzero twisted Jacquet module (via local Bessel model) $\CJ_{\RI_{\Fn}-v}(\pi)$ defined in \eqref{whit-quo}, associated to the nilpotent element $\RI_{\Fn}-v_*$ in $\FN_\Fn(F)$,  where $\FN_\Fn$ is the Lie algebra of $N_{\Fn}$.
Since $v_*$ is $k$-anisotropic, the rational nilpotent orbit of $\RI_{\Fn}-v_*$ is parameterized by the partition $[3,1^{\Fn-3}]$. 
Following the arguments in the proof of \cite[Proposition 3.3]{JZ18} (the stability of local descent),  $\FD_3(\varphi,\chi)$ contains tempered enhanced $L$-parameters and is non-empty. 
Then the lemma is proved. 

If $\Witt(\FV)=1$, then there is no nonzero isotropic vector. In this case, we must have 
that the first summation in \eqref{eq:Fourier-expansion-2} vanishes and that 
\[
f(g)
=\sum_{v\in N_\Fn(k); q_{\FV}(v,v)\ne0}
\int_{[N_\Fn]}f(ng)\psi_\BA(q_{\FV}(v,n))\ud n\neq 0
\]
since $f\neq 0$. We finish the proof of the lemma for this case.  

If $\Witt(\FV)>1$, and if for any anisotropic vector $v_*\in N_\Fn(k)$, the Bessel-Fourier coefficient 
\[
\int_{[N_\Fn]}f(ng)\psi_\BA(q_{FV}(v_*,n))\ud n
\]
of $f$, from the second summation in \eqref{eq:Fourier-expansion-2}, is always zero, then 
we have 
\begin{align}\label{eq:Fourier-expansion-3}
f(g)
=\sum_{v\ne 0\in N_\Fn(k); q_{\FV}(v,v)=0}
\int_{[N_\Fn]}f(ng)\psi_\BA(q_{\FV}(v,n))\ud n.
\end{align}
Since $f$ is nonzero, we must have that 
there exists a nonzero isotropic vector $v_1$ such that 
\[
f_{\Fn-2}(g):=\int_{[N_\Fn]}f(ng)\psi_\BA(q_{\FV}(v_1,n))\ud n\ne 0.
\]
Note that the Levi subgroup $M_\Fn(k)$ acts on the characters of $[N_\Fn]$ via conjugation.
Denote by $P_{\Fn-2}$  the stabilizer of $\psi_\BA(q_{\FV}(v_1,\cdot))$ under this action of $M_{\Fn}(k)$.
Then $f_{\Fn-2}(g)$ is left $P_{\Fn-2}(k)$-invariant. Write $P_{\Fn-2}=M_{\Fn-2}N_{\Fn-2}$, which can be identified as a parabolic subgroup of $\SO_{\Fn-2}$ whose Levi subgroup $M_{\Fn-2}\cong \GL_1\times \SO_{\Fn-4}$ and unipotent radical $N_{\Fn-2}(k)\cong k^{\Fn-4}$. 
We may then take the Fourier expansion $f_{\Fn-2}(g)$ along  the compact abelian group $[N_{\Fn-2}]:=N_{\Fn-2}(k)\bks N_{\Fn-2}(\BA)$
and obtain an expression for $f_{\Fn-2}(g)$, which is similar to that in \eqref{eq:Fourier-expansion-1}.
By repeating the same arguments, we obtain that either $\FD_5(\varphi,\chi)\ne \emptyset$ or there exists a nonzero isotropic vector $v_2$ in $N_{\Fn-2}(k)$ such that 
\[
\int_{[N_{\Fn-2}N_{\Fn-1}]}f(n_1n_2g)\psi_\BA(q_{\FV}(v_1,n_1)+q_{\FV}(v_2,n_2)\ud n_2\ud n_1\ne 0.
\]
Finally, by the induction on the Witt index of $\FV$, we may repeat the above process until  we obtain the first odd integer $3\leq \Fl\leq 2\Witt(V)+1$ such that $\FD_\Fl(\varphi,\chi)\ne \emptyset$,
which finishes the proof of the lemma. 
\end{proof}

Now assume that $G_n^*=\Sp_{2n}$ or $\Mp_{2n}$, and that $(\varphi,\chi,q)\in \FT_a(G_n^*)$. We will prove  that 
\begin{equation}\label{geq2}
\FD_2(\varphi,\chi)\neq \varnothing,
\end{equation}
which by induction implies that $\CL_a(\varphi,\chi,q)$ contains some pre-tableau $\Fs_{\udl{\Fl}}$ with $\udl{\Fl}=[2^n]$ (this is also consistent with Li's results in \cite{Li89}). This will finish the proof of the proposition. 

The statement \eqref{geq2} is well-known when $n=1$. Assume that $n\geq 2$, and we consider two cases separately. 

{\bf Case 1 $G_n^* = \Sp_{2n}$:} In this case, $\varphi$ is of orthogonal type with  dimension $2n+1\geq 5$. Applying Lemma \ref{claim} to $\varphi_1=\varphi\oplus\BC$ and an arbitrary extension  of $\chi$ from $\CS_\varphi$ to $\CS_{\varphi_1}$, we obtain 
an orthogonal parameter $\phi$ of dimension $2n$ and $z\in\CZ$, such that
\[
\chi_{\varphi,\phi}=\chi_{\varphi_1, \phi}|_{\CS_\varphi} = \chi\cdot\eta_z\in\ScO_\CZ(\chi).
\]
By the proof of \cite[Proposition 18.1]{GGP12}, we have that $\chi_{\varphi, \phi}=\chi_{\varphi(z), \phi(z)}$. Then from \eqref{char-spmp} we compute that
\[
\chi^z_{\varphi,\phi(z)} = \chi_{\varphi(z),\phi(z)}\cdot\eta_z =\chi_{\varphi,\phi}\cdot\eta_z= \chi.
\]
It follows that $(\phi(z), \chi^z_{\phi(z),\varphi})\in \FD^z_2(\varphi,\chi)$. Hence we obtain that $\FD_2(\varphi,\chi)\neq\varnothing$.

{\bf Case 2 $G_n^*= \Mp_{2n}$:} In this case, $\varphi$ is  of symplectic type with dimension $2n\geq 4$. By Lemma~\ref{claim}, we obtain an orthogonal parameter $\phi$ of dimension $2n-2$  such that $\chi_{\varphi,\phi}=\chi$. Assume that $\det\phi=\BC(z)$ with $z\in\CZ$, and put $\phi' =  \phi(z)\oplus \BC(z)$. Then $\phi'$ is of dimension $2n-1$ with $\det\phi'=\BC$. From \eqref{char-spmp} we compute that
\[
\chi^z_{\varphi,\phi'} = \chi_{\varphi, \phi'(z)\oplus\BC} = \chi_{\varphi, \phi\oplus 2\cdot\BC} = \chi_{\varphi,\phi}=\chi.
\]
It follows that $(\phi',\chi^z_{\phi',\varphi})\in \FD^z_2(\varphi,\chi)$. Hence $\FD_2(\varphi,\chi)\neq\varnothing$.

This completes the proof of Proposition \ref{prop:LDPT}. 

\quad

It is clear from the proof above that we have the following corollary. 

\begin{cor}\label{cor:DNE-NA}
Proposition \ref{prop:DNE} holds when $F$ is non-archimedean. 
\end{cor}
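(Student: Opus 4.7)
The plan is to observe that Corollary \ref{cor:DNE-NA} is essentially a byproduct of the proof of Proposition \ref{prop:LDPT} just given. Indeed, the inductive construction of $\CL_a(\varphi,\chi,q)$ in the preceding argument produces, as its very first step, a descent $\FD_\Fl(\varphi,\chi) \neq \varnothing$ for some $0 < \Fl \leq \Fn$. Thus once Proposition \ref{prop:LDPT} is established for non-archimedean $F$, Corollary \ref{cor:DNE-NA} follows with no extra work.

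More concretely, I would organize the reading as follows. For $G_n^*$ an orthogonal or unitary group (both Bessel and Fourier-Jacobi cases of unitary groups), the proof of Proposition \ref{prop:LDPT} invokes \cite[Proposition 1.6]{JZ18} together with Definition \ref{defn:pd} to conclude that $\FD_1(\varphi,\chi) \neq \varnothing$, so one may take $\Fl = 1$. For $G_n^* = \Sp_{2n}$ or $\Mp_{2n}$ with $n \geq 2$, the two cases treated in \textbf{Case 1} and \textbf{Case 2} at the end of the appendix show directly that $\FD_2(\varphi,\chi) \neq \varnothing$, via the reduction to Lemma \ref{claim} (i.e., the non-vanishing of $\FD_3$ for orthogonal parameters of dimension $\geq 5$, obtained by the global Bessel--Fourier coefficient argument on cuspidal automorphic representations of $\SO_\Fn$). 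The remaining case $n = 1$ for $\Sp_2$ or $\Mp_2$ is classical.

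Thus the corollary requires no new argument: in each case a specific small value of $\Fl$ (namely $\Fl = 1$ for orthogonal/unitary groups and $\Fl = 2$ for symplectic/metaplectic groups, with the base case $n \leq 1$ trivial) is exhibited as a non-empty descent in the course of proving Proposition \ref{prop:LDPT}. The only ``obstacle'' is bookkeeping, i.e., making sure that Proposition \ref{prop:LDPT} has been fully established in non-archimedean generality before citing it, which is precisely the content of Appendix \ref{App-A}. Hence the proof of Corollary \ref{cor:DNE-NA} reduces to the single sentence that the initial step of the inductive construction of $\CL_a(\varphi,\chi,q)$ furnishes the required $\Fl$.
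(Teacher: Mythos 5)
Your proposal matches the paper's own reasoning: after proving Proposition~\ref{prop:LDPT} (Appendix~\ref{App-A}) by exhibiting $\FD_1(\varphi,\chi)\neq\varnothing$ for orthogonal/unitary groups and $\FD_2(\varphi,\chi)\neq\varnothing$ for symplectic/metaplectic groups, the paper dismisses Corollary~\ref{cor:DNE-NA} with a single sentence noting that it is immediate from that proof. You have simply spelled out the bookkeeping that the paper leaves implicit.
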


\end{document}